\DeclareRobustCommand{\SkipTocEntry}[4]{}
\def\@tocline#1#2#3#4#5#6#7{\relax
  \ifnum #1>\c@tocdepth 
  \else
    \par \addpenalty\@secpenalty\addvspace{#2}%
    \begingroup \hyphenpenalty\@M
    \@ifempty{#4}{%
      \@tempdima\csname r@tocindent\number#1\endcsname\relax
    }{%
      \@tempdima#4\relax
    }%
    \parindent\z@ \leftskip#3\relax \advance\leftskip\@tempdima\relax
    \rightskip\@pnumwidth plus4em \parfillskip-\@pnumwidth
    #5\leavevmode\hskip-\@tempdima
      \ifcase #1
       \or\or \hskip 1em \or \hskip 2em \else \hskip 3em \fi%
      #6\nobreak\relax
    \dotfill\hbox to\@pnumwidth{\@tocpagenum{#7}}\par
    \nobreak
    \endgroup
  \fi}
\theoremstyle{definition}
\newtheorem{defn}{Definition}[section]
\newtheorem{ex}[defn]{Example}
\newtheorem{exercise}[defn]{Exercise}
\newtheorem{rmk}[defn]{Remark}
\newtheorem{question}[defn]{Question}
\theoremstyle{plain}
\newtheorem{thm}[defn]{Theorem}
\newtheorem{lem}[defn]{Lemma}
\newtheorem{prop}[defn]{Proposition}
\newtheorem{cor}[defn]{Corollary}
\def\C{\ensuremath{\mathbb{C}}}
\def\G{\ensuremath{\mathbb{G}}}
\def\H{\ensuremath{\mathbb{H}}}
\def\N{\ensuremath{\mathbb{N}}}
\def\P{\ensuremath{\mathbb{P}}}
\def\Q{\ensuremath{\mathbb{Q}}}
\def\R{\ensuremath{\mathbb{R}}}
\def\Z{\ensuremath{\mathbb{Z}}}
\def\AA{\ensuremath{\mathcal A}}
\def\BB{\ensuremath{\mathcal B}}
\def\CC{\ensuremath{\mathcal C}}
\def\DD{\ensuremath{\mathcal D}}
\def\EE{\ensuremath{\mathcal E}}
\def\FF{\ensuremath{\mathcal F}}
\def\HH{\ensuremath{\mathcal H}}
\def\II{\ensuremath{\mathcal I}}
\def\LL{\ensuremath{\mathcal L}}
\def\OO{\ensuremath{\mathcal O}}
\def\PP{\ensuremath{\mathcal P}}
\def\TT{\ensuremath{\mathcal T}}
\def\UU{\ensuremath{\mathcal U}}
\def\WW{\ensuremath{\mathcal W}}
\def\ZZ{\ensuremath{\mathcal Z}}
\def\Amp{\mathop{\mathrm{Amp}}\nolimits}
\def\Aut{\mathop{\mathrm{Aut}}\nolimits}
\def\ch{\mathop{\mathrm{ch}}\nolimits}
\def\Coh{\mathop{\mathrm{Coh}}\nolimits}
\def\Db{\mathop{\mathrm{D}^{\mathrm{b}}}\nolimits}
\def\dim{\mathop{\mathrm{dim}}\nolimits}
\def\ev{\mathop{\mathrm{ev}}\nolimits}
\def\inf{\mathop{\mathrm{inf}}\nolimits}
\def\End{\mathop{\mathrm{End}}\nolimits}
\def\ext{\mathop{\mathrm{ext}}\nolimits}
\def\Ext{\mathop{\mathrm{Ext}}\nolimits}
\def\gcd{\mathop{{\mathrm{gcd}}}\nolimits}
\def\GL{\mathop{\mathrm{GL}}\nolimits}
\def\Hom{\mathop{\mathrm{Hom}}\nolimits}
\def\id{\mathop{\mathrm{id}}\nolimits}
\def\im{\mathop{\mathrm{im}}\nolimits}
\def\Ker{\mathop{\mathrm{Ker}}\nolimits}
\def\Kom{\mathop{\mathrm{Kom}}\nolimits}
\def\Cok{\mathop{\mathrm{Cok}}\nolimits}
\def\mod{\mathop{\mathrm{mod}}\nolimits}
\def\min{\mathop{\mathrm{min}}\nolimits}
\def\num{\mathop{\mathrm{num}}\nolimits}
\def\NS{\mathop{\mathrm{NS}}\nolimits}
\def\perf{\mathop{\mathrm{perf}}\nolimits}
\def\PGL{\mathop{\mathrm{PGL}}}
\def\PSL{\mathop{\mathrm{PSL}}}
\def\Pic{\mathop{\mathrm{Pic}}\nolimits}
\def\Qcoh{\mathop{\mathrm{Qcoh}}\nolimits}
\def\rk{\mathop{\mathrm{rk}}}
\def\Sch{\mathop{\underline{\mathrm{Sch}}}}
\def\Set{\mathop{\underline{\mathrm{Set}}}}
\def\Sym{\mathop{\mathrm{Sym}}\nolimits}
\def\td{\mathop{\mathrm{td}}\nolimits}
\def\Stab{\mathop{\mathrm{Stab}}\nolimits}
\def\into{\ensuremath{\hookrightarrow}}
\def\onto{\ensuremath{\twoheadrightarrow}}
\begin{document}

\title{Lectures on Bridgeland Stability}

\author{Emanuele Macr\`i}
\address{Northeastern University, Department of Mathematics, 360 Huntington Avenue, Boston, MA 02115-5000, USA}
\curraddr{Universit\'e Paris-Sud, D\'epartement de Math\'ematiques, Rue Michel Magat, B\^at. 307, 91405 Orsay, France}
\email{emanuele.macri@u-psud.fr}
\urladdr{https://www.math.u-psud.fr/~macri/}

\author{Benjamin Schmidt}
\address{The Ohio State University, Department of Mathematics, 231 W 18th Avenue, Columbus, OH 43210-1174, USA}
\curraddr{Gottfried Wilhelm Leibniz Universit\"at Hannover, Institut f\"ur Algebraische Geometrie, Welfengarten 1, 30167 Hannover, Germany}
\email{bschmidt@math.uni-hannover.de}
\urladdr{https://sites.google.com/site/benjaminschmidtmath/}

\keywords{Stability conditions, Derived categories, Moduli Spaces, Curves and Surfaces}

\subjclass[2010]{14F05 (Primary); 14-02, 14D20, 14H60, 14J60, 18E30 (Secondary)}

\begin{abstract}
In these lecture notes we give an introduction to Bridgeland stability conditions on smooth complex projective varieties with a particular focus on the case of surfaces. This includes basic definitions of stability conditions on derived categories, basics on moduli spaces of stable objects and variation of stability. These notes originated from lecture series by the first author at the summer school \emph{Recent advances in algebraic and arithmetic geometry}, Siena, Italy, August 24-28, 2015 and at the school \emph{Moduli of Curves}, CIMAT, Guanajuato, Mexico, February 22 - March 4, 2016.
\end{abstract}

\maketitle
\tableofcontents

\section{Introduction}

Stability conditions in derived categories provide a framework for the study of moduli spaces of complexes of sheaves. They have been introduced in \cite{Bri07:stability_conditions}, with inspiration from work in string theory \cite{Dou02:mirror_symmetry}. It turns out that the theory has far further reach. A non exhaustive list of influenced areas is given by counting invariants, geometry of moduli spaces of sheaves, representation theory, homological mirror symmetry, and classical algebraic geometry.
This article will focus on the basic theory of Bridgeland stability on smooth projective varieties and give some applications to the geometry of moduli spaces sheaves. We pay particular attention to the case of complex surfaces.


\medskip

\noindent
{\bf Stability on curves.}
The theory starts with vector bundles on curves.
We give an overview of the classical theory in Section \ref{sec:curves}.
Let $C$ be a smooth projective curve.
In order to obtain a well behaved moduli space one has to restrict oneself to so called \emph{semistable} vector bundles. Any vector bundle $E$ has a \emph{slope} defined as $\mu(E) = \tfrac{d(E)}{r(E)}$, where $d(E)$ is the \emph{degree} and $r(E)$ is the \emph{rank}. It is called semistable if for all sub-bundles $F \subset E$ the inequality $\mu(F) \leq \mu(E)$ holds.

The key properties of this notion that one wants to generalize to higher dimensions are the following.
Let $\AA = \Coh(C)$ denote the category of coherent sheaves.
One can recast the information of rank and degree as an additive homomorphism
\begin{equation*}
Z: K_0(C) \to \C, \, \, v \mapsto -d(v) + \sqrt{-1}\, r(v),
\end{equation*}
where $K_0(C)$ denotes the Grothendieck group of $C$, generated by classes of vector bundles.
Then:
\begin{enumerate}
\item For any $E \in \AA$, we have $\Im Z(E) \geq 0$.
\item\label{enum:Intro1} If $\Im Z(E) = 0$ for some non trivial $E \in \AA$, then $\Re Z(E) < 0$.
\item For any $E \in \AA$ there is a filtration \[
0 = E_0 \subset E_1 \subset \ldots \subset E_{n-1} \subset E_n = E
\]
of objects $E_i \in \AA$ such that $A_i = E_i/E_{i-1}$ is semistable for all $i = 1, \ldots, n$ and $\mu(A_1) > \ldots > \mu(A_n)$.
\end{enumerate}


\medskip

\noindent
{\bf Higher dimensions.}
The first issue one has to deal with is that if one asks for the same properties to be true for coherent sheaves on a higher dimensional smooth projective variety $X$, it is not so hard to see that property \eqref{enum:Intro1} cannot be achieved (by any possible group homomorphism $Z$).
The key idea is then to change the category in which to define stability.
The bounded derived category of coherent sheaves $\Db(X)$ contains many full abelian subcategories with similar properties as $\Coh(X)$ known as \emph{hearts of bounded t-structures}.
A \emph{Bridgeland stability condition} on $\Db(X)$ is such a heart $\AA \subset \Db(X)$ together with an additive homomorphism $Z: K_0(X) \to \C$ satisfying the three properties above (together with a technical condition, called the \emph{support property}, which will be fundamental for the deformation properties below).
The precise definition is in Section \ref{sec:bridgeland_stability}.

Other classical generalizations of stability on curves such as slope stability or Gieseker stability (see Section \ref{sec:generalizations}) do not directly fit into the framework of Bridgeland stability. However, for most known constructions, their moduli spaces still appear as special cases of Bridgeland stable objects. We will explain this in the case of surfaces.


\medskip

\noindent
{\bf Bridgeland's deformation theorem.}
The main theorem in \cite{Bri07:stability_conditions} (see Theorem \ref{thm:BridgelandMain}) is that the set of stability conditions $\Stab(X)$ can be given the structure of a complex manifold in such a way that the map $(\AA, Z) \mapsto Z$ which forgets the heart is a local homeomorphism.
For general $X$ is not even known whether $\Stab(X) \neq \emptyset$. However, if $\dim X = 2$, the situation is much more well understood. In Section \ref{sec:surfaces} we construct stability conditions $\sigma_{\omega, B}$ for each choice of ample $\R$-divisor class $\omega$ and arbitrary $\R$-divisor class $B$. This construction originated in \cite{Bri08:stability_k3} in the case of K3 surfaces. Arcara and Bertram realized that the construction can be generalized to any surface by using the Bogomolov Inequality in \cite{AB13:k_trivial}.
The proof of the support property is in \cite{BM11:local_p2, BMT14:stability_threefolds, BMS16:abelian_threefolds}.
As a consequence, these stability conditions vary continuously in $\omega$ and $B$.


\medskip

\noindent
{\bf Moduli spaces.}
If we fix a numerical class $v$, it turns out that semistable objects with class $v$ vary nicely within $\Stab(X)$ due to \cite{Bri08:stability_k3}. More precisely, there is a locally finite wall and chamber structure such that the set of semistable objects with class $v$ is constant within each chamber.
In the case of surfaces, as mentioned before, there will be a chamber where Bridgeland semistable objects of class $v$ are exactly (twisted) Gieseker semistable sheaves.
The precise statement is Corollary \ref{cor:LargestWallExists}.
The first easy applications of the wall and chamber structure are in Section \ref{sec:applications}.

The next question is about moduli spaces.
Stability of sheaves on curves (or more generally, Gieseker stability on higher dimensional varieties) is associated to a GIT problem.
This guarantees that moduli spaces exist as projective schemes.
For Bridgeland stability, there is no natural GIT problem associated to it.
Hence, the question of existence and the fact that moduli spaces are indeed well-behaved is not clear in general.
Again, in the surface case, for the stability conditions $\sigma_{\omega,B}$, it is now known \cite{Tod08:K3Moduli} that moduli spaces exist as Artin stacks of finite-type over $\C$. In some particular surfaces, it is also known coarse moduli spaces parameterizing S-equivalence classes of semistable objects exist, and they are projective varieties.
We review this in Section \ref{subsec:ModuliSurfaces}.


\medskip

\noindent
{\bf Birational geometry of moduli spaces of sheaves on surfaces.}
The birational geometry of moduli spaces of sheaves on surfaces has been heavily studied by using wall crossing techniques in Bridgeland's theory. Typical question are what their nef and effective cones are and what the stable base locus decomposition of the effective cone is. The case of $\P^2$ was studied in many articles such as \cite{ABCH13:hilbert_schemes_p2, CHW17:effective_cones_p2, LZ18:stability_p2, LZ19:NewStabilityP2, Woo13:torsion_sheaves_p2}. The study of the abelian surfaces case started in \cite{MM13:stability_abelian_surfaces} and was completed in \cite{MYY14:stability_k_trivial_surfaces,YY14:stability_abelian_surfaces}. The case of K3 surfaces was handled in \cite{MYY14:stability_k_trivial_surfaces, BM14:projectivity, BM14:stability_k3}. Enriques surfaces were studied in \cite{Nue14:stability_enriques}. We will showcase some of the techniques in Section \ref{sec:nef} by explaining how to compute the nef cone of the Hilbert scheme of $n$ points for surfaces of Picard rank one if $n \gg 0$. In this generality it was done in \cite{BHLRSW15:nef_cones} and then later generalized to moduli of vector bundles with large discriminant in \cite{CH18:nef_cones}.

The above proofs are based on the so called \emph{Positivity Lemma} from \cite{BM14:projectivity} (see Theorem \ref{thm:positivity_lemma}).
Roughly, the idea is that to any Bridgeland stability condition $\sigma$ and to any family $\EE$ of $\sigma$-semistable objects parameterized by a proper scheme $S$, there is a nef divisor class $D_{\sigma,\EE}$ on $S$.
Moreover, if there exists a curve $C$ in $S$ such that $D_{\sigma,\EE}.C=0$, then all objects $\EE_{|X \times \{c\}}$ are S-equivalent, for all $c\in C$.
In examples, the divisor class will induce an ample divisor class on the moduli space of stable objects. Hence, we can use the Positivity Lemma in two ways: until we move in a chamber in the space of stability conditions, this gives us a subset of the ample cone of the moduli space.
Once we hit a wall, we have control when we hit also a boundary of the nef cone if we can find a curve of $\sigma$-stable objects in a chamber which becomes properly semistable on the wall.



\medskip

\noindent
{\bf Bridgeland stability for threefolds.}
As mentioned before, the original motivation for Bridgeland stability comes from string theory. In particular, it requires the construction of stability conditions on Calabi-Yau threefolds. It is an open problem to even find a single example of a stability condition on a simply connected projective Calabi-Yau threefold where skyscraper sheaves are stable (examples where they are semistable are in \cite{BMS16:abelian_threefolds}). Most successful attempts on threefolds trace back to a conjecture in \cite{BMT14:stability_threefolds}. In the surface case the construction is based on the classical Bogomolov inequality for Chern characters of semistable vector bundles. By analogy a generalized Bogomolov inequality for threefolds involving the third Chern character was conjectured in \cite{BMT14:stability_threefolds} that allows to construct Bridgeland stability. In \cite{Sch17:counterexample} it was shown that this conjectural inequality needs to be modified, since it does not hold for the blow up of $\P^3$ in a point.
There are though many cases in which the original inequality is true. The first case was $\P^3$ in \cite{BMT14:stability_threefolds, Mac14:conjecture_p3}. A similar argument was then successfully applied to the smooth quadric hypersurface in $\P^4$ in \cite{Sch14:conjecture_quadric}. The case of abelian threefolds was independently proved in \cite{MP15:conjecture_abelian_threefoldsI,MP16:conjecture_abelian_threefoldsII} and \cite{BMS16:abelian_threefolds}. Moreover, as pointed out in \cite{BMS16:abelian_threefolds}, this also implies the case of \'etale quotients of abelian threefolds and gives the existence of Bridgeland stability condition on orbifold quotients of abelian threefolds (this includes examples of Calabi-Yau threefolds which are simply-connected). The latest progress is the proof of the conjecture for all Fano threefolds of Picard rank one in \cite{Li19:conjecture_fano_threefold} and a proof of a modified version for all Fano threefolds independently in \cite{BMSZ17:stability_fano} and \cite{Piy17:Fano}.

Once stability conditions exist on threefolds, it is interesting to study moduli spaces therein and which geometric information one can get by varying stability.
For projective space this approach has led to first results in \cite{Sch15:stability_threefolds, Xia18:twisted_cubics, GHS16:elliptic_quartics}.


\medskip

\noindent
{\bf Structure of the notes.}
In Section \ref{sec:curves} we give a very light introduction to stability of vector bundles on curves. The chapter serves mainly as motivation and is logically independent of the remaining notes. Therefore, it can safely be skipped if the reader wishes to do so. In Section \ref{sec:generalizations}, we discuss classical generalizations of stability from curves to higher dimensional varieties. Moduli spaces appearing out of those are often times of classical interest and connecting these to Bridgeland stability is usually key. In Section \ref{sec:stability_abelian} and Section \ref{sec:bridgeland_stability} we give a full definition of what a Bridgeland stability condition is and prove or point out important basic properties. In Section \ref{sec:surfaces}, we demonstrate the construction of stability on smooth projective surfaces. Section \ref{sec:applications} and Section \ref{sec:nef} are about concrete examples. We show how to compute the nef cone for Hilbert schemes of points in some cases, how one can use Bridgeland stability to prove Kodaira vanishing on surfaces and discuss some further questions on possible applications to projective normality for surfaces. The last Section \ref{sec:P3} is about threefolds. We explain the construction of stability conditions on $\Db(\P^3)$. As an application we point out how Castelnuovo's classical genus bound for non degenerate curves turns out to be a simple consequence of the theory. In Appendix \ref{sec:derived_categories} we give some background on derived categories.


These notes contain plenty of exercises and we encourage the reader to do as many of them as possible. A lot of aspects of the theory that seem obscure and abstract at first turn out to be fairly simple in practice.


\medskip

\noindent
{\bf What is not in these notes.}
One of the main topics of current interest we do not cover in these notes is the ``local case'' (i.e., stability conditions on CY3 triangulated categories defined using quivers with potential; see, e.g., \cite{Bri06:non_compact, KS08:wall_crossing, BS15:quadratic_differentials}) or more generally stability conditions on Fukaya categories (see, e.g., \cite{DHKK14:dynamical_systems, HKK17:flat_surfaces, Joy15:conjectures_fukaya}).
Another fundamental topic is the connection with counting invariants;
for a survey we refer to \cite{Tod12:introduction_dt_theory, Tod12:stability_curve_counting, Tod14:icm}.
Connections to string theory are described for example in \cite{Asp05:d-branes, Bri06:icm, GMN13:hitchin}.
Connections to Representation Theory are in \cite{ABM15:stability}.

There is also a recent survey \cite{Hui17:stability_survey} focusing more on both the classical theory of semistable sheaves and concrete examples still involving Bridgeland stability. The note \cite{Bay18:BrillNoether} focuses instead on deep geometric applications of the theory (the classical Brill-Noether theorem for curves). The survey \cite{Huy11:intro_stability} focuses more on K3 surfaces and applications therein.
Finally, Bridgeland's deformation theorem is the topic of the excellent survey  \cite{Bay11:lectures_notes_stability}, with a short proof recently appearing in \cite{Bay16:short_proof}.

%
%



\medskip

\noindent
{\bf Notation.}
\begin{center}
  \begin{tabular}{ r l }
    $G_k$ & the $k$-vector space $G\otimes k$ for a field $k$ and abelian group $G$ \\
    $X$ & a smooth projective variety over $\C$ \\
    $\II_Z$ & the ideal sheaf of a closed subscheme $Z \subset X$ \\
    $\Db(X)$ & the bounded derived category of coherent sheaves on $X$ \\
    $\ch(E)$ & the Chern character of an object $E \in D^b(X)$ \\
    $K_0(X)$ & the Grothendieck group of $X$ \\
    $K_{\num}(X)$ & the numerical Grothendieck group of $X$ \\
    $\NS(X)$ & the N\'eron-Severi group of $X$ \\
    $N^1(X)$ & $\NS(X)_{\R}$ \\
    $\Amp(X)$ & the ample cone inside $N^1(X)$ \\
    $\Pic^d(C)$ & the Picard variety of lines bundles of degree $d$ on a smooth curve $C$
  \end{tabular}
\end{center}


\medskip

\noindent
{\bf Acknowledgments.}
We would very much like to thank Benjamin Bakker, Arend Bayer, Aaron Bertram, Izzet Coskun, Jack Huizenga, Daniel Huybrechts, Mart\'i Lahoz, Max Lieblich, Ciaran Meachan, Angel Rios Ortiz, Paolo Stellari, Yukinobu Toda, and Xiaolei Zhao for very useful discussions, many explanations on the topics of these notes, and for pointing out imprecisions in the first versions.
We are also grateful to Jack Huizenga for sharing a preliminary version of his survey article \cite{Hui17:stability_survey} with us and to the referee for very useful suggestions which improved the readability of these notes.
The first author would also like to thank very much the organizers of the two schools for the kind invitation and the excellent atmosphere, and the audience for many comments, critiques, and suggestions for improvement.
The second author would like to thank Northeastern University for the hospitality during the writing of this article. This work was partially supported by NSF grant DMS-1523496 and a Presidential Fellowship of the Ohio State University.


\section{Stability on Curves}\label{sec:curves}

The theory of Bridgeland stability conditions builds on the usual notions of stability for sheaves.
In this section we review the basic definitions and properties of stability for vector bundles on curves. Basic references for this section are \cite{New78:introduction_moduli,Ses80:vector_bundles_curves_book, Pot97:lectures_vector_bundles, HL10:moduli_sheaves}. Throughout this section $C$ will denote a smooth projective complex curve of genus $g \geq 0$.

\subsection{The projective line}
\label{subsec:P1}
The starting point is the projective line $\P^1$. In this case, vector bundles can be fully classified, by using the following decomposition theorem, which is generally attributed to Grothendieck, who proved it in modern language. It was known way before in the work of Dedekind-Weber, Birkhoff, Hilbert, among others (see \cite[Theorem 1.3.1]{HL10:moduli_sheaves}).

\begin{thm}
\label{thm:P1}
Let $E$ be a vector bundle on $\P^1$. Then there exist unique integers $a_1, \ldots, a_n \in \Z$ satisfying $a_1> \ldots >a_n$ and unique non-zero vector spaces $V_1, \ldots, V_n$ such that $E$ is isomorphic to
\[
E \cong \left(\OO_{\P^1}(a_1)\otimes_\C V_1\right) \oplus \ldots \oplus \left(\OO_{\P^1}(a_n) \otimes_C V_n\right).
\]
\end{thm}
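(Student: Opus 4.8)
The plan is to prove existence by induction on the rank $r=\rk(E)$, splitting off at each step a line subbundle of maximal possible degree, and then to deduce uniqueness from purely cohomological invariants. Three standard facts on $\P^1$ will do all the work: every line bundle is isomorphic to $\OO_{\P^1}(a)$ for a unique $a\in\Z$; one has $H^0(\OO_{\P^1}(m))=0$ precisely when $m<0$; and, by Serre duality ($\omega_{\P^1}=\OO_{\P^1}(-2)$), one has $H^1(\OO_{\P^1}(m))=0$ precisely when $m\geq -1$. The base case $r=1$ is immediate from the first fact, giving $n=1$.

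For the inductive step with $r\geq 2$, I would first locate the largest twist appearing in $E$. Consider the integers $k$ with $\Hom(\OO_{\P^1}(k),E)=H^0(E(-k))\neq 0$. This set is nonempty (for $k\ll 0$ the bundle $E(-k)$ is globally generated) and bounded above (a nonzero map $\OO_{\P^1}(k)\to E$ is injective, and the degrees of line subbundles of a fixed bundle on a curve are bounded above), so it has a maximum $a_1$. Fixing a nonzero $\varphi\colon\OO_{\P^1}(a_1)\to E$, I would check that $\varphi$ is injective with saturated image: otherwise its saturation would be a line subbundle $\OO_{\P^1}(b)\hookrightarrow E$ with $b>a_1$, contradicting maximality. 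Hence $Q:=\cok(\varphi)$ is locally free of rank $r-1$, and there is a short exact sequence
\[
0 \to \OO_{\P^1}(a_1) \to E \to Q \to 0.
\]

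The heart of the argument is to split this sequence. By induction $Q\cong\bigoplus_j \OO_{\P^1}(b_j)$. Twisting the sequence by $\OO_{\P^1}(-a_1-1)$ and feeding it into the long exact cohomology sequence, the vanishing $H^0(\OO_{\P^1}(-1))=H^1(\OO_{\P^1}(-1))=0$ together with $H^0(E(-a_1-1))=0$ (maximality of $a_1$ once more) forces $H^0(Q(-a_1-1))=0$; since $Q$ is a sum of line bundles this says exactly that $b_j\leq a_1$ for all $j$. Therefore every exponent in $\Ext^1(Q,\OO_{\P^1}(a_1))=\bigoplus_j H^1(\OO_{\P^1}(a_1-b_j))$ satisfies $a_1-b_j\geq 0\geq -1$, so this group vanishes and the sequence splits: $E\cong\OO_{\P^1}(a_1)\oplus Q$. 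Substituting the decomposition of $Q$ and collecting isomorphic summands produces the desired form, with $a_1$ the top exponent. I expect this splitting step to be the main obstacle, since it is precisely here that maximality of $a_1$ must be used a second time and that the special cohomology of $\P^1$ is indispensable; on a curve of higher genus the vanishing $H^1(\OO(m))=0$ for $m\geq -1$ fails, and indeed the theorem itself fails.

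Finally, for uniqueness I would read the data off the invariant $k\mapsto \dim\Hom(\OO_{\P^1}(k),E)=\sum_i (\dim V_i)\,h^0(\OO_{\P^1}(a_i-k))$, which depends only on $E$. Since $h^0(\OO_{\P^1}(a_i-k))=\max(a_i-k+1,0)$, this is a convex piecewise-linear function of $k$ whose slopes change exactly at the integers $a_i$; taking second differences in $k$ recovers each $a_i$ together with the multiplicity $\dim V_i$. This determines the integers $a_i$ uniquely and the vector spaces $V_i$ up to isomorphism, completing the argument.
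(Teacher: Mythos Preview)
Your proof is correct and follows essentially the same approach as the paper: induction on the rank, split off a line subbundle of maximal degree $a_1$, use the inductive hypothesis on the quotient, verify $b_j\leq a_1$, and conclude that the extension splits via the vanishing of $\Ext^1(Q,\OO_{\P^1}(a_1))$. The only cosmetic differences are that the paper proves $b_j\leq a_1$ by lifting a hypothetical map $\OO_{\P^1}(a_1+1)\to Q$ back to $E$ (using $\Ext^1(\OO_{\P^1}(a_1+1),\OO_{\P^1}(a_1))=0$) rather than by your twist-and-long-exact-sequence argument, and the paper's uniqueness is stated more tersely via $\Hom(\OO_{\P^1}(a),\OO_{\P^1}(b))=0$ for $a>b$, whereas your second-difference computation makes the same idea fully explicit.
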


\begin{proof}
For the existence of the decomposition we proceed by induction on the rank $r(E)$. If $r(E)=1$ there is nothing to prove. Hence, we assume $r(E)>1$.

By Serre duality and Serre vanishing, for all $a \gg 0$ we have
\[
\Hom(\OO_{\P^1}(a), E) = H^1(\P^1, E^\vee(a-2))^\vee = 0.
\]
We pick the largest integer $a \in \Z$ such that $\Hom(\OO_{\P^1}(a), E) \neq 0$ and a non-zero morphism $\phi \in \Hom(\OO_{\P^1}(a), E)$.
Since $\OO_{\P^1}(a)$ is torsion-free of rank $1$, $\phi$ is injective.
Consider the exact sequence
\[
0 \to \OO_{\P^1}(a) \xrightarrow{\phi} E \to F=\Cok(\phi) \to 0.
\]

We claim that $F$ is a vector bundle of rank $r(E)-1$. Indeed, by Exercise \ref{ex:TorsionExactSequence} below, if this is not the case, there is a subsheaf $T_F \into F$ supported in dimension $0$. In particular, we have a morphism from the skyscraper sheaf $\C(x) \into F$. But this gives a non-zero map $\OO_{\P^1}(a+1) \to E$, contradicting the maximality of $a$.

By induction $F$ splits as a direct sum 
\[
F \cong \bigoplus_j \OO_{\P^1}(b_j)^{\oplus r_j}.
\]
The second claim is that $b_j \leq a$, for all $j$.
If not, there is a non-zero morphism $\OO_{\P^1}(a+1) \into F$.
Since $\Ext^1(\OO_{\P^1}(a+1),\OO_{\P^1}(a)) = H^1(\P^1,\OO_{\P^1}(-1))=0$, this morphism lifts to a non-zero map $\OO_{\P^1}(a+1)\to E$, contradicting again the maximality of $a$.

Finally, we obtain a decomposition of $E$ as direct sum of line bundles since
\[
\Ext^1(\oplus_j \OO_{\P^1}(b_j)^{\oplus r_j},\OO_{\P^1}(a)) = \oplus_j H^0(\P^1,\OO_{\P^1}(b_j-a-2))^{\oplus r_j}=0.
\]

The uniqueness now follows from
\[
\Hom(\OO_{\P^1}(a),\OO_{\P^1}(b))=0
\]
for all $a > b$.
\end{proof}

In higher genus, Theorem \ref{thm:P1} fails, but some of its features still hold. The direct sum decomposition will be replaced by a filtration (the \emph{Harder-Narasimhan filtration}) and the blocks $\OO_{\P^1}(a)\otimes_\C V$ by \emph{semistable} sheaves. The ordered sequence $a_1>\ldots>a_n$ will be replaced by an analogously ordered sequence of \emph{slopes}. Semistable sheaves (of fixed rank and degree) can then be classified as points of a projective variety, the \emph{moduli space}. Finally, the uniqueness part of the proof will follow by an analogous vanishing for morphisms of semistable sheaves.

\begin{exercise}
\label{ex:TorsionExactSequence}
Show that any coherent sheaf $E$ on a smooth projective curve $C$ fits into a unique canonical exact sequence
\[ 
0 \to T_E \to E \to F_E \to 0,
\]
where $T_E$ is a torsion sheaf supported and $F_E$ is a vector bundle. Show additionally that this sequence splits (non-canonically).
\end{exercise}

\subsection{Stability}\label{subsec:StabilityCurves}
We start with the basic definition of slope stability.

\begin{defn}\label{def:SlopeCurves}
\begin{enumerate}
\item The \emph{degree} $d(E)$ of a vector bundle $E$ on $C$ is defined to be the degree of the line bundle $\bigwedge^{r(E)} E$.
\item The \emph{degree} $d(T)$ for a torsion sheaf $T$ on $C$ is defined to be the length of its scheme theoretic support.
\item The \emph{rank} of an arbitrary coherent sheaf $E$ on $C$ is defined as $r(E)=r(F_E)$, while its
\emph{degree} is defined as $d(E) = d(T_E) + d(F_E)$.
\item The \emph{slope} of a coherent sheaf $E$ on $C$ is defined as
\[
\mu(E) = \frac{d(E)}{r(E)},
\]
where dividing by $0$ is interpreted as $+\infty$.
\item A coherent sheaf $E$ on $C$ is called \emph{(semi)stable} if for any proper non-trivial subsheaf $F \subset E$ the inequality $\mu(F) < (\leq) \mu(E)$ holds.
\end{enumerate}
\end{defn}

The terms in the previous definition are often times only defined for vector bundles. The reason for this is the following exercise.

\begin{exercise}
\begin{enumerate}
\item Show that the degree is additive in short exact sequences, i.e., for any short exact sequence
\[
0 \to F \to E \to G \to 0
\]
in $\Coh(X)$ the equality $d(E) = d(F) + d(G)$ holds.
\item If $E \in \Coh(C)$ is semistable, then it is either a vector bundle or a torsion sheaf.
\item A vector bundle $E$ on $C$ is (semi)stable if and only if for all non trivial subbundles $F \subset E$ with $r(F) < r(E)$ the inequality $\mu(F) < (\leq) \mu(E)$ holds.
\end{enumerate}
\end{exercise}

\begin{exercise}\label{ex:EllipticCurve}
Let $g=1$ and let $p\in C$ be a point.
Then
\[
\Ext^1(\OO_C,\OO_C)=\C \text{ and } \Ext^1(\OO_C(p),\OO_C)\cong \C.
\]
Hence, we have two non-trivial extensions:
\begin{align*}
& 0 \to \OO_C \to V_0 \to \OO_C \to 0\\
& 0 \to \OO_C \to V_1 \to \OO_C(p) \to 0
\end{align*}
Show that $V_0$ is semistable but not stable and that $V_1$ is stable.
\end{exercise}

A useful fact to notice is that whenever a non-zero sheaf $E \in \Coh(C)$ has rank $0$, its degree is strictly positive. This is one of the key properties we want to generalize to higher dimensions. It also turns out to be useful for proving the following result, which generalizes Theorem \ref{thm:P1} to any curve.

\begin{thm}[\cite{HN74:hn_filtration}]
\label{thm:HNcurves}
Let $E$ be a non-zero coherent sheaf on $C$.
Then there is a unique filtration (called \emph{Harder-Narasimhan filtration})
\[
0 = E_0 \subset E_1 \subset \ldots \subset E_{n-1} \subset E_n = E
\]
of coherent sheaves such that $A_i = E_i/E_{i-1}$ is semistable for all $i = 1, \ldots, n$ and $\mu(A_1) > \ldots > \mu(A_n)$.
\end{thm}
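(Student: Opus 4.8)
The plan is to follow the classical route: first isolate a canonical \emph{maximal destabilizing subsheaf}, then peel it off and induct. Throughout I would make repeated use of the observation recorded just before the statement, that a nonzero sheaf of rank $0$ has strictly positive degree; in particular every torsion sheaf is semistable of slope $+\infty$, and a torsion sheaf of degree $0$ must vanish. This lets me handle torsion uniformly: if $E$ has nonzero torsion subsheaf $T_E$, then $T_E$ is forced to be the first step $E_1$ (slope $+\infty$), and by Exercise \ref{ex:TorsionExactSequence} the quotient $E/T_E$ is a vector bundle, so after this first step I may assume $E$ is torsion-free.

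The technical heart, and the step I expect to be the main obstacle, is a \emph{boundedness} statement: for a fixed vector bundle $E$ the slopes of all nonzero subsheaves are bounded above, and the supremum $\mu_{\max}(E) := \sup\{\mu(F) : 0 \neq F \subseteq E\}$ is attained. To prove this I would reduce in two steps. Replacing a subsheaf $F$ by its saturation (the preimage in $E$ of the torsion of $E/F$) only increases the slope and makes the quotient torsion-free, so it suffices to bound subbundles; and passing to $\bigwedge^{s} F \subseteq \bigwedge^{s} E$ reduces the rank-$s$ case to bounding degrees of line subbundles $L \hookrightarrow V$ of the fixed bundle $V = \bigwedge^{s} E$. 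For this last point I would invoke Riemann--Roch on $C$: a nonzero map from a line bundle into a torsion-free sheaf is injective, hence induces an injection $H^0(L) \hookrightarrow H^0(V)$, and once $d(L) > 2g-2$ one has $h^0(L) = d(L) + 1 - g$, forcing $d(L) \le h^0(V) + g - 1$. Since there are finitely many ranks and degrees are integer-valued, the supremum is attained.

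With boundedness in hand I would construct $E_1$ as a subsheaf of maximal rank among those attaining $\mu_{\max}(E)$. It is automatically semistable, since any $G \subseteq E_1$ satisfies $\mu(G) \le \mu_{\max}(E) = \mu(E_1)$. To see that $E_1$ contains \emph{every} subsheaf $F$ with $\mu(F) = \mu_{\max}(E)$, I would compare $E_1$ and $F$ via
\[
0 \to E_1 \cap F \to E_1 \oplus F \to E_1 + F \to 0 ;
\]
additivity of $d$ and $r$ together with $\mu(E_1 \cap F) \le \mu_{\max}(E)$ gives $\mu(E_1 + F) \ge \mu_{\max}(E)$, hence equality, and then maximality of $r(E_1)$ forces $(E_1 + F)/E_1$ to be torsion of degree $0$, so it vanishes and $F \subseteq E_1$. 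This pins down $E_1$ uniquely.

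Finally I would assemble the filtration and prove uniqueness. A preimage computation shows $\mu_{\max}(E/E_1) < \mu(E_1)$: any nonzero $\bar F \subseteq E/E_1$ with $\mu(\bar F) \ge \mu(E_1)$ would pull back to a subsheaf of $E$ of slope $\ge \mu_{\max}(E)$ strictly larger than $E_1$, contradicting maximality. Setting $A_1 = E_1$ and iterating on $E/E_1$, whose rank is strictly smaller once torsion has been separated, produces the filtration with strictly decreasing slopes in finitely many steps. For uniqueness I would use the standard vanishing: if $F, G$ are semistable with $\mu(F) > \mu(G)$ then $\Hom(F,G) = 0$, since the image of any map is simultaneously a quotient of $F$ and a subsheaf of $G$. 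Given any Harder--Narasimhan filtration $0 = E_0' \subset \ldots \subset E_m' = E$, applying this vanishing to $E_1 \hookrightarrow E \twoheadrightarrow E/E_1'$ shows $E_1 \subseteq E_1'$; semistability of $E_1'$ then forces $\mu(E_1') = \mu_{\max}(E)$, so the maximality property of $E_1$ gives the reverse inclusion and hence $E_1 = E_1'$. Induction on $E/E_1$ then yields uniqueness of the entire filtration.
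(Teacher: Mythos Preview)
Your argument is correct and complete; the boundedness step via saturation, exterior powers, and Riemann--Roch is the standard one, and the inductive construction of the maximal destabilizing subsheaf together with the $\Hom$-vanishing uniqueness argument is clean.

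However, the paper takes a genuinely different route. It does not prove Theorem~\ref{thm:HNcurves} directly at all; instead it defers to Proposition~\ref{prop:hn_exists}, which establishes existence of Harder--Narasimhan filtrations for any noetherian abelian category equipped with a stability function whose imaginary part has discrete image. The method there is the \emph{Harder--Narasimhan polygon}: one shows (Lemma~\ref{lem:bounded_degree}) that the degrees of all subobjects are bounded, so that the convex hull of $\{Z(F): F\subset E\}$ cut off by the line through $0$ and $Z(E)$ is a genuine polygon; its extremal vertices $v_0,\dots,v_n$ are then realized by subobjects $F_i$, and convexity forces $F_{i-1}\subset F_i$, semistability of the graded pieces, and the strict ordering of slopes all at once. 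Uniqueness is relegated to Exercise~\ref{exercise:hn_filtration}.

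Your approach is the classical ``peel off the maximal destabilizer and recurse'' argument, tailored to curves (Riemann--Roch enters in the boundedness step); it is more elementary and gives the canonical subsheaf $E_1$ explicitly. The paper's polygon argument is more general (it applies verbatim to the tilted hearts in Section~\ref{sec:surfaces}) and produces the entire filtration in a single geometric step, at the cost of the auxiliary Lemma~\ref{lem:bounded_degree}. Either proof is perfectly acceptable here.
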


We will give a proof of this statement in a more general setting in Proposition \ref{prop:hn_exists}. In the case of $\P^1$ semistable vector bundles are simply direct sums of line bundles, and stable vector bundles are simply line bundles. Vector bundles on elliptic curves can also be classified (see \cite{Ati57:vector_bundles_elliptic_curves} for the original source and \cite{Pol03:abelian_varieties} for a modern proof):

\begin{ex}
Let $g = 1$, fix a pair $(r,d) \in \Z_{\geq 1} \times \Z$, let $E$ be a vector bundle with $r(E)=r$ and $d(E) = d$. Then
\begin{itemize}
\item $\gcd(r,d)=1$: $E$ is semistable if and only if $E$ is stable if and only if $E$ is indecomposable (i.e., it cannot be decomposed as a non-trivial direct sum of vector bundles) if and only if $E$ is simple (i.e., $\End(E)=\C$). Moreover, all such $E$ are of the form $L \otimes E_0$, where $L\in\Pic^0(C)$ and $E_0$ can be constructed as an iterated extension similarly as in Exercise \ref{ex:EllipticCurve}.
\item $\gcd(r,d)\neq1$: There exist no stable vector bundles. Semistable vector bundles can be classified via torsion sheaves. More precisely, there is a one-to-one correspondence between semistable vector bundles of rank $r$ and degree $d$ and torsion sheaves of length $\gcd(r,d)$. Under this correspondence, indecomposable vector bundles are mapped onto torsion sheaves topologically supported on a point.
\end{itemize}
\end{ex}

\begin{rmk}\label{rmk:JHfiltrations}
Let $E$ be a semistable coherent sheaf on $C$.
Then there is a (non-unique!) filtration (called the \emph{Jordan-H\"older filtration})
\[
0 = E_0 \subset E_1 \subset \ldots \subset E_{n-1} \subset E_n = E
\]
of coherent sheaves such that $A_i = E_i/E_{i-1}$ is stable for all $i = 1, \ldots, n$ and their slopes are all equal. The factors $A_i$ are unique up to reordering. We say that two semistable sheaves are \emph{S-equivalent} if they have the same stable factors up to reordering. More abstractly, this follows from the fact that the category of semistable sheaves on $C$ with fixed slope is an abelian category of finite length (we will return to this in Exercise \ref{exercise:StableObjectsBridgelandStability}). Simple objects in that category agree with stable sheaves.
\end{rmk}

\subsection{Moduli spaces}
\label{subsec:ModuliSpacesCurves}
In this section, let $g \geq 2$ and denote by $C$ a curve of genus $g$. We fix integers $r \in \Z_{\geq 1}$, $d \in \Z$, and a line bundle $L \in \Pic^d(C)$ of degree $d$.

\begin{defn}
\label{def:functors}
We define two functors $\Sch_{\C} \to \Set$ as follows:
\begin{align*}
\underline{M}_C(r,d) (B) & := \left\{\EE \text{ vector bundle on } C\times B\,:\, \forall b\in B,\, \begin{array}{l} \EE|_{C\times\{b\}} \text{ is semistable}\\ r(\EE|_{C\times\{b\}})=r\\
d(\EE|_{C\times\{b\}})=d
\end{array} \right\} \, /\, \cong, \\
\underline{M}_C(r,L) (B) & := \left\{\EE \in \underline{M}_C(r,d) (B) \,:\, \det(\EE|_{C\times\{b\}})\cong L \right\},
\end{align*}
where $B$ is a scheme (locally) of finite-type over $\C$.
\end{defn}

We will denote the (open) subfunctors consisting of stable vector bundles by $\underline{M}^s_C(r,d)$ and $\underline{M}^s_C(r,d)$. The following result summarizes the work of many people, including Dr\'ezet, Mumford, Narasimhan, Ramanan, Seshadri, among others (see \cite{DN89:picard_group_semistable_curves, New78:introduction_moduli, Ses80:vector_bundles_curves_book, Pot97:lectures_vector_bundles, HL10:moduli_sheaves}).
It gives a further motivation to study stable vector bundles, besides classification reasons: the moduli spaces are interesting algebraic varieties.

\begin{thm}
\label{thm:MainSemistableBundlesCurves}
(i) There exists a coarse moduli space $M_C(r,d)$ for the functor $\underline{M}_C(r,d)$ parameterizing S-equivalence classes of semistable vector bundles on $C$. It has the following properties:
\begin{itemize}
\item It is non-empty.
\item It is an integral, normal, factorial, projective variety over $\C$ of dimension $r^2(g-1)+1$.
\item It has a non-empty smooth open subset $M^s_C(r,d)$ parameterizing stable vector bundles.
\item Except when $g=2$, $r=2$, and $d$ is even \footnote{See Example \ref{ex:g2d0}.}, $M_C(r,d)\setminus M^s_C(r,d)$ consists of all singular points of $M_C(r,d)$, and has codimension at least $2$.
\item $\Pic(M_C(r, d)) \cong \Pic(\Pic^d(C)) \times \Z$.
\item If $\gcd(r,d)=1$, then $M_C(r,d)=M^s_C(r,d)$ is a fine moduli space\footnote{To be precise, we need to modify the equivalence relation for $\underline{M}^s_C(r,d)(B)$: $\EE\sim\EE'$ if and only if $\EE \cong \EE' \otimes p_B^*\LL$, where $\LL\in\Pic(B)$.}.
\end{itemize}

(ii) A similar statement holds for $\underline{M}_C(r,L)$\footnote{By using the morphism $\det\colon M_C(r,d)\to \Pic^d(C)$, and observing that $M_C(r,L)=\det^{-1}(L)$.}, with the following additional properties:
\begin{itemize}
\item Its dimension is $(r^2-1)(g-1)$.
\item $\Pic(M_C(r,L)=\Z \cdot \theta$, where $\theta$ is ample.
\item If $u:=\gcd(r,d)$, then the canonical line bundle is $K_{M_C(r,L)}=-2u\theta$.
\end{itemize}
\end{thm}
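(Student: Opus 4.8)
The plan is to construct $M_C(r,d)$ as a GIT quotient and then read off its geometry from the quotient presentation together with deformation theory. First I would establish boundedness: the family of semistable bundles with fixed $(r,d)$ is bounded, so after twisting by $\OO_C(n)$ for $n \gg 0$ every such $E$ satisfies $H^1(C, E(n)) = 0$ and is globally generated, hence appears as a quotient $\OO_C(-n)^{\oplus N} \onto E$ with $N = \chi(E(n))$. This realizes all semistable bundles as points of a single Quot scheme $\Quot$ carrying a natural $\SL(N)$-action whose orbits are isomorphism classes. The central input (Gieseker, Maruyama, Simpson) is that GIT-(semi)stability on $\Quot$ with respect to a suitable linearization matches slope-(semi)stability of the quotient sheaf; granting this, the GIT quotient $M_C(r,d) := \Quot^{\mathrm{ss}}/\!\!/\SL(N)$ exists as a projective variety, and since closed orbits correspond to polystable bundles, two semistable bundles are identified in the quotient exactly when they are S-equivalent in the sense of Remark \ref{rmk:JHfiltrations}. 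Projectivity is automatic because $\Quot$ is projective.

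Next I would analyze the local structure via deformation theory. For a stable — hence simple — bundle $E$ one has $\Hom(E,E) = \C$, and since $C$ is a curve $\Ext^2(E,E) = H^2(C, \lEnd(E)) = 0$, so the deformation theory is unobstructed and $M^s_C(r,d)$ is smooth with tangent space $\Ext^1(E,E) = H^1(C, \lEnd(E))$. Riemann-Roch gives $\chi(\lEnd(E)) = r^2(1-g)$ because $\lEnd(E)$ has rank $r^2$ and degree $0$, whence
\[
\dim \Ext^1(E,E) = h^0(\lEnd(E)) - \chi(\lEnd(E)) = 1 + r^2(g-1),
\]
which is the asserted dimension. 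Non-emptiness of the stable locus for $g \geq 2$ I would obtain by an explicit construction (iterated extensions in the spirit of Exercise \ref{ex:EllipticCurve}, or elementary modifications) producing at least one stable bundle in each class. The description of the singular locus as $M_C(r,d) \setminus M^s_C(r,d)$ follows because at a properly polystable point the extra endomorphisms force a quotient singularity, and the codimension bound comes from a dimension count on the strata of bundles with fixed Jordan-H\"older type.

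The global properties require more. Irreducibility I would deduce from the irreducibility of $\Quot^{\mathrm{ss}}$ (proved by a deformation argument showing every semistable bundle degenerates within a single component), and normality from the fact that GIT quotients of smooth varieties by reductive groups are normal; together with reducedness this gives integrality. Factoriality and the computation $\Pic(M_C(r,d)) \cong \Pic(\Pic^d(C)) \times \Z$ are the deep results of Dr\'ezet-Narasimhan: the determinant morphism $\det\colon M_C(r,d) \to \Pic^d(C)$ supplies the first factor by pullback, while the extra $\Z$ is generated by a determinant-of-cohomology (theta) line bundle; one verifies these generate freely by computing on the smooth locus and extending across the codimension $\geq 2$ boundary. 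When $\gcd(r,d)=1$ there are no strictly semistable bundles, so $M_C(r,d)=M^s_C(r,d)$ is smooth and the universal quotient on $\Quot$ descends (up to a twist by a line bundle from the base) to a Poincar\'e bundle, making the space fine.

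For the fixed-determinant statement (ii) I would restrict the determinant morphism: $M_C(r,L) = \det^{-1}(L)$, so its dimension is $r^2(g-1)+1 - g = (r^2-1)(g-1)$, and its Picard group loses the $\Pic(\Pic^d(C))$ factor, leaving $\Z\cdot\theta$ with $\theta$ ample (the restriction of the theta bundle). The canonical bundle formula $K_{M_C(r,L)} = -2u\theta$ with $u = \gcd(r,d)$ I expect to be the main obstacle: it requires computing the determinant of the virtual relative tangent complex of the moduli problem via Grothendieck-Riemann-Roch and identifying the resulting class as a precise multiple of $\theta$. The hard part throughout is not the GIT construction but these two arithmetic-geometric computations — the Picard group and the anticanonical class — which both hinge on a careful analysis of determinant line bundles and the exact normalization of $\theta$.
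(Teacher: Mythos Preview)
Your proposal is correct and follows essentially the same approach as the paper: boundedness via twisting and Serre duality, realization of all semistable bundles as quotients in a single Quot scheme, and GIT for the quotient, with the identification of GIT-(semi)stability with slope-(semi)stability. The paper's own proof is only a three-bullet sketch (boundedness, Quot scheme, GIT) and defers everything else---the dimension count, smoothness of the stable locus, irreducibility and normality, the Dr\'ezet--Narasimhan Picard group computation, the canonical bundle formula---to the references, so your expanded outline in fact supplies more than the paper does; the only cosmetic discrepancy is that the paper quotients by $\PGL(\gamma)$ rather than $\SL(N)$.
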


\begin{proof}[Ideas of the proof]
$\bullet$ Boundedness: Fix a very ample line bundle $\OO_C(1)$ on $C$.
Then, for any semistable vector bundle $E$ on $C$, Serre duality implies
\[
H^1(C, E\otimes \OO_C(m-1)) \cong \Hom(E, \OO_C(-m+1)\otimes K_C)=0,
\]
for all $m\geq m_0$, where $m_0$ only depends on $\mu(E)$. Hence, by replacing $E$ with $E\otimes\OO_C(m_0)$, we obtain a surjective map
\[
\OO_C^{\oplus \chi(C,E)} \onto E.
\]

$\bullet$ Quot scheme: Let $\gamma = \chi(C,E)$, where $E$ is as in the previous step.
We consider the subscheme $R$ of the Quot scheme on $C$ parameterizing quotients $\phi: \OO_C^{\oplus \gamma} \onto U$, where $U$ is locally-free and $\phi$ induces an isomorphism  $H^0(C,\OO_C^{\oplus \gamma})\cong H^0(C,U)$. Then it can be proved that $R$ is smooth and integral. Moreover, the group $\PGL(\gamma)$ acts on $R$.

$\bullet$ Geometric Invariant Theory: We consider the set of semistable (resp., stable) points $R^{ss}$ (resp., $R^s$) with respect to the group action given by $\PGL(\gamma)$ (and a certain multiple of the natural polarization on $R$). Then $M_C(r,d) = R^{ss} \sslash \PGL(\gamma)$ and $M_C^s(r,d) = R^{s} / \PGL(\gamma)$.
\end{proof}
 
\begin{rmk}
While the construction of the line bundle $\theta$ in Theorem \ref{thm:MainSemistableBundlesCurves} and its ampleness can be directly obtained from GIT, we can give a more explicit description by using purely homological algebra techniques; see Exercise \ref{exer:ThetaCurves} below.

In fact, the ``positivity'' of $\theta$ can be split into two statements: the first is the property of $\theta$ to be strictly nef (i.e., strictly positive when intersecting all curves in $M_C(r,L)$); the second is semi-ampleness (or, in this case, to show that $\Pic(M_C(r,L)\cong\Z$). The second part is more subtle and it requires a deeper understanding of stable vector bundles or of the geometry of $M_C(r,L)$. But the first one turns out to be a fairly general concept, as we will see in Section \ref{sec:nef} in the Positivity Lemma.
\end{rmk}

The following exercise foreshadows how we will construct nef divisors in the case of Bridgeland stability in Section \ref{sec:nef}.

\begin{exercise}\label{exer:ThetaCurves}
Let $Z: \Z^{\oplus 2} \to \C$ be the group homomorphism given by
\[
Z(r,d) = - d + \sqrt{-1} r.
\]
Let $r_0,d_0\in\Z$ be such that $r_0\geq1$ and $\gcd(r_0,d_0)=1$, and let $L_0 \in \Pic^{d_0}(C)$.
We consider the moduli space $M:=M_C(r_0,L_0)$ and we let $\EE_0$ be a universal family on $C\times M$.

For an integral curve $\gamma \subset M$, we define
\[
\ell_Z . \gamma := - \Im \frac{Z(r(\Phi_{\EE_0}(\OO_\gamma)),d(\Phi_{\EE_0}(\OO_\gamma)))}{Z(r_0,d_0)} \in\R,
\]
where $\OO_\gamma\in\Coh(M)$ is the structure sheaf of $\gamma$ in $M$, $\Phi_{\EE_0}:\Db(M)\to \Db(C)$, $\Phi_{\EE_0}(-):= (p_C)_*(\EE_0\otimes p_M^*(-))$ is the Fourier-Mukai transform with kernel $\EE_0$, and $\Im$ denotes the imaginary part of a complex number.
\begin{enumerate}
\item Show that by extending $\ell_Z$ by linearity we get the numerical class of a real divisor $\ell_Z\in N^1(M) = N_1(M)^\vee$.
\item Show that
\[
\Im \frac{Z(r(\Phi_{\EE_0}(\OO_\gamma)),d(\Phi_{\EE_0}(\OO_\gamma)))}{Z(r_0,d_0)} = \Im \frac{Z(r(\Phi_{\EE_0}(\OO_\gamma(m))),d(\Phi_{\EE_0}(\OO_\gamma(m))))}{Z(r_0,d_0)},
\]
for any $m\in\Z$ and for any ample line bundle $\OO_\gamma(1)$ on $\gamma$.
\item Show that $\Phi_{\EE_0}(\OO_\gamma(m))$ is a sheaf, for $m\gg0$.
\item By using the existence of relative Harder-Narasimhan filtration (see, e.g., \cite[Theorem 2.3.2]{HL10:moduli_sheaves}), show that, for $m \gg 0$,
\[
\mu(\Phi_{\EE_0}(\OO_\gamma(m))) \leq \frac{d_0}{r_0}.
\]
\item Deduce that $\ell_Z$ is nef, namely $\ell_Z.\gamma\geq0$ for all $\gamma\subset M$ integral curve.
\end{enumerate}
\end{exercise}

\subsection{Equivalent definitions for curves}\label{subsec:EquivalentDefCurves}

In this section we briefly mention other equivalent notions of stability for curves.

Let $C$ be a curve.
We have four equivalent definition of stability for vector bundles:
\begin{enumerate}
\item Slope stability (Definition \ref{def:SlopeCurves}); this is the easiest to handle.
\item GIT stability (sketched in the proof of Theorem \ref{thm:MainSemistableBundlesCurves}); this is useful in the construction of moduli spaces as algebraic varieties.
\item Faltings-Seshadri definition (see below); this is useful to find sections of line bundles on moduli spaces, and more generally to construct moduli spaces without using GIT.
\item Differential geometry definition (see below); this is the easiest for proving results and deep properties of stable objects.
\end{enumerate}

\subsubsection*{Faltings' approach}
The main result is the following.

\begin{thm}[Faltings-Seshadri]
\label{thm:Faltings}
Let $E$ be a vector bundle on a curve $C$. Then E is semistable if and only if there exists a vector bundle $F$ such that $\Hom(F,E)=\Ext^1(F,E)=0$.
\end{thm}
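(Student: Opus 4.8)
The plan is to reformulate the orthogonality condition and then treat the two implications separately, since one is elementary while the other is the real content. Observe that $\Hom(F,E)=\Ext^1(F,E)=0$ is the same as the vanishing $\RHom(F,E)=0$ of all cohomology $H^\bullet(C,F^\vee\otimes E)$. In particular it forces $\chi(F,E)=0$, and Riemann--Roch on $C$ gives
\[
\chi(F,E)=r(F)\,d(E)-r(E)\,d(F)+r(F)r(E)(1-g),
\]
so that $\chi(F,E)=0$ is equivalent to the slope relation $\mu(F)=\mu(E)-(g-1)$. Thus every orthogonal bundle $F$ automatically carries this slope, and this single observation drives both directions.

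For the easy implication ``$F$ exists $\Rightarrow E$ semistable'' I would argue directly with quotients. It suffices to test locally free quotients $E\twoheadrightarrow Q$, since an arbitrary subsheaf may be replaced by its saturation without decreasing its slope, and then the quotient becomes locally free. Writing $E'$ for the kernel and applying $\Hom(F,-)$ to $0\to E'\to E\to Q\to 0$, the fact that $C$ has cohomological dimension $1$ makes $\Ext^1(F,E)\to\Ext^1(F,Q)$ surjective; as $\Ext^1(F,E)=0$ this gives $\Ext^1(F,Q)=0$. Hence $\chi(F,Q)=\hom(F,Q)\ge 0$, and dividing the Riemann--Roch expression for $\chi(F,Q)$ by $r(F)r(Q)$ yields $\mu(Q)\ge\mu(F)+(g-1)=\mu(E)$. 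Every locally free quotient therefore has slope at least $\mu(E)$, which is exactly semistability (Definition \ref{def:SlopeCurves}). I expect this direction to go through with no difficulty.

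The hard direction is existence: given $E$ semistable, produce an orthogonal $F$. First I would normalize the slope: since $(F\otimes L)^\vee\otimes(E\otimes L)\cong F^\vee\otimes E$, orthogonality is unchanged under simultaneous twisting, and $\mu(E\otimes L)=\mu(E)+\deg L$, so I may assume $\mu(E)=g-1$. The task becomes finding a degree-$0$ bundle $N:=F^\vee$ with $E\otimes N$ acyclic (the numerology already forces $\chi(E\otimes N)=0$). If $N$ is allowed to be a line bundle, this is precisely the statement that the theta locus $\Theta_E=\{L\in\Pic^0(C):H^0(E\otimes L)\neq 0\}$ is a proper subset of $\Pic^0(C)$, and then any $L\notin\Theta_E$ gives $F=L^{-1}$.

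The main obstacle is exactly that a rank-one $N$ need not suffice: there are semistable bundles (Raynaud's examples) whose theta locus is all of $\Pic^0(C)$, so genuinely higher-rank $F$ is unavoidable. My strategy would be to show that a sufficiently general stable bundle $N$ of degree $0$ and suitably divisible rank satisfies $H^0(E\otimes N)=0$: the jumping locus inside the moduli of such $N$ is a determinantal, hence closed, subscheme, so it is enough to exhibit a single acyclic example and then propagate by semicontinuity. One builds that example by induction on the rank, either through an elementary-transformation (Hecke-modification) construction or by realizing $N$ as a general extension. Carrying out this construction, and in particular guaranteeing non-emptiness of the good locus, is where the real work lies; absent that I would invoke the Faltings--Seshadri construction directly (see \cite{Ses80:vector_bundles_curves_book}).
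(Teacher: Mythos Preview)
Your proposal is correct and follows essentially the same outline as the paper: the hard direction ($E$ semistable $\Rightarrow$ $F$ exists) is deferred to the literature, and the easy direction is a short Riemann--Roch computation. The one minor difference is dual: the paper tests a destabilizing \emph{subbundle} $G\subset E$ and uses $\Hom(F,E)=0$ together with left-exactness of $\Hom(F,-)$ to force $\Hom(F,G)=0$, hence $\chi(F,G)\le 0$, contradicting $\mu(G)>\mu(E)$; you instead test a locally free \emph{quotient} $Q$ and use $\Ext^1(F,E)=0$ together with vanishing of $\Ext^2$ on a curve to force $\Ext^1(F,Q)=0$, hence $\chi(F,Q)\ge 0$ and $\mu(Q)\ge\mu(E)$. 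Both are equally clean, and your added remarks on Raynaud's examples explaining why a line bundle $F$ is insufficient are a nice complement to the paper's bare citation.
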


\begin{proof}
``$\Rightarrow$'': This is very hard. We refer to \cite{Ses93:vector_bundles_curves}.

``$\Leftarrow$'': Suppose that $E$ is not semistable.
Then, by definition, there exists a subbundle $G\subset E$ such that $\mu(G) > \mu(E)$. By assumption, $\chi(F,E) = 0$. Due to the Riemann-Roch Theorem, we have
\[
1-g+\mu(E) - \mu(F) =0.
\]
Hence,
\[
1-g+\mu(G) - \mu(F) > 0.
\]
By applying the Riemann-Roch Theorem again, we have $\chi(F,G)>0$, and so $\Hom(F,G)\neq0$.
But this implies $\Hom(F,E)\neq0$, which is a contradiction.
\end{proof}

An example for Theorem \ref{thm:Faltings} is in Section \ref{subsec:ExamplesCurves}.

\subsubsection*{Differential geometry approach}

To simplify, since this is not the main topic of this survey, we will treat only the degree $0$ case.
We refer to \cite[Theorem 2]{NS65:stable_vector_bundles} for the general result and for all details.

\begin{thm}[Narasimhan-Seshadri]
\label{thm:NarasimhanSeshadri}
There is a one-to-one correspondence
\[
\left\{\mathrm{Stable\, vector\, bundles\, on\, }C\,\mathrm{ of\, degree\, }0 \right\} \stackrel{1:1}{\longleftrightarrow} \left\{\mathrm{Irreducible\, unitary\, representations\, of\, }\pi_1(C)\right\}
\]
\end{thm}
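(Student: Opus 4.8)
The plan is to realize both sides of the correspondence through \emph{flat unitary connections}, which serve as the common bridge. On one hand, an irreducible unitary representation $\rho\colon \pi_1(C) \to U(r)$ determines, via the associated bundle construction on the universal cover $\widetilde{C} \to C$, a holomorphic vector bundle $E_\rho = \widetilde{C} \times_\rho \C^r$ carrying a flat Hermitian connection. On the other hand, a flat unitary connection on a holomorphic bundle $E$ produces its holonomy representation $\pi_1(C) \to U(r)$, well defined up to conjugation once a base point is fixed. Since the curvature $F_h$ of the Chern connection of a Hermitian metric is always of type $(1,1)$, and on a curve any $(1,1)$-form is a multiple of the K\"ahler form, the flatness $F_h = 0$ is equivalent in the degree $0$ case to the \emph{Hermitian--Einstein} condition $\sqrt{-1}\, \Lambda F_h = c \cdot \mathrm{Id}$ with $c$ proportional to $\mu(E) = 0$. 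The theorem then reduces to matching stability of $E$ with the existence of such a metric, together with matching irreducibility of $\rho$ with the absence of destabilizing subbundles.

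First I would treat the direction from representations to bundles. The bundle $E_\rho$ is holomorphic because the transition functions coming from deck transformations are locally constant, and it has degree $0$ because its induced connection is flat (the degree is the integral of the first Chern form, which vanishes). For stability I would argue by Chern--Weil: equip $E_\rho$ with the flat Hermitian metric, and for any coherent subsheaf $F \subset E_\rho$ bound $\deg(F) \leq 0$, with equality forcing $F$ to be a flat subbundle, hence to correspond to a $\rho$-invariant subspace, i.e.\ a subrepresentation. Irreducibility of $\rho$ then forbids any proper destabilizing $F$, so $\mu(F) < 0 = \mu(E_\rho)$ for all proper subsheaves and $E_\rho$ is stable.

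The harder direction passes from a stable bundle $E$ of degree $0$ to a representation. The essential input is the existence of a Hermitian--Einstein metric $h$ on $E$, whose Chern connection then satisfies $\sqrt{-1}\, \Lambda F_h = 0$; by the curve-dimension remark above this forces $F_h = 0$, so the Chern connection is flat and unitary, and its holonomy defines $\rho_E \colon \pi_1(C) \to U(r)$. Stability of $E$ translates into irreducibility of $\rho_E$, since a nontrivial invariant subspace would yield a flat, hence degree $0$, holomorphic subbundle, contradicting $\mu(F) < 0$ for proper subsheaves.

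Finally I would verify that the two assignments $\rho \mapsto E_\rho$ and $E \mapsto \rho_E$ are mutually inverse and descend to isomorphism classes of stable bundles and conjugacy classes of irreducible representations, completing the bijection. The main obstacle is unquestionably the existence of the Hermitian--Einstein metric on a stable bundle: this is a nonlinear elliptic problem whose solvability is precisely equivalent to stability, and it is the one-dimensional instance of the Hitchin--Kobayashi / Donaldson--Uhlenbeck--Yau correspondence. I would establish it either through Donaldson's heat-flow (or variational) method, where stability guarantees that the flow does not degenerate and converges to an Einstein metric, or through the inductive continuity argument of Narasimhan--Seshadri; in either route stability is used exactly to exclude the breakdown of the metric, and this analytic existence result is the genuine heart of the theorem.
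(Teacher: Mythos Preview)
Your proposal is correct and outlines what is essentially Donaldson's differential-geometric proof of the Narasimhan--Seshadri theorem via Hermitian--Einstein metrics. The paper, however, takes a different (and much more abbreviated) route: it only sketches the easy direction, and does so by a ``no global sections'' argument rather than Chern--Weil. Concretely, the paper shows that $E_\rho$ is semistable by assuming a destabilizing subbundle $W$ of positive degree, reducing to a line subbundle, twisting by a degree-zero line bundle so that $W$ has a section, and then invoking the fact (from \cite{NS64:holomorphic_vector_bundles}) that a bundle arising from a nontrivial irreducible unitary representation has no nonzero global sections. The hard direction is not addressed at all; the paper simply refers to the original source.

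Your curvature/second-fundamental-form argument for the easy direction is cleaner and more conceptual, and it generalizes immediately to higher dimensions. The paper's approach is more elementary in that it avoids connections and curvature entirely, staying within algebraic/representation-theoretic language, but at the cost of importing a nontrivial vanishing result as a black box. For the hard direction, your identification of the Hermitian--Einstein existence problem as the crux is exactly right, and the two methods you mention (Donaldson's heat flow versus the original Narasimhan--Seshadri inductive continuity argument) are indeed the two standard proofs; the paper does not choose between them.
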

\begin{proof}[Idea of the proof]
We only give a very brief idea on how to associate a stable vector bundle to an irreducible unitary representation.
Let $C$ be a curve of genus $g \geq 2$. Consider its universal cover $p: \H \to C$. The group of deck transformation is $\pi_1(C)$. Consider the trivial bundle $\H \times \C^r$ on $\H$ and a representation $\rho \colon \pi_1(C) \to \GL(r,\C)$.
Then we get a $\pi_1(C)$-bundle via the action
\begin{align*}
a_\tau \colon \H \times \C^r & \to \H \times \C^r, \\
(y,v) &\mapsto (\tau \cdot y, \rho(\tau) \cdot v),
\end{align*}
for $\tau \in \pi_1(C)$.
This induces a vector bundle $E_\rho$ of rank $r$ on $C$ with degree $0$.

If $\rho$ is a unitary representation, then the semistability of $E$ can be shown as follows. Assume there is a sub vector bundle $W \subset E$ with $d(W) > 0$. By taking $\bigwedge^{r(W)}$ we can reduce to the case $r(W)=1$, namely $W$ is a line bundle of positive degree. 

By tensoring with a line bundle of degree $0$, we can assume $W$ has a section.
But it can be shown (see \cite[Proposition 4.1]{NS65:stable_vector_bundles}) that a vector bundle corresponding to a non-trivial unitary irreducible representation does not have any section. By splitting $\rho$ as direct sum of irreducible representation, this implies that $W$ has to be a sub line bundle of the trivial bundle, which is impossible.

If the representation is irreducible, the bundle is stable.
\end{proof}

As mentioned before, an important remark about the interpretation of stability of vector bundles in terms of irreducible representations is that it is easier to prove results on stable sheaves in this language.
The first example is the preservation of stability by the tensor product (\cite[Theorem 3.1.4]{HL10:moduli_sheaves}): the tensor product of two semistable vector bundles is semistable.
Example \ref{ex:Mumford} is an application of Theorem \ref{thm:NarasimhanSeshadri} and preservation of stability by taking the symmetric product.

\subsubsection*{Gieseker's ample bundle characterization}

We also mention that for geometric applications (e.g., in the algebraic proof of preservation of stability by tensor product mentioned above), the following result by Gieseker (\cite{Gie79:bogomolov} and \cite[Theorem 3.2.7]{HL10:moduli_sheaves}) is of fundamental importance.

\begin{thm}
\label{thm:Gieseker}
Let $E$ be a semistable vector bundle on $C$. Consider the projective bundle $\pi:\P_C(E) \to C$ and denote the tautological line bundle by $\OO_\pi(1)$.
Then:
\begin{enumerate}
\item\label{enum:nef} $d(E)\geq0$ if and only if $\OO_\pi(1)$ is nef.
\item $d(E)>0$ if and only if $\OO_\pi(1)$ is ample.
\end{enumerate}
\end{thm}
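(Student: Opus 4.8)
The plan is to translate positivity of $\OO_\pi(1)$ into a slope inequality for line bundle quotients of $E$ and of its pullbacks, feeding in semistability, while disposing of the two ``only if'' implications by a single intersection computation on $X := \P_C(E)$.

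First I would record the geometry of $X$. Write $\xi = c_1(\OO_\pi(1))$ and use Grothendieck's convention $X = \Proj(\Sym E)$ with tautological quotient $\pi^* E \onto \OO_\pi(1)$, so that $\dim X = r$ (where $r = \rk E$) and the Grothendieck relation $\sum_{i=0}^r (-1)^i \pi^* c_i(E)\, \xi^{r-i} = 0$ holds. Since $C$ is a curve, $c_i(E) = 0$ for $i \geq 2$, hence $\xi^r = \pi^* c_1(E) \cdot \xi^{r-1}$; integrating over $X$ and using that $\xi$ restricts to $\OO(1)$ on each fibre $\P^{r-1}$ gives the key identity $\xi^r \cdot [X] = d(E)$. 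This already yields both ``only if'' directions: an ample class has strictly positive top self-intersection and a nef class non-negative top self-intersection, so if $\OO_\pi(1)$ is ample then $d(E) > 0$, and if it is merely nef then $d(E) \geq 0$.

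For the two ``if'' directions I would invoke the numerical (Barton--Kleiman) characterization of positivity for vector bundles on a curve: $\OO_\pi(1)$ is nef (resp.\ ample) if and only if for every finite morphism $\nu\colon C' \to C$ from a smooth projective curve and every line bundle quotient $\nu^* E \onto L$ one has $\deg L \geq 0$ (resp.\ $\deg L > 0$). The content is that such data are exactly the curves in $X$ dominating $C$, with $\xi$ meeting the image in $\deg L$, whereas curves inside a fibre meet $\xi$ strictly positively and need not be checked. To feed in semistability, I would use that in characteristic zero the pullback $\nu^* E$ of a semistable bundle under a finite morphism is again semistable, with $\mu(\nu^* E) = (\deg \nu)\,\mu(E)$. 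Then any quotient satisfies $\deg L = \mu(L) \geq \mu(\nu^* E) = (\deg \nu)\, \mu(E)$; so $d(E) \geq 0$ forces $\deg L \geq 0$ and nefness, while $d(E) > 0$ makes the inequality strict and gives ampleness. Together with the self-intersection computation this establishes both equivalences.

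The main obstacle is the ample ``if'' direction, and specifically the two external inputs on which it rests: that pullback along finite covers preserves semistability, and that the degree-of-quotients condition genuinely characterizes ampleness rather than the a priori weaker condition of positive intersection with all curves. The first I would either cite or prove by averaging a would-be destabilizing subsheaf of $\nu^* E$ over the Galois closure of $\nu$. The second is precisely where the curve base is special; I would either invoke the Barton--Kleiman criterion directly or reprove it through Nakai--Moishezon, using that $\OO_\pi(1)$ is $\pi$-relatively ample to reduce the higher-dimensional subvariety computations to the fibres and the base.
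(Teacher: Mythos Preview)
Your proposal is correct and, for the one implication the paper actually sketches (the ``if'' direction of \eqref{enum:nef}), follows the same argument: reduce to curves dominating $C$ using that $\OO_\pi(1)$ is relatively ample, normalize to get a finite cover $\nu\colon C'\to C$, observe that the curve corresponds to a line bundle quotient of $\nu^*E$, and conclude via preservation of semistability under finite pullback (which the paper records as Exercise~\ref{ex:StabilityPullback}). Your write-up is more complete: you supply the self-intersection computation $\xi^r=d(E)$ for the two ``only if'' directions and you address the ample case, correctly flagging that strict positivity on curves is not enough (indeed the paper's own Example~\ref{ex:Mumford} is the standard counterexample) and that one must go through Nakai--Moishezon or the Hartshorne/Barton--Kleiman criterion.
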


\begin{proof}[Idea of the proof]
We just sketch how to prove the non-trivial implication of \eqref{enum:nef}.
Assume that $d(E)\geq0$.
To prove that $\OO_\pi(1)$ is nef, since it is relatively ample, we only need to prove that $\OO_\pi(1) \cdot \gamma \geq0$ for a curve $\gamma \subset \P_C(E)$ which maps surjectively onto $C$ via $\pi$.

By taking the normalization, we can assume that $\gamma$ is smooth.
Denote by $f:\gamma \to C$ the induced finite morphism. Then we have a surjection $f^*E \to \OO_\pi(1)|_\gamma$. But $f^*E$ is semistable, by Exercise \ref{ex:StabilityPullback} below. Hence, $d(\OO_\pi(1)|_\gamma)\geq0$, which is what we wanted.
\end{proof}

\begin{exercise}\label{ex:StabilityPullback}
Let $f:C'\to C$ be a finite morphism between curves. Let $E$ be a vector bundle on $C$. Show that $E$ is semistable if and only if $f^*E$ is semistable.
\emph{Hint: One implication (``$\Leftarrow$'') is easy. For the other (``$\Rightarrow$''), by using the first implication, we can reduce to a Galois cover. Then use uniqueness of Harder-Narasimhan filtrations to show that if $f^*E$ is not semistable. Then all its semistable factors must be invariant. Then use that all factors are vector bundles to show that they descend to achieve a contradiction.}
\end{exercise}

\subsection{Applications and examples}
\label{subsec:ExamplesCurves}

In this section we present a few examples and applications of stability of vector bundles on curves. We will mostly skip proofs, but give references to the literature.

\begin{ex}[Mumford's example; {\cite[Theorem 10.5]{Har70:ample_subvarieties}}]
\label{ex:Mumford}
Let $C$ be a curve of genus $g\geq2$.
Then there exists a stable rank $2$ vector bundle $U$ on $C$ such that
\begin{equation}\label{eq:MumfordEx}
H^0(C,\Sym^m U) = 0,
\end{equation}
for all $m>0$\footnote{If we let $\pi\colon X:=\P_C(U)\to C$ be the corresponding ruled surface, then \eqref{eq:MumfordEx} implies that the nef divisor $\OO_\pi(1)$ is not ample, although it has the property that $\OO_\pi(1) \cdot \gamma>0$, for all curves $\gamma\subset X$.}.

Indeed, by using Theorem \ref{thm:NarasimhanSeshadri}, since
\[
\pi_1(C) = \langle a_1,b_1,\ldots,a_g,b_g | a_1b_1a_1^{-1}b_1^{-1} \cdot \ldots \cdot a_g b_g a_g^{-1} b_g^{-1} \rangle,
\]
we only need to find two matrices $A,B\in\mathrm{U}(2)$ such that $\Sym^m A$ and $\Sym^m B$ have no common fixed subspace, for all $m>0$.
Indeed, by letting them be the matrices corresponding to $a_1$ and $b_1$, the matrices corresponding to the other $a_i$'s and $b_i$'s can be chosen to satisfy the relation.

We can choose
\[
A = \begin{pmatrix}
\lambda_1 & 0\\ 0 & \lambda_2
\end{pmatrix}
\]
where $|\lambda_i|=1$ and $\lambda_2/\lambda_1$ is not a root of unity. Then
\[
\Sym^m A = \begin{pmatrix}
\lambda_1^m & 0 & \dots & 0\\
0 & \lambda_1^{m-1} \lambda_2 & \dots & 0\\
\vdots & \vdots & \ddots & \vdots\\
0 & 0 & \dots & \lambda_2^m
\end{pmatrix}
\]
has all eigenvalues distinct, and so the only fixed subspaces are subspaces generated by the standard basis vectors.

If we let
\[
B = \begin{pmatrix}
\mu_{11} & \mu_{12}\\ \mu_{21} & \mu_{22}
\end{pmatrix},
\]
then all entries of $\Sym^m B$ are certain polynomials in the $\mu_{ij}$ not identically zero.
Since $\mathrm{U}(2) \subset \GL(2,\C)$ is not contained in any analytic hypersurface, we can find $\mu_{ij}$ such that all entries of $\Sym^m B$ are non-zero, for all $m>0$.
This concludes the proof.
\end{ex}

The two classical examples of moduli spaces are the following (see \cite{NR69:moduli_vector_bundles, DR76:vector_bundles_rank2}).

\begin{ex}\label{ex:g2d0}
Let $C$ be a curve of genus $g=2$. Then $M_C(2,\OO_C) \cong \P^3$.

Very roughly, the basic idea behind this example is the following.
We can identify $\P^3$ with $\P(H^0(\Pic^1(C), 2 \Theta))$, where $\Theta$ denotes the theta-divisor on $\Pic^1(C)$.
The isomorphism in the statement is then given by associating to any rank $2$ vector bundle $W$ the subset
\[
\left\{ \xi \in \Pic^1(C)\,:\, H^0(C,W\otimes\xi)\neq 0\right\}.
\]
It is interesting to notice that the locus of strictly semistable vector bundles corresponds to the S-equivalence class of vector bundles of the form $M\oplus M^*$, where $M\in\Pic^0(C)$.
Geometrically, this is the Kummer surface associated to $\Pic^0(C)$ embedded in $\P^3$.
\end{ex}

\begin{ex}
\label{ex:IntersectionQuadrics}
Let $C$ be a curve of genus $g=2$, and let $L$ be a line bundle of degree $1$.
Then $M_C(2,L) \cong Q_1 \cap Q_2 \subset \P^5$, where $Q_1$ and $Q_2$ are two quadric hypersurfaces.

The space $M_2(2,L)$ is one of the first instances of derived category theory, semi-orthogonal decompositions, and their connection with birational geometry.
Indeed, the result above can be enhanced by saying that there exists a \emph{semi-orthogonal decomposition} of the derived category of $M_C(2,L)$ in which the non-trivial component is equivalent to the derived category of the curve $C$.
We will not add more details here, since it is outside the scope of these lecture notes; we refer instead to \cite{BO95:semiorthogonal_decomposition, Kuz14:semiorthogonal_decompositions} and references therein.
There is also an interpretation in terms of Bridgeland stability conditions (see \cite{BLMS16:BridgelandStabilitySemiOrth}).
\end{ex}

\begin{ex}
Let $C$ be a curve of genus $2$.
The moduli space $M_C(3,\OO_C)$ also has a very nice geometric interpretation, and it is very closely related to the Coble cubic hypersurface in $\P^8$.
We refer to \cite{Ort03:ortega_thesis} for the precise statement and for all the details.
\end{ex}

Finally, we mention another classical application of vector bundles techniques to birational geometry. In principle, by using Bridgeland stability, this could be generalized to higher dimensions. We will talk briefly about this in Section \ref{sec:applications}.

\begin{ex}
\label{ex:LazarsfledMukaiCurves}
Let $C$ be a curve of genus $g\geq1$, and let $L$ be an ample divisor on $C$ of degree $d(L) \geq 2g+1$. Then the vector bundle
\[
M_L := \Ker\left(\ev\colon \OO_C\otimes_\C H^0(C,L) \onto L\right)
\]
is semistable. As an application this implies that the embedding of $C$ in $\P(H^0(C,L)^\vee)$ induced by $L$ is projectively normal, a result by Castelnuovo, Mattuck, and Mumford. We refer to \cite{Laz16:linear_series_survey} for a proof of this result, and for more applications.

We will content ourselves to prove a somehow related statement for the canonical line bundle, by following \cite[Theorem 1.6]{GP00:vanishing_theorems}, to illustrate Faltings' definition of stability: the vector bundle $M_{K_C}$ is semistable.

To prove this last statement we let $L$ be a general line bundle of degree $g+1$ on $C$.
Then $L$ is base-point free and $H^1(C,L)=0$.

Start with the exact sequence
\[
0 \to M_{K_C} \to \OO_C\otimes_\C H^0(C,K_C) \to K_C \to 0,
\]
tensor by $L$ and take cohomology. We have a long exact sequence
\begin{equation}\label{eq:FaltingsApplication}
\begin{split}
0 \to H^0(C,M_{K_C}\otimes L) \to H^0(C,K_C) \otimes H^0(C,L) &\stackrel{\alpha}{\to} H^0(C,K_C \otimes L) \\ &\to H^1(C,M_{K_C}\otimes L) \to 0.
\end{split}
\end{equation}

By the Riemann-Roch Theorem, $h^0(C,L)=2$.
Hence, we get an exact sequence
\[
0\to L^\vee \to \OO_C\otimes H^0(C,L) \to L \to 0.
\]
By tensoring by $K_C$ and taking cohomology, we get
\[
0 \to H^0(C,K_C\otimes L^\vee) \to H^0(C,K_C)\otimes H^0(C,L) \stackrel{\alpha}{\to} H^0(C,K_C \otimes L),
\]
and so $\Ker(\alpha)\cong H^0(C,K_C\otimes L^\vee) = H^1(C,L)^\vee = 0$.

By using \eqref{eq:FaltingsApplication}, this gives $H^0(C,M_{K_C}\otimes L) = H^1(C,M_{K_C}\otimes L) = 0$, and so $M_{K_C}$ is semistable, by Theorem \ref{thm:Faltings}.
\end{ex}


\section{Generalizations to Higher Dimensions}
\label{sec:generalizations}

The definitions of stability from Section \ref{subsec:EquivalentDefCurves} generalize in different ways from curves to higher dimensional varieties. In this section we give a quick overview on this. We will not talk about how to generalize the Narasimhan-Seshadri approach. There is a beautiful theory (for which we refer for example to \cite{Kob87:differential_geometry}), but it is outside the scope of this survey.

Let $X$ be a smooth projective variety over $\C$ of dimension $n\geq2$.
We fix an ample divisor class $\omega \in N^1(X)$ and another divisor class $B \in N^1(X)$.

\begin{defn}\label{def:TwistedChern}
We define the twisted Chern character as
\[
\ch^B := \ch \cdot e^{-B}.
\]
\end{defn}

If the reader is unfamiliar with Chern characters we recommend either a quick look at Appendix A of \cite{Har77:algebraic_geometry} or a long look at \cite{Ful98:intersection_theory}.
By expanding Definition \ref{def:TwistedChern}, we have for example
\begin{align*}
\ch^{B}_0 &= \ch_0 = \rk, \\
\ch^{B}_1 &= \ch_1 - B \cdot \ch_0 ,\\
\ch^{B}_2 &= \ch_2 - B \cdot \ch_1 + \frac{B^2}{2} \cdot \ch_0.
\end{align*}

\subsubsection*{Gieseker-Maruyama-Simpson stability}

GIT stability does generalize to higher dimensions. The corresponding ``numerical notion'' is the one of (twisted) Gieseker-Maruyama-Simpson stability (see \cite{Gie77:vector_bundles, Mar77:stable_sheavesI, Mar78:stable_sheavesII, Sim94:moduli_representations, MW97:thaddeus_principle}).

\begin{defn}
\label{def:GiesekerStability}
Let $E\in\Coh(X)$ be a pure sheaf of dimension $d$.
\begin{enumerate}
\item The $B$-twisted Hilbert polynomial is
\[
P(E,B,t) := \int_X \ch^B(E) \cdot e^{t\omega} \cdot \td_X = a_d(E,B) t^d + a_{d-1}(E,B) t^{d-1} + \ldots + a_0(E,B)
\]
\item We say that $E$ is $B$-twisted Gieseker (semi)stable if, for any proper non-trivial subsheaf $F\subset E$, the inequality
\[
\frac{P(F,B,t)}{a_d(F,B)} < (\leq) \frac{P(E,B,t)}{a_d(E,B)}
\]
holds, for $t\gg0$.
\end{enumerate}
\end{defn}

If $E$ is torsion-free, then $d = n$ and $a_d(E,B) = \ch_0(E)$. Theorem \ref{thm:HNcurves} and the existence part in Theorem \ref{thm:MainSemistableBundlesCurves} generalize for (twisted) Gieseker stability (the second when $\omega$ and $B$ are rational classes), but singularities of moduli spaces become very hard to understand.

\subsubsection*{Slope stability}

We can also define a twisted version of the standard slope stability function for $E \in \Coh(X)$ by
\[
\mu_{\omega, B}(E) := \frac{\omega^{n-1} \cdot \ch^{B}_1(E)}{\omega^n \cdot \ch^{B}_0(E)} = \frac{\omega^{n-1} \cdot \ch_1(E)}{\omega^n \cdot \ch_0(E)} -  \frac{\omega^{n-1} \cdot B}{\omega^n},
\]
where dividing by $0$ is interpreted as $+\infty$.

\begin{defn}\label{def:SlopeStability}
A sheaf $E \in \Coh(X)$ is called slope (semi)stable if for all subsheaves $F \subset E$ the inequality $\mu_{\omega, B}(F) < (\leq) \mu_{\omega, B}(E/F)$ holds.
\end{defn}

Notice that this definition is independent of $B$ and classically one sets $B = 0$. However, this more general notation will turn out useful in later parts of these notes. Also, torsion sheaves are all slope semistable of slope $+\infty$, and if a sheaf with non-zero rank is semistable then it is torsion-free.

This stability behaves well with respect to certain operations, like restriction to (general and high degree) hypersurfaces, pull-backs by finite morphisms, tensor products, and we will see that Theorem \ref{thm:HNcurves} holds for slope stability as well.
The main problem is that moduli spaces (satisfying a universal property) are harder to construct, even in the case of surfaces.

\subsubsection*{Bridgeland stability}

We finally come to the main topic of this survey, Bridgeland stability.
This is a direct generalization of slope stability. The main difference is that we will need to change the category we are working with. Coherent sheaves will never work in dimension $\geq 2$.
Instead, we will look for other abelian categories inside the bounded derived category of coherent sheaves on $X$.

To this end, we first have to treat the general notion of slope stability for abelian categories. Then we will introduce the notion of a bounded t-structure and the key operation of tilting. Finally, we will be able to define Bridgeland stability on surfaces and sketch how to conjecturally construct Bridgeland stability conditions in higher dimensions.

The key advantage of Bridgeland stability is the very nice behavior with respect to change of stability. We will see that moduli spaces of Gieseker semistable and slope semistable sheaves are both particular cases of Bridgeland semistable objects, and one can pass from one to the other by varying stability. This will give us a technique to study those moduli spaces as well.
Finally, we remark that even Faltings' approach to stability on curves, which despite many efforts still lacks a generalization to higher dimensions as strong as for the case of curves, seems to require as well to look at stability for complexes of sheaves (see, e.g., \cite{ACK07:functorial_construction, HP12:postnikov, HP14:postnikov2}).

As drawbacks, moduli spaces in Bridgeland stability are not associated naturally to a GIT problem and therefore harder to construct. Also, the categories to work with are not ``local'', in the sense that many geometric constructions for coherent sheaves will not work for these more general categories, since we do not have a good notion of ``restricting a morphism to an open subset''.


\section{Stability in Abelian Categories}
\label{sec:stability_abelian}

The goal of the theory of stability conditions is to generalize the situation from curves to higher dimensional varieties with similar nice properties. In order to do so it turns out that the category of coherent sheaves is not good enough. In this chapter, we will lay out some general foundation for stability in a more general abelian category. Throughout this section we let $\AA$ be an abelian category with Grothendieck group $K_0(\AA)$.

\begin{defn}\label{def:StabilityFunction}
Let $Z: K_0(\AA) \to \C$ be an additive homomorphism.
We say that $Z$ is a \emph{stability function} if, for all non-zero $E\in\AA$, we have
\[
\Im Z(E) \geq 0, \, \, \text{ and } \, \,  \Im Z(E) = 0 \, \Rightarrow \, \Re Z(E) <0.
\]
\end{defn}

Given a stability function, we will often denote $R := \Im Z$,  the \emph{generalized rank}, $D := - \Re Z$, the \emph{generalized degree}, and $M := \frac{D}{R}$, the \emph{generalized slope}, where as before dividing by zero is interpreted as $+\infty$.

\begin{defn}\label{def:SlopeStabilityAbelian}
Let $Z: K_0(\AA) \to \C$ be a stability function.
A non-zero object $E\in\AA$ is called \emph{(semi)stable} if, for all proper non trivial subobjects $F \subset E$, the inequality $M(F) < (\leq) M(E)$ holds.
\end{defn}

When we want to specify the function $Z$, we will say that objects are $Z$-semistable, and add the suffix $Z$ to all the notation.

\begin{ex}\label{ex:StabilityCurvesAgain}
(1) Let $C$ be a curve, and let $\AA:=\Coh(C)$.
Then the group homomorphism
\[
Z: K_0(C) \to \C, \, \, Z(E) = -d(E) + \sqrt{-1} \, r(E)
\]
is a stability function on $\AA$.
Semistable objects coincide with semistable sheaves in the sense of Definition \ref{def:SlopeCurves}.

(2) Let $X$ be a smooth projective variety of dimension $n\geq2$, $\omega\in N^1(X)$ be an ample divisor class and $B\in N^1(X)$.
Let $\AA:=\Coh(X)$.
Then the group homomorphism
\[
\overline{Z}_{\omega,B}: K_0(X) \to \C, \, \, \overline{Z}_{\omega,B}(E) = -\omega^{n-1} \cdot \ch_1^B(E) + \sqrt{-1} \, \omega^n \cdot \ch_0^B(E)
\]
is {\bf not} a stability function on $\AA$.
Indeed, for a torsion sheaf $T$ supported in codimension $\geq 2$, we have $\overline{Z}_{\omega,B}(T)=0$.
More generally, by \cite[Lemma 2.7]{Tod09:limit_stable_objects} there is no stability function whose central charge factors through the Chern character for $\AA = \Coh(X)$.

(3) Let $X$, $\omega$, and $B$ be as before.
Consider the quotient category
\[
\AA:= \Coh_{n,n-1}(X) := \Coh(X) / \Coh(X)_{\leq n-2}
\]
of coherent sheaves modulo the (Serre) subcategory $\Coh(X)_{\leq n-2}$ of coherent sheaves supported in dimension $\leq n-2$ (see \cite[Definition 1.6.2]{HL10:moduli_sheaves}). Then $\overline{Z}_{\omega,B}$ is a stability function on $\AA$. Semistable objects coincide with slope semistable sheaves in the sense of Definition \ref{def:SlopeStability}.
\end{ex}

\begin{exercise}\label{ex:EquivDefStabilityAbelian}
Show that an object $E \in \AA$ is (semi)stable if and only for all quotient $E \onto G$ the inequality $M(E) < (\leq) M(G)$ holds.
\end{exercise}

Stability for abelian categories still has Schur's property:

\begin{lem}\label{lem:Schur}
Let $Z: K_0(\AA) \to \C$ be a stability function.
Let $A,B\in\AA$ be non-zero objects which are semistable with $M(A) > M(B)$.
Then $\Hom(A,B)=0$.
\end{lem}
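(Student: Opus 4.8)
The plan is to argue by contradiction: assuming a non-zero morphism $f \colon A \to B$ exists, I would factor it through its image and derive a chain of slope inequalities that violates $M(A) > M(B)$.

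First I would set $I := \im f \in \AA$, which is non-zero precisely because $f \neq 0$. The morphism $f$ factors as $A \onto I \into B$, exhibiting $I$ simultaneously as a quotient of $A$ and a subobject of $B$. Since $A$ is semistable, the quotient characterization of semistability (Exercise \ref{ex:EquivDefStabilityAbelian}) applied to the surjection $A \onto I$ gives $M(A) \leq M(I)$. Since $B$ is semistable, the defining subobject inequality applied to the inclusion $I \into B$ gives $M(I) \leq M(B)$. Chaining these yields $M(A) \leq M(B)$, contradicting the hypothesis $M(A) > M(B)$; hence no non-zero $f$ can exist, and $\Hom(A,B) = 0$.

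The one point requiring genuine care—and the place where the argument must be checked rather than merely asserted—is the behavior of the slope $M = D/R$ under the convention that $R = 0$ is read as $M = +\infty$. Here I would use that $R = \Im Z$ and $D = -\Re Z$ are additive on the short exact sequence $0 \to \Ker f \to A \to I \to 0$, together with the fact that $R \geq 0$ on every object (part of the stability-function axioms). In particular, if $R(A) = 0$, then additivity and non-negativity force $R(I) = 0$, so both slopes equal $+\infty$; the subobject inequality for $B$ then forces $M(B) = +\infty$, again contradicting $M(A) > M(B)$. Thus the two slope comparisons hold verbatim even in the degenerate rank-zero cases, and the inequality chain is valid throughout. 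The remaining verifications—that Exercise \ref{ex:EquivDefStabilityAbelian} indeed yields $M(A) \leq M(I)$ and that the two semistability inequalities compose—are routine ``seesaw'' manipulations relying only on additivity of $Z$, so I expect no substantive obstacle beyond this bookkeeping.
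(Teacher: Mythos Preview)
Your proof is correct and follows essentially the same route as the paper's: both take the image of a putative non-zero morphism, apply the quotient form of semistability (Exercise~\ref{ex:EquivDefStabilityAbelian}) to $A \onto I$ and the subobject form to $I \into B$, and derive $M(A) \leq M(B)$. Your extra paragraph verifying the $R=0$ edge case is a welcome bit of care that the paper leaves implicit.
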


\begin{proof}
Let $f:A\to B$ be a morphism, and let $Q := \im(f) \subset B$.
If $f$ is non-zero, then $Q \neq 0$.

Since $B$ is semistable, we have $M(Q)\leq M(B)$.
Since $A$ is semistable, by Exercise \ref{ex:EquivDefStabilityAbelian}, we have $M(A) \leq M(Q)$. This is a contradiction to $M(A) > M(B)$.
\end{proof}

\begin{defn}
\label{defn:stability_abelian}
Let $Z: K_0(\AA) \to \C$ be an additive homomorphism.
We call the pair $(\AA, Z)$ a \emph{stability condition} if
\begin{itemize}
\item $Z$ is a stability function, and
\item Any non-zero $E \in \AA$ has a filtration, called the \emph{Harder-Narasimhan filtration},
\[
0 = E_0 \subset E_1 \subset \ldots \subset E_{n-1} \subset E_n = E
\]
of objects $E_i \in \AA$ such that $A_i = E_i/E_{i-1}$ is semistable for all $i = 1, \ldots, n$ and $M(A_1) > \ldots > M(A_n)$.
\end{itemize}
\end{defn}

\begin{exercise}\label{exercise:hn_filtration}
Show that for any stability condition $(\AA, Z)$ the Harder-Narasimhan filtration for any object $E \in \AA$ is unique up to isomorphism.
\end{exercise}

In concrete examples we will need a criterion for the existence of Harder-Narasimhan filtrations. This is the content of Proposition \ref{prop:hn_exists}. It requires the following notion.

\begin{defn}
An abelian category $\AA$ is called \emph{noetherian}, if for any $A \in \AA$ and for any ascending chain of subobjects of $A$
\[
A_0 \subset A_1 \subset \ldots \subset A_i \subset \ldots \subset A
\]
the equality $A_i = A_j$ holds for $i,j \gg 0$.
\end{defn}

Standard examples of noetherian abelian categories are the category of modules over a noetherian ring or the category of coherent sheaves on a variety. Before dealing with Harder-Narasimhan filtrations, we start with a preliminary result.

\begin{lem}
\label{lem:bounded_degree}
Let $Z: K_0(\AA) \to \C$ be a stability function.
Assume that
\begin{itemize}
\item $\AA$ is noetherian, and
\item the image of $R$ is discrete in $\R$.
\end{itemize}
Then, for any object $E \in \AA$, there is a number $D_E \in \R$ such that for any $F \subset E$ the inequality $D(F) \leq D_E$ holds.
\end{lem}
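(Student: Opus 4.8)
The plan is to induct on the value $\rho := R(E) = \Im Z(E)$, which lies in the discrete image of $R$; since $0$ is a lower bound for $R$ on every object of $\AA$, this induction is well-founded. First I record the numerics used throughout: $R$ and $D = -\Re Z$ are additive on short exact sequences, $R(G) \ge 0$ for all $G \in \AA$ (so that $0 \le R(F) \le R(E)$ for every $F \subseteq E$, via $R(E) = R(F) + R(E/F)$), and the defining property of a stability function gives $D(G) > 0$ whenever $R(G) = 0$ and $G \neq 0$, with $D(0) = 0$. In the base case $\rho = 0$ every subobject $F \subseteq E$ has $R(F) = 0$, hence $R(E/F) = 0$ and $D(E/F) \ge 0$, so $D(F) = D(E) - D(E/F) \le D(E)$; thus $D_E = D(E)$ works.

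For the inductive step assume $\rho > 0$ and that the statement holds for every object of $R$-value strictly smaller than $\rho$. The key case is when $E$ admits a subobject $A$ of intermediate rank, $0 < R(A) < \rho$. Then $R(A) < \rho$ and $R(E/A) = \rho - R(A) < \rho$, so by induction there are finite bounds $B_A := \sup_{G \subseteq A} D(G)$ and $B_{E/A} := \sup_{G \subseteq E/A} D(G)$. For an arbitrary $F \subseteq E$ I intersect with $A$: the sequence $0 \to F \cap A \to F \to F/(F \cap A) \to 0$ together with the isomorphism $F/(F \cap A) \cong (F+A)/A$ exhibits $F/(F \cap A)$ as a subobject of $E/A$. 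Additivity of $D$ then gives $D(F) = D(F \cap A) + D(F/(F \cap A)) \le B_A + B_{E/A}$, a bound independent of $F$.

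It remains to treat the degenerate case in which no subobject has rank strictly between $0$ and $\rho$, i.e. every $F \subseteq E$ satisfies $R(F) \in \{0, \rho\}$. If $R(F) = \rho$, then $E/F$ has rank $0$, so $D(E/F) \ge 0$ and $D(F) \le D(E)$. If $R(F) = 0$, I use the noetherian hypothesis: the rank-zero subobjects of $E$ are closed under sums (for rank-zero $F, F'$ one computes $R(F + F') = R(F) + R(F') - R(F \cap F') = 0$), so there is a maximal rank-zero subobject $T \subseteq E$ containing all of them; then $T/F$ has rank $0$, $D(T/F) \ge 0$, and $D(F) \le D(T)$. Taking $D_E = \max\{D(E), D(T)\}$ finishes this case.

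The main obstacle, and the reason a naive induction on the rank of the subobject fails, is that the sum of two subobjects of rank $r$ can have rank larger than $r$, so rank-$r$ subobjects cannot be controlled purely in terms of strictly lower ranks. Inducting instead on the total rank $R(E)$ and factoring an arbitrary subobject through a fixed intermediate-rank $A$ circumvents this, since both $A$ and $E/A$ then have strictly smaller rank. Discreteness of the image of $R$ enters only to guarantee finitely many rank values and hence well-foundedness of the induction, while the noetherian assumption is needed solely to produce the maximal rank-zero subobject $T$ in the degenerate case.
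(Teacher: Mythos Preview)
Your proof is correct, but it takes a somewhat different route from the paper's. Both arguments induct on $R(E)$ and treat the base case $R(E)=0$ identically. For the inductive step, however, the paper argues by contradiction: assuming a sequence $F_n \subset E$ with $D(F_n)\to\infty$, it reduces to $R(F_n)<R(E)$ for all $n$ and then builds an infinite strictly ascending chain $F_{n_1}\subsetneq F_{n_1}+F_{n_2}\subsetneq\cdots$ inside $E$, all of rank $<R(E)$, using the inductive hypothesis (applied to the partial sum of smaller rank) to bound the degree of the intersections and force the degrees of the sums to diverge; this contradicts the noetherian hypothesis directly. Your argument is instead a direct case split: if some $A\subset E$ has $0<R(A)<R(E)$, you factor any $F$ through $A$ and $E/A$ via the second isomorphism theorem and apply the inductive hypothesis to both; if no such $A$ exists, you invoke noetherianity only to produce a maximal rank-zero subobject $T$ and bound all rank-zero $F$ by $D(T)$. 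Your approach is a bit more structural and avoids the sequence construction, at the cost of an explicit dichotomy; the paper's approach is more uniform but less transparent about exactly where noetherianity enters.
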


\begin{proof}
Since the image of $R$ is discrete in $\R$, we can do induction on $R(E)$. If $R(E) = 0$ holds, then $R(F) = 0$. In particular, $0 < D(F) \leq D(E)$.

Let $R(E) > 0$. Assume there is a sequence $F_n \subset E$ for $n \in \N$ such that
\[
\lim_{n \to \infty} D(F_n) = \infty.
\]
If the equality $R(F_n) = R(E)$ holds for any $n \in \N$, then the quotient satisfies $R(E/F_n) = 0$. In particular, $D(E/F_n) \geq 0$ implies $D(F_n) \leq D(E)$. Therefore, we can assume $R(F_n) < R(E)$ for all $n$.

We will construct an increasing sequence of positive integers $n_k$ such that $D(F_{n_1} + \ldots + F_{n_k}) > k$ and $R(F_{n_1} + \ldots + F_{n_k}) < R(E)$. We are done after that, because this contradicts $\AA$ being noetherian. By assumption, we can choose $n_1$ such that $D(F_{n_1}) \geq 1$. Assume we have constructed $n_1$, \ldots, $n_{k-1}$. There is an exact sequence 
\[
0 \to (F_{n_1} + \ldots + F_{n_{k-1}}) \cap F_n \to (F_{n_1} + \ldots + F_{n_{k-1}}) \oplus F_n \to F_{n_1} + \ldots + F_{n_{k-1}} + F_n \to 0
\]
for any $n > n_{k-1}$, where intersection and sum are taken inside of $E$. Therefore,
\[
D(F_{n_1} + \ldots + F_{n_{k-1}} + F_n) = D(F_{n_1} + \ldots + F_{n_{k-1}}) + D(F_n) - D((F_{n_1} + \ldots + F_{n_{k-1}}) \cap F_n).
\]
By induction $D((F_{n_1} + \ldots + F_{n_{k-1}}) \cap F_n)$ is bounded from above and we obtain
\[
\lim_{n \to \infty} D(F_{n_1} + \ldots + F_{n_{k-1}} + F_n) = \infty.
\]
As before this is only possible if $R(F_{n_1} + \ldots + F_{n_{k-1}} + F_n) < R(E)$ for $n \gg 0$. Therefore, we can choose $n_k > n_{k-1}$ as claimed.
\end{proof}

We will follow a proof from \cite{Gra84:polygon_proof} (the main idea goes back at least to \cite{Sha76:degeneration_vector_bundles}) for the existence of Harder-Narasimhan filtrations, as reviewed in \cite{Bay11:lectures_notes_stability}.

\begin{prop}
\label{prop:hn_exists}
Let $Z: K_0(\AA) \to \C$ be a stability function.
Assume that
\begin{itemize}
\item $\AA$ is noetherian, and
\item the image of $R$ is discrete in $\R$.
\end{itemize}
Then Harder-Narasimhan filtrations exist in $\AA$ with respect to $Z$.
\end{prop}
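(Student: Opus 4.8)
The plan is to reduce everything to the existence of a \emph{maximal destabilizing subobject} and then peel it off repeatedly. Concretely, I would first show that every nonzero $E\in\AA$ has a subobject $F_{\max}(E)\subseteq E$ that is semistable and satisfies $M(G)\le M(F_{\max}(E))$ for all nonzero $G\subseteq E$, with $G\subseteq F_{\max}(E)$ whenever $M(G)=M(F_{\max}(E))$. Granting this, set $A_1=F_1=F_{\max}(E)$ and recursively let $F_i\subseteq E$ be the preimage of $F_{\max}(E/F_{i-1})$, so that $A_i=F_i/F_{i-1}$ is semistable. The factors have strictly decreasing slopes: writing $\tilde G\subseteq E/F_{i-1}$ for the preimage of $A_{i+1}=F_{\max}(E/F_i)$, one has $R(\tilde G)=R(A_i)+R(A_{i+1})$ and $D(\tilde G)=D(A_i)+D(A_{i+1})$, so $M(A_{i+1})\ge M(A_i)$ would make $\tilde G$ a subobject of $E/F_{i-1}$ of slope $\ge M(A_i)$ strictly larger than $A_i$, violating the maximality defining $F_{\max}(E/F_{i-1})=A_i$. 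Thus $M(A_1)>M(A_2)>\cdots$, and since $F_1\subsetneq F_2\subsetneq\cdots$ is strictly increasing it terminates at $E$ because $\AA$ is noetherian; this is the required filtration.

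It remains to construct $F_{\max}(E)$, and this is where both hypotheses enter. First I would record the a priori constraints on a subobject $G\subseteq E$: since $E/G\in\AA$ has $\Im Z\ge0$ we get $R(G)\le R(E)$, so discreteness of the image of $R$ leaves only finitely many possible values of $R(G)$; and by Lemma~\ref{lem:bounded_degree} we have $D(G)\le D_E$. If $E$ has a nonzero subobject of rank $R(G)=0$, then Definition~\ref{def:StabilityFunction} forces $D(G)>0$ and hence $M(G)=+\infty$; as the rank-zero subobjects are closed under sums (additivity and nonnegativity of $R$), the noetherian property yields a largest one $T$, which is automatically semistable and absorbs every subobject of slope $+\infty$, so $F_{\max}(E)=T$. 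Otherwise every nonzero subobject has positive rank; with finitely many ranks and $D$ bounded above, the slopes $M(G)=D(G)/R(G)$ are bounded above, so $\mu^+:=\sup_{G}M(G)<+\infty$.

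The technical crux is to promote this supremum to a maximum realized by a largest subobject. Here I would package the subobjects by their images $(R(G),D(G))$ in the plane and pass to the upper boundary of the convex hull of this set together with the origin, the concave ``Harder--Narasimhan polygon'' of the cited proof; its steepest top edge has slope $\mu^+$. To see that the corresponding vertex is attained by an actual subobject, I would use the noetherian property through the linear functional $\phi(G)=D(G)-\mu^+R(G)$: the additivity $\phi(G)+\phi(G')=\phi(G\cap G')+\phi(G+G')$ together with $\phi\le c:=\sup\phi$ shows that the subobjects attaining $c$ are closed under both $+$ and $\cap$, so once one such subobject exists the ascending chain condition produces a largest one $F_{\max}(E)$ containing all of them. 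Its semistability is then immediate, since by definition of $\mu^+$ any subobject of $F_{\max}(E)$ has slope $\le\mu^+=M(F_{\max}(E))$.

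I expect the main obstacle to be precisely the attainment of $\mu^+$ by an actual subobject: the bound on $D$ from Lemma~\ref{lem:bounded_degree} and the discreteness of $R$ only bound the slopes, and converting this supremum into an honest maximum is exactly the point where the noetherian hypothesis must be exploited in full, via a sum-of-subobjects construction feeding the ascending chain condition. Once attainment is in hand, everything else---the seesaw inequalities, semistability of the factors, and termination of the filtration---is formal.
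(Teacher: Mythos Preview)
Your approach is correct but takes a genuinely different route from the paper. The paper uses the \emph{Harder--Narasimhan polygon} method in full: it forms the convex hull $\PP(E)$ of all $Z(F)$ for $F\subset E$, reads off \emph{all} the HN factors at once from the extremal vertices $v_0,\dots,v_n$, and then verifies via convexity arguments that the chosen $F_i$ are nested with semistable quotients of decreasing slope. You instead use the classical \emph{maximal destabilizing subobject} method (as in Huybrechts--Lehn): produce a single $F_{\max}(E)$, peel it off, and recurse, with termination by noetherianity. The two are essentially equivalent in content---your $F_{\max}(E)$ is the subobject realizing the paper's first vertex $v_1$---but the polygon presentation has the advantage that all factors and all the inequalities among them are visible simultaneously from the geometry, while your iterative version makes the role of the noetherian hypothesis in termination more transparent.

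One remark on the step you correctly flag as the crux. Your $\phi$-argument as written only shows that the set $\{G:\phi(G)=c\}$ is closed under $+$ and $\cap$, hence has a largest element \emph{if nonempty}; it does not by itself prove attainment of $c$. The paper faces the identical issue (realizing the vertex $v_1$ by an actual subobject) and, notably, also defers it: the text gives the full argument only under the extra hypothesis that the image of $D$ is discrete, leaving the general case to an exercise whose hint is exactly the ``sum-of-subobjects fed into the ascending chain condition'' that you gesture at. So your proposal is at the same level of completeness as the paper's proof; to close the gap you should spell out that, fixing the smallest rank $r_0$ on which the supremum $\mu^+$ is approached and taking $G_n$ of rank $r_0$ with $D(G_n)\nearrow r_0\mu^+$, one builds an ascending chain $H_k=H_{k-1}+G_{n_k}$ whose degrees strictly increase unless the supremum is already attained, forcing stabilization by noetherianity and hence attainment.
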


\begin{proof}
We will give a proof when the image of $D$ is discrete in $\R$ and leave the full proof as an exercise.

\begin{figure}[ht]
  \centerline{
  \xygraph{
!{<0cm,0cm>;<1cm,0cm>:<0cm,1cm>::}
!{(0,0) }*{\bullet}="v0"
!{(-2,1) }*{\bullet}="v1"
!{(-3,2) }*{\bullet}="v2"
!{(-3,4) }*{\bullet}="vn-2"
!{(-2,5) }*{\bullet}="vn-1"
!{(1,6) }*{\bullet}="vn"
!{(0.3,0) }*{v_0}="l0"
!{(-1.7,1.1) }*{v_1}="l1"
!{(-2.7,2) }*{v_2}="l2"
!{(-2.5,3.9) }*{v_{n-2}}="ln-2"
!{(-1.5,4.8) }*{v_{n-1}}="ln-1"
!{(1.3,6.1) }*{v_n}="ln"
"v0"-"v1"
"v0"-"vn"
"v1"-"v2"
"v2"-@{.}"vn-2"
"vn-2"-"vn-1"
"vn-1"-"vn"
}
  }
  \caption{The polygon $\PP(E)$.}
  \label{fig:polygon_hn}
\end{figure}

Let $E \in \AA$ be non-zero. The goal is to construct a Harder-Narasimhan filtration for $E$. By $\HH(E)$ we denote the convex hull of the set of all $Z(F)$ for $F \subset E$. By Lemma \ref{lem:bounded_degree} this set is bounded from the left. By $\HH_l$ we denote the half plane to the left of the line between $Z(E)$ and $0$. If $E$ is semistable, we are done. Otherwise the set $\PP(E) = \HH(E) \cap \HH_L$ is a convex polygon (see Figure \ref{fig:polygon_hn}).

Let $v_0 = 0, v_1, \ldots, v_n = Z(E)$ be the extremal vertices of $\PP(E)$ in order of ascending imaginary part. 
Choose $F_i \subset E$ with $Z(F_i) = v_i$ for $i = 1, \ldots, n-1$. We will now prove the following three claims to conclude the proof.
\begin{enumerate}
\item The inclusion $F_{i-1} \subset F_i$ holds for all $i = 1, \ldots, n$.
\item The object $G_i = F_i / F_{i-1}$ is semistable for all $i = 1, \ldots, n$.
\item The inequalities $M(G_1) > M(G_2) > \ldots > M(G_n)$ hold.
\end{enumerate}

By definition of $\HH(E)$ both $Z(F_{i-1} \cap F_i)$ and $Z(F_{i-1} + F_i)$ have to lie in $\HH(E)$. Moreover, we have $R(F_{i-1} \cap F_i) \leq R(F_{i-1}) < R(F_i) \leq R(F_{i-1} + F_i))$. To see part (1) observe that
\[
Z(F_{i-1} + F_i) - Z(F_{i-1} \cap F_i) = (v_{i-1} - Z(F_{i-1} \cap F_i)) + (v_i - Z(F_{i-1} \cap F_i)). 
\]
This is only possible if $Z(F_{i-1} \cap F_i) = Z(F_{i-1})$ and $Z(F_{i-1} + F_i)) = Z(F_i)$ (see Figure \ref{fig:inclusion_hn}).

\begin{figure}[ht]
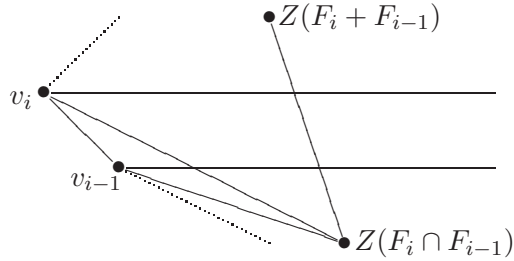

  \centerline{
  \xygraph{
!{<0cm,0cm>;<1cm,0cm>:<0cm,1cm>::}
!{(0,0) }*{\bullet}="intersection"
!{(-3,1) }*{\bullet}="vi-1"
!{(-4,2) }*{\bullet}="vi"
!{(-1,3) }*{\bullet}="sum"
!{(-3,3) }*{}="upper_limit"
!{(-1,0) }*{}="lower_limit"
!{(2,2) }*{}="upper_ray_right"
!{(2,1) }*{}="lower_ray_right"
!{(1.2,0) }*{Z(F_i \cap F_{i-1})}="lintersection"
!{(-3.3,0.8) }*{v_{i-1}}="lvi-1"
!{(-4.3,1.9) }*{v_i}="lvi"
!{(0.2,3) }*{Z(F_i + F_{i-1})}="lsum"
"vi-1"-"vi"
"vi-1"-@{.}"lower_limit"
"vi"-@{.}"upper_limit"
"vi"-"upper_ray_right"
"vi-1"-"lower_ray_right"
"intersection"-"vi-1"
"intersection"-"vi"
"intersection"-"sum"
}
  }
  \caption{Vectors adding up within a convex set.}
  \label{fig:inclusion_hn}
\end{figure}

This implies $Z(F_{i-1}/(F_{i-1} \cap F_i)) = 0$ and the fact that $Z$ is a stability function shows $F_{i-1} = F_{i-1} \cap F_i$. This directly leads to (1).

Let $\overline{A} \subset G_i$ be a non zero subobject with preimage $A \subset F_i$. Then $Z(A)$ has to be in $\HH(E)$ and $R(F_{i-1}) \leq R(A) \leq R(F_i)$. That implies $Z(A) - Z(F_{i-1})$ has smaller or equal slope than $Z(F_i) - Z(F_{i-1})$. But this is the same as $M(\overline{A}) \leq M(G_i)$, proving (2).

\begin{figure}[ht]
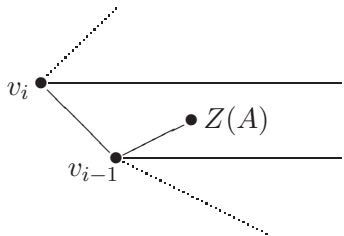

  \centerline{
  \xygraph{
!{<0cm,0cm>;<1cm,0cm>:<0cm,1cm>::}
!{(-2,1) }*{\bullet}="vi-1"
!{(-1,1.5) }*{\bullet}="ZA"
!{(-3,2) }*{\bullet}="vi"
!{(-2,3) }*{}="upper_limit"
!{(0,0) }*{}="lower_limit"
!{(1,2) }*{}="upper_ray_right"
!{(1,1) }*{}="lower_ray_right"
!{(-0.4,1.5) }*{Z(A)}="lZA"
!{(-2.3,0.8) }*{v_{i-1}}="lvi-1"
!{(-3.3,1.9) }*{v_i}="lvi"
"vi-1"-"vi"
"vi-1"-@{.}"lower_limit"
"vi"-@{.}"upper_limit"
"vi"-"upper_ray_right"
"vi-1"-"lower_ray_right"
"vi-1"-"ZA"
}
  }
  \caption{Subobjects of $G_i$ have smaller or equal slope.}
  \label{fig:semistability_hn}
\end{figure}

The slope $M(G_i)$ is the slope of the line through $v_i$ and $v_{i-1}$. Convexity of $\PP(E)$ implies (3).
\end{proof}

\begin{exercise}
The goal of this exercise is to remove the condition that the image of $D$ is discrete in the proof of Proposition \ref{prop:hn_exists}. Show that there are objects $F_i \subset E$ with $Z(F_i) = v_i$ for $i = 1, \ldots, n-1$. \emph{Hint: By definition of $\PP(E)$ there is a sequence of objects $F_{j,i} \subset E$ such that
\[
\lim_{j \to \infty} Z(F_{j,i}) = Z(F_i).
\]
Since $R$ has discrete image, we can assume $R(F_{j,i}) = \Im v_i$ for all $j$. Replace the $F_{j,i}$ by an ascending chain of objects similarly to the proof of Lemma \ref{lem:bounded_degree}.}
\end{exercise}

\begin{ex}\label{ex:ExamplesStability}
For the stability functions in Example \ref{ex:StabilityCurvesAgain} (1) and (3) Harder-Narasimhan filtrations exist.
\end{ex}

We conclude the section with an example coming from representation theory.

\begin{exercise}
\label{ex:Quiver}
Let $A$ be a finite-dimensional algebra over $\C$ and $\AA=\mod\text{-}A$ be the category of finitely dimensional (right) $A$-modules. We denote the simple modules by $S_1, \ldots, S_m$. Pick $Z_i \in \C$ for $i = 1, \ldots, m$, where $\Im(Z_i) \geq 0$ holds and if $\Im Z_i  = 0$ holds, then we have $\Re Z_i < 0$.
\begin{enumerate}
\item Show that there is a unique homomorphism $Z: K_0(\AA) \to \C$ with $Z(S_i) = Z_i$.
\item Show that $(\AA, Z)$ is a stability condition.
\end{enumerate}
The corresponding projective moduli spaces of semistable $A$-modules were constructed in \cite{Kin94:moduli_quiver_reps}.
\end{exercise}

\begin{rmk}
\label{rmk:WeakStability}
There is also a weaker notion of stability on abelian categories, with similar properties.
We say that a homomorphism $Z$ is a \emph{weak stability function} if, for all non-zero $E\in\AA$, we have
\[
\Im Z(E) \geq 0, \, \, \text{ and } \, \,  \Im Z(E) = 0 \, \Rightarrow \, \Re Z(E) \leq 0.
\]
Semistable objects can be defined similarly, and Proposition \ref{prop:hn_exists} still holds. The proof can be extended to this more general case by demanding that the $F_i$ are maximal among those objects satisfying $Z(F_i) = v_i$. A \emph{weak stability condition} is then defined accordingly.

For example, slope stability of sheaves is coming from a weak stability condition on $\Coh(X)$. This is going to be useful for Bridgeland stability on higher dimensional varieties (see Section \ref{sec:P3}, or \cite{BMT14:stability_threefolds, BMS16:abelian_threefolds, PT19:bridgeland_moduli_properties}).
\end{rmk}


\section{Bridgeland Stability}
\label{sec:bridgeland_stability}

In this section we review the basic definitions and results in the general theory of Bridgeland stability conditions. The key notions are the heart of a bounded t-structure and slicings recalled in Section \ref{subsec:heart}. We then introduce the two equivalent definitions in Section \ref{subsec:BridgelandDefinition}. Bridgeland's Deformation Theorem, together with a sketch of its proof, is in Section \ref{subsec:BridgelandDefoThm}. Finally, we discuss moduli spaces in Section \ref{subsec:moduli} and the fundamental wall and chamber structure of the space of stability conditions in Section \ref{subsec:WallChamberGeneral}. Some of the proofs we sketch here will be fully proved in the case of surfaces in the next section.

Throughout this section, we let $X$ be a smooth projective variety over $\C$. We also denote the bounded derived category of coherent sheaves on $X$ by $\Db(X):=\Db(\Coh(X))$. A quick introduction to bounded derived categories can be found in Appendix \ref{sec:derived_categories}. The results in this section are still true in the more general setting of triangulated categories.

\subsection{Heart of a bounded t-structure and slicing}\label{subsec:heart}

For the general theory of t-structures we refer to \cite{BBD82:pervers_sheaves}.
In this section we content ourselves to speak about the heart of a bounded t-structure, which in this case is equivalent.

\begin{defn}
\label{def:heart}
The \emph{heart of a bounded t-structure} on $\Db(X)$ is a
full additive subcategory $\AA \subset \Db(X)$ such that
\begin{itemize}
  \item for integers $i > j$ and $A,B \in \AA$, the vanishing
  $\Hom(A[i],B[j]) = \Hom(A, B[j-i]) = 0$ holds, and
  \item for all $E \in \Db(X)$ there are integers $k_1 > \ldots > k_m$, objects $E_i \in \Db(X)$, $A_i \in \AA$ for $i = 1, \ldots, m$ and a collection of triangles
  \[
  \xymatrix{
  0=E_0 \ar[r] & E_1 \ar[r] \ar[d] & E_2 \ar[r] \ar[d] & \dots \ar[r] & E_{m-1}
  \ar[r] \ar[d] & E_m = E \ar[d] \\
  & A_1[k_1] \ar@{-->}[lu] & A_2[k_2] \ar@{-->}[lu] & & A_{m-1}[k_{m-1}]
  \ar@{-->}[lu] & A_m[k_m]. \ar@{-->}[lu] }
  \]
\end{itemize}
\end{defn}

\begin{lem}
The heart $\AA$ of a bounded t-structure in $\Db(X)$ is abelian.
\end{lem}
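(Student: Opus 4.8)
The plan is to verify the standard fact that the heart of a bounded t-structure is an abelian category by checking that it has kernels and cokernels and that every monomorphism is a kernel and every epimorphism is a cokernel (i.e., the canonical map from coimage to image is an isomorphism). The key tool throughout is that $\AA$ sits inside the triangulated category $\Db(X)$, so I would build the relevant kernels and cokernels by completing morphisms to distinguished triangles and then extracting the pieces of the resulting objects that lie in $\AA$.

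First I would record the basic consequence of the defining axioms: the decomposition in the second bullet of Definition \ref{def:heart} gives, for each $E \in \Db(X)$, well-defined cohomology objects $\HH^i(E) \in \AA$ (the factors $A_i[k_i]$ placed in the appropriate degree), and these are functorial. The orthogonality in the first bullet shows that for $A, B \in \AA$ one has $\Hom(A, B[i]) = 0$ for $i < 0$. I would then establish the central computational lemma: given a morphism $f \colon A \to B$ in $\AA$, complete it to a distinguished triangle $A \to B \to C \to A[1]$ in $\Db(X)$, and show using the long exact sequence in cohomology together with the vanishing $\HH^i(A) = \HH^i(B) = 0$ for $i \neq 0$ that $C$ has cohomology concentrated in degrees $0$ and $1$. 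Concretely, $\HH^i(C) = 0$ unless $i \in \{-1, 0\}$ after a shift, so $C$ is an extension of $\HH^0(C) \in \AA$ by $\HH^{-1}(C)[1]$ with $\HH^{-1}(C) \in \AA$.

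Next I would identify $\ker f := \HH^{-1}(C)$ and $\cok f := \HH^0(C)$ and check the universal properties. The long exact cohomology sequence associated to the triangle reads
\[
0 \to \HH^{-1}(C) \to A \xrightarrow{f} B \to \HH^0(C) \to 0
\]
as a sequence in $\AA$, where exactness is interpreted in $\AA$ using that the connecting maps vanish by the concentration of cohomology. Verifying that $\HH^{-1}(C) \to A$ is genuinely a categorical kernel of $f$ in $\AA$ — and dually for the cokernel — requires lifting test morphisms against the triangle and using the orthogonality axiom to see that the required factorizations exist and are unique. I would then check that $f$ being a monomorphism in $\AA$ forces $\ker f = 0$, hence (from the triangle) realizes $A$ as the kernel of $B \to \cok f$; this gives that every monomorphism is a kernel, and dually every epimorphism is a cokernel.

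The main obstacle I expect is proving that the canonical morphism $\mathrm{coim}(f) \to \mathrm{im}(f)$ is an isomorphism, equivalently that the image computed via the cokernel of the kernel agrees with the kernel of the cokernel. The clean way to handle this is to use the octahedral axiom applied to the composition built from the triangle for $f$, producing a candidate image object $I$ with $\HH^i(I) = 0$ for $i \neq 0$, and then to show $A \onto I \into B$ factors $f$ with $A \to I$ epi and $I \to B$ mono in $\AA$. The delicate point is keeping track of which long exact sequences are exact in $\AA$ versus merely giving cohomology in $\Db(X)$, and ensuring all connecting homomorphisms vanish; this is purely a matter of repeated application of the two axioms in Definition \ref{def:heart}, but it is where the bookkeeping is heaviest. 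Additivity of $\AA$ is given, so once kernels, cokernels, and the image-coimage isomorphism are in place, $\AA$ is abelian.
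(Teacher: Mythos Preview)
Your proposal is correct and follows essentially the same approach as the paper: complete $f$ to a triangle $A \to B \to C \to A[1]$, observe that $C$ has cohomology only in degrees $-1$ and $0$, and set $\Ker(f)=\HH^{-1}(C)$, $\Cok(f)=\HH^0(C)$. The paper phrases this via the truncation triangle $C_{>0}\to C\to C_{\leq 0}$ and argues that $C_{>0}\in\AA[1]$ and $C_{\leq 0}\in\AA$, which amounts to the same thing; it stops there as a sketch, whereas you go on to outline the verification of the universal properties and the coimage--image isomorphism via the octahedral axiom, which is the standard completion of the argument.
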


\begin{proof}[Sketch of the proof]
Exact sequences in $\AA$ are nothing but exact triangles in $\Db(X)$ all of whose objects are in $\AA$. To define kernel and cokernel of a morphism $f:A \to B$, for $A,B\in\AA$ we can proceed as follows. Complete the morphism $f$ to an exact triangle
\[
A \stackrel{f}{\to} B \to C \to A[1].
\]
By the definition of a heart, we have a triangle
\[
C_{>0} \to C \to C_{\leq 0} \to C_{>0}[1],
\]
where $C_{>0}$ belongs to the category generated by extensions of $\AA[i]$, with $i>0$, and $C_{\leq0}$ to the category generated by extensions of $\AA[j]$, with $j\leq 0$.

Consider the composite map $B\to C\to C_{\leq 0}$.
Then an easy diagram chase shows that $C_{\leq 0}\in\AA$.
Similarly, the composite map $C_{>0}\to C \to A[1]$ gives $C_{>0}\in\AA[1]$.
Then $\Ker(f)=C_{>0}[-1]$ and $\Cok(f)=C_{\leq 0}$.
\end{proof}

The standard example of a heart of a bounded t-structure on $\Db(X)$ is given by $\Coh(X)$. If $\Db(\AA) \cong \Db(X)$, then $\AA$ is the heart of a bounded t-structure. The converse does not hold (see Exercise \ref{exer:Boston1222016} below), but this is still one of the most important examples for having an intuition about this notion.
In particular, given $E\in\Db(X)$, the objects $A_1,\ldots,A_n\in\AA$ in the definition of a heart are its \emph{cohomology objects} with respect to $\AA$. They are denoted $A_1=:\HH^{-k_1}_\AA(E),\ldots,A_n=:\HH^{-k_n}_\AA(E)$. In the case $\AA=\Coh(X)$, these are nothing but the cohomology sheaves of a complex. These cohomology objects have long exact sequences, just as cohomology sheaves of a complexes.

\begin{exercise}\label{exer:Boston1222016}
Let $X=\mathbb{P}^1$.
Let $\AA\subset\Db(\P^1)$ be the additive subcategory generated by extensions by $\OO_{\P^1}[2]$ and $\OO_{\P^1}(1)$.
\begin{enumerate}
\item Let $A\in\AA$. Show that there exists $a_0,a_1\geq0$ such that
\[
A \cong \OO_{\P^1}^{\oplus a_0}[2] \oplus \OO_{\P^1}(1)^{\oplus a_1}.
\]
\item Show that $\AA$ is the heart of a bounded t-structure on $\Db(\P^1)$.
\emph{Hint: Use Theorem \ref{thm:P1}.}
\item Show that, as an abelian category, $\AA$ is equivalent to the direct sum of two copies of the category of finite-dimensional vector spaces. 
\item Show that $\Db(\AA)$ is not equivalent to $\Db(\P^1)$.
\end{enumerate}
\end{exercise}

\begin{exercise}
Let $\AA\subset\Db(X)$ be the heart of a bounded t-structure.
Show that there is a natural identification between Grothendieck groups
\[
K_0(\AA) = K_0(X) = K_0(\Db(X)).
\]
\end{exercise}

A slicing further refines the heart of a bounded t-structure.

\begin{defn}[\cite{Bri07:stability_conditions}]
A \emph{slicing} $\PP$ of $\Db(X)$ is a collection of subcategories $\PP(\phi) \subset
\Db(X)$ for all $\phi \in \mathbb{R}$ such that
\begin{itemize}
  \item $\PP(\phi)[1] = \PP(\phi + 1)$,
  \item if $\phi_1 > \phi_2$ and $A \in \PP(\phi_1)$, $B \in \PP(\phi_2)$ then
  $\Hom(A,B) = 0$,
  \item for all $E \in \Db(X)$ there are real numbers $\phi_1 > \ldots > \phi_m$, objects $E_i \in \Db(X)$ for $i = 1, \ldots, m$, and a collection of triangles
  \[
  \xymatrix{
  0=E_0 \ar[r] & E_1 \ar[r] \ar[d] & E_2 \ar[r] \ar[d] & \ldots \ar[r] & E_{m-1}
  \ar[r] \ar[d] & E_m = E \ar[d] \\
  & A_1 \ar@{-->}[lu] & A_2 \ar@{-->}[lu] & & A_{m-1} \ar@{-->}[lu] & A_m
  \ar@{-->}[lu] }
  \]
  where $A_i \in P(\phi_i)$.
\end{itemize}
For this filtration of an element $E \in \Db(X)$ we write $\phi^-(E) := \phi_m$
and $\phi^+(E) := \phi_1$. Moreover, for $E \in P(\phi)$ we call $\phi(E):=\phi$ the \emph{phase} of $E$.
\end{defn}

The last property is called the \emph{Harder-Narasimhan filtration}. By setting
$\AA := \PP((0,1])$ to be the extension closure of the subcategories $\{\PP(\phi):\phi \in (0,1]\}$ one gets the heart of a bounded t-structure from a slicing\footnote{More generally, by fixing $\phi_0\in\R$, the category $\PP((\phi_0,\phi_0+1])$ is also a heart of a bounded t-structure. A slicing is a family of hearts, parameterized by $\R$.}. In both cases of a slicing and the heart of a bounded t-structure the Harder-Narasimhan filtration is unique similarly to Exercise \ref{exercise:hn_filtration}.

\begin{exercise}\label{exercise:tStructuresContained}
Let $\AA,\BB\subset \Db(X)$ be hearts of bounded t-structures. Show that if $\AA \subset \BB$, then $\AA=\BB$. Similarly, if $\PP$ and $\PP'$ are two slicings and $\PP'(\phi) \subset \PP(\phi)$ for all $\phi \in \R$, then $\PP$ and $\PP'$ are identical.
\end{exercise}

\subsection{Bridgeland stability conditions: definition}\label{subsec:BridgelandDefinition}

The definition of a Bridgeland stability condition will depend on some additional data.
More precisely, we fix a finite rank lattice $\Lambda$ and a surjective group homomorphism
\[
v\colon K_0(X) \onto \Lambda.
\]
We also fix a norm $\| \cdot \|$ on $\Lambda_\R$. Recall that all choices of norms are equivalent here and subsequent definitions will not depend on it.

\begin{ex}
\label{ex:NumericalGrothendieck}
Let $T$ be the set of $v \in K_0(X)$ such that $\chi(v,w) = 0$ for all $w \in K_0(X)$. Then the \emph{numerical Grothendieck group} $K_{\num}(X)$ is defined as $K_0(X)/T$.
We have that $K_{\num}(X)$ is a finitely generated $\Z$-lattice.
The choice of $\Lambda=K_{\num}(X)$ together with the natural projection is an example of a pair $(\Lambda,v)$ as before.

If $X$ is a surface, then $K_{\num}(X)$ is nothing but the image of the Chern character map
\[
\ch\colon K_0(X) \to H^*(X,\Q),
\]
and the map $v$ is the Chern character.
For K3 or abelian surfaces, $K_{\num}(X) = H^*_{\mathrm{alg}}(X,\Z) = H^0(X,\Z) \oplus \NS(X) \oplus H^4(X,\Z)$.
\end{ex}

\begin{defn}[\cite{Bri07:stability_conditions}]\label{def:Bridgeland1}
A \emph{Bridgeland stability condition} on $\Db(X)$ is a pair $\sigma
= (\PP,Z)$ where:
\begin{itemize}
\item $\PP$ is a slicing of $\Db(X)$, and
\item $Z\colon \Lambda \to \C$ is an additive
homomorphism, called the \emph{central charge},
\end{itemize}
satisfying the following properties:
\begin{enumerate}
\item For any non-zero $E\in\PP(\phi)$, we have
\[
Z(v(E)) \in \R_{>0} \cdot e^{\sqrt{-1} \pi \phi}.
\]
\item (\emph{support property})
\[
C_{\sigma}:= \inf \left\{ \frac{|Z(v(E))|}{\| v(E) \|}\, : \, 0\neq E \in \PP(\phi), \phi \in \R \right\} > 0.
\]
\end{enumerate}
\end{defn}

As before, the heart of a bounded t-structure can be defined by $\AA := P((0,1])$.
Objects in $\PP(\phi)$ are called $\sigma$-semistable of phase $\phi$.
The \emph{mass} of an object $E \in \Db(X)$ is defined as $m_\sigma(E)=\sum |Z(A_j)|$, where $A_1,\ldots,A_m$ are the Harder-Narasimhan factors of $E$.

\begin{exercise}\label{exercise:StableObjectsBridgelandStability}
Let $\sigma=(\PP,Z)$ be a stability condition.
Show that the category $\PP(\phi)$ is abelian and of finite length (i.e., it is noetherian and artinian). \emph{Hint: Use the support property to show there are only finitely many classes of subobjects in $\PP(\phi)$ of a given object in $\PP(\phi)$.}
\end{exercise}

The simple object in $\PP(\phi)$ are called \emph{$\sigma$-stable}.
As in the case of stability of sheaves on curves (see Remark \ref{rmk:JHfiltrations}), since the category $\PP(\phi)$ is of finite length, $\sigma$-semistable objects admit finite  \emph{Jordan-H\"older filtrations} in $\sigma$-stable ones of the same phase.
The notion of \emph{S-equivalent} $\sigma$-semistable objects is defined analogously as well.

The support property was introduced in \cite{KS08:wall_crossing}. It is equivalent to Bridgeland's notion of a full locally-finite stability condition in \cite[Definition 4.2]{Bri08:stability_k3} (see \cite[Proposition B.4]{BM11:local_p2}).
There is an equivalent formulation: There is a symmetric bilinear form $Q$ on $\Lambda_\R$ such that
\begin{enumerate}
  \item all semistable objects $E\in \PP$ satisfy the inequality $Q(v(E),v(E))
  \geq 0$ and
  \item all non zero vectors $v \in \Lambda_\R$ with $Z(v) = 0$ satisfy $Q(v, v) < 0$.
\end{enumerate}
The inequality $Q(v(E),v(E)) \geq 0$ can be viewed as some generalization of the classical Bogomolov inequality for vector bundles; we will see the precise relation in Section \ref{sec:surfaces}.
By abuse of notation we will often forget $v$ from the notation.
For example, we will write $Q(E,F)$ instead of $Q(v(E), v(F))$.
We will also use the notation $Q(E) = Q(E,E)$.

\begin{exercise}
Show that the previous two definitions of the support property are equivalent. \emph{Hint: Use $Q(w) = \tfrac{1}{C^2} |Z(w)|^2 - ||w||^2$.}
\end{exercise}

Definition \ref{def:Bridgeland1} is short and good for abstract argumentation, but it is not very practical for finding concrete examples. The following lemma shows that this definition of a stability condition on $\Db(X)$ and the one given in the previous section for an arbitrary abelian category $\AA$ are closely related.

\begin{lem}[{\cite{Bri07:stability_conditions}[Proposition 5.3]}]
Giving a stability condition $(\PP,Z)$ on $\Db(X)$ is equivalent to giving a stability condition $(\AA, Z)$ in the sense of Definition \ref{defn:stability_abelian}, where $\AA$ is the heart of a bounded t-structure on $\Db(X)$ together with the support property
\[
\inf \left\{ \frac{|Z(v(E))|}{\| v(E) \|}\, : \, 0\neq E \in \AA\ \mathrm{ semistable}\right\} > 0.
\]
\end{lem}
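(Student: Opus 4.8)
\emph{Plan.} The statement asserts that the two constructions passing between a slicing and the heart of a bounded t-structure are mutually inverse and carry one set of axioms to the other. The first map sends a slicing $\PP$ to the pair $(\AA,Z)$ with $\AA:=\PP((0,1])$, which is already known to be the heart of a bounded t-structure. The second map sends a heart $\AA$ with stability function $Z$ to the slicing defined by declaring, for $\phi\in(0,1]$, that $\PP(\phi)$ consists of $0$ together with the $Z$-semistable objects $E\in\AA$ with $Z(E)\in\R_{>0}\cdot e^{\sqrt{-1}\pi\phi}$, and then extending to all of $\R$ by the rule $\PP(\phi+1):=\PP(\phi)[1]$. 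The conceptual key, used throughout, is the dictionary between the phase and the generalized slope: writing $Z(E)=|Z(E)|\,e^{\sqrt{-1}\pi\phi}$ one has $M(E)=-\cot(\pi\phi)$, a strictly increasing bijection from $(0,1]$ onto $(-\infty,+\infty]$. Hence ``$Z$-semistable of phase $\phi$'' in $\AA$ and ``semistable in the slicing sense with value on the ray of argument $\pi\phi$'' describe the same objects, and the two constructions are visibly inverse once the axioms are checked.

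For the direction from a slicing to $(\AA,Z)$, I would first verify that $Z$ is a stability function on $\AA$ in the sense of Definition \ref{def:StabilityFunction}. Every object of $\AA=\PP((0,1])$ is an iterated extension of objects of $\PP(\phi)$ with $\phi\in(0,1]$, for which $Z$ takes values in the closed upper half plane; by additivity $\Im Z(E)\ge 0$, and $\Im Z(E)=0$ forces all slicing Harder--Narasimhan factors to lie in $\PP(1)$, so $Z(E)$ is a sum of negative reals and $\Re Z(E)<0$. The Harder--Narasimhan filtration required by Definition \ref{defn:stability_abelian} is exactly the slicing filtration of $E\in\AA$: its factors $A_i\in\PP(\phi_i)$ automatically have $\phi_i\in(0,1]$, hence lie in $\AA$, are $Z$-semistable, and have strictly decreasing phases, which by the slope dictionary is strictly decreasing slope. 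Finally the support property transfers unchanged, since the semistable objects of $(\AA,Z)$ are precisely the members of the $\PP(\phi)$ that lie in $\AA$, and both $|Z(v(E))|$ and $\|v(E)\|$ are invariant under shift.

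For the direction from $(\AA,Z)$ to a slicing, the only substantial axiom is the vanishing $\Hom(A,B)=0$ for $A\in\PP(\phi_1)$, $B\in\PP(\phi_2)$ with $\phi_1>\phi_2$. Writing $\PP(\psi)=\PP(\psi-k)[k]\subset\AA[k]$ for $\psi\in(k,k+1]$, I would split into two cases. If $A$ and $B$ lie in different shifts $\AA[k_1]$, $\AA[k_2]$ with $k_1>k_2$, then $\Hom(A,B)=\Hom(A',B'[k_2-k_1])=0$ by the orthogonality axiom of a bounded t-structure (the shift is negative). If they lie in the same shift $\AA[k]$, then after shifting both into $\AA$ they are $Z$-semistable with $\phi_1-k>\phi_2-k$ in $(0,1]$, hence $M(A[-k])>M(B[-k])$, and the $\Hom$ vanishes by Schur's property, Lemma \ref{lem:Schur}. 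The slicing Harder--Narasimhan filtration of an arbitrary $E\in\Db(X)$ is then assembled by first taking the cohomology objects $\HH^i_\AA(E)\in\AA$ provided by the bounded t-structure and refining each by its abelian Harder--Narasimhan filtration from Proposition \ref{prop:hn_exists}; since distinct cohomological degrees occupy disjoint phase intervals and the in-heart factors are strictly slope-decreasing, the concatenated factors have strictly decreasing phase. The support property again matches by shift-invariance of $|Z|$ and $\|\cdot\|$.

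I expect the main obstacle to be not any single deep step but the careful bookkeeping tying the two semistability notions together: one must check that a subobject test in the abelian category $\AA$ is equivalent to the slicing inequalities, which amounts to showing that the top slicing Harder--Narasimhan factor of a destabilizing $F\subset E$ would produce a nonzero map out of a strictly higher-phase semistable object, contradicting Lemma \ref{lem:Schur}. Once the phase--slope monotonicity and this compatibility are in place, verifying that the two constructions are inverse and that the remaining axioms hold is routine, and the equivalence of the two forms of the support property is immediate from shift-invariance.
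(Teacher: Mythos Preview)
Your proposal is correct and follows exactly the same approach as the paper: set $\AA=\PP((0,1])$ in one direction, and in the other define $\PP(\phi)$ for $\phi\in(0,1]$ as the $Z$-semistable objects of that phase and extend by shift. The paper's proof is in fact much terser than yours---it merely names the two constructions without verifying the axioms---so your added detail (checking that $Z$ is a stability function on $\AA$, that the slicing HN filtration restricts to the abelian one, the $\Hom$-vanishing via Lemma~\ref{lem:Schur} and the t-structure orthogonality, and the assembly of the global HN filtration from cohomology objects) is welcome and correct.
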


\begin{proof}
Assume we have a stability conditions $(\PP,Z)$ on $\Db(X)$. Then we can define a heart $\AA = \PP((0,1])$. Then $(\AA, Z)$ is a stability conditions in the sense of Definition \ref{defn:stability_abelian} satisfying the support property. The other way around, we can define $\PP(\phi)$ to be the category of semistable objects of phase $\phi$ in $\AA$ whenever $\phi \in (0,1]$. This definition can be extended to any $\phi \in \R$ via the property $\PP(\phi)[1] = \PP(\phi + 1)$.
\end{proof}

From here on we will interchangeably use $(\PP,Z)$ and $(\AA, Z)$ to denote a stability condition.

\begin{ex}\label{ex:BridgelandStabilityCurves}
Let $C$ be a curve.
Then the stability condition in Example \ref{ex:StabilityCurvesAgain} (1) gives a Bridgeland stability condition on $\Db(C)$.
The lattice $\Lambda$ is nothing but $H^0(C,\Z)\oplus H^2(C,\Z)$, the map $v$ is nothing but $(r,d)$, and we can choose $Q = 0$.
\end{ex}

\begin{exercise}
Show that the stability condition in Exercise \ref{ex:Quiver} gives a Bridgeland stability condition on $\Db(A)=\Db(\mod\text{-}A)$. What is $Q$?
\end{exercise}

\subsection{Bridgeland's deformation theorem}\label{subsec:BridgelandDefoThm}

The main theorem in \cite{Bri07:stability_conditions} is the fact that the set of stability conditions can be given the structure of a complex manifold.

Let $\Stab(X)$ be the set of stability conditions on $\Db(X)$ (with respect to $\Lambda$ and $v$). This set can be given a topology as the coarsest topology such that for any $E \in \Db(X)$ the maps $(\AA,Z) \mapsto Z$, $(\AA, Z) \mapsto \phi^+(E)$ and $(\AA, Z) \mapsto \phi^-(E)$ are continuous. Equivalently, the topology is induced by the generalized (i.e., with values in $[0,+\infty]$) metric
\[
d(\sigma_1,\sigma_2) = \underset{0\neq E \in \Db(X)}{\sup}\left\{|\phi^+_{\sigma_1}(E)-\phi^+_{\sigma_2}(E)|,\, | \phi^-_{\sigma_1}(E) - \phi^-_{\sigma_2}(E)|,\, \|Z_1-Z_2\| \right\},
\]
for $\sigma_i=(\PP_i,Z_i)\in\Stab(X)$, $i=1,2$; here, by abuse of notation, $\| \cdot \|$ also denotes the induced operator norm on $\Hom(\Lambda,\C)$.

\begin{rmk}\label{rmk:GroupActions}
There are two group actions on the space of stability conditions.

(1) The universal cover $\widetilde{\GL}^+(2,\R)$ of $\GL^+(2,\R)$, the $2\times 2$ matrices with real entries and positive determinant, acts on the right of $\Stab(X)$ as follows.
We first recall the presentation
\[
\widetilde{\GL}^+(2,\R) = \left\{ (T,f)\,:\, \begin{array}{l}
f\colon \R\to\R\ \mathrm{increasing}, \ f(\phi+1)=f(\phi)+1\\
T\in\GL^+(2,\R)\\
f|_{\R/2\Z} = T|_{(\R^2 \backslash \{ 0 \})/\R_{>0}}
\end{array} \right\}
\]
Then $(T,f)$ acts on $(\PP,Z)$ by $(T,f)\cdot (\PP,Z) = (\PP',Z')$, where $Z'=T^{-1}\circ Z$ and $\PP'(\phi) = \PP(f(\phi))$.

(2) The group of exact autoequivalences $\Aut_\Lambda(\Db(X))$ of $\Db(X)$, whose action $\Phi_*$ on $K_0(X)$ is compatible with the map $v\colon K_0(X) \to \Lambda$, acts on the left of $\Stab(X)$ by $\Phi\cdot (\PP,Z) = (\Phi(\PP), Z \circ \Phi_*)$.
In \cite{BB13:autoequivalences_k3}, Bayer and Bridgeland use this action and a description of the geometry of the space of stability conditions on K3 surfaces of Picard rank $1$ to describe the full group of derived autoequivalences. This idea should work for all K3 surfaces, as envisioned by Bridgeland in \cite[Conjecture 1.2]{Bri08:stability_k3}.
\end{rmk}

\begin{thm}[\cite{Bri07:stability_conditions}]\label{thm:BridgelandMain}
The map $\ZZ\colon \Stab(X) \to \Hom(\Lambda, \C)$ given by $(\AA,Z) \mapsto Z$ is a local homeomorphism. In particular, $\Stab(X)$ is a complex manifold of dimension $\rk(\Lambda)$.
\end{thm}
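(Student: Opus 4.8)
The plan is to verify that $\ZZ$ is a local homeomorphism by showing, for each $\sigma=(\PP,Z)\in\Stab(X)$, that $\ZZ$ restricts to a homeomorphism from a small metric ball $B_\sigma(\epsilon)$ onto an open neighborhood of $Z$ in $\Hom(\Lambda,\C)$. Continuity of $\ZZ$ is immediate from the definition of the topology on $\Stab(X)$, so the work splits into three pieces: local injectivity of $\ZZ$, openness of the image together with a continuous local inverse $W\mapsto\sigma_W$, and finally the transfer of the complex structure. Throughout, the support property --- in its equivalent form as a quadratic form $Q$ with $Q(E)\geq 0$ on semistable objects and $Q<0$ on $\ker Z$ --- is what makes the deformation argument work, since it provides control uniform in the individual object.

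For local injectivity I would use the rigidity of phases: if $E$ is $\sigma$-semistable of phase $\phi$, then $Z(v(E))\in\R_{>0}\cdot e^{\sqrt{-1}\pi\phi}$, so $\phi$ is determined by $Z$ modulo $2$. Suppose $\sigma_1=(\PP_1,W)$ and $\sigma_2=(\PP_2,W)$ lie in $B_\sigma(\epsilon)$ with $\epsilon<\tfrac14$ and share the central charge $W$. Then the phase functions $\phi^{\pm}$ of the two slicings differ by less than $2\epsilon<1$ on every object, while the shared value $W$ determines, modulo $2$, the phase of any object that is semistable for either slicing. A short comparison of Harder--Narasimhan filtrations then forces $\PP_1(\psi)=\PP_2(\psi)$ for all $\psi$, where Exercise~\ref{exercise:tStructuresContained} is used to upgrade an inclusion of slicings to an equality.

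The main obstacle is the deformation step: given $W$ with $\|W-Z\|$ small, I must build a slicing $\PP'$ so that $(\PP',W)\in\Stab(X)$ and $d(\sigma,\sigma')$ is small. The key estimate comes from the support property: for a $\sigma$-semistable $E$ one has $|Z(v(E))|\geq C_\sigma\|v(E)\|$, so
\[
|W(v(E))-Z(v(E))|\leq \|W-Z\|\,\|v(E)\|\leq \frac{\|W-Z\|}{C_\sigma}\,|Z(v(E))|,
\]
which bounds the change in the argument of the central charge --- and hence in the prospective phase --- of every semistable class by a quantity tending to $0$ with $\|W-Z\|$, uniformly in $E$. The difficulty is that objects of $\AA=\PP((0,1])$ whose phase is near $0$ or $1$ may cross the boundary of the heart under the perturbation, so one cannot merely keep $\AA$ fixed and declare $W$ a new stability function on it. Instead I would construct $\PP'$ directly from the old slicing, re-sorting the old semistable pieces according to their $W$-phases and assembling the new Harder--Narasimhan filtrations; the uniform bound above guarantees that only finitely many phases enter any given object and that no semistable classes accumulate against $\ker W$, so that $Q$ continues to certify the support property for $(\PP',W)$.

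Finally, the assignment $W\mapsto\sigma_W$ produced above is continuous --- the same uniform estimates control $d(\sigma,\sigma_W)$ in terms of $\|W-Z\|$ --- and is inverse to $\ZZ$ on a neighborhood, so $\ZZ$ is an open map with continuous local inverse, that is, a local homeomorphism. Since $\Hom(\Lambda,\C)\cong\C^{\rk\Lambda}$ is a complex vector space, pulling back its complex structure through the local charts $\ZZ$ makes $\Stab(X)$ a complex manifold of dimension $\rk(\Lambda)$; compatibility of charts on overlaps is automatic, as the transition maps are restrictions of the identity on $\Hom(\Lambda,\C)$. I expect the construction of $\PP'$ to be by far the hardest part, since it requires simultaneously controlling the existence and finiteness of the new Harder--Narasimhan filtrations and the persistence of the support property.
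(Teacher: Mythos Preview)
Your overall architecture (continuity, local injectivity, deformation, complex structure) is sound, and your support-property estimate $|W(v(E))-Z(v(E))|\leq \tfrac{\|W-Z\|}{C_\sigma}|Z(v(E))|$ is exactly the right input. But your deformation step diverges from the paper's in a way worth flagging.

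You correctly identify the obstacle: objects of phase near $0$ or $1$ may leave the heart, so one ``cannot merely keep $\AA$ fixed''. The paper sidesteps this entirely. It first observes that $\sigma\mapsto C_\sigma$ is continuous, and then uses the $\widetilde{\GL}^+(2,\R)$-action (essentially a rotation by $\pi/2$) to reduce to the special case $\Im W=\Im Z$ with $\|W-Z\|<\epsilon C_\sigma$, $\epsilon<\tfrac18$. In that case the heart $\AA$ \emph{does} stay fixed, and the only thing to check is that $(\AA,W)$ has Harder--Narasimhan filtrations. This is done by the same convex-polygon argument as in Proposition~\ref{prop:hn_exists}: one uses the $Z$-HN filtration of a subobject $F\subset E$ together with the support property to bound $\Re W(F)$ below, and then bounds the mass $m_\sigma(F)$ of any extremal vertex, forcing finitely many classes.

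Your alternative --- building $\PP'$ by ``re-sorting the old semistable pieces according to their $W$-phases'' --- is closer in spirit to Bridgeland's original argument, and is viable, but as written it is a description of a hope rather than a construction: you have not said what the $\PP'$-semistable objects are, nor why the resulting filtrations terminate. The paper's reduction buys you exactly the simplification you said was unavailable.
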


\begin{proof}[Ideas from the proof]
We follow the presentation in \cite[Section 5.5]{Bay11:lectures_notes_stability} and we refer to it for the complete argument.
Let $\sigma=(\AA,Z)\in\Stab(X)$.
We need to prove that any group homomorphism $W$ near $Z$ extends to a stability condition near $\sigma$.

The first key property is the following (\cite[Lemma 5.5.4]{Bay11:lectures_notes_stability}): The function $C:\Stab(X)\to\R_{>0}$, $\sigma\mapsto C_\sigma$ is continuous, where $C_\sigma$ is the constant appearing in the support property in Definition \ref{def:Bridgeland1}.

By using this, and the $\widetilde{\GL}^+(2,\R)$-action described in Remark \ref{rmk:GroupActions} (by just rotating of $\pi/2$), it is possible to reduce to the following case:
We let $W\colon \Lambda\to\Z$ satisfy the assumptions
\begin{itemize}
\item $\Im W = \Im Z$;
\item $\| W- Z \| < \epsilon C_\sigma$, with $\epsilon<\frac{1}{8}$.
\end{itemize}

The claim is that $(\AA,W)\in\Stab(X)$.
To prove this, under our assumptions, the only thing needed is to show that HN filtrations exist with respect to $W$.
We proceed as in the proof of Proposition \ref{prop:hn_exists}.
We need an analogue of Lemma \ref{lem:bounded_degree} and to show that there are only finitely many vertices in the Harder-Narasimhan polygon.
The idea for these is the following.
Let $E\in\AA$.
\begin{itemize}
\item The existence of HN filtrations for $Z$ implies there exists a constant $\Gamma_E$ such that
\[
\Re Z(F) \geq \Gamma_E + m_\sigma(F),
\]
for all $F\subset E$.
\item By taking the HN filtration of $F$ with respect to $Z$, and by using the support property for $\sigma$, we get
\[
|\Re W(F_i) - \Re Z(F_i)| \leq \epsilon | Z(F_i) |,
\]
where the $F_i$'s are the HN factors of $F$ with respect to $Z$.
\item By summing up, we get
\[
\Re W(F) \geq \Re Z(F) - \epsilon m_\sigma(F) \geq \Gamma_E + (\underbrace{1-\epsilon}_{>0}) m_\sigma(F) > \Gamma_E,
\]
namely Lemma \ref{lem:bounded_degree} holds.
\item If $F\subset E$ is an extremal point of the polygon for $W$, then
\[
\max \{ 0,\Re W(E)\} > \Re W(F),
\]
and so
\[
m_\sigma(F) \leq \frac{\max \{ 0,\Re W(E)\} - \Gamma_E}{1-\epsilon}=: \Gamma_E'.
\]
\item Again, by taking the HN factors of $F$ with respect to $Z$, we get $|Z(F_i)|<\Gamma_E'$, and so by using the support property,
\[
\| F_i \| < \frac{\Gamma_E'}{C_\sigma}.
\]
\item From this we deduce there are only finitely many classes, and so finitely many vertices in the polygon. The proof of Proposition \ref{prop:hn_exists} applies now and this gives the existence of Harder-Narasimhan filtrations with respect to $W$.
\item The support property follows now easily.
\end{itemize}
\end{proof}

We can think about Bridgeland's main theorem more explicitly in terms of the quadratic form $Q$ appearing in the support property. This approach actually gives an ``$\epsilon$-free proof'' of Theorem \ref{thm:Gieseker} and appears in \cite{Bay16:short_proof}.
The main idea is contained in the following proposition, which is \cite[Proposition A.5]{BMS16:abelian_threefolds}. We will not prove this, but we will see it explicitly in the case of surfaces.

\begin{prop}\label{prop:ExplicitBridgelandMain}
Assume that $\sigma=(\PP,Z)\in\Stab(X)$ satisfies the support property with respect to a quadratic form $Q$ on $\Lambda_\R$.
Consider the open subset of $\Hom(\Lambda,\C)$ consisting of central charges on whose kernel $Q$ is negative definite, and let $U$ be the connected component containing $Z$. Let $\UU\subset\Stab(X)$ be the connected component of the preimage $\ZZ^{-1}(U)$ containing $\sigma$. Then:
\begin{enumerate}
\item The restriction $\ZZ|_{\UU}\colon \UU \to U$ is a covering map.
\item Any stability condition $\sigma'\in\UU$ satisfies the support property with respect to the same quadratic form $Q$.
\end{enumerate}
\end{prop}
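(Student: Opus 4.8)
The plan is to exploit the reformulation of the support property in terms of the quadratic form $Q$ recalled above: a stability condition $\sigma'=(\PP',Z')$ satisfies the support property with respect to $Q$ if and only if (a) every $\sigma'$-semistable object $E$ satisfies $Q(v(E))\geq 0$, and (b) $Q(w)<0$ for every non-zero $w\in\Ker Z'$. The point is that condition (b) for $Z'$ is, by definition, exactly the requirement $Z'\in U$, so the only content one must propagate across $\UU$ is condition (a). The geometric input I would isolate first is that $Q$ has signature $(2,\rk\Lambda-2)$: condition (b) forces $Q$ to be negative definite on the codimension-two subspace $\Ker Z$, so the positive index is at most $2$, and one checks it is exactly $2$. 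Consequently, for each phase $\phi$ the restriction of $Q$ to the hyperplane $\Lambda_\phi=\{w:Z'(w)\in\R\cdot e^{\sqrt{-1}\pi\phi}\}$ is Lorentzian of signature $(1,\rk\Lambda-2)$, so $\{w\in\Lambda_\phi:Q(w)\geq 0\}$ is a union of two opposite convex cones. I would record this as a lemma: if $v=\sum_i v_i$ with all $Z'(v_i)$ positive real multiples of a common $e^{\sqrt{-1}\pi\phi}$ and $Q(v_i)\geq 0$ for each $i$, then $Q(v)\geq 0$. This forward-cone convexity is the mechanism that makes $Q$ survive wall-crossing.

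For part (2), I would argue by a clopen (open-and-closed) argument. Let $V\subseteq\ZZ^{-1}(U)$ be the set of stability conditions satisfying the support property with respect to $Q$; since $\sigma\in V$, it suffices to show $V\cap\UU$ is open and closed in $\UU$. Closedness is the easier direction: condition (a) is a non-strict inequality that is stable under the limits controlled by the metric $d$, since a semistable object of a limiting stability condition is a limit of Harder--Narasimhan factors of nearby semistable objects with nearly parallel central charges, and the convexity lemma passes the inequality to the limit. For openness I would use Bridgeland's local homeomorphism (Theorem \ref{thm:BridgelandMain}): near $\sigma'\in V$ every stability condition $\sigma''$ arises from a small deformation of the central charge, and the only new semistable objects that appear are produced by crossing walls, where a previously semistable object acquires a Jordan--H\"older filtration whose factors are themselves semistable of equal phase. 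Applying the convexity lemma to these factors shows the new semistable classes still satisfy $Q\geq 0$, so $\sigma''\in V$. Hence $V\cap\UU$ is clopen and non-empty, thus equals $\UU$, which is precisely statement (2).

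For part (1), Bridgeland's theorem already gives that $\ZZ$ is a local homeomorphism, so $\ZZ|_{\UU}\colon\UU\to U$ is one as well, and it remains to upgrade this to a covering map. Since $U$ is connected and locally path-connected, it suffices to verify the path-lifting property. Given a path $\gamma\colon[0,1]\to U$ with $\gamma(0)=Z$, I would lift it on a maximal half-open interval $[0,t_0)$ using the local homeomorphism and then show the lift extends to $t_0$. The key point is a uniform lower bound: because every stability condition along the lift satisfies the support property with the same $Q$ by part (2), and the central charges range over the compact set $\gamma([0,1])$ on which $Q|_{\Ker\gamma(t)}$ is uniformly negative definite, the support constants $C_{\sigma'}$ are bounded below by a single $C_0>0$. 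With this uniform bound the lifts form a Cauchy family as $t\to t_0$ for the metric $d$, hence converge to a stability condition with central charge $\gamma(t_0)$, extending the lift past $t_0$ and contradicting maximality unless $t_0=1$. Unique path-lifting then yields the covering map property.

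The main obstacle I anticipate is the openness step in part (2): one must control how the set of semistable objects changes under an arbitrary small deformation of $Z'$ inside $U$ and verify that every newly semistable class lies in $\{Q\geq 0\}$. This rests entirely on the forward-cone convexity lemma together with the fact that wall-crossing only produces extensions of objects of equal phase, and making it precise requires the local finiteness of the wall-and-chamber structure so that only finitely many walls are crossed in a small deformation. By contrast, the completeness argument underlying the path-lifting in part (1) is comparatively routine once the uniform support constant $C_0$ is in hand.
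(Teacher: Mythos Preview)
The paper does not give a proof of this proposition; it simply cites \cite[Proposition A.5]{BMS14:abelian_threefolds} and remarks that the surface case will be treated explicitly later. Your overall architecture---the forward-cone convexity of $\{Q\geq 0\}$ on each ray (Lemma \ref{lem:convex_cone}), a connectedness argument for (2), and path-lifting with a uniform support constant for (1)---is exactly the strategy of the cited reference, and your sketch of (1) is correct: once (2) is in hand, the uniform $C_0$ along $\gamma([0,1])$ makes the effective form of Theorem \ref{thm:BridgelandMain} produce neighborhoods of uniform size, so the maximal lift extends.

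The weak point is your closedness step, which is not ``the easier direction.'' The convexity lemma you invoke (Lemma \ref{lem:convex_cone}) requires the summands to lie on the \emph{same} ray for a fixed central charge, not merely on nearly parallel rays; with $\sigma_n\to\sigma'$ and $E$ $\sigma'$-semistable, the $\sigma_n$-HN factors of $E$ have $Z_n$-phases only converging to $\phi$, so you cannot apply the lemma directly. One needs an extra compactness step: the support property bounds the number and the numerical classes of these HN factors uniformly in $n$, so after passing to a subsequence the classes $v(F_i^{(n)})$ are constant, and then the limiting $Z'(F_i)$ are exactly on the ray $\phi$, where Lemma \ref{lem:convex_cone} applies. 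Alternatively (and closer to the cited proof), one can dispense with closedness entirely and propagate (2) along paths using only a quantitative openness: the same uniform $C_0$ that you use in (1) makes the openness neighborhood have definite size along any compact path in $\UU$, which already forces $V\cap\UU=\UU$. Your openness sketch is essentially right once reformulated: if $E$ is $\sigma''$-semistable but not $\sigma'$-semistable, it is strictly semistable on a wall between them, its Jordan--H\"older factors there are stable on the wall and hence $\sigma'$-semistable by openness of stability, so $Q\geq 0$ on the factors and Lemma \ref{lem:convex_cone} gives $Q(E)\geq 0$; local finiteness lets you iterate across finitely many walls. Finally, the signature $(2,\rk\Lambda-2)$ remark is heuristic and unnecessary---Lemma \ref{lem:convex_cone} uses only negative (semi)definiteness of $Q$ on $\Ker Z'$, which is precisely the hypothesis $Z'\in U$.
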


\begin{ex}
Spaces of stability conditions are harder to study in general. For curves everything is known.
As before, we keep $\Lambda$ and $v$ as in Example \ref{ex:BridgelandStabilityCurves}.
\begin{enumerate}
\item $\Stab(\P^1) \cong \C^2$ (see \cite{Oka06:P1}).
\item Let $C$ be a curve of genus $g\geq1$. Then $\Stab(C)\cong \H \times \C\, (= \sigma_0 \cdot \widetilde{\GL}^+(2,\R))$ (see \cite{Bri07:stability_conditions, Mac07:curves}), where $\sigma_0$ is the stability $(\Coh(C),-d+\sqrt{-1}\, r)$ in Example \ref{ex:BridgelandStabilityCurves}.
\end{enumerate}

Let $\sigma=(\PP,Z)\in\Stab(C)$. We will examine only the case $g \geq 1$.
The key point of the proof is to show that the skyscraper sheaves $\C(x)$, $x\in C$, and all line bundles $L$ are all in the category $\PP((\phi_0,\phi_0+1])$, for some $\phi_0\in\R$, and they are all $\sigma$-stable.

Assume first that $\C(x)$ is not $\sigma$-semistable, for some $x\in C$.
By taking its HN filtration, we obtain an exact triangle
\[
A \to \C(x) \to B \to A[1]
\]
with $\Hom^{\leq 0}(A,B)=0$\footnote{More precisely, $A\in\PP(<\phi_0)$ and $B\in\PP(\geq\phi_0)$, for some $\phi_0\in\R$.}.
By taking cohomology sheaves, we obtain a long exact sequence of coherent sheaves on $C$
\[
0\to \HH^{-1}(B) \to \HH^0(A) \to \C(x) \stackrel{f}{\to} \HH^0(B) \to \HH^1(A) \to 0,
\]
and $\HH^{i-1}(B)\cong \HH^i(A)$, for all $i\neq 0,1$.

But, since $C$ is smooth of dimension $1$, an object $F$ in $\Db(C)$ is isomorphic to the direct sum of its cohomology sheaves: $F\cong \oplus_i \HH^i(F)[-i]$.
Since $\Hom^{\leq0}(A,B)=0$, this gives $\HH^{i-1}(B)\cong \HH^i(A)=0$, for all $i\neq 0,1$.

We look at the case $f=0$. The case in which $f\neq 0$ (and so, $f$ is injective) can be dealt with similarly.
In this case, $\HH^{0}(B)\cong \HH^1(A)$ and so they are both trivial as well, since otherwise $\Hom^{-1}(A,B)\neq0$.
Therefore, $A\cong \HH^0(A)$ and $B\cong\HH^{-1}(B)[1]$.
But, by using Serre duality, we have
\begin{align*}
0\neq \Hom(\HH^{-1}(B),\HH^0(A))\hookrightarrow \Hom(\HH^{-1}(B), \HH^0(A)\otimes K_C)&\cong\Hom(\HH^0(A),\HH^{-1}(B)[1])\\ &\cong\Hom(A,B),
\end{align*}
a contradiction.

We now claim that $\C(x)$ is actually stable.
Indeed, a similar argument as before shows that if $\C(x)$ is not $\sigma$-stable, then the only possibility is that all its stable factors are isomorphic to a single object $K$. 
But then, by looking at its cohomology sheaves, $K$ must be a sheaf as well, which is impossible.
Set $\phi_x$ as the phase of $\C(x)$ in $\sigma$.

In the same way, all line bundles are $\sigma$-stable as well.
For a line bundle $A$ on $C$, we set $\phi_A$ as the phase of $A$ in $\sigma$.

The existence of maps $A\to\C(x)$ and $\C(x)\to A[1]$ gives the inequalities
\[
\phi_x-1\leq\phi_A\leq\phi_x.
\]
Since $A$ and $\C(x)$ are both $\sigma$-stable, the strict inequalities hold.
Hence, $Z$ is an isomorphism as a map $\R^2\to\R^2$ and orientation-preserving.
By acting with an element of $\widetilde{\GL}^+(2,\R)$, we can therefore assume that $Z = Z_0 = -d + \sqrt{-1}\, r$ and that, for some $x\in C$, the skyscraper sheaf $\C(x)$ has phase $1$.
But then all line bundles on $C$ have phases in the interval $(0,1)$, and so all skyscraper sheaves have phase $1$ as well.
But this implies that $\PP((0,1])=\Coh(C)$, and so $\sigma=\sigma_0$.
\end{ex}

\begin{ex}
One of the motivations for the introduction of Bridgeland stability conditions is coming from mirror symmetry (see \cite{Dou02:mirror_symmetry}).
In case of elliptic curves, the mirror symmetry picture is particularly easy to explain (see \cite[Section 9]{Bri07:stability_conditions} and \cite{Bri09:seattle_survey}).
Indeed, let $C$ be an elliptic curve.
We can look at the action of the subgroup $\C \subset \widetilde{\GL}^+(2,\R)$ and of $\Aut(\Db(C))$ on $\Stab(C)$, and one deduces
\[
\Aut(\Db(C)) \backslash \Stab(C) / \C \cong  \H / \PSL(2,\Z),
\]
coherently with the idea that spaces of stability conditions should be related to variations of complex structures on the mirror variety (in this case, a torus itself).
\end{ex}

\subsection{Moduli spaces}\label{subsec:moduli}

The main motivation for stability conditions is the study of moduli spaces of semistable objects.
In this section we recall the general theory of moduli spaces of complexes, and then define in general moduli spaces of Bridgeland semistable objects.

To define the notion of a \emph{family of complexes}, we recall first the notion of a \emph{perfect complex}.
We follow \cite{Ina02:moduli_complexes,Lie06:moduli_complexes}.
Let $B$ be a scheme locally of finite type over $\C$. We denote the unbounded derived category of quasi-coherent sheaves by $\mathrm{D}(\Qcoh(X\times B))$.
A complex $E\in\mathrm{D}(\Qcoh(X\times B))$ is called \emph{$B$-perfect} if it is, locally over $B$, isomorphic to a bounded complex of flat sheaves over $B$ of finite presentation.
The triangulated subcategory of $B$-perfect complexes is denoted by $\mathrm{D}_{B\text{-perf}}(X\times B)$.

\begin{defn}
We let $\mathfrak{M}\colon \underline{\mathrm{Sch}}\to \underline{\mathrm{Grp}}$ be the $2$-functor which maps a scheme $B$ locally of finite type over $\C$ to the groupoid $\mathfrak{M}(B)$ given by
\[
\left\{\EE\in\mathrm{D}_{B\text{-perf}}(X\times B)\,:\, \Ext^i(\EE_b,\EE_b)=0, \text{ for all }i<0\text{ and all geometric pts }b\in B  \right\},
\]
where $\EE_b$ is the restriction $\EE|_{X\times\{b\}}$
\end{defn}

\begin{thm}[{\cite[Theorem 4.2.1]{Lie06:moduli_complexes}}]
\label{thm:Lieblich}
$\mathfrak{M}$ is an Artin stack, locally of finite type, locally quasi separated, and with separated diagonal.
\end{thm}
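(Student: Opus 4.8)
The plan is to establish algebraicity by verifying Artin's criteria for a stack to be an Artin stack, organizing the argument into three stages: descent, analysis of the diagonal, and the construction of a smooth presentation through deformation theory. Throughout, the crucial hypothesis is the fibrewise vanishing $\Ext^i(\EE_b,\EE_b)=0$ for $i<0$, and a useful first step is to realize $\mathfrak{M}$ as an open substack of the (non-algebraic, but fppf) stack of \emph{all} $B$-perfect complexes: by semicontinuity of the dimensions of $\Ext^i(\EE_b,\EE_b)$, the negative-$\Ext$ vanishing is an open condition on the base, so if one proves the ambient formal theory and then the openness, the algebraicity of $\mathfrak{M}$ follows from cutting out an open locus.

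First I would check that $\mathfrak{M}$ is genuinely a stack for the fppf topology on schemes locally of finite type over $\C$. The essential input is effective fppf descent for objects of $\mathrm{D}_{B\text{-perf}}(X\times B)$ and for morphisms between them; the defining vanishing condition is stable under base change and is fppf-local on $B$, hence descends. Next I would treat the diagonal $\Delta\colon\mathfrak{M}\to\mathfrak{M}\times\mathfrak{M}$. Given two families $\EE,\FF\in\mathfrak{M}(B)$, the fibre of $\Delta$ is the functor $B'\mapsto \Isom_{B'}(\EE_{B'},\FF_{B'})$, which I would exhibit as an open subfunctor (the invertible locus) of the linear functor $B'\mapsto\Hom_{B'}(\EE_{B'},\FF_{B'})$. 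Here the condition $\Ext^{<0}=0$ is what makes this tractable: it guarantees that the relative $\RlHom$ complex has no fibrewise cohomology in negative degrees, so that the degree-$0$ homomorphisms are controlled by a coherent sheaf on $B$ and glue uniquely. By Grothendieck's representability of such linear $\Hom$-functors, $\Hom$ is representable by an affine scheme over $B$, and $\Isom$ is then an open subscheme of it; this simultaneously gives that the diagonal is representable by algebraic spaces and that it is separated, since an open immersion into an affine scheme is separated.

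Finally, to produce a smooth atlas I would verify the remaining Artin axioms using the deformation theory of an object $E\in\mathfrak{M}$, which is governed by its self-$\Ext$ groups: infinitesimal automorphisms by $\Hom(E,E)$, first-order deformations by $\Ext^1(E,E)$, and obstructions by $\Ext^2(E,E)$, with the vanishing of negative $\Ext$'s ensuring that no lower-degree terms contaminate the relevant tangent-obstruction picture and that the cotangent complex behaves as expected. Concretely I would check: (i) $\mathfrak{M}$ is limit-preserving, i.e. locally of finite presentation, using that perfect complexes and their $\Ext$-groups commute with filtered colimits of rings; (ii) the Schlessinger--Rim conditions hold, yielding formal versal deformations; (iii) formal deformations are effective, via a Grothendieck existence (formal GAGA) theorem for objects of the derived category; and (iv) the tangent-obstruction formalism assembles into a coherent obstruction theory compatible with the cotangent complex. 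Combined with the stack and diagonal properties above, Artin's theorem then gives that $\mathfrak{M}$ is an Artin stack, locally of finite type over $\C$, with the stated separation properties.

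The hard part will be stage three, and within it the effectivity of formal deformations together with the construction of the obstruction theory in full derived-categorical generality. Grothendieck existence for a complex $E$, rather than for a coherent sheaf, cannot be reduced to local freeness, and locating the obstruction in $\Ext^2(E,E)$ requires working with the cotangent complex of the moduli problem directly. The hypothesis $\Ext^{<0}(E,E)=0$ is precisely what tames both the diagonal and these deformation-theoretic subtleties — without it, the $\Isom$-functor fails to be reasonable and the formal theory acquires extra contributions — so the genuine content of the proof is tracking carefully how this single vanishing condition is used at each of the three stages.
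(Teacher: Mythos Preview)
The paper does not give a proof of this theorem: it is quoted verbatim as \cite[Theorem 4.2.1]{Lie06:moduli_complexes} and used as a black box, so there is no ``paper's own proof'' to compare against. Your outline is a faithful high-level summary of Lieblich's actual strategy in the cited reference --- verify the stack axioms, show the diagonal is representable and separated via the $\Hom$/$\Isom$ description (this is where the hypothesis $\Ext^{<0}=0$ does its work, making complexes ``universally gluable'' in Lieblich's language), and then run Artin's criteria using the deformation--obstruction theory governed by $\Ext^1$ and $\Ext^2$.

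If you intend this as an independent write-up rather than a citation, be aware that the genuinely delicate points you flag at the end are real: fppf descent for objects of the derived category is not formal (one needs the negative-$\Ext$ vanishing to reduce to descent for sheaves of homomorphisms, and even then care is required), and Grothendieck existence for perfect complexes over a formal scheme is a theorem in its own right. Your sketch correctly identifies where the content lies, but filling in these steps would amount to reproducing substantial portions of Lieblich's paper; for the purposes of these notes the citation is the appropriate level of detail.
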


We will not discuss Artin stacks in detail any further in this survey; we refer to the introductory book \cite{ols16:algebraic_spaces_stacks} for the basic terminology (or \cite{stacks-project}).
We will mostly use the following special case.
Let $\mathfrak{M}_{\mathrm{Spl}}$ be the open substack of $\mathfrak{M}$ parameterizing simple objects (as in the sheaf case, these are complexes with only scalar endomorphisms). This is also an Artin stack with the same properties as $\mathfrak{M}$.
Define also a functor $\underline{M}_{\mathrm{Spl}}\colon \underline{\mathrm{Sch}}\to \underline{\mathrm{Set}}$ by forgetting the groupoid structure on $\mathfrak{M}_{\mathrm{Spl}}$ and by sheafifying it (namely, as in the stable sheaves case, through quotienting by the equivalence relation given by tensoring with line bundles from the base $B$).

\begin{thm}[{\cite[Theorem 0.2]{Ina02:moduli_complexes}}]
\label{thm:Inaba}
The functor $\underline{M}_{\mathrm{Spl}}$ is representable by an algebraic space locally of finite type over $\C$.
Moreover, the natural morphism $\mathfrak{M}_{\mathrm{Spl}}\to \underline{M}_{\mathrm{Spl}}$ is a $\mathbb{G}_m$-gerbe.
\end{thm}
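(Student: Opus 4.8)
The plan is to deduce both assertions from Lieblich's Theorem \ref{thm:Lieblich} by analyzing the inertia of $\mathfrak{M}_{\mathrm{Spl}}$ and applying the rigidification formalism for gerbes. The guiding principle is that a simple object has no automorphisms beyond the scalars $\C^*$, so $\mathfrak{M}_{\mathrm{Spl}}$ should be a $\mathbb{G}_m$-gerbe over the space obtained by forgetting these scalars; and removing a central $\mathbb{G}_m$ from the inertia of an Artin stack produces an Artin stack with trivial stabilizers, that is, an algebraic space.

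First I would verify that $\mathfrak{M}_{\mathrm{Spl}} \subset \mathfrak{M}$ is open, so that it inherits from Theorem \ref{thm:Lieblich} the property of being an Artin stack locally of finite type over $\C$ with separated diagonal. This rests on the semicontinuity statement that $b \mapsto \dim_{\kappa(b)} \Hom(\EE_b, \EE_b)$ is upper semicontinuous on any family $\EE$; since every nonzero object satisfies $\Hom(\EE_b,\EE_b) \supseteq \C$, the simple locus $\{\dim \Hom(\EE_b,\EE_b) = 1\}$ is exactly where this function attains its minimum, hence open. Next I would compute the inertia: for a family $\EE$ of simple objects over $B$, an automorphism is a global unit, so $\underline{\mathrm{Aut}}(\EE) = \mathbb{G}_{m,B}$, and this identification is canonical and central. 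Thus the inertia stack of $\mathfrak{M}_{\mathrm{Spl}}$ is $\mathbb{G}_m \times \mathfrak{M}_{\mathrm{Spl}}$, with $\mathbb{G}_m$ sitting centrally.

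Given this central $\mathbb{G}_m$, I would invoke the rigidification construction (as in Abramovich--Corti--Vistoli): there is an Artin stack $M'$ and a morphism $\mathfrak{M}_{\mathrm{Spl}} \to M'$ which is a $\mathbb{G}_m$-gerbe, characterized by the property that it kills precisely the central $\mathbb{G}_m$ in every automorphism group. Since the only automorphisms of objects of $\mathfrak{M}_{\mathrm{Spl}}$ were these scalars, $M'$ has trivial inertia and is therefore an algebraic space; it is locally of finite type over $\C$, as this property descends along the smooth surjective gerbe morphism. The remaining point is to identify $M'$ with the sheafified functor $\underline{M}_{\mathrm{Spl}}$: by construction two families over $B$ define the same point of $M'$ if and only if they differ by tensoring with a line bundle pulled back from $B$, which is exactly the equivalence relation used to define $\underline{M}_{\mathrm{Spl}}$. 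Checking that $M'$ represents this sheafification gives the first assertion, and the gerbe morphism $\mathfrak{M}_{\mathrm{Spl}} \to M' = \underline{M}_{\mathrm{Spl}}$ gives the second.

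The main obstacle, and the content genuinely beyond Lieblich, is the passage from ``Artin stack with $\mathbb{G}_m$-inertia'' to ``algebraic space together with a $\mathbb{G}_m$-gerbe structure'': producing the rigidification and proving $M'$ is algebraic with the correct finiteness. Concretely one must show that two families differing by a base line bundle become isomorphic \'etale-locally on $B$, so that the gerbe is locally nonempty, locally connected, and genuinely banded by $\mathbb{G}_m$; and one must verify that trivial inertia forces $M'$ to be an algebraic space (equivalently, that the diagonal is a monomorphism). If one prefers to avoid citing the rigidification package, the alternative is to verify Artin's representability axioms for $\underline{M}_{\mathrm{Spl}}$ directly, where the deformation theory is controlled by $\Ext^1(E,E)$ (deformations) and $\Ext^2(E,E)$ (obstructions), with simplicity $\Hom(E,E) = \C$ and the vanishing $\Ext^{<0}(E,E) = 0$ ensuring respectively that stabilizers reduce to $\mathbb{G}_m$ and that the problem behaves well in negative degrees; this is the route of Inaba's original argument.
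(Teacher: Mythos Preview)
The paper does not give its own proof of this theorem; it is simply quoted from \cite{Ina02:moduli_complexes} and used as a black box. So there is no argument in the paper to compare against.

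Your proposal is nonetheless a coherent and correct outline. The rigidification route you describe (open substack via semicontinuity, central $\mathbb{G}_m$ in the inertia, then Abramovich--Corti--Vistoli rigidification to an algebraic space) is the standard modern packaging of this result, and you correctly note that Inaba's original argument instead verifies Artin's representability criteria directly, with the deformation/obstruction theory governed by $\Ext^1$ and $\Ext^2$. One small point worth tightening: the claim that $\underline{\Aut}(\EE) = \mathbb{G}_{m,B}$ for a family of simple objects is not quite immediate from simplicity at closed points; you need that the relative $\lHom$ is a line bundle on $B$ (which follows from base change once you know the fiberwise $\Hom$ is one-dimensional and the negative $\Ext$'s vanish), so that the invertible endomorphisms form a $\mathbb{G}_m$-torsor. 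Similarly, the identification of the rigidification with the line-bundle-twist quotient deserves a sentence: two families become isomorphic over the rigidification precisely when their difference is a $\mathbb{G}_m$-torsor on $B$, i.e.\ a line bundle, which is the equivalence defining $\underline{M}_{\mathrm{Spl}}$.
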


We can now define the moduli functor for Bridgeland semistable objects.

\begin{defn}
Let $\sigma=(\PP,Z)\in\Stab(X)$.
Fix a class $v_0\in\Lambda$ and a phase $\phi\in\R$ such that $Z(v_0)\in\R_{>0}\cdot e^{\sqrt{-1}\pi\phi}$.
We define  $\widehat{M}_{\sigma}(v,\phi)$ to be the set of $\sigma$-semistable objects in $\PP(\phi)$ of numerical class $v_0$ and $\mathfrak{M}_{\sigma}(v,\phi)\subset\mathfrak{M}$ the substack of objects in $M_{\sigma}(v,\phi)$.
Similarly, we define $\mathfrak{M}^s_{\sigma}(v,\phi)$ as the substack parameterizing stable objects.
\end{defn}

\begin{question}\label{question:OpenAndBounded}
Are the inclusions $\mathfrak{M}^s_{\sigma}(v,\phi)\subset\mathfrak{M}_{\sigma}(v,\phi)\subset\mathfrak{M}$ open?
Is the set $\widehat{M}_{\sigma}(v,\phi)$ bounded?
\end{question}

If the answers to Question \ref{question:OpenAndBounded} are both affirmative, then by Theorem \ref{thm:Lieblich}  and \cite[Lemma 3.4]{Tod08:K3Moduli}, $\mathfrak{M}_{\sigma}(v,\phi)$ and $\mathfrak{M}^s_{\sigma}(v,\phi)$ are both Artin stack of finite type over $\C$.
Moreover, by Theorem \ref{thm:Inaba}, the associated functor $\underline{M}^s_{\sigma}(v,\phi)$ would be represented by an algebraic space of finite type over $\C$.

We will see in Section \ref{subsec:ModuliSurfaces} that Question \ref{question:OpenAndBounded} has indeed an affirmative answer in the case of certain stability conditions on surfaces \cite{Tod08:K3Moduli} (or more generally, when Bridgeland stability conditions are constructed via an iterated tilting procedure \cite{PT19:bridgeland_moduli_properties}).

A second fundamental question is about the existence of a moduli space:

\begin{question}\label{question:ProjectivityModuli}
Is there a coarse moduli space $M_{\sigma}(v,\phi)$ parameterizing S-equivalence classes of $\sigma$-semistable objects? Is $M_{\sigma}(v,\phi)$ a projective scheme?
\end{question}

Question \ref{question:ProjectivityModuli} has an affirmative complete answer only for the projective plane $\P^2$ (see \cite{ABCH13:hilbert_schemes_p2}), $\P^1 \times \P^1$ and the blow up of a point in $\P^2$ (see \cite{AM17:projectivity}), and partial answers for other surfaces, including abelian surfaces (see \cite{MYY14:stability_k_trivial_surfaces,YY14:stability_abelian_surfaces}), K3 surfaces (see \cite{BM14:projectivity}), and Enriques surfaces (see \cite{Nue14:stability_enriques, {Yos16:projectivity_enriques}}). In Section \ref{subsec:ModuliSurfaces} we show how the projectivity is shown in case of $\P^2$ via quiver representations. The other two del Pezzo cases were proved with the same method, but turn out more technically involved.
As remarked in Section \ref{sec:generalizations}, since Bridgeland stability is not a priori associated to a GIT problem, it is harder to answer Question \ref{question:ProjectivityModuli} in general.
The recent works \cite{AS12:good_moduli, AHR15:luna_stacks} on good quotients for Artin stacks may lead to a general answer to this question, though.
Once a coarse moduli space exists, then separatedness and properness of the moduli space is a general result by Abramovich and Polishchuk \cite{AP06:constant_t_structures}, which we will review in Section \ref{sec:nef}.

It is interesting to note that the technique in \cite{AP06:constant_t_structures} is also very useful in the study of the geometry of the moduli space itself. This will also be reviewed in Section \ref{sec:nef}.

\begin{exercise}
Show that Question \ref{question:OpenAndBounded} and Question \ref{question:ProjectivityModuli} have both affirmative answers for curves (you can assume, for simplicity, that the genus is $\geq1$). Show also that moduli spaces in this case are exactly moduli spaces of semistable sheaves as reviewed in Section \ref{sec:curves}.
\end{exercise}

\subsection{Wall and chamber structure}\label{subsec:WallChamberGeneral}

We conclude the section by explaining how stable object change when the stability condition is varied within $\Stab(X)$ itself. It turns out that the topology on $\Stab(X)$ is defined in such as way that this happens in a controlled way. This will be one of the key properties in studying the geometry of moduli spaces of semistable objects.

\begin{defn}\label{def:WallGeneral}
Let $v_0,w\in\Lambda\setminus\{0\}$ be two non-parallel vectors.
A \emph{numerical wall} $W_w(v_0)$ for $v_0$ with respect to $w$ is a non empty subset of $\Stab(X)$ given by
\[
W_w(v_0):=\left\{\sigma=(\PP,Z)\in\Stab(X)\,:\, \Re Z(v_0)\cdot \Im Z(w)=\Re Z(w)\cdot\Im Z(v_0)\right\}.
\]
\end{defn}

We denote the set of numerical walls for $v_0$ by $\WW(v_0)$.
By Theorem \ref{thm:BridgelandMain}, a numerical wall is a real submanifold of $\Stab(X)$ of codimension $1$.

We will use walls only for special subsets of the space of stability conditions (the $(\alpha,\beta)$-plane in Section \ref{subsec:WallChamber}). We will give a complete proof in this case in Section \ref{sec:surfaces}.
The general behavior is explained in \cite[Section 9]{Bri08:stability_k3}.
We direct the reader to \cite[Proposition 3.3]{BM11:local_p2} for the complete proof of the following result.

\begin{prop}\label{prop:locally_finite}
Let $v_0\in\Lambda$ be a primitive class, and let $S\subset\Db(X)$ be an arbitrary set of objects of class $v_0$.
Then there exists a collection of walls $W^S_w(v_0)$, $w\in\Lambda$, with the following properties:
\begin{enumerate}
\item Every wall $W^S_w(v_0)$ is a closed submanifold with boundary of real codimension
one.
\item The collection $W^S_w(v_0)$ is locally finite (i.e., every compact subset $K\subset\Stab(X)$ intersects only a finite number of walls).
\item\label{enum:ActualWall3} For every stability condition $(\PP,Z)$ on a wall in $W^S_w(v_0)$, there exists a phase
$\phi\in\R$ and an inclusion $F_w \into E_{v_0}$ in $\PP(\phi)$ with $v(F_w)=w$ and $E_{v_0}\in S$.
\item\label{enum:ActualWall4} If $C\subset\Stab(X)$ is a connected component of the complement of $\cup_{w\in\Lambda} W^S_w(v_0)$ and $\sigma_1,\sigma_2\in C$, then an object $E_{v_0}\in S$ is $\sigma_1$-stable if and only if it is $\sigma_2$-stable.
\end{enumerate}
In particular, the property for an object to be stable is open in $\Stab(X)$.
\end{prop}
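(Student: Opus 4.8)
The plan is to construct the walls from genuine destabilizing subobjects and to control them entirely through the support property. For two non-parallel classes $v_0, w \in \Lambda$, the numerical wall $W_w(v_0)$ of Definition \ref{def:WallGeneral} is exactly the locus where $Z(w)$ and $Z(v_0)$ lie on a common ray through the origin in $\C \cong \R^2$. I would therefore define $W^S_w(v_0)$ to be the set of those $\sigma = (\PP, Z)$ in $W_w(v_0)$ for which there exist a phase $\phi$, an object $E_{v_0} \in S$ with $E_{v_0} \in \PP(\phi)$, and a subobject $F_w \hookrightarrow E_{v_0}$ in $\PP(\phi)$ with $v(F_w) = w$. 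With this definition property (3) holds by construction, and the whole difficulty is transferred to showing that this collection behaves well.

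The core of the argument is a boundedness statement for destabilizing classes. Suppose $\sigma = (\PP, Z)$ lies on such a wall, witnessed by $F_w \hookrightarrow E_{v_0}$ in $\PP(\phi)$. Then $F_w$ and the quotient $E_{v_0}/F_w$ are semistable of phase $\phi$, so $Z(w)$, $Z(v_0 - w)$ and $Z(v_0)$ are all positive multiples of $e^{\sqrt{-1}\pi\phi}$, whence $|Z(w)| \leq |Z(v_0)|$. The support property of Definition \ref{def:Bridgeland1} then gives $\| w \| \leq |Z(w)|/C_\sigma \leq |Z(v_0)|/C_\sigma$. Now fix a compact set $K \subset \Stab(X)$: continuity of $\sigma \mapsto Z$ bounds $|Z(v_0)|$ from above on $K$, while continuity of $\sigma \mapsto C_\sigma$ (established in the proof of Theorem \ref{thm:BridgelandMain}) bounds $C_\sigma$ away from $0$ on $K$. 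Hence there is a constant $M$ with $\| w \| \leq M$ for every class $w$ whose wall meets $K$. Since $\Lambda$ is a finite rank lattice, only finitely many such $w$ occur, giving local finiteness (property (2)).

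For property (1), the defining relation $\Re Z(v_0)\, \Im Z(w) = \Re Z(w)\, \Im Z(v_0)$ is a single real-analytic condition on $Z$; since $\ZZ\colon \Stab(X) \to \Hom(\Lambda, \C)$ is a local homeomorphism by Theorem \ref{thm:BridgelandMain}, each numerical wall is a real submanifold of codimension one, and $W^S_w(v_0)$, being the locus within it on which a destabilizer of class $w$ exists, is cut out by finitely many further phase inequalities and is thus a closed submanifold with boundary. For property (4), fix a connected component $C$ of the complement of $\bigcup_w W^S_w(v_0)$ and an object $E_{v_0} \in S$; the functions $\sigma \mapsto \phi^+(E_{v_0})$ and $\sigma \mapsto \phi^-(E_{v_0})$ are continuous by the very definition of the topology on $\Stab(X)$. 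If $E_{v_0}$ were $\sigma_1$-stable but not $\sigma_2$-stable for $\sigma_1, \sigma_2 \in C$, then along a path in $C$ it would acquire a subobject of equal phase at some intermediate $\sigma$; by the boundedness step that subobject has a class $w$ with $\|w\| \leq M$, so $\sigma \in W^S_w(v_0)$, contradicting $\sigma \in C$. Thus stability is constant on each chamber, and the concluding openness statement is the special case $S = \{E\}$, obtained by using local finiteness to find a wall-free neighborhood of any stability condition at which $E$ is stable.

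The step I expect to be the main obstacle is making the boundedness argument genuinely uniform for an arbitrary set $S$: one must extract the bound $\|w\| \leq M$ at the boundary stability conditions where a destabilizer actually exists and then propagate it across the entire numerical wall using the local structure of $\Stab(X)$, all while keeping $C_\sigma$ controlled. The delicate interplay is that destabilization is only guaranteed \emph{on} the wall, so the finiteness of candidate classes $w$ and the verification that they cut out only finitely many distinct numerical walls meeting $K$ both hinge on the continuity of $C_\sigma$ together with the primitivity of $v_0$, which rules out degenerate self-parallel destabilizations.
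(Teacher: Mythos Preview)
Your boundedness step via the support property is essentially the paper's argument: from $F_w \hookrightarrow E_{v_0}$ in $\PP(\phi)$ one gets $|Z(w)| \leq |Z(v_0)|$, and then $\|w\| \leq |Z(v_0)|/C_\sigma$, which is uniformly bounded on compacta by continuity of $C_\sigma$. This correctly gives local finiteness (property (2)).

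The genuine gap is in your treatment of property (1), and it propagates to (4). You define $W^S_w(v_0)$ to be the full locus $V^S_w$ of stability conditions where some $E_{v_0} \in S$ is semistable and admits a subobject of class $w$ with equal phase, and then assert this is a closed codimension-one submanifold with boundary because it is ``cut out by finitely many further phase inequalities'' inside the numerical wall. That last step is unjustified: the condition ``there exists $E_{v_0} \in S$ semistable here with a subobject of class $w$'' is not a finite system of phase inequalities on a fixed object, and $V^S_w$ can in fact have components of higher codimension inside the numerical wall. The paper handles this differently: it \emph{defines} $W^S_w(v_0)$ to be only the codimension-one part of $V^S_w$, so (1) holds by fiat, and then spends the remaining effort on (4), arguing that the discarded higher-codimension pieces of $V^S_w$ consist entirely of stability conditions at which the relevant $E_{v_0}$ is semistable on that locus but unstable at every nearby point off it --- hence removing those pieces does not create extra chambers in which stability could jump.

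Your argument for (4) would be fine if your $W^S_w(v_0)$ really were codimension one, but as written it depends on the unproved (1). Conversely, if you adopt the paper's definition, your path argument for (4) no longer closes: the intermediate $\sigma$ where destabilization first occurs might lie on a higher-codimension component of $V^S_w$ rather than on the codimension-one wall, and you need the paper's observation about such components to conclude. The primitivity of $v_0$ enters exactly here, to exclude the case where $E_{v_0}$ becomes strictly semistable with a Jordan--H\"older factor of proportional class.
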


The walls in Proposition \ref{prop:locally_finite} will be called \emph{actual walls}.
A \emph{chamber} is defined to be a connected component of the complement of the set of actual walls, as in \eqref{enum:ActualWall4} above.

\begin{proof}[Idea of the proof]
For a class $w\in\Lambda$, we let $V^S_w$ be the set of stability conditions for which there exists an inclusion as in \eqref{enum:ActualWall3}.
Clearly $V^S_w$ is contained in the numerical wall $W_w(v_0)$.

The first step is to show that there only finitely many $w$ for which $V^S_w$ intersects a small neighborhood around a stability condition.
This is easy to see by using the support property: see also Lemma \ref{lem:convex_cone} below.

We then let $W^S_w(v_0)$ be the codimension one component of $V^S_w$.
It remains to show \eqref{enum:ActualWall4}.
The idea is the following: higher codimension components
of $V^S_w$ always come from objects $E_{v_0}$ that are semistable on this
component and unstable at any nearby point.
\end{proof}

\begin{rmk}\label{rmk:WallChamberGeneralSemistability}
We notice that if $v_0$ is not primitive and we ask only for semistability on chambers, then part \eqref{enum:ActualWall3} cannot be true: namely actual walls may be of higher codimension. On the other hand, the following holds (see \cite[Section 9]{Bri08:stability_k3}): Let $v_0\in\Lambda$. Then there exists a locally finite collection of numerical walls $W_{w}(v_0)$ such that on each chamber the set of semistable objects $\widehat{M}_\sigma(v_0,\phi)$ is constant.
\end{rmk}

The following lemma clarifies how walls behave with respect to the quadratic form appearing in the support property.
It was first observed in \cite[Appendix A]{BMS16:abelian_threefolds}.

\begin{lem}
\label{lem:convex_cone}
Let $Q$ be a quadratic form on a real vector space $V$ and $Z: V \to \C$ a linear map such that the kernel of $Z$ is negative semi-definite with respect to $Q$.
If $\rho$ is a ray in $\C$ starting at the origin, we define
\[ \CC^+_{\rho} = Z^{-1}(\rho) \cap \{ Q \geq 0 \}.\]
\begin{enumerate}
  \item If $w_1, w_2 \in \CC^+_{\rho}$, then $Q(w_1, w_2) \geq 0$.
  \item The set $\CC^+_{\rho}$ is a convex cone.
  \item Let $w, w_1, w_2 \in \CC^+_{\rho}$ with $w = w_1 + w_2$. Then $0 \leq Q(w_1) + Q(w_2) \leq Q(w)$. Moreover, $Q(w_1) = Q(w)$ implies $Q(w) = Q(w_1) = Q(w_2) = Q(w_1, w_2) = 0$.
  \item If the kernel of $Z$ is negative definite with respect to $Q$, then any vector $w \in \CC^+_{\rho}$ with $Q(w) = 0$ generates an extremal ray of $\CC^+$.
\end{enumerate}
\begin{proof}
For any non trivial $w_1, w_2 \in \CC^+_{\rho}$ there is $\lambda > 0$ such that $Z(w_1 - \lambda w_2) = 0$. Therefore, we get
\[
0 \geq Q(w_1 - \lambda w_2) = Q(w_1) + \lambda^2 Q(w_2) - 2\lambda Q(w_1, w_2).
\]
The inequalities  $Q(w_1) \geq 0$ and $Q(w_2) \geq 0$ lead to $Q(w_1, w_2) \geq 0$. Part (2) follows directly from (1). The first claim in (3) also follows immediately from (1) by using
\[
Q(w) = Q(w_1) + Q(w_2) + 2 Q(w_1, w_2) \geq 0.
\]
Observe that $Q(w_1) = Q(w)$ implies $Q(w_2) = Q(w_1, w_2) = 0$ and therefore,
\[
0 = 2\lambda Q(w_1, w_2) \geq Q(w_1) + \lambda^2 Q(w_2) = Q(w_1) \geq 0.
\]

Let $w \in \CC^+_{\rho}$ with $Q(w) = 0$. Assume that $w$ is not extremal, i.e., there are linearly independent $w_1, w_2 \in \CC^+_{\rho}$ such that $w = w_1 + w_2$. By (3) we get $Q(w_1) = Q(w_2) = Q(w_1, w_2) = 0$. As before $Z(w_1 - \lambda w_2) = 0$, but this time $w_1 - \lambda w_2 \neq 0$. That means
\[
0 > Q(w_1 - \lambda w_2) = Q(w_1) + \lambda^2 Q(w_2) - 2\lambda Q(w_1, w_2) = 0. \qedhere
\]
\end{proof}
\end{lem}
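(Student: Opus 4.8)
The plan is to derive all four parts from a single observation: any two nonzero vectors of $\CC^+_\rho$ have the same image ray under $Z$, so after rescaling one of them their difference lands in $\ker Z$, where the behaviour of $Q$ is prescribed by hypothesis. The identity $Z(w_1 - \lambda w_2) = 0$ is the engine that drives everything.

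First I would establish (1). Given nonzero $w_1, w_2 \in \CC^+_\rho$ (the degenerate case in which an image vanishes is immediate), both $Z(w_1)$ and $Z(w_2)$ lie on the common ray $\rho$, so there is a unique $\lambda > 0$ with $Z(w_1) = \lambda Z(w_2)$, whence $w_1 - \lambda w_2 \in \ker Z$. Negative semi-definiteness gives $Q(w_1 - \lambda w_2) \le 0$, and expanding bilinearly yields
\[
0 \ge Q(w_1) - 2\lambda Q(w_1, w_2) + \lambda^2 Q(w_2).
\]
Since $w_1, w_2 \in \{Q \ge 0\}$, the two square terms are nonnegative, so $Q(w_1, w_2) \ge 0$ because $\lambda > 0$.

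Part (2) then follows formally from (1): closure under positive scaling is automatic because $Z$ is linear, $\rho$ is a ray, and $Q$ is homogeneous of degree two, while closure under sums uses that $Z(w_1 + w_2)$ stays on $\rho$ together with $Q(w_1 + w_2) = Q(w_1) + 2Q(w_1, w_2) + Q(w_2) \ge 0$. For part (3) I would expand $Q(w) = Q(w_1) + Q(w_2) + 2Q(w_1, w_2)$ and read off $0 \le Q(w_1) + Q(w_2) \le Q(w)$ directly from (1). For the rigidity clause, $Q(w_1) = Q(w)$ forces $Q(w_2) + 2Q(w_1, w_2) = 0$, hence $Q(w_2) = Q(w_1, w_2) = 0$ since both are nonnegative; feeding this back into the same $\lambda$ gives $0 \ge Q(w_1 - \lambda w_2) = Q(w_1)$, so $Q(w_1) = Q(w) = 0$ as well.

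Finally, part (4) is where the semidefinite hypothesis must be strengthened to definite, and this is the step I expect to be the real content. Assuming a vector $w$ with $Q(w) = 0$ is \emph{not} extremal, I would write $w = w_1 + w_2$ with $w_1, w_2 \in \CC^+_\rho$ linearly independent (linear dependence would force $w_1$ to be a multiple of $w$, i.e. extremality). Part (3) applied at $Q(w) = 0$ collapses everything: $Q(w_1) = Q(w_2) = Q(w_1, w_2) = 0$. Then $w_1 - \lambda w_2$ is a \emph{nonzero} element of $\ker Z$ — nonzero precisely by linear independence — on which $Q$ vanishes, contradicting strict negative definiteness via $0 > Q(w_1 - \lambda w_2) = 0$. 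Everything other than this final contradiction is bookkeeping around the rescaling identity; the passage from semi-definite to definite is exactly what makes the extremality argument work and is therefore the only genuinely delicate point.
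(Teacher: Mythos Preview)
Your proof is correct and follows essentially the same route as the paper: both hinge on choosing $\lambda>0$ with $w_1-\lambda w_2\in\ker Z$, expanding $Q(w_1-\lambda w_2)$, and then cascading through (2)--(4) by the same bookkeeping, with (4) using linear independence to make $w_1-\lambda w_2$ nonzero and thus obtain the contradiction $0>Q(w_1-\lambda w_2)=0$. Your write-up is in fact slightly more careful than the paper's in flagging the degenerate case $Z(w_i)=0$ and in spelling out why convexity in (2) follows, but the argument is the same.
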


\begin{cor}{\cite[Proposition A.8]{BMS16:abelian_threefolds}}
\label{cor:Qzero}
Assume that $U$ is a path-connected open subset of $\Stab(X)$ such that all $\sigma \in U$ satisfy the support property with respect to the quadratic form $Q$. If $E \in \Db(X)$ with $Q(E) = 0$ is $\sigma$-stable for some $\sigma \in U$ then it is $\sigma'$-stable for all $\sigma' \in U$.
\begin{proof}
If there is a point in $U$ at which $E$ is unstable, then there is a point $\sigma = (\AA, Z)$ at which $E$ is strictly semistable. If $\rho$ is the ray that $Z(E)$ lies on, then the previous lemma implies that $v(E)$ is extremal in $\CC^+_{\rho}$. That is contradiction to $E$ being strictly semistable.
\end{proof}
\end{cor}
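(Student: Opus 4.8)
The plan is to argue by contradiction, reducing global instability on $U$ to the existence of a single stability condition at which $E$ is strictly semistable, and then to exploit that $Q(E)=0$ forces $v(E)$ onto an extremal ray of the convex cone $\CC^+_\rho$, which is incompatible with a genuine destabilizing subobject. Suppose $E$ is $\sigma$-stable but $\tau$-unstable for some $\sigma,\tau\in U$. Since $U$ is path-connected, I would choose a path $\gamma\colon[0,1]\to U$ from $\sigma$ to $\tau$. By the wall and chamber structure of Proposition \ref{prop:locally_finite}, stability is an open condition, so the set of $t$ with $E$ being $\gamma(t)$-stable is open and contains $0$; let $t_0$ be the infimum of the (closed, nonempty) set of $t$ at which stability fails, so that $t_0>0$ and $E$ is not $\gamma(t_0)$-stable. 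Because the maps $\phi^\pm$ are continuous in the topology on $\Stab(X)$ and $\phi^+_{\gamma(t)}(E)=\phi^-_{\gamma(t)}(E)$ for $t<t_0$, passing to the limit $t\to t_0^-$ gives $\phi^+_{\gamma(t_0)}(E)=\phi^-_{\gamma(t_0)}(E)$, so $E$ is semistable but not stable at $\sigma_0:=\gamma(t_0)$. Since stability is constant on chambers (Proposition \ref{prop:locally_finite}\eqref{enum:ActualWall4}), the point $\sigma_0$ must lie on an actual wall $W^S_w(v_0)$ for $v_0:=v(E)$.

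Now write $\sigma_0=(\AA,Z)$ and let $\rho\subset\C$ be the ray on which $Z(v_0)$ lies. As $\sigma_0\in U$ satisfies the support property with respect to $Q$, the kernel of $Z$ is negative definite for $Q$; since $E$ is $\sigma_0$-semistable and $Q(v_0)=0$ by hypothesis, we have $v_0\in\CC^+_\rho$. Part (4) of Lemma \ref{lem:convex_cone} then forces $v_0$ to generate an extremal ray of $\CC^+_\rho$. On the other hand, Proposition \ref{prop:locally_finite}\eqref{enum:ActualWall3} provides, at $\sigma_0$, an inclusion $F\hookrightarrow E$ in $\PP(\phi)$ with $v(F)=w$, where $\phi=\phi(E)$; since $\PP(\phi)$ is abelian (Exercise \ref{exercise:StableObjectsBridgelandStability}), the quotient $E/F$ also lies in $\PP(\phi)$. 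Thus both $v(F)$ and $v(E/F)$ lie in $\CC^+_\rho$ and satisfy $v_0=v(F)+v(E/F)$. The crucial point is that by Definition \ref{def:WallGeneral} the wall class $w$ is non-parallel to $v_0$, so $v(F)$ and $v(E/F)$ are linearly independent; this contradicts the extremality of $v_0$ and completes the argument.

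The step I expect to be most delicate is precisely this last one: guaranteeing that the class of the destabilizing subobject at $\sigma_0$ is non-parallel to $v_0$. Extremality of $v_0$ is only violated by a decomposition into linearly independent cone vectors, so an arbitrary destabilizing subobject is not enough—a Jordan--H\"older factor whose class is proportional to $v(E)$ would give a harmless decomposition $v_0=\lambda v_0+(1-\lambda)v_0$ that respects extremality. This is why one must extract the destabilizer from the actual-wall structure rather than from a general filtration: the walls of Definition \ref{def:WallGeneral} are indexed by classes non-parallel to $v_0$, and a subobject of proportional class cannot cut out a codimension-one wall, since it would share the phase of $E$ at every stability condition and would already obstruct $\sigma$-stability of $E$. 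Once the non-parallel destabilizing class is in hand the contradiction is immediate, and the remaining verifications—that $v_0\in\CC^+_\rho$ and that the support property yields negative definiteness on $\ker Z$—follow directly from the definitions.
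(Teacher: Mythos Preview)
Your proof is correct and follows the same strategy as the paper: reduce to a point $\sigma_0$ where $E$ is strictly semistable, observe that $v(E)$ is extremal in $\CC^+_\rho$ by Lemma~\ref{lem:convex_cone}(4), and derive a contradiction from a destabilizing decomposition. The paper's proof is simply terser, writing only ``That is a contradiction to $E$ being strictly semistable,'' whereas you spell out explicitly why the destabilizing class must be non-parallel to $v(E)$ by invoking the wall structure of Proposition~\ref{prop:locally_finite} and Definition~\ref{def:WallGeneral}---a point the paper leaves to the reader.
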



\section{Stability Conditions on Surfaces}
\label{sec:surfaces}

This is one of the key sections of these notes. We introduce the fundamental operation of tilting for hearts of bounded t-structures and use it to give examples of Bridgeland stability conditions on surfaces.
Another key ingredient is the Bogomolov inequality for slope semistable sheaves, which we will recall as well, and show its close relation with the support property for Bridgeland stability conditions introduced in the previous section.
We then conclude with a few examples of stable objects, explicit description of walls, and the behavior at the large volume limit point.

\subsection{Tilting of t-structures}

Given the heart of a bounded t-structure, the process of tilting is used to obtain a new heart. For a detailed account of the general theory of tilting we refer to \cite{HRS96:tilting} and \cite[Section 5]{BvdB03:functors}.
The idea of using this operation to construct Bridgeland stability conditions is due to Bridgeland in \cite{Bri08:stability_k3}, in the case of K3 surfaces. The extension to general smooth projective surfaces is in \cite{AB13:k_trivial}.

\begin{defn}\label{def:TorsionPair}
Let $\AA$ be an abelian category.
A \emph{torsion pair} on $\AA$ consists of a pair of full additive subcategories $(\FF,\TT)$ of $\AA$ such that the following two properties hold:
\begin{itemize}
\item For any $F \in \FF$ and $T \in \TT$, $\Hom(T,F)=0$.
\item For any $E \in \AA$ there are $F \in \FF$, $T \in \TT$ together with an exact sequence
\[
0 \to T \to E \to F \to 0.
\]
\end{itemize}
\end{defn}

By the vanishing property, the exact sequence in the definition of a torsion pair is unique.

\begin{exercise}
Let $X$ be a smooth projective variety.
Show that the pair of subcategories
\begin{align*}
&\TT := \left\{\text{Torsion sheaves on }X \right\}\\ 
&\FF := \left\{\text{Torsion-free sheaves on }X \right\}
\end{align*}
gives a torsion pair in $\Coh(X)$.
\end{exercise}

\begin{lem}[Happel-Reiten-Smal\o]
Let $X$ be a smooth projective variety.
Let $\AA\subset\Db(X)$ be the heart of a bounded t-structure, and let $(\FF,\TT)$ be a torsion pair in $\AA$.
Then the category
\[
\AA^\sharp := \left\{ E\in\Db(X)\,:\, \begin{array}{l}
\HH_\AA^i(E)=0,\, \mathrm{for\, all\, }i\neq 0,-1\\
\HH^0_\AA(E)\in\TT\\
\HH^{-1}_\AA(E)\in\FF
\end{array} \right\}
\]
is the heart of a bounded t-structure on $\Db(X)$, where $\HH^\bullet_\AA$ denotes the cohomology object with respect to the t-structure $\AA$.
\end{lem}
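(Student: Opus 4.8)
The plan is to verify the two defining properties of the heart of a bounded t-structure from Definition \ref{def:heart} directly for $\AA^\sharp$. The starting observation is that $\AA^\sharp$ is exactly the extension closure $\langle\FF[1],\TT\rangle$: for $E\in\AA^\sharp$ the $\AA$-truncation triangle reads
\[
\HH^{-1}_\AA(E)[1] \to E \to \HH^0_\AA(E) \to \HH^{-1}_\AA(E)[2],
\]
exhibiting $E$ as an extension of $\HH^0_\AA(E)\in\TT$ by $\HH^{-1}_\AA(E)[1]\in\FF[1]$, while conversely any such extension has $\AA$-cohomology concentrated in degrees $-1,0$ of the required type. Thus $\TT$ and $\FF[1]$ generate $\AA^\sharp$ under extensions, which is the reduction tool for everything that follows.

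For the $\Hom$-vanishing I want $\Hom(A,B[k])=0$ for all $A,B\in\AA^\sharp$ and $k<0$, which is the first bullet of Definition \ref{def:heart} rewritten with $k=j-i$. Since $\Hom(-,B[k])$ and $\Hom(A,-[k])$ are cohomological, the generation statement above lets me reduce to the four cases in which $A$ and $B$ each lie in $\TT$ or in $\FF[1]$. If both lie in $\TT\subset\AA$ or both in $\FF[1]$ (a common shift of objects of $\AA$), the vanishing is immediate from the heart axiom for $\AA$. The mixed cases reduce, after absorbing the shift, either to the heart axiom again or, in the single boundary case $k=-1$ with $A\in\TT$ and $B=B'[1]$, $B'\in\FF$, to $\Hom(A,B')=0$, which is precisely the orthogonality $\Hom(\TT,\FF)=0$ built into the torsion pair.

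For the filtration property I would build the $\sharp$-truncations out of the $\AA$-truncations, effectively exhibiting the candidate aisles $\DD^{\leq 0}_\sharp:=\{E:\HH^i_\AA(E)=0 \text{ for } i>0,\ \HH^0_\AA(E)\in\TT\}$ and $\DD^{\geq 1}_\sharp:=\{E:\HH^i_\AA(E)=0 \text{ for } i<0,\ \HH^0_\AA(E)\in\FF\}$ as a t-structure. Given $E\in\Db(X)$, set $E_{\leq 0}:=\tau^{\leq 0}_\AA E$; the complementary piece $\tau^{\geq 1}_\AA E$ already lies in $\DD^{\geq 1}_\sharp$. Writing $0\to T\to\HH^0_\AA(E)\to F\to 0$ for the torsion decomposition of the top cohomology, I form the triangle $E'\to E_{\leq 0}\to F$ whose fibre $E'$ has $\HH^0_\AA(E')=T\in\TT$ and unchanged lower cohomology, so $E'\in\DD^{\leq 0}_\sharp$. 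Then $F\in\FF$ and $\tau^{\geq 1}_\AA E$ reassemble, via the octahedral axiom applied to $E'\to E_{\leq 0}\to E$, into a single $E''\in\DD^{\geq 1}_\sharp$ fitting in $E'\to E\to E''$. Iterating the truncation produces the finite filtration of Definition \ref{def:heart} with factors in shifts of $\AA^\sharp$ ordered by descending shift, and boundedness is inherited from $\AA$ since every object has only finitely many nonzero $\AA$-cohomologies and hence finitely many nonzero $\AA^\sharp$-cohomologies.

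The main obstacle is this last step: the octahedral reassembly that yields the truncation triangles, together with checking that $\DD^{\leq 0}_\sharp$ and $\DD^{\geq 1}_\sharp$ genuinely form a t-structure (their mutual orthogonality, which overlaps with the vanishing computation above, and closure under the relevant shifts). By contrast, the extension-closure description and the $\Hom$-vanishing are routine once the torsion-pair orthogonality $\Hom(\TT,\FF)=0$ is in hand.
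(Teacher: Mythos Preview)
Your argument is correct and follows the same route as the paper for the $\Hom$-vanishing: both reduce via the extension triangles $F[1]\to E\to T$ to the torsion-pair orthogonality $\Hom(\TT,\FF)=0$. You in fact go further than the paper, which only sketches the first axiom and leaves the filtration property as an exercise; your outline of the truncation/octahedral argument for the second axiom is the standard one and is fine.
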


\begin{proof}
We only need to check the conditions in Definition \ref{def:heart}.
We check the first one and leave the second as an exercise.

Given $E,E'\in\AA^\sharp$, we need to show that $\Hom^{<0}(E,F)=0$.
By definition of $\AA^\sharp$, we have exact triangles
\[
F_E[1] \to E \to T_E \, \, \text{ and } \, \, F_{E'}[1]\to E'\to T_{E'},
\]
where $F_E,F_{E'}\in\FF$ and $T_E,T_{E'}\in\TT$.
By taking the functor $\Hom$, looking at the induced long exact sequences, and using the fact that negative $\Hom$'s vanish for objects in $\AA$, the required vanishing amounts to showing that $\Hom(T_E,F_{E'})=0$. This is exactly the first condition in the definition of a torsion pair.
\end{proof}

\begin{exercise}
Show that $\AA^\sharp$ can also be defined as the smallest extension closed full subcategory of $\Db(X)$ containing both $\TT$ and $\FF[1]$. We will use the notation $\AA^\sharp = \langle \FF[1], \TT \rangle$.
\end{exercise}

\begin{exercise}
\label{exercise:heartIsATilt}
Let $\AA$ and $\BB$ be two hearts of bounded t-structures on $\Db(X)$ such that
$\AA \subset \langle \BB, \BB[1] \rangle$. Show that $\AA$ is a tilt of $\BB$. \emph{Hint: Set $\TT = \BB \cap \AA$ and $\FF = \BB \cap \AA[-1]$ and show that $(\FF, \TT)$ is a torsion pair on $\BB$.}
\end{exercise}

\subsection{Construction of Bridgeland stability conditions on surfaces}
\label{subsec:SurfaceHeart}

Let $X$ be a smooth projective surface over $\C$.
As in Section \ref{sec:generalizations}, we fix an ample divisor class $\omega \in N^1(X)$ and a divisor class $B\in N^1(X)$. We will construct a family of Bridgeland stability conditions  that depends on these two parameters.

As remarked in Example \ref{ex:StabilityCurvesAgain} (2), $\Coh(X)$ will never be the heart of a Bridgeland stability condition. The idea is to use tilting, by starting with coherent sheaves, and use slope stability to define a torsion pair.

Let $\AA=\Coh(X)$.
We define a pair of subcategories
\begin{align*}
\TT_{\omega, B} &= \left\{E \in \Coh(X) : \text{any semistable factor $F$ of $E$ satisfies $\mu_{\omega, B}(F) > 0$} \right\}, \\
\FF_{\omega, B} &=  \left\{E \in \Coh(X) : \text{any semistable factor $F$ of $E$ satisfies $\mu_{\omega, B}(F) \leq 0$} \right\}.
\end{align*}
The existence of Harder-Narasimhan filtrations for slope stability (see Example \ref{ex:ExamplesStability}) and Lemma \ref{lem:Schur} show that this is indeed a torsion pair on $\Coh(X)$.

\begin{defn}
We define the tilted heart
\[
\Coh^{\omega,B}(X) := \AA^\sharp = \langle\FF_{\omega, B}[1], \TT_{\omega, B} \rangle.
\]
\end{defn}

\begin{exercise}
Show that the categories $\TT_{\omega,B}$ and $\FF_{\omega,B}$ can also be defined as follows:
\begin{align*}
\TT_{\omega, B} &=  \left\{E \in \Coh(X) : \forall\, E\onto Q\neq0,\ \mu_{\omega, B}(Q) \leq 0 \right\},\\
\FF_{\omega, B} &= \left\{E \in \Coh(X) : \forall\, 0\neq F\subset E,\ \mu_{\omega, B}(F) > 0 \right\}.
\end{align*}
\end{exercise}

\begin{exercise}
Show that the category $\Coh^{\omega,B}(X)$ depends only on $\frac{\omega}{\omega^2}$ and $\frac{\omega \cdot B}{\omega^2}$.
\end{exercise}

\begin{exercise}\label{exercise:MinimalObjects}
We say that an object $S$ in an abelian category is \emph{minimal}\footnote{This is commonly called a \emph{simple} object. Unfortunately, in the theory of semistable sheaves, the word ``simple'' is used to indicate $\Hom(S,S)=\C$; this is why we use this slightly non-standard notation.} if $S$ does not have any non-trivial subobjects (or quotients).
Show that skyscraper sheaves are minimal objects in the category $\Coh^{\omega,B}(X)$.
Let $E$ be a $\mu_{\omega,B}$-stable vector bundle on $X$ with $\mu_{\omega, B}(E) = 0$. Show that $E[1]$ is a minimal object in $\Coh^{\omega,B}(X)$.
\end{exercise}

We now need to define a stability function on the tilted heart.
We set
\[
Z_{\omega,B} := - \int_X\, e^{\sqrt{-1}\,\omega} \cdot \ch^B.
\]
Explicitly, for $E\in\Db(X)$,
\[
Z_{\omega, B}(E) = \left(-\ch^{B}_2(E) + \frac{\omega^2}{2} \cdot \ch^{B}_0(E)\right) + \sqrt{-1} \ \omega \cdot \ch^{B}_1(E).
\]
The corresponding slope function is
\[
\nu_{\omega, B}(E) = \frac{\ch^{B}_2(E) - \frac{\omega^2}{2} \cdot \ch^{B}_0(E)}{\omega \cdot \ch^{B}_1(E)}.
\]

The main result is the following (see \cite{Bri08:stability_k3, AB13:k_trivial, BM11:local_p2}).
We will choose $\Lambda=K_{\num}(X)$ and $v$ as the Chern character map as in Example \ref{ex:NumericalGrothendieck}.

\begin{thm}\label{thm:BridgelandSurface}
Let $X$ be a smooth projective surface.
The pair $\sigma_{\omega, B} = (\Coh^{\omega,B}(X),Z_{\omega,B})$ gives a Bridgeland stability condition on $X$. Moreover, the map
\[
\Amp(X)\times N^1(X) \to \Stab(X), \, \, (\omega,B)\mapsto\sigma_{\omega,B}
\]
is a continuous embedding.
\end{thm}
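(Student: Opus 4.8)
The plan is to verify directly the two conditions of Definition~\ref{def:Bridgeland1} for $\sigma_{\omega,B}$ — that $Z_{\omega,B}$ is a stability function on the (already abelian) tilted heart $\Coh^{\omega,B}(X)$ with Harder--Narasimhan filtrations, and that the support property holds — and then to treat continuity and injectivity of the parameterizing map. I would organize the first part around the decomposition of any nonzero $E\in\Coh^{\omega,B}(X)$ coming from the triangle $\HH^{-1}(E)[1]\to E\to\HH^0(E)$ with $\HH^{-1}(E)\in\FF_{\omega,B}$ and $\HH^0(E)\in\TT_{\omega,B}$, where $\HH^\bullet$ denotes ordinary cohomology sheaves.

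First I would check that $Z_{\omega,B}$ is a stability function. Positivity $\Im Z_{\omega,B}(E)=\omega\cdot\ch_1^B(E)\geq 0$ reduces to the two torsion-pair conditions: objects of $\TT_{\omega,B}$ satisfy $\omega\cdot\ch_1^B\geq 0$ (positive-rank semistable factors because $\mu_{\omega,B}>0$ and $\omega^2>0$, torsion factors because $\ch_1$ is effective), while $\FF_{\omega,B}[1]$ contributes $-\omega\cdot\ch_1^B\geq 0$. For the second clause I would argue that $\Im Z_{\omega,B}(E)=0$ forces both summands to vanish, so $\HH^0(E)$ is supported in dimension $0$ and $\HH^{-1}(E)$ is $\mu_{\omega,B}$-semistable of slope $0$. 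The zero-dimensional part contributes $\Re Z_{\omega,B}\leq 0$, and for a nonzero slope-$0$ torsion-free $F$ the Hodge index theorem ($\omega\cdot D=0\Rightarrow D^2\leq 0$) together with the classical Bogomolov inequality $\ch_1^B(F)^2\geq 2\,\ch_0^B(F)\,\ch_2^B(F)$ gives $\ch_2^B(F)\leq 0$, whence $\Re Z_{\omega,B}(F)\geq\tfrac{\omega^2}{2}\ch_0^B(F)>0$. Combining the pieces yields $\Re Z_{\omega,B}(E)<0$.

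Next I would take $Q=\Delta:=(\ch_1^B)^2-2\,\ch_0^B\ch_2^B$ (the twist-invariant discriminant) for the support property: $Q<0$ on $\ker Z_{\omega,B}\setminus\{0\}$ follows from the Hodge index theorem exactly as above, and $Q(E)\geq 0$ for $\sigma_{\omega,B}$-semistable $E$ is the Bogomolov inequality for tilt-stability, which I would deduce from the classical Bogomolov inequality for slope-semistable sheaves. The genuinely delicate step is the existence of Harder--Narasimhan filtrations: one cannot simply quote Proposition~\ref{prop:hn_exists}, because $\Im Z_{\omega,B}=\omega\cdot\ch_1^B$ need not have discrete image when $\omega$ or $B$ is irrational. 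I would resolve this by first proving that $\Coh^{\omega,B}(X)$ is noetherian and then running the polygon argument of Proposition~\ref{prop:hn_exists}, using $Q$ to bound the classes of potential subobjects so that only finitely many vertices occur. An attractive alternative is to establish the axioms first for rational $(\omega,B)$, where $\Im Z_{\omega,B}$ has discrete image and Proposition~\ref{prop:hn_exists} applies verbatim, and then to obtain the irrational classes by deformation: since $\Delta$ is negative definite on every $\ker Z_{\omega,B}$, all these central charges lie in the region of Proposition~\ref{prop:ExplicitBridgelandMain}, so both the stability-condition property and the support property (with the same $Q$) propagate across the whole family.

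The main obstacle is precisely this Harder--Narasimhan/noetherianity step for irrational classes; the stability-function and support-property verifications are comparatively mechanical once Bogomolov and Hodge index are available. Finally, for the embedding, continuity of $(\omega,B)\mapsto Z_{\omega,B}$ is immediate from the explicit quadratic formula, and combined with Theorem~\ref{thm:BridgelandMain} (the forgetful map $\ZZ$ is a local homeomorphism) this upgrades to continuity of $(\omega,B)\mapsto\sigma_{\omega,B}$ and shows it is an embedding, since locally the stability condition is pinned down by its central charge. Injectivity follows because $Z_{\omega,B}$ determines $(\omega,B)$: the imaginary part recovers the linear form $D\mapsto\omega\cdot D$ on $N^1(X)$, hence $\omega$ itself by nondegeneracy of the intersection pairing, together with $\omega\cdot B$, and the real part then recovers the form $D\mapsto B\cdot D$, hence $B$.
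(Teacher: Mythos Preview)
Your overall strategy---check the stability-function axiom via Bogomolov and Hodge index, establish the rational case first so that Proposition~\ref{prop:hn_exists} applies, then propagate to irrational $(\omega,B)$ using the support property and Proposition~\ref{prop:ExplicitBridgelandMain}---is precisely the paper's. But your choice $Q=\Delta$ does not work on surfaces admitting curves of negative self-intersection. If $D\subset X$ is an integral curve with $D^2<0$ (say a $(-1)$-curve on a blow-up, or a $(-2)$-curve on a K3), then $\OO_D$ is a $\sigma_{\omega,B}$-semistable object for $\omega$ large enough, yet $\Delta(\OO_D)=D^2<0$; so condition~(1) of the support property fails. The classical Bogomolov inequality only applies to torsion-free sheaves, and that is exactly where your deduction ``$Q(E)\geq 0$ for tilt-semistable $E$ follows from classical Bogomolov'' breaks. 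The paper corrects this (Definition~\ref{def:discriminant}, Remark~\ref{rmk:TorsionSheavesBogomolov}, Theorem~\ref{thm:BogomolovBridgelandStability}) by using $\Delta^C_{\omega,B}=\Delta+C_\omega(\omega\cdot\ch_1^B)^2$, where $C_\omega\geq 0$ is chosen via Exercise~\ref{exer:ConstantCH} so that $C_\omega(\omega\cdot D)^2+D^2\geq 0$ for every effective $D$; since $\omega\cdot\ch_1^B=0$ on $\ker Z_{\omega,B}$, negative-definiteness there is unaffected. Note that $\Delta^C_{\omega,B}$ genuinely depends on $(\omega,B)$, so you cannot literally say ``all these central charges lie in the region of Proposition~\ref{prop:ExplicitBridgelandMain} for one fixed $Q$''; the deformation has to be done a little more carefully (the paper first moves along the ray $\omega=\alpha H$ with $B$ fixed and rational, where $\overline{\Delta}^B_H$ is constant, and only then varies $B$).

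There is a second, smaller gap. When you deform from rational $(\omega,B)$ via Theorem~\ref{thm:BridgelandMain}, you obtain \emph{some} stability condition with central charge $Z_{\omega,B}$, but you have not verified that its heart is $\Coh^{\omega,B}(X)$. Your continuity argument---``$\ZZ$ is a local homeomorphism, so continuity of $Z_{\omega,B}$ upgrades to continuity of $\sigma_{\omega,B}$''---tacitly assumes that the independently defined $\sigma_{\omega,B}$ coincides with the continuous lift, which is exactly what must be checked. The paper closes this with the lemma at the end of Section~\ref{subsec:ProofThmSurface}: any stability condition with central charge $Z_{\omega,B}$ in which all skyscraper sheaves are stable of phase one has heart equal to $\Coh^{\omega,B}(X)$.
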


Unfortunately the proof of this result is not so direct, even in the case of K3 surfaces.
We will give a sketch in Section \ref{subsec:ProofThmSurface} below.
The idea is to first prove the case in which $\omega$ and $B$ are rational classes.
The non trivial ingredient in this part of the proof is the classical Bogomolov inequality for slope semistable torsion-free sheaves. Then we show we can deform by keeping $B$ fixed and letting $\omega$ vary.
This, together with the behavior for $\omega$ ``large'', will give a Bogomolov inequality for Bridgeland stable objects. This will allow to deform $B$ as well and to show a general Bogomolov inequality for Bridgeland semistable objects, which will finally imply the support property and conclude the proof of the theorem.

The key result in the proof of Theorem \ref{thm:BridgelandSurface} can be summarized as follows.
It is one of the main theorems from \cite{BMT14:stability_threefolds} (see also \cite[Theorem 3.5]{BMS16:abelian_threefolds}).
We first need to introduce three notions of discriminant for an object in $\Db(X)$.

\begin{exercise}\label{exer:ConstantCH}
Let $\omega\in N^1(X)$ be an ample real divisor class.
Then there exists a constant $C_\omega\geq0$ such that, for every effective divisor $D\subset X$, we have
\[
C_\omega (\omega \cdot D)^2 + D^2 \geq 0.
\]
\end{exercise}

\begin{defn}\label{def:discriminant}
Let $\omega,B\in N^1(X)$ with $\omega$ ample.
We define the \emph{discriminant} function as
\[
\Delta := \left(\ch_1^B\right)^2 - 2 \ch_0^B \cdot \ch_2^B = \left(\ch_1\right)^2 - 2 \ch_0 \cdot \ch_2.
\]
We define the \emph{$\omega$-discriminant} as
\[
\overline{\Delta}^B_\omega := \left(\omega \cdot \ch_1^B \right)^2 - 2 \omega^2 \cdot \ch_0^B \cdot \ch_2^B.
\]
Choose a rational non-negative constant $C_\omega$ as in Exercise \ref{exer:ConstantCH} above.
Then we define the \emph{$(\omega,B,C_\omega)$-discriminant} as
\[
\Delta^C_{\omega,B} := \Delta + C_\omega (\omega \cdot \ch_1^B)^2.
\]
\end{defn}

\begin{thm}\label{thm:BogomolovBridgelandStability}
Let $X$ be a smooth projective surface over $\C$.
Let $\omega,B\in N^1(X)$ with $\omega$ ample.
Assume that $E$ is $\sigma_{\omega,B}$-semistable.
Then
\[
\Delta^C_{\omega,B}(E)\geq 0 \, \, \text{ and } \, \, \overline{\Delta}^B_\omega(E) \geq0.
\]
\end{thm}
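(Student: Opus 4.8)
The plan is to deduce both inequalities from the single geometric input of the classical Bogomolov inequality $\Delta(F)\geq0$ for $\mu_{\omega,B}$-semistable torsion-free sheaves $F$, propagating positivity through short exact sequences by means of the convex cone lemma (Lemma \ref{lem:convex_cone}). First I would reduce to rational classes $\omega,B$, recovering arbitrary real $\omega$ and $B$ at the end by continuity. Fixing such rational classes, I would run an induction along the ray $\{(t\omega,B):t>0\}$, keeping $B$ fixed and only rescaling $\omega$, on the quantity $n(E):=\omega\cdot\ch_1^B(E)=\Im Z_{\omega,B}(E)\geq0$. Since $\omega$ and $B$ are rational and $\ch_1^B(E)\in\NS(X)_\Q$ with bounded denominators, $n$ takes values in a discrete subgroup of $\R$, which is what makes the induction well founded. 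Two elementary observations guide the whole argument. By the Hodge index theorem $(\omega\cdot\ch_1^B)^2\geq\omega^2(\ch_1^B)^2$, so that $\overline{\Delta}^B_\omega\geq\omega^2\Delta$ and $\Delta^C_{\omega,B}\geq\Delta$; hence whenever $\Delta(E)\geq0$ both desired inequalities hold at once. Moreover, a direct computation on $\ker Z_{\omega,B}$ (where $\omega\cdot\ch_1^B=0$ and $\ch_2^B=\tfrac{\omega^2}{2}\ch_0^B$) gives $\overline{\Delta}^B_\omega=-(\omega^2)^2(\ch_0^B)^2\leq0$ and $\Delta^C_{\omega,B}=(\ch_1^B)^2-\omega^2(\ch_0^B)^2\leq0$, the latter again by Hodge index on $\omega^\perp$; so both quadratic forms are negative semidefinite on $\ker Z_{\omega,B}$, which is exactly the hypothesis required by Lemma \ref{lem:convex_cone}.

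For the base of the induction I would treat the objects that are, up to shift, sheaves. For every rank-zero object, $\ch_1^B=\ch_1$ is effective, so the inequalities hold unconditionally: $\overline{\Delta}^B_\omega(E)=(\omega\cdot\ch_1)^2\geq0$, while Exercise \ref{exer:ConstantCH} gives $\Delta^C_{\omega,B}(E)=(\ch_1)^2+C_\omega(\omega\cdot\ch_1)^2\geq0$. This is precisely the reason the correction term $C_\omega(\omega\cdot\ch_1^B)^2$ is built into $\Delta^C_{\omega,B}$, since here the plain discriminant $\Delta$ may be negative. For an object with $n(E)=0$, the canonical sequence $0\to\HH^{-1}(E)[1]\to E\to\HH^0(E)\to0$ in the heart exhibits $E$ as an extension of a zero-dimensional torsion sheaf $\HH^0(E)$ by the shift of a $\mu_{\omega,B}$-semistable torsion-free sheaf $\HH^{-1}(E)$ of slope zero; classical Bogomolov then gives $\Delta(\HH^{-1}(E))\geq0$, hence $Q\geq0$ on both factors for each of the two forms $Q$, and since the factors share the phase of $E$ the convex cone lemma yields $Q(E)\geq0$.

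For the inductive step, suppose $n(E)>0$. If $E$ is strictly $\sigma_{\omega,B}$-semistable, I take a short exact sequence $0\to A\to E\to G\to0$ in the heart with $A,G$ semistable of the same slope; then $0<n(A),n(G)<n(E)$, the inductive hypothesis gives both forms nonnegative on $A$ and $G$, and since $v(A),v(G)$ lie on a common ray through the origin, Lemma \ref{lem:convex_cone}(3) yields $Q(E)\geq Q(A)+Q(G)\geq0$ for each $Q$. If instead $E$ is $\sigma_{\omega,B}$-stable, I vary the scale $t\geq1$; both forms have $t$-independent sign along the ray, since $\overline{\Delta}^B_{t\omega}=t^2\,\overline{\Delta}^B_\omega$ and, taking $C_{t\omega}=C_\omega/t^2$, the form $\Delta^C$ is scale invariant. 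If $E$ is destabilized at a smallest wall $t_0>1$, then at $t_0$ it is strictly semistable and I obtain a short exact sequence whose factors have strictly smaller $n$, concluding exactly as before by the inductive hypothesis and Lemma \ref{lem:convex_cone} applied at $t_0\omega$. Otherwise $E$ remains stable for all $t\geq1$, and the large-volume-limit analysis identifies $E$, up to shift, with a slope-semistable sheaf, so classical Bogomolov together with the observations of the first paragraph finishes the proof.

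I expect two genuine obstacles. The first is the large-volume base case: showing that an object stable for all $t\gg0$ is, up to shift, a slope-semistable sheaf requires a careful study of the cohomology sheaves $\HH^{-1}(E),\HH^0(E)$ and of the asymptotics of $\nu_{t\omega,B}$ as $t\to\infty$, and is the only step in which the geometry of $X$ really enters. The second, and more delicate, is the well-foundedness of the induction: I must know that along the ray there are only finitely many walls between $t=1$ and the large-volume regime, so that a smallest destabilizing $t_0$ exists. The discreteness of $n$ controls the classes of the possible destabilizing subobjects, but one must avoid circularity with the local finiteness statement of Proposition \ref{prop:locally_finite}, whose proof relies on the very support property that these inequalities are meant to establish. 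This is exactly what is resolved by working first with rational classes along a single ample ray, and only afterwards deforming to arbitrary real $\omega$ and $B$ by continuity, which is also the point at which the argument feeds back into the proof of Theorem \ref{thm:BridgelandSurface}.
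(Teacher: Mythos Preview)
Your proposal is correct and follows essentially the same route as the paper: reduce to rational $\omega,B$, induct on the discrete invariant $\omega\cdot\ch_1^B$, use Lemma~\ref{lem:large_volume_limit_tilt} at large volume to invoke classical Bogomolov, and propagate via Lemma~\ref{lem:convex_cone} through destabilizing sequences. Your explicit verification that both $\overline{\Delta}^B_\omega$ and $\Delta^C_{\omega,B}$ are negative semidefinite on $\ker Z_{\omega,B}$, and your treatment of the $n(E)=0$ and torsion cases, are in fact more detailed than the paper's sketch; the circularity you flag is exactly the subtle point the paper singles out, and its resolution is the one you gesture at---destabilizers have strictly smaller $\omega\cdot\ch_1^B$, so by the inductive hypothesis they already satisfy the support-type inequality, which is enough to control walls for $E$ without assuming the support property for $E$ itself.
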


The quadratic form $\Delta^C_{\omega,B}$ will give the support property for $\sigma_{\omega,B}$.
The quadratic form $\overline{\Delta}^B_\omega$ will give the support property on the $(\alpha,\beta)$-plane (see Section \ref{subsec:WallChamber}).

\subsection{Sketch of the proof of Theorem \ref{thm:BridgelandSurface} and Theorem \ref{thm:BogomolovBridgelandStability}}\label{subsec:ProofThmSurface}

We keep the notation as in the previous section.

\subsubsection*{Bogomolov inequality}

Our first goal is to show that $Z_{\omega,B}$ is a stability function on $\Coh^{\omega,B}(X)$.
The key result we need is the following (\cite{Bog78:inequality, Gie79:bogomolov, Rei78:bogomolov}, \cite[Theorem 12.1.1]{Pot97:lectures_vector_bundles}, \cite[Theorem 3.4.1]{HL10:moduli_sheaves}).

\begin{thm}[Bogomolov Inequality]
Let $X$ be a smooth projective surface.
Let $\omega,B\in N^1(X)$ with $\omega$ ample, and let $E$ be a $\mu_{\omega,B}$-semistable torsion-free sheaf.
Then
\[
\Delta(E) = \ch_1^B(E)^2 - 2 \ch_0^B(E) \cdot \ch_2^B(E) \geq 0.
\]
\end{thm}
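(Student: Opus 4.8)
The plan is to prove the inequality by a contradiction argument built on tensor powers of the endomorphism bundle, in the spirit of Bogomolov's original approach. First I would make two harmless reductions. Since $\mu_{\omega,B}$-semistability is independent of $B$ (the slope only shifts by the constant $\tfrac{\omega^{n-1}\cdot B}{\omega^n}$) and $\Delta = (\ch_1^B)^2 - 2\ch_0^B\ch_2^B = \ch_1^2 - 2\ch_0\ch_2$ is independent of $B$ by Definition \ref{def:discriminant}, I may assume $B = 0$. Next, replacing $E$ by its double dual $E^{\vee\vee}$, which is reflexive and hence locally free on the smooth surface $X$, changes neither $\ch_0$ nor $\ch_1$ and only increases $\ch_2$ by the length of the codimension-two quotient $E^{\vee\vee}/E$; thus $\Delta(E) \geq \Delta(E^{\vee\vee})$, while $E^{\vee\vee}$ remains $\mu_\omega$-semistable of the same slope. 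So it suffices to treat the case where $E$ is a vector bundle, and since a line bundle has $\Delta = 0$ I may assume $\rk E = r \geq 2$.

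The main construction is then as follows. Set $G := E \otimes E^\vee$, the endomorphism bundle, with $\ch_0(G) = r^2$, $\ch_1(G) = 0$, and $\ch_2(G) = 2\ch_0\ch_2 - \ch_1^2 = -\Delta(E)$. Using that tensor products of $\mu_\omega$-semistable sheaves remain $\mu_\omega$-semistable in characteristic zero (see \cite{HL10:moduli_sheaves}), every tensor power $G^{\otimes n}$ is $\mu_\omega$-semistable of slope $0$. From multiplicativity of the Chern character and $\ch_1(G) = 0$ one computes $\ch_0(G^{\otimes n}) = r^{2n}$, $\ch_1(G^{\otimes n}) = 0$, and $\ch_2(G^{\otimes n}) = n\,r^{2n-2}\,\ch_2(G) = -n\,r^{2n-2}\,\Delta(E)$. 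Hirzebruch--Riemann--Roch, with the middle Todd term vanishing because $\ch_1(G^{\otimes n}) = 0$, then yields
\[
\chi(X, G^{\otimes n}) = \ch_2(G^{\otimes n}) + r^{2n}\,\chi(\OO_X) = -n\,r^{2n-2}\,\Delta(E) + r^{2n}\,\chi(\OO_X).
\]
Assuming for contradiction that $\Delta(E) < 0$, the right-hand side grows like $n\,r^{2n-2}\,|\Delta(E)| \to +\infty$.

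To close the argument I would bound $\chi$ from above by controlling global sections. For a $\mu_\omega$-semistable sheaf $F$ of slope $0$ the crucial input is a uniform estimate $h^0(X,F) \leq C\cdot\rk(F)$ with $C = C(X,\omega)$ independent of $F$; by Serre duality $h^2(X,F) = h^0(X, F^\vee\otimes K_X)$, and $F^\vee\otimes K_X$ is $\mu_\omega$-semistable of the fixed slope $\tfrac{\omega\cdot K_X}{\omega^2}$, so the same type of bound applies. Granting this, $\chi(X, G^{\otimes n}) \leq h^0 + h^2 \leq C'\,r^{2n}$ with $C'$ independent of $n$, contradicting the lower bound above as soon as $n > \tfrac{(C' - \chi(\OO_X))\,r^2}{|\Delta(E)|}$. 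Hence $\Delta(E) \geq 0$. I expect the main obstacle to be precisely this uniform linear bound on global sections of semistable sheaves of fixed slope — a Le Potier--Simpson / boundedness estimate — together with the preservation of $\mu_\omega$-semistability under tensor products in characteristic zero; these are the two genuinely substantial ingredients, whereas the reductions and the Riemann--Roch bookkeeping are routine.
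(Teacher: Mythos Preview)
Your argument is correct in outline and is a well-known variant of the proof, but it differs from the paper's in two substantive ways, and one of them needs a word of caution.

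The paper first reduces to $\ch_1(E)=0$ by pulling back along a cyclic cover and twisting by a line bundle, then takes tensor powers of $E$ itself; it bounds $h^0(X,E^{\otimes m})$ by restricting to a smooth curve $C\in |uH|$ and using only that tensor powers of a semistable bundle \emph{on a curve} remain semistable (a consequence of Narasimhan--Seshadri), together with an elementary $h^0$-bound on $C$. You instead pass to $G=E\otimes E^\vee$, which kills $\ch_1$ without any covering trick, and then invoke two heavier inputs on the surface: preservation of $\mu_\omega$-semistability under tensor products, and a Le~Potier--Simpson type linear bound $h^0(F)\le C\cdot\rk(F)$ for semistable $F$ of fixed slope. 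Your route is slicker and avoids the cyclic cover; the paper's route is more elementary in that every nontrivial step is pushed down to a curve.

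The caution: be careful about circularity when citing tensor-product preservation on surfaces. Some algebraic proofs (via Gieseker's ample-bundle criterion, cf.\ \cite[\S3.2]{HL10:moduli_sheaves}) already use the Bogomolov inequality. You are fine if you invoke a proof that goes through restriction to curves (Flenner or Mehta--Ramanathan, which are independent of Bogomolov) together with the curve case, or the Kobayashi--Hitchin correspondence; but you should say which one. The same remark applies to the $h^0$-bound: the cleanest way to get $h^0(X,F)\le C\cdot\rk(F)$ for $\mu_\omega$-semistable $F$ of slope $0$ without risk of circularity is again to twist down by a curve class and restrict, which is exactly what the paper does. In that sense your two black boxes, once unpacked safely, converge on the paper's strategy.
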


\begin{proof}[Sketch of the proof]
Since neither slope stability nor $\Delta$ depend on $B$, we can assume $B=0$.
Also, by \cite[Lemma 4.C.5]{HL10:moduli_sheaves}, slope stability with respect to an ample divisor changes only at integral classes. Hence, we can assume $\omega=H\in\NS(X)$ is the divisor class of a very ample integral divisor (by abuse of notation, we will still denote it by $H$).

Since $\Delta$ is invariant both by tensor products of line bundles and by pull-backs via finite surjective morphisms, and since these two operations preserve slope stability of torsion-free sheaves (see e.g., Exercise \ref{ex:StabilityPullback} for the case of curves, and \cite[Lemma 3.2.2]{HL10:moduli_sheaves} in general), by taking a $\rk(E)$-cyclic cover (see, e.g., \cite[Theorem 3.2.9]{HL10:moduli_sheaves}) and by tensoring by a $\rk(E)$-root of $\ch_1(E)$, we can assume $\ch_1(E)=0$.
Finally, by taking the double-dual, $\Delta$ can only become worse.
Hence, we can assume $E$ is actually a vector bundle on $X$.

Hence, we are reduced to show $\ch_2(E)\leq0$.
We use the restriction theorem for slope stability (see \cite[Section 7.1 or 7.2]{HL10:moduli_sheaves} or \cite{Lan04:positive_char}): up to replacing $H$ with a multiple $uH$, for $u\gg0$ fixed, there exists a smooth projective curve $C\in |H|$ such that $E|_C$ is again slope semistable.
Also, as remarked in Section \ref{subsec:EquivalentDefCurves}, the tensor product $E|_C \otimes \ldots \otimes E|_C$ is still semistable on $C$.

Now the theorem follows by estimating Euler characteristics.
Indeed, on the one hand, by the Riemann-Roch Theorem we have
\[
\chi(X,\underbrace{E\otimes\ldots\otimes E}_{m\text{-times}}) = \rk(E)^m \chi(X,\OO_X) + m \rk(E)^{m-1}\ch_2(E),
\]
for all $m>0$.

On the other hand, we can use the exact sequence
\[
0 \to \left( E\otimes\ldots\otimes E\right) \otimes \OO_X(-H) \to E\otimes\ldots\otimes E \to E|_C\otimes\ldots\otimes E|_C \to 0,
\]
and, by using the remark above, deduce that
\[
h^0(X,E\otimes\ldots\otimes E) \leq h^0(C,E|_C\otimes\ldots\otimes E|_C) \leq \gamma_E \rk(E)^m,
\]
for a constant $\gamma_E>0$ which is independent on $m$.
Similarly, by Serre duality, and by using an analogous argument, we have
\[
h^2(X,E\otimes\ldots\otimes E) \leq \delta_E \rk(E)^m,
\]
for another constant $\delta_E$ which depends only on $E$.

By putting all together, since
\[
\chi(X,E\otimes\ldots\otimes E)\leq h^0(X,E\otimes\ldots\otimes E) + h^2(X,E\otimes\ldots\otimes E) \leq (\gamma_E + \delta_E) \rk(E)^m,
\]
we deduce
\[
\chi(X,\OO_X) + m \frac{\ch_2(E)}{\rk(E)} \leq \gamma_E + \delta_E,
\]
for any $m>0$, namely $\ch_2(E)\leq0$, as we wanted.
\end{proof}

\begin{rmk}\label{rmk:TorsionSheavesBogomolov}
Let $E$ be a torsion sheaf.
Then
\[
\Delta^C_{\omega,B}(E) \geq 0.
\]
Indeed, $\ch_1^B(E)=\ch^1(E)$ is an effective divisor.
Then the inequality follows directly by the definition of $C_\omega$ in Exercise \ref{exer:ConstantCH}.
\end{rmk}

We can now prove our result:

\begin{prop}\label{prop:StabilityFunctionSurface}
The group homomorphism $Z_{\omega,B}$ is a stability function on $\Coh^{\omega,B}(X)$.
\end{prop}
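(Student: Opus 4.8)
The plan is to verify the two conditions of Definition \ref{def:StabilityFunction} directly, exploiting the torsion-pair decomposition of objects of the tilted heart together with additivity of $Z_{\omega,B}$. Given a non-zero $E\in\Coh^{\omega,B}(X)=\langle\FF_{\omega,B}[1],\TT_{\omega,B}\rangle$, I would write its coherent cohomology objects as $F:=\HH^{-1}(E)\in\FF_{\omega,B}$ and $T:=\HH^{0}(E)\in\TT_{\omega,B}$, giving an exact sequence $0\to F[1]\to E\to T\to 0$ in $\Coh^{\omega,B}(X)$ and hence $Z_{\omega,B}(E)=Z_{\omega,B}(T)-Z_{\omega,B}(F)$. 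Since $\Im Z_{\omega,B}=\omega\cdot\ch_1^B$, the key observation is that each contribution has non-negative imaginary part: every semistable factor of $T$ has $\mu_{\omega,B}>0$ and hence $\omega\cdot\ch_1^B\geq 0$ (with equality only for factors supported in dimension $0$), while every semistable factor of $F$ has positive rank and $\mu_{\omega,B}\leq 0$ and hence $\omega\cdot\ch_1^B\leq 0$. Summing over Harder--Narasimhan factors yields $\Im Z_{\omega,B}(E)=\omega\cdot\ch_1^B(T)-\omega\cdot\ch_1^B(F)\geq 0$, which is the first condition.

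For the second condition I would assume $E\neq 0$ with $\Im Z_{\omega,B}(E)=0$. As the two contributions are individually non-negative they must both vanish, so $\omega\cdot\ch_1^B(T)=0=\omega\cdot\ch_1^B(F)$. The vanishing for $T$, combined with $T\in\TT_{\omega,B}$, forces every semistable factor of $T$ to be a zero-dimensional torsion sheaf, so $\ch_0(T)=\ch_1(T)=0$ and $\Re Z_{\omega,B}(T)=-\ch_2^B(T)=-\mathrm{length}(T)$, which is $\leq 0$ and strictly negative whenever $T\neq 0$. The vanishing for $F$ forces every semistable factor to have slope exactly $0$, so $F$ is a $\mu_{\omega,B}$-semistable torsion-free sheaf with $\mu_{\omega,B}(F)=0$.

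The crux is then to control $\Re Z_{\omega,B}(F[1])=\ch_2^B(F)-\tfrac{\omega^2}{2}\ch_0(F)$. Here I would invoke the Bogomolov Inequality for $F$, i.e.\ $\ch_1^B(F)^2\geq 2\,\ch_0(F)\,\ch_2^B(F)$, together with the Hodge Index Theorem: since $\omega\cdot\ch_1^B(F)=0$ and $\omega^2>0$, the class $\ch_1^B(F)$ lies in the negative semi-definite orthogonal complement of $\omega$, so $\ch_1^B(F)^2\leq 0$. If $\ch_0(F)>0$ this gives $\ch_2^B(F)\leq \ch_1^B(F)^2/(2\ch_0(F))\leq 0$, whence $\Re Z_{\omega,B}(F[1])\leq -\tfrac{\omega^2}{2}\ch_0(F)<0$; if $\ch_0(F)=0$ then $F=0$ because $F$ is torsion-free. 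To conclude I would distinguish two cases: if $T\neq 0$, then $\Re Z_{\omega,B}(E)=\Re Z_{\omega,B}(T)+\Re Z_{\omega,B}(F[1])<0$ since the first summand is $<0$ and the second $\leq 0$; if $T=0$, then $E=F[1]$ with $F\neq 0$, so $\ch_0(F)>0$ and the estimate above gives $\Re Z_{\omega,B}(E)<0$.

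The only genuinely non-formal ingredient is the interplay of the Bogomolov Inequality with the Hodge Index Theorem in the third paragraph; the rest is bookkeeping with the torsion pair and additivity. I expect the main point requiring care to be the \emph{strictness} of $\Re Z_{\omega,B}(E)<0$: neither the bound on $\ch_2^B(F)$ nor the Bogomolov estimate alone is strict, and it is precisely the case distinction on whether $T$ vanishes---so that the term $-\tfrac{\omega^2}{2}\ch_0(F)$ is available when $E=F[1]$---that supplies the strict inequality.
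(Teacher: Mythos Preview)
Your argument is correct and follows essentially the same route as the paper: decompose $E$ via the torsion pair into $F[1]$ and $T$, use the definition of $\TT_{\omega,B}$ and $\FF_{\omega,B}$ to see $\Im Z_{\omega,B}\geq 0$, and when $\Im Z_{\omega,B}(E)=0$ combine the Bogomolov Inequality with the Hodge Index Theorem to bound $\ch_2^B(F)$. Your final case distinction on whether $T=0$ is in fact slightly more careful than the paper's presentation, which asserts strict negativity for each cohomology piece without explicitly noting that one of them may vanish.
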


\begin{proof}
By definition, it is immediate to see that $\Im Z_{\omega,B}\geq0$ on $\Coh^{\omega,B}(X)$.
Moreover, if $E\in\Coh^{\omega,B}(X)$ is such that $\Im Z_{\omega,B}(E)=0$, then $E$ fits in an exact triangle
\[
\HH^{-1}(E)[1] \to E \to \HH^0(E),
\]
where $\HH^{-1}(E)$ is a $\mu_{\omega,B}$-semistable torsion-free sheaf with $\mu_{\omega,B}(\HH^{-1}(E))=0$ and $\HH^0(E)$ is a torsion sheaf with zero-dimensional support. Here, as usual, $\HH^\bullet$ denotes the cohomology sheaves of a complex.

We need to show $\Re Z_{\omega,B}(E) <0$.
Since $\Re Z_{\omega,B}$ is additive, we only need to show this for its cohomology sheaves.
But, on the one hand, since $\HH^0(E)$ is a torsion sheaf with support a zero-dimensional subscheme, we have
\[
\Re Z_{\omega,B}(\HH^0(E)) = - \ch_2(\HH^0(E)) < 0.
\]
On the other hand, by the Hodge Index Theorem, since $\omega \cdot \ch_1^B(\HH^{-1}(E))=0$, we have $\ch_1^B(\HH^{-1}(E))^2\leq0$. Therefore, by the Bogomolov inequality, we have $\ch_2^B(\HH^{-1}(E))\leq0$.
Hence,
\[
\Re Z_{\omega,B}(\HH^{-1}(E)[1]) = - \Re Z_{\omega,B}(\HH^{-1}(E)) = \underbrace{\ch_2^B(\HH^{-1}(E))}_{\leq 0} - \underbrace{\frac{\omega^2}{2} \ch_0^B(\HH^{-1}(E))}_{>0} < 0,
\]
thus concluding the proof.
\end{proof}

\subsubsection*{The rational case}

We now let $B \in \NS_\Q$ be a rational divisor class, and we let $\omega=\alpha H$, where $\alpha\in\R$ and $H\in \NS(X)$ is an integral ample divisor class.

We want to show that the group homomorphism $Z_{\omega,B}$ gives a stability condition in the abelian category $\Coh^{\omega,B}(X)$, in the sense of Definition \ref{defn:stability_abelian}. By using Proposition \ref{prop:StabilityFunctionSurface} above, we only need to show that HN filtrations exist.
To this end, we use Proposition \ref{prop:hn_exists}, and since $\Im Z_{\omega,B}$ is discrete under our assumptions, we only need to show the following.

\begin{lem}\label{lem:TiltNoetherian}
Under the previous assumption, the tilted category $\Coh^{\omega,B}(X)$ is noetherian.
\end{lem}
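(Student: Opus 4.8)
The plan is to prove that $\Coh^{\omega,B}(X)$ is noetherian by verifying the ascending chain condition on subobjects: any chain $E_1\subset E_2\subset\cdots$ of subobjects of a fixed $E\in\Coh^{\omega,B}(X)$ must stabilize (this is the form of the condition actually used in Lemma~\ref{lem:bounded_degree} and Proposition~\ref{prop:hn_exists}). The three inputs I would rely on are: the discreteness of $\Im Z_{\omega,B}=\omega\cdot\ch_1^B$, which holds precisely because $\omega=\alpha H$ with $H$ integral and $B\in\NS_\Q$, so that $\omega\cdot\ch_1^B(-)$ takes values in a fixed discrete subgroup of $\R$; the noetherianity of $\Coh(X)$; and the slope-stability version of Schur's Lemma~\ref{lem:Schur}. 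Throughout I write $\HH^i$ for the usual cohomology sheaves and recall that every object of $\Coh^{\omega,B}(X)=\AA^\sharp$ is concentrated in degrees $-1,0$ with $\HH^{-1}\in\FF_{\omega,B}$ and $\HH^0\in\TT_{\omega,B}$. First I would reduce the imaginary part: since $E_i\subset E$ and $E/E_i\in\AA^\sharp$, the sequence $\Im Z_{\omega,B}(E_i)$ is non-decreasing and bounded above by $\Im Z_{\omega,B}(E)$, hence eventually constant by discreteness, so after discarding finitely many terms the quotients $Q_i:=E_{i+1}/E_i$ satisfy $\Im Z_{\omega,B}(Q_i)=0$. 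Next, the cohomology sequence of $0\to E_i\to E\to E/E_i\to 0$ realizes the $\HH^{-1}(E_i)$ as an ascending chain of subsheaves of $\HH^{-1}(E)$, which stabilizes by noetherianity of $\Coh(X)$; so I may further assume $\HH^{-1}(E_i)=F$ is constant.

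The key local claim is then that each $Q_i$ is a zero-dimensional sheaf. The cohomology sequence of $0\to E_i\to E_{i+1}\to Q_i\to 0$ has the map $\HH^{-1}(E_i)\to\HH^{-1}(E_{i+1})$ equal to the identity of $F$, so it produces an injection $\HH^{-1}(Q_i)\hookrightarrow\HH^0(E_i)$. Now $\HH^{-1}(Q_i)\in\FF_{\omega,B}$ has maximal slope $\leq 0$ while $\HH^0(E_i)\in\TT_{\omega,B}$ has minimal slope $>0$, so applying Lemma~\ref{lem:Schur} to Harder--Narasimhan factors forces $\HH^{-1}(Q_i)=0$. Hence $Q_i=\HH^0(Q_i)\in\TT_{\omega,B}$ is a sheaf with $\omega\cdot\ch_1^B(Q_i)=0$; since every torsion-free semistable factor in $\TT_{\omega,B}$ contributes strictly positively to $\omega\cdot\ch_1^B$ and every torsion factor contributes $\omega\cdot\ch_1\geq 0$ with equality only in dimension zero, $Q_i$ must be a torsion sheaf with $\ch_1=0$, i.e. zero-dimensional.

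To finish I would transfer the problem to the quotients $C_i:=E/E_i$, giving a chain of surjections $E\twoheadrightarrow C_1\twoheadrightarrow C_2\twoheadrightarrow\cdots$ with kernels $Q_i$. Taking $\HH^0$, the surjections $\HH^0(E)\twoheadrightarrow\HH^0(C_i)$ have an ascending chain of kernels inside $\HH^0(E)$, so $\HH^0(C_i)$ stabilizes; once it does, the cohomology sequence of $0\to Q_i\to C_i\to C_{i+1}\to 0$ shows $\HH^{-1}(C_i)\subset\HH^{-1}(C_{i+1})$ with quotient $Q_i$ zero-dimensional. Here each $\HH^{-1}(C_i)\in\FF_{\omega,B}$ is torsion-free, and on the smooth surface $X$ a zero-dimensional quotient does not change the reflexive hull: applying $\lHom(-,\OO_X)$ and using $\lHom(Q_i,\OO_X)=\lExt^1(Q_i,\OO_X)=0$ yields $\HH^{-1}(C_i)^{\vee\vee}\cong\HH^{-1}(C_{i+1})^{\vee\vee}=:W$, a single locally free sheaf containing every $\HH^{-1}(C_i)$. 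The ascending chain $\HH^{-1}(C_i)\subset W$ then stabilizes by noetherianity of $\Coh(X)$, forcing $Q_i=0$ and hence $E_i=E_{i+1}$ for $i\gg 0$.

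The step I expect to be the main obstacle is this last one. After the imaginary part and $\HH^{-1}(E_i)$ are frozen, the chain is controlled by the sheaves $\HH^0(E_i)$ whose consecutive quotients are zero-dimensional, and such chains of sheaves need \emph{not} stabilize in isolation (lengths can grow without bound, as for $\OO\subset\tfrac1x\OO\subset\tfrac1{x^2}\OO\subset\cdots$). The idea that rescues the argument is to use the global bound coming from the ambient object $E$, by passing to the descending chain of quotients $C_i=E/E_i$, together with the surface-specific fact that taking double duals annihilates zero-dimensional discrepancies, so that all the relevant torsion-free sheaves are trapped inside one fixed coherent (indeed locally free) sheaf $W$. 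Notably, this proof needs only the positivity of $\omega$ against effective classes and not the Bogomolov inequality, which enters elsewhere (Proposition~\ref{prop:StabilityFunctionSurface}) for the stability-function property.
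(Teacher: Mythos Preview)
Your proof is correct and follows essentially the same strategy as the paper's argument. Both proofs exploit the discreteness of $\Im Z_{\omega,B}$ to reduce to chains whose successive quotients (or, after the paper's further reduction, whose members) have vanishing imaginary part, then stabilize the obvious pieces using noetherianity of $\Coh(X)$, and finally control the remaining zero-dimensional growth by trapping the torsion-free sheaves $\HH^{-1}(E/E_i)$ inside a single coherent sheaf. The paper phrases the last step as ``$0\to U\to \HH^{-1}(Q_l)\to K_l\to 0$ with $U$ fixed, $\HH^{-1}(Q_l)$ torsion-free, $K_l$ zero-dimensional, hence $\mathrm{length}(K_l)$ is bounded,'' leaving implicit exactly the double-dual argument you spell out; your version makes this step explicit via $\HH^{-1}(C_i)^{\vee\vee}\cong W$, which is a helpful clarification. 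The only organizational difference is that the paper first passes to the quotient $M=E/E_N$ so that all $A_l$ have $\Im Z=0$, whereas you keep $E$ and work with the consecutive quotients $Q_i$; this costs you the extra (but easy) step of showing $\HH^{-1}(Q_i)=0$ via the torsion pair, and is otherwise equivalent.
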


\begin{proof}
This is a general fact about tilted categories. It was first observed in the case of K3 surfaces in \cite[Proposition 7.1]{Bri08:stability_k3}. We refer to \cite[Lemma 2.17]{PT19:bridgeland_moduli_properties} for all details and just give a sketch of the proof.
As observed above, under our assumptions, $\Im Z_{\omega,B}$ is discrete and non-negative on $\Coh^{\omega,B}(X)$.
This implies that it is enough to show that, for any $M\in\Coh^{\omega,B}(X)$, there exists no infinite filtration
\[
0 = A_0 \subsetneq A_1 \subsetneq \ldots \subsetneq A_l \subsetneq \ldots \subset M
\]
with $\Im Z_{\omega,B}(A_l)=0$.

Write $Q_l:=M/A_l$.
By definition, the exact sequence
\[
0 \to A_l \to M \to Q_l \to 0
\]
in $\Coh^{\omega,B}(X)$ induces a long exact sequence of sheaves
\[
0 \to \HH^{-1}(A_l) \to \HH^{-1}(M) \to \HH^{-1}(Q_l) \to \HH^{0}(A_l) \to \HH^{0}(M) \to \HH^{0}(Q_l)\to 0.
\]
Since $\Coh(X)$ is noetherian, we have that both sequences
\begin{align*}
&\HH^0(M) = \HH^0(Q_0) \onto \HH^0(Q_1) \onto \HH^0(Q_2) \onto \ldots \\
&0=\HH^{-1}(A_0)\into\HH^{-1}(A_1) \into \HH^{-1}(A_2) \into \ldots \into \HH^{-1}(M)
\end{align*}
stabilize.
Hence, from now on we will assume $\HH^0(Q_l)=\HH^0(Q_{l+1})$ and $\HH^{-1}(A_l)=\HH^{-1}(A_{l+1})$ for all $l$.
Both $U:=\HH^{-1}(M)/\HH^{-1}(A_l)$ and $V:=\Ker(\HH^0(M)\onto\HH^0(Q_l))$ are constant and we have an exact sequence
\begin{equation}\label{eq:noetherianity1}
0 \to U \to \HH^{-1}(Q_l) \to \HH^0(A_l) \to V \to 0,
\end{equation}
again by definition of $\Coh^{\omega,B}(X)$, since $\Im Z_{\omega,B}(A_l)=0$, $\HH^0(A_l)$ is a torsion sheaf supported in dimension $0$. Write $B_l:=A_l/A_{l-1}$.
Again, by looking at the induced long exact sequence of sheaves and the previous observation, we deduce that
\[
0 \to \HH^{-1}(A_{l-1}) \xrightarrow{\cong} \HH^{-1}(A_l) \xrightarrow{0} \underbrace{\HH^{-1}(B_l)}_{\text{torsion-free}\Rightarrow =0} \to \underbrace{\HH^{0}(A_{l-1})}_{\text{torsion}} \to \HH^{0}(A_l) \to \HH^{0}(B_l)\to 0.
\]
Hence, the only thing we need is to show that $\HH^0(B_l)=0$, for all $l\gg 0$, or equivalently to bound the length of $\HH^0(A_l)$. But, by letting $K_l:=\Ker (\HH^0(A_l)\onto V)$, \eqref{eq:noetherianity1} gives an exact sequence
of sheaves
\[
0 \to U \to \HH^{-1}(Q_l) \to K_l \to 0.
\]
where $K_l$ is a torsion sheaf supported in dimension $0$ as well and $\HH^{-1}(Q_l)$ is torsion-free.
This gives a bound on the length of $K_l$ and therefore a bound on the length of $\HH^0(A_l)$, as we wanted.
\end{proof}

To finish the proof that, under our rationality assumptions, $\sigma_{\omega,B}$ is a Bridgeland stability condition, we still need to prove the support property, namely Theorem \ref{thm:BogomolovBridgelandStability} in our case. The idea is to let $\alpha \to \infty$ and use the Bogomolov theorem for sheaves (together with Remark \ref{rmk:TorsionSheavesBogomolov}).
First we use the following lemma (we will give a more precise statement in Section \ref{subsec:WallChamber}).
It first appeared in the K3 surface case as a particular case of \cite[Proposition 14.2]{Bri08:stability_k3}.

\begin{lem}\label{lem:large_volume_limit_tilt}
Let $\omega,B \in N^1(X)$ with $\omega$ ample.
If $E \in \Coh^{\omega, B}(X)$ is $\sigma_{\alpha\cdot \omega, B}$-semistable
for all $\alpha \gg 0$, then it satisfies one of the following conditions:
\begin{enumerate}
\item $\HH^{-1}(E)=0$ and $\HH^0(E)$ is a $\mu_{\omega,B}$-semistable torsion-free sheaf.
\item $\HH^{-1}(E)=0$ and $\HH^0(E)$ is a torsion sheaf.
\item $\HH^{-1}(E)$ is a $\mu_{\omega,B}$-semistable torsion-free sheaf and $\HH^0(E)$ is either $0$ or a torsion sheaf supported in dimension zero.
\end{enumerate}
\end{lem}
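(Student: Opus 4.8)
The plan is to track the phase $\phi_{\alpha\omega,B}$ of the central charge as $\alpha\to\infty$, applied to the canonical sequence of $E$ in the tilted heart. Write $F:=\HH^{-1}(E)\in\FF_{\omega,B}$ and $T:=\HH^0(E)\in\TT_{\omega,B}$, so that there is a short exact sequence in $\Coh^{\omega,B}(X)$
\[
0 \to F[1] \to E \to T \to 0 .
\]
Two soft facts will be used repeatedly: for a $\sigma_{\alpha\omega,B}$-semistable $E$, any subobject $A$ has $\phi^+(A)\le\phi_{\alpha\omega,B}(E)$ and any quotient $Q$ has $\phi^-(Q)\ge\phi_{\alpha\omega,B}(E)$; and for any object $A$ with $Z_{\alpha\omega,B}(A)\neq0$ the phase of the total class satisfies $\phi^-(A)\le\phi\big(Z_{\alpha\omega,B}(A)\big)\le\phi^+(A)$. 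The computational input is the elementary asymptotics of $\phi(Z_{\alpha\omega,B}(A))$ read off from $Z_{\alpha\omega,B}(A)=\big(-\ch_2^B(A)+\tfrac{\alpha^2\omega^2}{2}\ch_0(A)\big)+\sqrt{-1}\,\alpha\,\omega\cdot\ch_1^B(A)$: a sheaf of positive rank in $\TT_{\omega,B}$ has $\phi\to0$ with $\pi\phi\approx\tfrac{2\mu_{\omega,B}}{\alpha}$; an object of $\FF_{\omega,B}[1]$ has $\phi\to1$ with $\pi(1-\phi)\approx\tfrac{2|\mu_{\omega,B}|}{\alpha}$; a sheaf of pure dimension $1$ has $\phi\to\tfrac12$; and a sheaf supported in dimension $0$ has $\phi=1$.

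First I would rule out the mixed configuration. Suppose $F\neq0$ and $T$ is not supported in dimension $0$. Then $\phi(Z(F[1]))\to1$, and since $\phi(Z(F[1]))\le\phi^+(F[1])\le\phi_{\alpha\omega,B}(E)$ we get $\phi_{\alpha\omega,B}(E)\to1$. On the other hand $T$ is a sheaf with $\ch_0(T)\ge0$ having a component of dimension $\ge1$, so $\omega\cdot\ch_1^B(T)>0$ and hence $\phi(Z(T))\to\tfrac12$ or $0$; combined with $\phi_{\alpha\omega,B}(E)\le\phi^-(T)\le\phi(Z(T))$ this forces $\phi_{\alpha\omega,B}(E)\le\tfrac12+o(1)$, a contradiction for $\alpha\gg0$. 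Hence either $F=0$, so $E=T$ is a sheaf (cases (1) and (2)), or $F\neq0$ and $T$ is zero or supported in dimension $0$ (the shape of case (3)).

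The remaining purity and semistability refinements are each proved by producing a phase-violating subobject, which is legitimate because $\TT_{\omega,B}$ is closed under quotients and $\FF_{\omega,B}$ under subobjects. If $F=0$ and $E=T$ has positive rank, its torsion subsheaf $T_{\mathrm{tor}}\in\TT_{\omega,B}$ would satisfy $\phi^+(T_{\mathrm{tor}})\ge\phi(Z(T_{\mathrm{tor}}))$ bounded below away from $0$ (limit $\tfrac12$ or $1$), while $\phi_{\alpha\omega,B}(E)=\phi(Z(T))\to0$, so $T_{\mathrm{tor}}=0$ and $T$ is torsion-free; and if this torsion-free $T$ were not $\mu_{\omega,B}$-semistable, its maximal destabilizing subsheaf $T_1$ (with $\mu_{\omega,B}(T_1)>\mu_{\omega,B}(T)>0$) would have, by the subleading asymptotics, $\phi^+(T_1)\ge\phi(Z(T_1))>\phi(Z(T))=\phi_{\alpha\omega,B}(E)$ for $\alpha\gg0$, contradicting $\phi^+(T_1)\le\phi_{\alpha\omega,B}(E)$. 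Symmetrically, when $F\neq0$, if $F$ were not $\mu_{\omega,B}$-semistable then necessarily $\mu_{\omega,B}(F)<0$, and its maximal destabilizing subsheaf $F_1$ has $|\mu_{\omega,B}(F_1)|<|\mu_{\omega,B}(F)|$; since $Z_{\alpha\omega,B}(E)=Z_{\alpha\omega,B}(F[1])+Z_{\alpha\omega,B}(T)$ has the same leading phase as $F[1]$, one finds $\phi^+(F_1[1])\ge\phi(Z(F_1[1]))>\phi_{\alpha\omega,B}(E)$ for $\alpha\gg0$, contradicting $F_1[1]\subset F[1]\subset E$. This leaves precisely the three listed alternatives.

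I expect the main obstacle to be the asymptotic bookkeeping of the third paragraph: the leading-order phase of every object of $\FF_{\omega,B}[1]$ (and of every positive-rank object of $\TT_{\omega,B}$) is the same, so the destabilizing comparisons genuinely rest on the subleading $O(1/\alpha)$ term, and one must keep careful track of $\phi^+$ versus $\phi^-$ since the candidate destabilizers need not themselves be $\sigma_{\alpha\omega,B}$-semistable. Verifying the implication $\mu_{\omega,B}(F)=0\Rightarrow F$ is $\mu_{\omega,B}$-semistable — so that ``$F$ not semistable'' really yields $\mu_{\omega,B}(F)<0$ and hence $|\mu_{\omega,B}(F)|>0$ — is a small but necessary point; it holds because $\mu_{\omega,B}(F)$ is a rank-weighted average of the slopes of the Harder--Narasimhan factors of $F$, all of which are $\le0$ for $F\in\FF_{\omega,B}$.
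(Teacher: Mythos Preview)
Your proof is correct and follows essentially the same approach as the paper's: both arguments track the asymptotic behavior of the tilt slope as $\alpha\to\infty$, where it is governed by $\mu_{\omega,B}$, and produce destabilizing subobjects from the slope Harder--Narasimhan filtration. The paper packages this via the renormalized slope $\tfrac{2\nu_{\alpha\omega,B}}{\alpha}\to -\mu_{\omega,B}^{-1}$ and splits cases on the sign of $\ch_0(E)$ and whether $\omega\cdot\ch_1^B(E)=0$, while you work directly with phases and split on whether $F=0$ and on $\dim\supp T$; these are cosmetic differences, and your careful handling of $\phi^{\pm}$ versus $\phi(Z(-))$ is sound (and in fact slightly more explicit than the paper's treatment).
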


\begin{proof}
One can compute $\sigma_{\alpha \omega, B}$-stability with slope $\tfrac{2\nu_{\alpha \omega, B}}{\alpha}$ instead of $\nu_{\alpha \omega, B}$. This is convenient in the present argument because
\[
\lim_{\alpha \to \infty} \tfrac{2\nu{\alpha \omega, B}}{\alpha}(E) = - \mu_{\omega, B}^{-1}(E).
\]
By definition of $\Coh^{\omega, B}(X)$ the object $E$ is an extension $0 \to F[1] \to E \to T \to 0$,
where $F \in \FF_{\omega, B}$ and $T \in \TT_{\omega, B}$.

Assume that $\omega \cdot \ch_1^B(E) = 0$. Then both $\omega \cdot \ch_1^B(F) = 0$ and $\omega \cdot \ch_1^B(T) = 0$. By definition of $\FF_{\omega, B}$ and $\TT_{\omega, B}$ this means $T$ is $0$ or has be supported in dimension $0$ and $F$ is $0$ or a $\mu_{\omega, B}$-semistable torsion free sheaf. Therefore, for the rest of the proof we can assume $\omega \cdot \ch_1^B(E) > 0$.

Assume that $\ch_0^B(E) \geq 0$. By definition $-\mu_{\omega, B}^{-1}(F[1]) \geq 0$. The inequality $\omega \cdot \ch_1^B(E) > 0$ implies $-\mu_{\omega, B}^{-1}(E) < 0$. Since $E$ is $\sigma_{\alpha \omega, B}$-semistable for $\alpha \gg 0$, we get $F = 0$ and $E \in \TT_{\omega, B}$ is a sheaf. If $E$ is torsion, we are in case (2). Assume $E$ is neither torsion nor slope semistable. Then by definition of $\TT_{\omega, B}$ there is an exact sequence
\[
0 \to A \to E \to B \to 0
\]
in $\TT_{\omega, B} = \Coh^{\omega, B}(X) \cap \Coh(X)$ such that $\mu_{\omega, B}(A) > \mu_{\omega, B}(E) > 0$. But then $-\mu_{\omega, B}^{-1} (A) > -\mu_{\omega, B}^{-1} (E)$ contradicts the fact that $E$ is $\sigma_{\alpha \omega, B}$-semistable for $\alpha \gg 0$.

Assume that $\ch_0^B(E) < 0$. If $\omega \cdot \ch_1^B(T) = 0$, then $T \in \TT_{\omega, B}$ implies $\ch_0^B(T) = 0$ and up to semistability of $F$ we are in case (3). Let $\omega \cdot \ch_1^B(T) > 0$. By definition $-\mu_{\omega, B}^{-1}(T) < 0$. As before we have $-\mu_{\omega, B}^{-1}(T) > 0$ and the fact that $E$ is $\sigma_{\alpha \omega, B}$-semistable for $\alpha \gg 0$ implies $T = 0$. In either case we are left to show that $F$ is $\mu_{\omega, B}$-semistable. If not, there is an exact sequence
\[
0 \to A \to F \to B \to 0
\]
in $\FF_{\omega, B} = \Coh^{\omega, B}(X)[-1] \cap \Coh(X)$ such that $\mu_{\omega, B}(A) > \mu_{\omega, B}(F)$. Therefore, there is an injective map $A[1] \into E$ in $\Coh^{\omega, B}(X)$ such that $-\mu_{\omega, B}^{-1}(A[1]) > -\mu_{\omega, B}^{-1}(E)$ in contradiction to the fact that $E$ is $\sigma_{\alpha \omega, B}$-semistable for $\alpha \gg 0$.
\end{proof}

\begin{exercise}\label{exer:SmallerImaginaryPart}
Let $B \in \NS(X)_\Q$ be a rational divisor class and let $\omega = \alpha H$, where $\alpha\in\R_{>0}$ and $H \in \NS(X)_\Q$ is an integral ample divisor.
Show that
\[
c:= \min \left\{ H \cdot \ch_1^B(F) \, :\, \begin{array}{l} F\in\Coh^{\omega,B}(X)\\ H \cdot \ch_1^B(F)>0\end{array}\right\} > 0.
\]
exists.
Let $E\in\Coh^{\omega,B}(X)$ satisfy $H \cdot \ch_1^B(E)=c$ and $\Hom(A,E)=0$, for all $A\in\Coh^{\omega,B}(X)$ with $H \cdot \ch_1^B(A)=0$. Show that $E$ is $\sigma_{\omega,B}$-stable.
\end{exercise}

We can now prove Theorem \ref{thm:BogomolovBridgelandStability} (and so Theorem \ref{thm:BridgelandSurface}) in the rational case.

\begin{proof}[Proof of Theorem \ref{thm:BogomolovBridgelandStability}, rational case]
Since $B$ is rational and $\omega = \alpha H$, with $H$ being an integral divisor, the imaginary part is a discrete function. We proceed by induction on $H \cdot \ch_1^B$.

Let $E\in\Coh^{H,B}(X)$ be $\sigma_{\alpha_0 H,B}$-semistable for some $\alpha_0>0$ such that $H \cdot \ch_1^B>0$ is minimal.
Then by Exercise \ref{exer:SmallerImaginaryPart}, $E$ is stable for all $\alpha\gg0$.
Therefore, by Lemma \ref{lem:large_volume_limit_tilt}, both $\Delta^C_{\omega,B}(E)\geq 0$ and $\overline{\Delta}^B_\omega(E) \geq0$ hold, since they are true for semistable sheaves.

The induction step is subtle: we would like to use the wall and chamber structure.
If an object $E$ is stable for all $\alpha\gg0$, then again by Lemma \ref{lem:large_volume_limit_tilt} we are done.
If not, it is destabilized at a certain point. By Lemma \ref{lem:convex_cone}, we can then proceed by induction on $H \cdot \ch_1^B$. The issue is that we do not know the support property yet and therefore, that walls are indeed well behaved.
Fortunately, as $\alpha$ increases, all possible destabilizing subobjects and quotients have strictly smaller $H \cdot \ch_1^B$, which satisfy the desired inequality by our induction assumption.
This is enough to ensure that $E$ satisfies well-behaved wall-crossing. By following the same argument as in Proposition \ref{prop:locally_finite} (and Remark \ref{rmk:WallChamberGeneralSemistability}), it is enough to know a support property type statement for all potentially destabilizing classes.
\end{proof}

\subsubsection*{The general case}

We only sketch the argument for the general case.
This is explained in detail in the case of K3 surfaces in \cite{Bri08:stability_k3}, and can be directly deduced from \cite[Sections 6 \& 7]{Bri07:stability_conditions}.

The idea is to use the support property we just proved in the rational case to deform $B$ and $\omega$ in all possible directions, by using Bridgeland's Deformation Theorem \ref{thm:BridgelandMain} (and Proposition \ref{prop:ExplicitBridgelandMain}).
The only thing we have to make sure is that once we have the correct central charge, the category is indeed $\Coh^{\omega,B}(X)$.
The intuition behind this is in the following result.

\begin{lem}
Let $\sigma=(\AA, Z_{\omega, B})$ be a stability condition satisfying the support property such that all skyscraper sheaves are stable of phase one. Then $\AA = \Coh^{\omega, B}(X)$ holds.
\end{lem}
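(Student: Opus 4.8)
The plan is to prove the single inclusion $\Coh^{\omega,B}(X)\subseteq\AA$; since both are hearts of bounded t-structures on $\Db(X)$, Exercise \ref{exercise:tStructuresContained} then automatically upgrades this to the desired equality. Writing $\AA=\PP((0,1])$, the whole difficulty is to compare $\AA$ with the standard heart $\Coh(X)$, and the bridge is the stability of skyscrapers. First I would record the elementary phase estimates coming from $\C(x)\in\PP(1)$: for every $E\in\AA$ one has
\[
\Hom(E,\C(x)[j])=0 \quad\text{and}\quad \Hom(\C(x),E[j])=0 \qquad (j\le -1),
\]
because $0<\phi^-(E)\le\phi^+(E)\le 1$ forces $\phi(\C(x)[j])=1+j\le 0<\phi^-(E)$ in the first case and $\phi^+(E[j])=\phi^+(E)+j\le 0<1=\phi(\C(x))$ in the second.

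The key structural step is to show that $\AA$ is a \emph{tilt} of $\Coh(X)$, i.e. $\AA\subseteq\langle\Coh(X),\Coh(X)[1]\rangle$, equivalently $\HH^i_{\Coh}(E)=0$ for all $E\in\AA$ and all $i\notin\{-1,0\}$. The upper bound is clean: if $p=\max\{i:\HH^i_{\Coh}(E)\neq0\}$, then the truncation $E\to\HH^p_{\Coh}(E)[-p]$ composed with a surjection $\HH^p_{\Coh}(E)\onto\C(x)$ (for $x$ in the support) is nonzero on $\HH^p$, so $\Hom(E,\C(x)[-p])\neq0$, and the vanishing above forces $-p\ge0$, i.e. $p\le0$. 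The lower bound is where I expect the real work: it is equivalent to the assertion that every coherent sheaf $F$ has $\phi^-_\sigma(F)>-1$. Granting that, a truncation $\HH^q_{\Coh}(E)[-q]\into E$ with $q\le-2$ would have source of minimal phase $\phi^-_\sigma(\HH^q_{\Coh}(E))-q>-1-q\ge 1\ge\phi^+(E)$, hence be the zero map, a contradiction. This phase bound on sheaves is the main obstacle, since the stability of a \emph{single} skyscraper yields only one-sided estimates (the Ext--degree shift overshoots the forbidden range); I would obtain it by a hypercohomology spectral sequence $\Ext^s(\C(x),\HH^t_{\Coh}(-))\Rightarrow\Hom(\C(x),-[s+t])$ at generic $x$, combined with Serre duality to exchange the two extreme cohomology sheaves, this being the manifestation for $\AA$ of the ``geometricity'' of $\sigma$. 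By Exercise \ref{exercise:heartIsATilt} the tilt structure produces a torsion pair $(\FF',\TT')$ on $\Coh(X)$ with $\TT'=\AA\cap\Coh(X)$ and $\FF'=\{F\in\Coh(X):F[1]\in\AA\}$, so that $\AA=\langle\FF'[1],\TT'\rangle$.

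It then remains to identify $(\FF',\TT')$ with $(\FF_{\omega,B},\TT_{\omega,B})$, and here the explicit central charge does the work. Since $\Im Z_{\omega,B}=\omega\cdot\ch_1^B$, membership in $\AA=\PP((0,1])$ forces $\omega\cdot\ch_1^B(T)\ge0$ for $T\in\TT'$ and $\omega\cdot\ch_1^B(F)\le0$ for $F\in\FF'$; moreover every zero-dimensional sheaf lies in $\TT'$, being an iterated extension of the $\C(x)\in\AA$. I would then prove $\TT_{\omega,B}\subseteq\TT'$ and $\FF_{\omega,B}\subseteq\FF'$. For $T\in\TT_{\omega,B}$, its $\FF'$-quotient $T\onto F_0$ again lies in $\TT_{\omega,B}$ and has no zero-dimensional subsheaf (such a subsheaf would lie in $\TT'\cap\FF'=0$), whence $\omega\cdot\ch_1^B(F_0)>0$ unless $F_0=0$; as $F_0\in\FF'$ this gives $F_0=0$, i.e. $T\in\TT'$. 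The inclusion $\FF_{\omega,B}\subseteq\FF'$ is dual. The only delicate case is the borderline one: a $\mu_{\omega,B}$-semistable torsion-free sheaf $G$ with $\omega\cdot\ch_1^B(G)=0$. Such $G$ cannot lie in $\TT'$, for then $\Im Z_{\omega,B}(G)=0$ would force $\Re Z_{\omega,B}(G)<0$ by the stability-function property of $Z_{\omega,B}$ on $\AA$ (valid since $\sigma$ is a stability condition), whereas the Hodge Index Theorem gives $\ch_1^B(G)^2\le0$ and then the Bogomolov inequality gives $\ch_2^B(G)\le0$, so that
\[
\Re Z_{\omega,B}(G)=-\ch_2^B(G)+\tfrac{\omega^2}{2}\,\ch_0^B(G)>0,
\]
a contradiction; hence $G\in\FF'$, exactly as in $\Coh^{\omega,B}(X)$.

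Combining these containments of the torsion and torsion-free classes yields $\Coh^{\omega,B}(X)=\langle\FF_{\omega,B}[1],\TT_{\omega,B}\rangle\subseteq\langle\FF'[1],\TT'\rangle=\AA$, and Exercise \ref{exercise:tStructuresContained} concludes that $\AA=\Coh^{\omega,B}(X)$. To summarize, the identification of the torsion pair via $\Im Z_{\omega,B}$ and the Bogomolov/Hodge-index computation at the borderline is routine, and the genuine obstacle is the lower cohomological bound, i.e. the two-sided phase window $\Coh(X)\subseteq\PP((-1,1])$, whose proof essentially uses that \emph{every} skyscraper, not just one, is $\sigma$-stable of phase one.
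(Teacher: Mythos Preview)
Your overall architecture matches the paper's exactly: show $\AA\subset\langle\Coh(X),\Coh(X)[1]\rangle$, invoke Exercise~\ref{exercise:heartIsATilt} to write $\AA$ as a tilt of $\Coh(X)$ at some torsion pair $(\FF',\TT')$, then verify $\TT_{\omega,B}\subset\TT'$ and $\FF_{\omega,B}\subset\FF'$. Your upper bound $p\le 0$ is also the paper's.

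The genuine gap is exactly where you flag it: the lower cohomological bound. Your spectral sequence sketch does not close. The paper's argument is short and you are missing its two ingredients. First, reduce to $E\in\AA$ \emph{stable} and not a skyscraper; then for every $x\in X$ one has not only $\Hom(E,\C(x)[j])=\Hom(\C(x),E[j])=0$ for $j\le -1$, but also $\Hom(\C(x),E)=0$ (both objects are stable, and either $\phi(E)<1$ or $\phi(E)=1$ with $E\not\cong\C(x)$). Serre duality then converts these into $\Ext^i(E,\C(x))=0$ for all $i\neq 0,1$ and all $x$. Second, apply Proposition~\ref{prop:locally_free_complex} (the Bridgeland--Maciocia criterion) to conclude that $E$ is quasi-isomorphic to a two-term complex of locally free sheaves concentrated in degrees $-1,0$; in particular $\HH^i_{\Coh}(E)=0$ for $i\neq -1,0$ and $\HH^{-1}_{\Coh}(E)$ is torsion-free. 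This last torsion-freeness is then reused in the paper's identification of the torsion pair, whereas you substitute the observation that zero-dimensional sheaves lie in $\TT'$; both routes are fine.

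For the borderline slope-zero case your argument via Bogomolov and the Hodge index theorem is correct (applied to the $\TT'$-part $T_0\subset G$, which is again torsion-free $\mu$-semistable of slope zero), but it is worth noting that the paper gives a different, more intrinsic argument that avoids Bogomolov entirely: if a slope-zero semistable sheaf $G$ lay in $\TT'$, then $Z_{\omega,B}(G)\in\R_{<0}$ and $G$ would be $\sigma$-semistable of phase one; iteratively taking kernels of surjections $G\onto\C(x)$ (which remain in $\AA$ since $\C(x)$ is stable of phase one) moves $Z$ to the right by $1$ at each step, eventually reaching $\R_{\ge 0}$, a contradiction.
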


\begin{proof}
We start by showing that all objects $E \in \AA$ have $H^i(E) = 0$ whenever $i \neq 0, -1$ and that $H^{-1}(E)$ is torsion free. Notice that $E$ is an iterated extension of stable objects, so we can assume that $E$ is stable. This is clearly true for skyscraper sheaves, so we can assume that $E$ is not a skyscraper sheaf. Then Serre duality implies that for $i \neq 0, 1$ and any $x \in X$ we have
\[
\Ext^i(E, \C(x)) = \Ext^{2-i}(\C(x), E) = 0.
\]
By Proposition \ref{prop:locally_free_complex} we get that $E$ is isomorphic to a two term complex of locally free sheaves and the statement follows. This also implies that $H^{-1}(E)$ is torsion free.

Therefore, the inclusion $\AA \subset \langle \Coh(X), \Coh(X)[1] \rangle$ holds. Set $\TT = \Coh(X) \cap \AA$ and $\FF = \Coh(X) \cap \AA[-1]$. By Exercise \ref{exercise:heartIsATilt} we get $\AA = \langle \TT, \FF[1] \rangle$ and $\Coh(X) = \langle \TT, \FF \rangle$. We need to show $\TT = \TT_{\omega, B}$ and $\FF = \FF_{\omega, B}$. In fact, it will be enough to show $\TT_{\omega, B} \subset \TT$ and $\FF_{\omega, B} \subset \FF$.

Let $E \in \Coh(X)$ be slope semistable. There is an exact sequence $0 \to T \to E \to F \to 0$ with $T \in \TT$ and $F \in \FF$. We already showed that $F = H^{-1}(F)$ is torsion free. If $E$ is torsion this implies $F = 0$ and $E = T \in \TT$ as claimed.

Assume that $E$ is torsion free. Since $F[1] \in \AA$ and $T \in \AA$, we get $\mu_{\omega, B}(F) \leq 0$ and $\mu_{\omega, B}(T) \geq 0$. This is a contradiction to $E$ being stable unless either $F = 0$ or $T = 0$. Therefore, we showed $E \in \FF$ or $E \in \TT$.

If $\omega \cdot \ch_1^B(E) > 0$, then $E \in \TT$ and if $\omega \cdot \ch_1^B(E) < 0$, then $E \in \FF$. Assume that $\omega \cdot \ch_1^B(E) = 0$, but $E \in \TT$. Then by definition of a stability condition we have $Z_{\omega, B}(E) \in \R_{<0}$ and $E$ is $\sigma$-semistable. Since $E$ is a sheaf, there is a skyscraper sheaf $\C(x)$ together with a surjective morphism of coherent sheaves $E \onto \C(x)$. Since $\C(x)$ is stable of slope $\infty$ this morphism is also a surjection in $\AA$. Let $F \in \AA \cap \Coh(X) = \TT$ be the kernel of this map. Then $Z(F) = Z(E) + 1$. Iterating this procedure will lead to an object $F$ with $Z(F) \in \R_{\geq 0}$, a contradiction.
\end{proof}


\subsection{The wall and chamber structure in the $(\alpha,\beta)$-plane}\label{subsec:WallChamber}

If we consider a certain slice of the space of Bridgeland stability conditions, the structure of the walls turns out rather simple.

\begin{defn}\label{def:alphabetaPlane}
Let $H\in\NS(X)$ be an ample integral divisor class and let $B_0\in \NS_\Q$.
We define the $(\alpha,\beta)$-plane as the set of stability conditions of the form
$\sigma_{\alpha H, B_0 + \beta H}$, for $\alpha,\beta\in\R$, $\alpha>0$.
\end{defn}

When it is clear from context which $(\alpha,\beta)$ plane we choose (for example, if the Picard number is one), we will abuse notation and drop $H$ and $B_0$ from the notation; for example, we denote stability conditions by $\sigma_{\alpha,\beta}$, the twisted Chern character by $\ch^\beta$, etc.

The following proposition describes all walls in the $(\alpha,\beta)$-plane.
It was originally observed  by Bertram and completely proved in \cite{Mac14:nested_wall_theorem}.

\begin{prop}[Structure Theorem for Walls on Surfaces]\label{prop:StructureThmWallsSurfaces}
Fix a class $v \in K_{\num}(X)$.
\begin{enumerate}
\item All numerical walls are either semicircles with center on the $\beta$-axis or vertical rays.
\item \label{item:walls_dont_intersect}
Two different numerical walls for $v$ cannot intersect.
\item For a given class $v \in K_{\num}(X)$ the hyperbola $\Re Z_{\alpha, \beta}(v) = 0$ intersects all numerical semicircular walls at their top points.
\item If $\ch_0^{B_0}(v) \neq 0$, then there is a unique numerical vertical wall defined by the equation
\[
\beta = \frac{H \ch_1^{B_0}(v)}{H^2 \ch_0^{B_0}(v)}.
\]
\item If $\ch_0^{B_0}(v) \neq 0$, then all semicircular walls to either side of the unique numerical vertical wall are strictly nested semicircles.
\item If $\ch_0^{B_0}(v) = 0$, then there are only semicircular walls that are strictly nested.
\item \label{item:walls_dont_stop}
If a wall is an actual wall at a single point, it is an actual wall everywhere along the numerical wall. 
\end{enumerate}
\end{prop}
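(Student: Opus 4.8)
The plan is to reduce the statement to a persistence property of the \emph{stable} constituents witnessing the wall, and then to reassemble a destabilizing sequence at every point of $W$. Suppose $W=W_w(v_0)$ is an actual wall at a single point $\sigma_0\in W$, realized by a short exact sequence $0\to F\to E\to G\to 0$ with $v(F)=w$, $v(E)=v_0$, and $F,E,G$ all $\sigma_0$-semistable of one common phase $\phi_0$. Since the category $\PP_{\sigma_0}(\phi_0)$ is abelian of finite length (Exercise \ref{exercise:StableObjectsBridgelandStability}), the object $E$ has finitely many Jordan--H\"older factors $S_1,\dots,S_k$, each $\sigma_0$-stable of phase $\phi_0$, and the factors of $F$ form a sub-multiset of the $S_i$. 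The first thing I would record is that each class $u_i:=v(S_i)$ defines the \emph{same} numerical wall $W$: as $S_i$ is a subquotient of $E$ of phase $\phi_0$, the charge $Z_{\sigma_0}(u_i)$ is parallel to $Z_{\sigma_0}(v_0)$, so $\sigma_0\in W_{u_i}(v_0)$, and since two distinct numerical walls for $v_0$ cannot meet by part \eqref{item:walls_dont_intersect}, we get $W_{u_i}(v_0)=W$ (if $u_i$ is parallel to $v_0$ the conclusion is automatic). Hence $Z_\sigma(u_i)\parallel Z_\sigma(v_0)$ for all $\sigma\in W$, and each $S_i$ keeps phase equal to the common wall phase $\phi(\sigma)$ whenever it is semistable. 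The key reduction is then: \emph{if every $S_i$ remains $\sigma$-semistable of the wall phase along all of $W$, then $W$ is an actual wall everywhere}. Indeed, at an arbitrary $\sigma'\in W$ the objects $F$, $E$, $G$ are iterated extensions of the $S_i$, hence all lie in $\PP_{\sigma'}(\phi(\sigma'))$, so the triangle $F\to E\to G$ becomes a short exact sequence in this abelian category with $F$ a proper nonzero subobject of $E$ of equal phase, exhibiting $\sigma'$ as an actual wall.

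It therefore suffices to prove the persistence of the constituents, and the main effort goes into the claim that each $S_i$ is in fact $\sigma$-stable for \emph{every} $\sigma\in W$. I would show that $U_i:=\{\sigma\in W: S_i\text{ is }\sigma\text{-stable}\}$ is open, nonempty, and closed in the connected arc $W$, hence equal to $W$. Openness and nonemptiness are immediate from Proposition \ref{prop:locally_finite} and $\sigma_0\in U_i$. For closedness, let $\sigma_n\to\sigma_*\in W$ with $S_i$ stable at each $\sigma_n$; since the topology on $\Stab(X)$ is defined so that $\phi^+$ and $\phi^-$ are continuous, passing to the limit gives $\phi^+_{\sigma_*}(S_i)=\phi^-_{\sigma_*}(S_i)$, so $S_i$ is at least $\sigma_*$-semistable. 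The only possibility to exclude is that $S_i$ is \emph{strictly} semistable at $\sigma_*$, destabilized by some $S'\hookrightarrow S_i$ of the same phase in $\PP_{\sigma_*}(\phi(\sigma_*))$. Here part \eqref{item:walls_dont_intersect} again forces the class $u':=v(S')$ to define $W$, and Lemma \ref{lem:convex_cone}(3) gives $0\le \overline{\Delta}^B_\omega(u')\le\overline{\Delta}^B_\omega(u_i)$, with strict inequality unless $\overline{\Delta}^B_\omega(u_i)=0$. This bounds the possible destabilizing classes to a finite set (as in the proof of Proposition \ref{prop:locally_finite}) and lets me set up an induction on $\overline{\Delta}^B_\omega$. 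In the base case $\overline{\Delta}^B_\omega(S_i)=0$, Corollary \ref{cor:Qzero} applies on a connected neighborhood of $W$ carrying a single quadratic form (furnished by Proposition \ref{prop:ExplicitBridgelandMain}) and yields stability of $S_i$ throughout at once; in the inductive step the factors of $S'$ have strictly smaller discriminant and so, by hypothesis, are $\sigma$-stable of the wall phase for all $\sigma\in W$, whence $S'$ itself is $\sigma$-semistable of the wall phase near $\sigma_*$.

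The decisive and most delicate point, which closes the induction, is a mass-continuity contradiction. Near $\sigma_*$ the object $S_i$ is stable and $S'$ is semistable, both of the common wall phase, so the fixed nonzero morphism $S'\to S_i$ is a map in the abelian category $\PP_\sigma(\phi(\sigma))$; its image is a phase-$\phi(\sigma)$ subobject of the stable object $S_i$ and must therefore equal $S_i$, so the map is surjective and $m_\sigma(S')\ge m_\sigma(S_i)$ for $\sigma$ near $\sigma_*$. At $\sigma_*$, however, the same morphism is a monomorphism onto a \emph{proper} subobject, giving $m_{\sigma_*}(S')<m_{\sigma_*}(S_i)$. As the mass $m_\sigma(-)$ varies continuously with $\sigma$, these inequalities are incompatible, so $S_i$ is $\sigma_*$-stable and $U_i$ is closed. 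I expect this persistence of (semi)stability of the constituents along the wall to be the genuine obstacle: the clean wall/chamber picture controls which classes can intervene (they must define $W$ and obey the convex-cone bounds), but ruling out strict semistability in the limit is exactly what requires the interplay of continuity of $\phi^\pm$, the extremality in Lemma \ref{lem:convex_cone}, and the mass estimate above. Once the claim is in hand, the reassembly of the first paragraph produces an actual destabilization at every $\sigma'\in W$, completing the proof.
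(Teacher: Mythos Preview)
Your argument is essentially correct, but it is a substantially heavier machine than what the paper has in mind, and some of the scaffolding (the real-valued induction on $\overline{\Delta}^B_\omega$, the appeal to Corollary~\ref{cor:Qzero} on an unspecified open neighborhood of $W$) would need tightening to be airtight. The mass-continuity step does work; in fact one can phrase it even more cleanly by looking at the cone of your fixed map $S'\to S_i$: at $\sigma_*$ it lies in $\PP_{\sigma_*}(\phi_*)$, while at each nearby $\sigma_n\in U_i$ it lies in $\PP_{\sigma_n}(\phi_n+1)$, contradicting continuity of $\phi^\pm$ unless the cone is zero, i.e., $S'\cong S_i$.

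The paper, however, proves this particular proposition in the concrete $(\alpha,\beta)$-plane and leaves part~\eqref{item:walls_dont_stop} as an exercise with the hint: \emph{show that a destabilizing subobject or quotient would have to destabilize itself at some point of the wall}. The intended argument is elementary and uses only parts (1)--(6). If $0\to F\to E\to G\to 0$ witnesses the wall at one point of $W$, then $\nu_{\alpha,\beta}(F)=\nu_{\alpha,\beta}(E)=\nu_{\alpha,\beta}(G)$ along all of $W$. Were, say, $F$ to fail to be semistable at some other $(\alpha_1,\beta_1)\in W$, a maximal destabilizer $F'\subset F$ would satisfy $\nu_{\alpha_1,\beta_1}(F')>\nu_{\alpha_1,\beta_1}(F)=\nu_{\alpha_1,\beta_1}(E)$, whereas at the original point $\nu(F')\le\nu(F)=\nu(E)$. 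The numerical locus $\nu(F')=\nu(E)$ is then a numerical wall for $v$ meeting $W$; by part~\eqref{item:walls_dont_intersect} it must equal $W$, forcing $\nu(F')=\nu(F)$ everywhere on $W$ and contradicting the strict inequality at $(\alpha_1,\beta_1)$. This bypasses Jordan--H\"older factors, discriminant induction, and the convex-cone lemma entirely. What your approach buys is generality---it would apply on an arbitrary wall in $\Stab(X)$ once you fix the induction---whereas the paper's route exploits, and in turn illustrates, the nested-semicircle geometry already established in parts (1)--(6).
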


\begin{exercise}
Prove Proposition \ref{prop:StructureThmWallsSurfaces}. \emph{(Hint: For (\ref{item:walls_dont_intersect}) ignore the slope and rephrase everything with just $Z$ using linear algebra. For (\ref{item:walls_dont_stop}) show that a destabilizing subobject or quotient would have to destabilize itself at some point of the wall.)}
\end{exercise}

As application, it is easy to show that a ``largest wall'' exists and we will also be able to prove that walls are locally finite in the surface case without using Proposition \ref{prop:locally_finite}. Both will directly follow from the following statement that is taken from \cite[Lemma 5.6]{Sch15:stability_threefolds}.

\begin{lem}
Let $v \in K_{\num}(X)$ be a non-zero class such that $\overline{\Delta}_H^{B_0}(v) \geq 0$. For fixed $\beta_0 \in \Q$ there are only finitely many walls intersecting the vertical line $\beta = \beta_0$.
\end{lem}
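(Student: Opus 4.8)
The plan is to reduce the statement to the finiteness of a set of numerical invariants, and then to extract that finiteness from the two Bogomolov-type inequalities of Theorem \ref{thm:BogomolovBridgelandStability} together with the convexity estimate of Lemma \ref{lem:convex_cone}.

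First I would set up the reduction. By the Structure Theorem for walls (Proposition \ref{prop:StructureThmWallsSurfaces}), distinct walls for $v$ are pairwise non-intersecting semicircles or vertical rays, so any two of them that meet the line $\beta=\beta_0$ do so in distinct points. Hence it suffices to bound the number of heights $\alpha_0>0$ at which some wall crosses the line. If a wall crosses at $\sigma_0:=\sigma_{\alpha_0 H,\,B_0+\beta_0 H}$, there is a short exact sequence $0\to F\to E\to G\to 0$ in $\Coh^{\alpha_0 H,\,B_0+\beta_0 H}(X)$ with $E$ of class $v$ and with $F,G$ both $\sigma_0$-semistable of the same phase as $E$. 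Setting $w:=\ch(F)$, a direct computation of the equal-slope equation shows that $\alpha_0^2$ is determined by $v$ and by the three numbers $\ch_0(w)$, $H\cdot\ch_1^{B_0+\beta_0 H}(w)$ and $\ch_2^{B_0+\beta_0 H}(w)$; so it is enough to show that only finitely many such triples occur.

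The key simplification is that the support-property form is essentially constant along the line. Indeed $\overline{\Delta}_{\alpha H}^{B}=\alpha^2\,\overline{\Delta}_H^{B}$ and $\overline{\Delta}_H^{B_0+\beta_0 H}=\overline{\Delta}_H^{B_0}$, so at every point of the line the relevant quadratic form is a positive multiple of the single fixed form $\overline{\Delta}:=\overline{\Delta}_H^{B_0}$. Thus Theorem \ref{thm:BogomolovBridgelandStability} gives $\overline{\Delta}(F)\geq 0$ and $\overline{\Delta}(G)\geq 0$, while Lemma \ref{lem:convex_cone}(3), applied on the ray of $Z_{\sigma_0}(v)$, yields
\[
0\leq \overline{\Delta}(w)+\overline{\Delta}(v-w)\leq \overline{\Delta}(v).
\]
In particular both $\overline{\Delta}(w)$ and $\overline{\Delta}(v-w)$ lie in the interval $[0,\overline{\Delta}(v)]$, which is nonempty by the hypothesis $\overline{\Delta}_H^{B_0}(v)\geq 0$. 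Independently, since $F\hookrightarrow E$ in the heart, the imaginary part of the central charge is controlled, namely $0\leq H\cdot\ch_1^{B_0+\beta_0 H}(w)\leq H\cdot\ch_1^{B_0+\beta_0 H}(v)$, and this quantity is discrete because $H$ is integral and $B_0,\beta_0$ are rational; hence it takes only finitely many values.

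The hard part is the final passage from these bounds to actual finiteness, because $\overline{\Delta}$ is indefinite: the bound $0\leq\overline{\Delta}(w)\leq\overline{\Delta}(v)$ by itself does not confine $w$, since $\overline{\Delta}$ stays bounded along its null directions. The remedy is to use the inequalities for $w$ and for $v-w$ simultaneously. After fixing one of the finitely many values of $H\cdot\ch_1^{B_0+\beta_0 H}(w)$, expanding $\overline{\Delta}(v-w)$ and subtracting the bound on $\overline{\Delta}(w)$ converts the two estimates into simultaneous control of the product $\ch_0(w)\,\ch_2^{B_0+\beta_0 H}(w)$ and of a linear combination of $\ch_0(w)$ and $\ch_2^{B_0+\beta_0 H}(w)$ whose coefficients are read off from $v$. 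Since $\ch_0(w)$ is an integer and $\ch_2^{B_0+\beta_0 H}(w)$ lies in a fixed lattice, this pins down finitely many pairs, the only escape being a single degenerate direction along which $\ch_0(w)$ runs to infinity. On that direction the numerator of $\alpha_0^2$ vanishes, so the crossing height degenerates to $\alpha_0^2\leq 0$ and gives no intersection with the open line $\alpha>0$ (equivalently, $w$ becomes numerically proportional to $v$ and defines no wall in the sense of Definition \ref{def:WallGeneral}). Discarding this direction leaves finitely many admissible triples, hence finitely many crossing heights and finitely many walls.
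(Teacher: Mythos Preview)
Your approach is essentially the paper's: bound $c:=H\cdot\ch_1^{\beta_0}(w)$ using the imaginary part of the central charge, then apply Theorem~\ref{thm:BogomolovBridgelandStability} and Lemma~\ref{lem:convex_cone}(3) to bound $\overline{\Delta}_H^{B_0}(w)$ (equivalently the product $rd$, where $r=H^2\ch_0(w)$ and $d=\ch_2^{\beta_0}(w)$), and finish by discreteness. The paper does exactly this, just more concretely: it bounds $rd$ alone via $0\leq c^2-2rd\leq\overline{\Delta}_H^{B_0}(v)$ and then treats the exceptional possibilities $r=0$ and $d=0$ by hand (either by switching from $F$ to the quotient $G$, or by checking that the equal-slope condition becomes independent of $(\alpha,\beta)$ and hence defines no wall).

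Your final paragraph, however, has a real gap. You claim there is a \emph{single} degenerate escape direction along which $\ch_0(w)\to\infty$, that ``the numerator of $\alpha_0^2$ vanishes'' there, and that equivalently ``$w$ becomes numerically proportional to $v$''. None of this is correct in general. Writing $R=H^2\ch_0(v)$ and $D=\ch_2^{\beta_0}(v)$, the simultaneous bounds on $rd$ and on $Rd+rD$ leave \emph{two} escapes: $r\to\pm\infty$ forces $d=0$ and then $D=0$, while $d\to\pm\infty$ forces $r=0$ and then $R=0$. In the first escape the numerator of $\alpha_0^2=\tfrac{2(Cd-cD)}{Cr-cR}$ vanishes, but in the second it is the \emph{denominator} that vanishes; and in neither case is $w$ forced to be proportional to $v$. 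The honest conclusion in both situations is that the equal-slope equation becomes independent of $(\alpha,\beta)$ and therefore defines no wall --- but this has to be verified case by case, as the paper does, rather than asserted. There are also the intermediate cases (for instance $r=0$ with $R\neq 0$, where the bound on $rd$ is vacuous but genuine walls can occur) that your argument does not address; the paper handles these by passing to the quotient $G$.
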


\begin{proof}
Any wall has to come from an exact sequence $0 \to F \to E \to G \to 0$ in $\Coh^{\beta}(X)$. Let $(H^2 \cdot  \ch_0^{\beta}(E), H \cdot \ch_1^{\beta}(E), \ch_2^{\beta}(E)) = (R, C, D)$ and $(H^2 \cdot  \ch_0^{\beta}(F), H \cdot \ch_1^{\beta}(F), \ch_2^{\beta}(F)) = (r, c, d)$. Notice that due to the fact that $\beta \in \Q$ the possible values of $r$, $c$ and $d$ are discrete in $\R$. Therefore, it will be enough to bound those values to obtain finiteness.

By definition of $\Coh^{\beta}(X)$ one has $0 \leq c \leq C$. If $C = 0$, then $c = 0$ and we are dealing with the unique vertical wall. Therefore, we may assume $C \neq 0$. Let $\Delta := C^2 - 2RD$. The Bogomolov inequality together with Lemma \ref{lem:convex_cone} implies $0 \leq c^2 - 2rd \leq \Delta$. Therefore, we get
\[
\frac{c^2}{2} \geq rd \geq \frac{c^2 - \Delta}{2}.
\]
Since the possible values of $r$ and $d$ are discrete in $\R$, there are finitely many possible values unless $r = 0$ or $d = 0$.

Assume $R = r = 0$. Then the equality $\nu_{\alpha, \beta}(F) = \nu_{\alpha,
\beta}(E)$ holds if and only if $Cd-Dc = 0$. In particular, it is independent of
$(\alpha, \beta)$. Therefore, the sequence does not define a wall.

If $r = 0$, $R \neq 0$, and $D - d \neq 0$, then using the same type of inequality for $G$ instead of $E$ will finish the proof. If $r = 0$ and $D - d = 0$, then $d = D$ and there are are only finitely many walls like this, because we already bounded $c$.

Assume $D = d = 0$. Then the equality $\nu_{\alpha, \beta}(F) = \nu_{\alpha,
\beta}(E)$ holds if and only if $Rc - Cr = 0$. Again this cannot define a wall.

If $d = 0$, $D \neq 0$, and $R - r \neq 0$, then using the same type of inequality for $G$ instead of $E$ will finish the proof. If $d = 0$ and $R - r = 0$, then $r = R$ and there are are only finitely many walls like this, because we already bounded $c$.
\end{proof}

\begin{cor}
\label{cor:LargestWallExists}
Let $v \in K_{\num}(X)$ be a non-zero class such that $\overline{\Delta}_H^{B_0}(v)\geq 0$. Then semicircular walls in the $(\alpha, \beta)$-plane with respect to $v$ are bounded from above.
\end{cor}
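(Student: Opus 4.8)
The plan is to argue by contradiction, feeding the nesting structure from Proposition~\ref{prop:StructureThmWallsSurfaces} into the finiteness statement of the preceding Lemma. Recall that by Proposition~\ref{prop:StructureThmWallsSurfaces} every semicircular wall for $v$ in the $(\alpha,\beta)$-plane has its center on the $\beta$-axis, that two distinct walls never meet, and that all semicircular walls lying on a fixed side of the numerical vertical wall (or simply all of them, in the case $\ch_0^{B_0}(v)=0$) are strictly nested. Thus a semicircular wall is recorded by the open interval $(s-\rho,\,s+\rho)$ it cuts out on the $\beta$-axis, where $\rho>0$ is its radius, and the semicircle meets a vertical line $\beta=\beta_0$ exactly when $\beta_0$ lies in this open interval. ``Bounded from above'' then means precisely that the radii $\rho$ of all \emph{actual} semicircular walls are bounded.

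So suppose this fails: there are actual semicircular walls of arbitrarily large radius, and I pick a sequence $W_n$ of such walls with distinct radii $\rho_n\to\infty$. Since there are at most two sides of the numerical vertical wall, an infinite subsequence of the $W_n$ lies on a single side; after passing to it I may assume the whole family $\{W_n\}$ is pairwise strictly nested. I would then fix any one member $W_{n_0}$ and choose a \emph{rational} number $\beta_0$ inside the open interval that $W_{n_0}$ cuts out on the $\beta$-axis, which is possible because every nonempty open interval contains a rational.

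For each of the infinitely many indices $n$ with $\rho_n>\rho_{n_0}$, strict nesting forces the half-disk bounded by $W_{n_0}$ to sit inside that bounded by $W_n$ (of two nested semicircles centered on the axis, the one of larger radius is the outer one), so the interval of $W_{n_0}$ is contained in that of $W_n$. In particular $\beta_0$ lies in the interval of $W_n$, i.e.\ $W_n$ meets the line $\beta=\beta_0$. Hence infinitely many actual walls intersect $\beta=\beta_0$, contradicting the preceding Lemma, whose hypothesis $\overline{\Delta}_H^{B_0}(v)\ge 0$ is exactly the one assumed in the corollary. This contradiction yields the bound.

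The only delicate point—and the step I expect to need the most care—is the passage ``infinitely many walls cross a single fixed rational $\beta_0$''. Its entire force comes from nesting: without it, semicircles of growing radius could drift toward $\beta=\pm\infty$ and dodge any fixed vertical line, so no contradiction with the Lemma would arise. I would therefore be explicit that nesting is genuinely available here—it is the content of parts (5) and (6) of Proposition~\ref{prop:StructureThmWallsSurfaces}, together with the non-intersection in part (2)—and that a rational $\beta_0$ can always be chosen inside a prescribed wall's interval so that the Lemma applies verbatim. The remaining assertions are immediate from the elementary geometry of semicircles centered on the $\beta$-axis.
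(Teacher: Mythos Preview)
Your proof is correct and is exactly the argument the paper has in mind: the corollary is stated without proof because it follows immediately from the preceding Lemma together with the nesting in Proposition~\ref{prop:StructureThmWallsSurfaces}, and your contradiction argument spells this out cleanly. The one point you flagged as delicate---that among nested semicircles the one of larger radius is the outer one---is indeed automatic, since containment of the base intervals forces the inequality of radii.
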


\begin{cor}
\label{cor:locally_finite_surfaces}
Let $v \in K_{\num}(X)$ be a non-zero class such that $\overline{\Delta}_H^{B_0}(v)\geq 0$. Walls in the $(\alpha, \beta)$-plane with respect to $v$ are locally finite, i.e., any compact subset of the upper half plane intersects only finitely many walls.
\end{cor}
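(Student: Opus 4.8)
The plan is to deduce local finiteness from the previous lemma, which controls how many walls can cross a single \emph{rational} vertical line, by showing that every wall meeting a given compact set must cross one of \emph{finitely many} such lines.

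First I would fix a compact subset $K$ of the upper half plane $\{(\alpha,\beta) : \alpha > 0\}$ and enclose it in a box $K \subseteq [\beta_1,\beta_2] \times [\alpha_1,\alpha_2]$ with $\alpha_1 > 0$. By Proposition \ref{prop:StructureThmWallsSurfaces}(1) every wall is either the (at most one) vertical wall or a semicircle with centre $(s,0)$ on the $\beta$-axis; since the vertical wall is unique it is harmless, so it suffices to bound the semicircular walls meeting $K$. A semicircle $(\beta-s)^2+\alpha^2=\rho^2$ meeting $K$ passes through a point of height $\alpha \ge \alpha_1$, so its radius satisfies $\rho \ge \alpha_1$, while Corollary \ref{cor:LargestWallExists} gives a uniform upper bound $\rho \le \rho_{\max}$. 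Consequently, at the contact point its centre satisfies $|s-\beta| = \sqrt{\rho^2-\alpha^2} \le \sqrt{\rho_{\max}^2-\alpha_1^2}=:M$, so $s$ lies in the fixed bounded interval $[\beta_1 - M, \beta_2+M]$ and the whole horizontal span $(s-\rho,s+\rho)$ is contained in the fixed bounded interval $J := (\beta_1-M-\rho_{\max},\, \beta_2+M+\rho_{\max})$.

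Next I would choose a finite set of rational numbers $q_1 < \dots < q_N$ covering $\overline{J}$ with consecutive gaps strictly smaller than $\alpha_1$. For any semicircular wall meeting $K$ the open span $(s-\rho, s+\rho)$ has length $2\rho \ge 2\alpha_1 > \alpha_1$, hence contains at least one $q_i$; at $\beta = q_i$ the semicircle has strictly positive height, so this wall intersects the rational vertical line $\beta = q_i$. Thus every semicircular wall meeting $K$ crosses one of the finitely many lines $\beta = q_1, \dots, \beta = q_N$. By the previous lemma each of these lines is crossed by only finitely many walls, so only finitely many semicircular walls meet $K$; together with the unique vertical wall this proves the statement.

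The only real content is the uniform bound on the centres and radii of the semicircles meeting $K$: without the upper bound from Corollary \ref{cor:LargestWallExists} the centres $s$ could run off to infinity (with the arcs through $K$ becoming arbitrarily flat), and no finite family of rational lines would suffice. Once that bound is in hand the argument is elementary plane geometry plus the previous lemma. An alternative, slightly slicker route replaces the explicit grid by a compactness argument: if infinitely many distinct semicircular walls met $K$, one extracts convergent centres and radii, the limit is a genuine semicircle of positive radius, and any rational $\beta_0$ interior to its span would then be crossed by infinitely many of the walls, contradicting the previous lemma.
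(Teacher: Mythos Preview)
Your argument is correct. The paper states this result as an immediate corollary of the preceding lemma and gives no proof, so there is nothing to compare against beyond the intended one-line deduction; your write-up is a valid and careful way to fill in that deduction. If you want a marginally shorter route, you can exploit the nesting in Proposition~\ref{prop:StructureThmWallsSurfaces}(5)--(6) more directly: on each side of the vertical wall the semicircular walls meeting $K$ are totally ordered by inclusion, their (closed) $\beta$-spans form a nested family of intervals each of length $\geq 2\alpha_1$, so the intersection of these spans has length $\geq 2\alpha_1$ and contains a single rational $\beta_0$, and then the preceding lemma applied to that one line finishes the argument without the grid of $q_i$'s or the explicit bound on centres.
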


We will compute the largest wall in examples: see Section \ref{subsec:LargestWallHilbertScheme}. An immediate consequence of Proposition \ref{cor:LargestWallExists} is the following precise version of Lemma \ref{lem:large_volume_limit_tilt}. This was proved in the case of K3 surfaces in \cite[Proposition 14.2]{Bri08:stability_k3}. The general proof is essentially the same if the statement is correctly adjusted.

\begin{exercise}
\label{exercise:BridgelandvsGieseker}
Let $v \in K_{\num}(X)$ be a non-zero class with positive rank and let $\beta \in \R$ such that $H \cdot \ch_1^{\beta}(v) > 0$. Then there exists $\alpha_0>0$ such that for any $\alpha > \alpha_0$ the set of $\sigma_{\alpha, \beta}$-semistable objects with class $v$ is the same as the set of twisted $(\omega, B_0 - \tfrac{1}{2} K_X)$-Gieseker semistable sheaves with class $v$. Moreover, $\sigma_{\alpha, \beta}$-stable objects of class $v$ are the same as twisted $(\omega, B_0 - \tfrac{1}{2} K_X)$-Gieseker stable sheaves with class $v$. \emph{Hint: Follow the proof of Lemma \ref{lem:large_volume_limit_tilt} and compare lower terms.}
\end{exercise}

\subsection{Examples of semistable objects}\label{subsec:ExampleSemistable}

We already saw a few easy examples of stable objects with respect to $\sigma_{\omega,B}$: skyscraper sheaves and objects with minimal $H \cdot \ch_1^B$ (or with $H \cdot \ch_1^B=0$).
See Exercise \ref{exercise:MinimalObjects} and Exercise \ref{exer:SmallerImaginaryPart}.

The key example of Bridgeland semistable objects are those with trivial discriminant.

\begin{lem}
\label{lem:parallel_line_bundle}
Let $E$ be a $\mu_{\omega,B}$-stable vector bundle.
Assume that either $\Delta_{\omega,B}^C(E)=0$, or $\overline{\Delta}_H^B(E)=0$.
Then $E$ is $\sigma_{\omega,B}$-stable.
\end{lem}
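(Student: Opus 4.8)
The plan is to realize $E$ as a stable object at the \emph{large volume limit} and then transport stability all the way down to $\sigma_{\omega,B}$ along the ray of rescaled polarizations, using that a vanishing discriminant forces $v(E)$ onto an extremal ray of the positive cone of the relevant quadratic form, where no destabilization can occur. First I would dispose of the degenerate case $\mu_{\omega,B}(E)=0$: then $\omega\cdot\ch_1^B(E)=0$ and Exercise~\ref{exercise:MinimalObjects} already shows $E[1]$ is a minimal object of $\Coh^{\omega,B}(X)$, hence $\sigma_{\omega,B}$-stable, needing no hypothesis on the discriminant. So I may assume $\mu_{\omega,B}(E)\neq0$, and after applying derived duality if necessary (it preserves $\mu$-stability and both discriminants, and intertwines $\sigma_{\omega,B}$ with $\sigma_{\omega,-B}$ up to the $\widetilde{\GL}^+(2,\R)$-action of Remark~\ref{rmk:GroupActions}) that $\mu_{\omega,B}(E)>0$, so that $E\in\TT_{\omega,B}\subset\Coh^{\omega,B}(X)$.

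Since $E$ is $\mu_{\omega,B}$-stable it is in particular twisted Gieseker stable, so by Exercise~\ref{exercise:BridgelandvsGieseker} (equivalently Lemma~\ref{lem:large_volume_limit_tilt}) the object $E$ is $\sigma_{t\omega,B}$-stable for all $t\gg0$. This furnishes a point on the ray $R=\{\sigma_{t\omega,B}:t>0\}$ at which $E$ is known to be stable, which will serve as the base point for the propagation.

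The crux is to choose a single quadratic form $Q$ that witnesses the support property at \emph{every} point of $R$ and vanishes on $v(E)$. In the case $\overline{\Delta}_\omega^B(E)=0$ I would take $Q=\overline{\Delta}_\omega^B$: working in the $(\alpha,\beta)$-plane, the ray $R$ is the vertical line of fixed $\beta$, and completing the square in the $\beta$-twist gives $\overline{\Delta}^{B_0+\beta H}_{\alpha H}=\alpha^2\,\overline{\Delta}^{B_0}_H$, so up to the positive scalar $\alpha^2$ this form is literally constant along $R$; by Theorem~\ref{thm:BogomolovBridgelandStability} it is exactly the support-property form on the plane. In the case $\Delta_{\omega,B}^C(E)=0$ I would instead take $Q=\Delta_{\omega,B}^C$, the point being that $C_{t\omega}=C_\omega/t^2$ is an admissible constant for $t\omega$ in the sense of Exercise~\ref{exer:ConstantCH}, and with this choice $\Delta^C_{t\omega,B}=\Delta+C_\omega(\omega\cdot\ch_1^B)^2=\Delta^C_{\omega,B}$ is genuinely independent of $t$. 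In either case $Q(v(E))=0$ by hypothesis and every $\sigma\in R$ satisfies the support property with respect to this one $Q$. By Proposition~\ref{prop:ExplicitBridgelandMain} the locus where the support property holds with respect to $Q$ contains an open connected neighborhood $\UU$ of the connected ray $R$, on which $Q$ is negative definite on the kernels of the central charges. Since $Q(v(E))=0$ and $E$ is stable at the large volume base point, Corollary~\ref{cor:Qzero} applies and yields $\sigma$-stability of $E$ for all $\sigma\in\UU$; in particular $E$ is $\sigma_{\omega,B}$-stable. Concretely the engine here is Lemma~\ref{lem:convex_cone}(4): a class with $Q=0$ is extremal in $\CC^+_{\rho}$, so it can never split as a sum of two non-parallel classes of equal-phase semistable objects, which is precisely what crossing a destabilizing wall would require.

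The step I expect to be the main obstacle is exactly this construction of a \emph{fixed} support-property form valid along the entire ray, together with checking that $R$ lies in a single connected component $\UU$ of the region of Proposition~\ref{prop:ExplicitBridgelandMain}; this is what upgrades one-point stability at the large volume limit to stability at $\sigma_{\omega,B}$ via Corollary~\ref{cor:Qzero}. The two hypotheses of the lemma genuinely require different forms ($\overline{\Delta}_\omega^B$ on the plane versus $\Delta_{\omega,B}^C$ on all of $\Stab(X)$), since $\Delta_{\omega,B}^C(E)=0$ does not force $\overline{\Delta}_\omega^B(E)=0$, so both scaling computations have to be carried out separately before Corollary~\ref{cor:Qzero} can be invoked.
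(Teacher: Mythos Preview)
Your proposal is correct and follows essentially the same route as the paper: establish $\sigma_{t\omega,B}$-stability of $E$ (or $E[1]$) for $t\gg0$ via the large volume limit, then invoke Corollary~\ref{cor:Qzero} with the support-property form that vanishes on $v(E)$ to propagate stability down to $t=1$. The paper's version is just terser: it reduces to rational $\omega,B$ so as to sit in an $(\alpha,\beta)$-plane, writes ``$E$ (or $E[1]$)'' in place of your case split plus duality for $\mu_{\omega,B}(E)\le0$, and cites Corollary~\ref{cor:Qzero} directly rather than spelling out the scaling identities $\overline{\Delta}^B_{t\omega}=t^2\,\overline{\Delta}^B_\omega$ and $\Delta^C_{t\omega,B}=\Delta^C_{\omega,B}$ (with $C_{t\omega}=t^{-2}C_\omega$)---these are implicitly packaged into Theorem~\ref{thm:BogomolovBridgelandStability}, which already asserts that both forms serve as support-property witnesses at \emph{every} $\sigma_{\omega,B}$, so the ``main obstacle'' you flag is in fact already handled.
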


\begin{proof}
We can assume $\omega$ and $B$ to be rational.
Consider the $(\alpha,\beta)$-plane.
By Exercise \ref{exercise:BridgelandvsGieseker}, $E$ (or $E[1]$) is stable for $\alpha\gg0$.
The statement now follows directly from Corollary \ref{cor:Qzero}.
\end{proof}

In particular, all line bundles are stable everywhere only if the N\'eron-Severi group is of rank one, or if the constant $C_\omega$ of Exercise \ref{exer:ConstantCH} is zero (e.g., for abelian surfaces).

\begin{exercise}
Let $\sigma\colon X\to\P^2$ be the blow up of $\P^2$ at one point.
Let $h$ be the pull-back $\sigma^*\OO_{\P^2}(1)$ and let $f$ denote the fiber of the $\P^1$-bundle $X\to\P^1$.
Consider the anti-canonical line bundle $H:= - K_X = 2h+f$, and consider the $(\alpha,\beta)$-plane with respect to $\omega=\alpha H$, $B=\beta H$.
Show that $\OO_X(h)$ is $\sigma_{\alpha,\beta}$-stable for all $\alpha,\beta$, while there exists $(\alpha_0,\beta_0)$ for which $\OO(2h)$ is not $\sigma_{\alpha_0,\beta_0}$-semistable.
\end{exercise}

\subsection{Moduli spaces}\label{subsec:ModuliSurfaces}

We keep the notation as in the beginning of this section, namely $\omega,B\in N^1(X)$ with $\omega$ ample.
We fix $v_0\in K_{\num}(X)$ and $\phi\in\R$.
Consider the stack $\mathfrak{M}_{\omega,B}(v_0,\phi):=\mathfrak{M}_{\sigma_{\omega,B}}(v_0,\phi)$ (and $\mathfrak{M}_{\omega,B}^s(v_0,\phi)$) as in Section \ref{subsec:moduli}.

\begin{thm}[Toda]
$\mathfrak{M}_{\omega,B}(v_0,\phi)$ is a universally closed  Artin stack of finite type over $\C$.
Moreover, $\mathfrak{M}_{\omega,B}^s(v_0,\phi)$ is a $\G_m$-gerbe over an algebraic space $M_{\omega,B}(v_0,\phi)$. Finally, if $\mathfrak{M}_{\omega,B}(v_0,\phi)=\mathfrak{M}_{\omega,B}^s(v_0,\phi)$, then $M_{\omega,B}(v_0,\phi)$ is a proper algebraic space over $\C$.
\end{thm}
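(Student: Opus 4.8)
The plan is to reduce the first two assertions to the general representability results of Lieblich (Theorem \ref{thm:Lieblich}) and Inaba (Theorem \ref{thm:Inaba}) once openness and boundedness are known, and then to obtain universal closedness, and ultimately properness, from a valuative (semistable reduction) argument.

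First I would invoke that Question \ref{question:OpenAndBounded} has an affirmative answer for the stability conditions $\sigma_{\omega,B}$ on surfaces; this is the content of \cite{Tod08:K3Moduli}. Openness of $\mathfrak{M}^s_{\omega,B}(v_0,\phi)\subset\mathfrak{M}_{\omega,B}(v_0,\phi)\subset\mathfrak{M}$ together with Theorem \ref{thm:Lieblich} shows these are Artin stacks with separated diagonal, and boundedness of $\widehat{M}_{\omega,B}(v_0,\phi)$ together with \cite[Lemma 3.4]{Tod08:K3Moduli} improves ``locally of finite type'' to ``of finite type''. For the gerbe statement, $\sigma_{\omega,B}$-stable objects are simple, so $\mathfrak{M}^s_{\omega,B}(v_0,\phi)$ is an open substack of $\mathfrak{M}_{\mathrm{Spl}}$; applying Theorem \ref{thm:Inaba} realizes it as a $\G_m$-gerbe over the corresponding open subspace $M_{\omega,B}(v_0,\phi)\subset\underline{M}_{\mathrm{Spl}}$, which is then an algebraic space of finite type over $\C$.

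The heart of the matter is universal closedness, which I would check via the valuative criterion. Given a DVR $R$ with fraction field $K$ and a $\sigma_{\omega,B}$-semistable family $\EE_K$ of class $v_0$ and phase $\phi$ over $\Spec K$, I would show that, after possibly replacing $R$ by a finite extension, one can extend $\EE_K$ to some $R$-perfect complex on $X\times\Spec R$ lying fiberwise in the tilted heart $\Coh^{\omega,B}(X)$. Since the special fiber need not be semistable, the key step is a Langton-type modification: one repeatedly performs an elementary modification of the family along the maximal destabilizing subobject of the central fiber and argues that a suitable discrete numerical invariant strictly decreases, so that the procedure terminates with a semistable central fiber. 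Here I would use that $\PP(\phi)$ is of finite length (Exercise \ref{exercise:StableObjectsBridgelandStability}), giving well-behaved Harder--Narasimhan and Jordan--H\"older data, and that the support property confines the relevant invariants to a discrete set bounded below. I expect this Langton-type argument to be the main obstacle: unlike the classical sheaf case one works in a tilted heart, so the elementary modifications must be set up derived-categorically, the destabilizing subobject must be shown to stay in the heart over $R$, and one must exhibit a genuinely decreasing integer-valued quantity.

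Finally, under the hypothesis $\mathfrak{M}_{\omega,B}(v_0,\phi)=\mathfrak{M}^s_{\omega,B}(v_0,\phi)$, properness of $M_{\omega,B}(v_0,\phi)$ follows by combining finite type (boundedness), universal closedness (the criterion above), and separatedness. Separatedness is the uniqueness half of the valuative criterion: when every semistable object is stable there are no nontrivial S-equivalences, and any two $R$-extensions agreeing on the generic fiber and having stable special fibers must coincide, since a nonzero morphism between stable objects of equal phase is an isomorphism (Lemma \ref{lem:Schur}). Hence $M_{\omega,B}(v_0,\phi)$ is a proper algebraic space over $\C$.
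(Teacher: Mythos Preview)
Your outline is correct and matches the paper's approach: reduce to openness and boundedness, invoke Lieblich and Inaba, and obtain universal closedness/properness via a semistable reduction argument. The paper's sketch differs only in emphasis: it singles out the Abramovich--Polishchuk constant t-structure construction (Theorem \ref{thm:constant_t_structure}) as the key technical input for openness, and attributes the valuative/semistable reduction step you call ``Langton-type'' directly to \cite{AP06:constant_t_structures} rather than spelling it out; your discussion of the termination mechanism via discreteness and finite length is a faithful description of what that argument does.
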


\begin{proof}[Ideas from the proof]
As we observed after Question \ref{question:OpenAndBounded}, we only need to show openness and boundedness.
The idea to show boundedness is to reduce to semistable sheaves and use boundedness from them.
For openness, the key technical result is a construction by Abramovich-Polishchuk \cite{AP06:constant_t_structures}, which we will recall in Theorem \ref{thm:constant_t_structure}. Then openness for Bridgeland semistable objects follows from the existence of relative Harder-Narasimhan filtrations for sheaves. 
We refer to \cite{Tod08:K3Moduli} for all the details.
\end{proof}

There are only few examples where we can be more precise on the existence of moduli spaces. We will present a few of them below.
To simplify notation, we will drop from now on the phase $\phi$ from the notation for a moduli space.

\subsubsection*{The projective plane}
Let $X=\P^2$.
We identify the lattice $K_0(\P^2)=K_{\num}(\P^2)$ with $\Z^{\oplus 2}\oplus \frac{1}{2}\Z$, and the Chern character with a triple $\ch=(r,c,s)$, where $r$ is the rank, $c$ is the first Chern character, and $s$ is the second Chern character.
We fix $v_0\in K_{\num}(\P^2)$.

\begin{thm}
\label{thm:P2}
For all $\alpha,\beta\in\R$, $\alpha>0$, there exists a coarse moduli space $M_{\alpha,\beta}(v_0)$ parameterizing S-equivalence classes of $\sigma_{\alpha,\beta}$-semistable objects.
It is a projective variety.
Moreover, if $v_0$ is primitive and $\sigma_{\alpha,\beta}$ is outside a wall for $v_0$, then $M_{\alpha,\beta}^s(v_0)=M_{\alpha,\beta}(v_0)$ is a smooth irreducible projective variety.
\end{thm}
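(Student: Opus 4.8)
The plan is to reduce the construction of $M_{\alpha,\beta}(v_0)$ to a moduli problem for quiver representations, for which projectivity is furnished by King's Geometric Invariant Theory construction \cite{Kin94:moduli_quiver_reps} (see Exercise \ref{ex:Quiver}). Beilinson's theorem provides a full strong exceptional collection $\OO_{\P^2},\OO_{\P^2}(1),\OO_{\P^2}(2)$, hence a derived equivalence $\Db(\P^2)\cong\Db(\mod\text{-}A)$, where $A=\End\bigl(\OO_{\P^2}\oplus\OO_{\P^2}(1)\oplus\OO_{\P^2}(2)\bigr)$ is the path algebra of the Beilinson quiver modulo its commutation relations. The first step is to isolate the \emph{algebraic} stability conditions $\sigma$: those whose heart $\PP((0,1])$ is carried by this equivalence onto $\mod\text{-}A$ (up to an overall shift), with a suitable mutation of the exceptional collection as the three simple objects. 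For such a $\sigma$ the central charge restricts to a stability function of the type considered in Exercise \ref{ex:Quiver}, and $\sigma$-(semi)stability matches King $\theta$-(semi)stability of $A$-modules; King's theorem then yields a projective coarse moduli space parametrizing S-equivalence classes.

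To upgrade this to arbitrary $\sigma_{\alpha,\beta}$ I would invoke the two group actions of Remark \ref{rmk:GroupActions}. Both the $\widetilde{\GL}^+(2,\R)$-action and the autoequivalences $-\otimes\OO_{\P^2}(k)$ and the mutations along the exceptional objects act on $\Stab(\P^2)$ and induce isomorphisms between the corresponding moduli functors. The essential point, and the step I expect to be the main obstacle, is to verify that the $\widetilde{\GL}^+(2,\R)\times\Aut$-orbits of the algebraic stability conditions cover the whole $(\alpha,\beta)$-plane. This is exactly the part special to $\P^2$: it rests on the explicit description of $\Stab(\P^2)$ and of the regions cut out by the exceptional collection and its line-bundle twists, as carried out in \cite{ABCH13:hilbert_schemes_p2}. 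Granting the covering, each $\sigma_{\alpha,\beta}$ is equivalent to an algebraic $\sigma_Q$, and the induced isomorphism of moduli functors gives $M_{\alpha,\beta}(v_0)\cong M_{\sigma_Q}(v_0')$, a projective variety parametrizing S-equivalence classes.

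For the last assertion, suppose $v_0$ is primitive and $\sigma_{\alpha,\beta}$ avoids all walls. Then there are no strictly semistable objects: a Jordan--H\"{o}lder factor of equal phase would produce a class parallel to $v_0$, which by primitivity would place $\sigma_{\alpha,\beta}$ on a numerical wall. Hence $M^s_{\alpha,\beta}(v_0)=M_{\alpha,\beta}(v_0)$ and every point corresponds to a simple (stable) object, so the $\G_m$-gerbe of Section \ref{subsec:ModuliSurfaces} is rigidified to a genuine fine moduli space. The dimension is the constant $1-\chi(v_0,v_0)$, read off from the tangent space $\Ext^1(E,E)$ together with $\hom(E,E)=1$. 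Smoothness is the delicate point: for positive-rank objects lying in the Gieseker chamber it is the classical vanishing $\Ext^2(E,E)\cong\Hom(E,E(-3))^\vee=0$, which follows from slope semistability together with the anti-ampleness of $K_{\P^2}=\OO_{\P^2}(-3)$ (a nonzero map $E\to E(-3)$ is ruled out by a slope comparison as in Lemma \ref{lem:Schur}); for the remaining classes, where $\Ext^2(E,E)$ need not vanish, smoothness follows from the quiver-GIT description, in which the obstructions to deforming a $\sigma$-stable object are shown to vanish.

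Finally, irreducibility I would obtain by wall-crossing from the large-volume chamber. After possibly applying an autoequivalence so that $v_0$ has positive rank and $H\cdot\ch_1^\beta(v_0)>0$, Exercise \ref{exercise:BridgelandvsGieseker} identifies $M_{\alpha,\beta}(v_0)$ for $\alpha\gg0$ with a moduli space of twisted Gieseker-stable sheaves, which is classically irreducible on $\P^2$. Since walls are locally finite (Corollary \ref{cor:locally_finite_surfaces}), any chamber is joined to this one by finitely many wall-crossings, and across each wall the objects that remain stable form a common dense open subset, so the two moduli spaces are birational. A smooth projective variety birational to an irreducible one is irreducible, and thus every $M_{\alpha,\beta}(v_0)$ is a smooth irreducible projective variety, completing the proof.
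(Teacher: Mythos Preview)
Your overall strategy---reduce to King's GIT for quiver representations via a Beilinson-type equivalence---is exactly what the paper does. The main difference is in the reduction step. Rather than using autoequivalences and the $\widetilde{\GL}^+(2,\R)$-action to cover the $(\alpha,\beta)$-plane by orbits of algebraic stability conditions (your acknowledged ``main obstacle''), the paper exploits the nested-semicircle wall structure of Proposition~\ref{prop:StructureThmWallsSurfaces}: every chamber extends down to arbitrarily small $\alpha$, so one can always move within the same chamber to a point with $0<\alpha_0<\tfrac{1}{2}$ (Exercise~\ref{exercise:SmallAlphaP2}). Then, for each integer $k$, the paper uses the collection $\EE=\OO_{\P^2}(k-1)\oplus\Omega_{\P^2}(k+1)\oplus\OO_{\P^2}(k)$, whose associated simples $\OO_{\P^2}(k-3)[2],\,\OO_{\P^2}(k-2)[1],\,\OO_{\P^2}(k-1)$ sit naturally in $\Coh^\beta(\P^2)$ for the relevant range of $\beta$, and shows directly (Exercise~\ref{exercise:quiverP2}) that for $\alpha<\tfrac{1}{2}$ the stability condition $\sigma_{\alpha,\beta}$ is $\widetilde{\GL}^+(2,\R)$-equivalent to a quiver stability condition. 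This sidesteps the orbit-coverage verification entirely; your route via mutations and line-bundle twists would also work but is less direct.

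Your irreducibility argument, however, has a genuine gap. The claim that ``across each wall the objects that remain stable form a common dense open subset'' fails at totally semistable walls, where \emph{every} object of class $v_0$ becomes strictly semistable; there the common stable locus is empty, the two moduli spaces need not be birational, and one side may even be empty. So birationality-from-Gieseker does not propagate irreducibility across all walls. The paper does not attempt an independent argument here either: it calls the smoothness in the generic primitive case ``more subtle'' and attributes both smoothness and irreducibility to \cite{LZ16:NewStabilityP2}, where the quiver description is exploited more substantially.
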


The projectivity was first observed in \cite{ABCH13:hilbert_schemes_p2}, while generic smoothness is proved in this generality in \cite{LZ19:NewStabilityP2}. For the proof a GIT construction is used, but with a slightly different GIT problem.
First of all, an immediate consequence of Proposition \ref{prop:StructureThmWallsSurfaces} is the following (we leave the details to the reader).

\begin{exercise}\label{exercise:SmallAlphaP2}
Given $\alpha,\beta\in\R$, $\alpha>0$, there exist $\alpha_0,\beta_0\in\R$, such that $0<\alpha_0<\frac{1}{2}$ and $\widehat{M}_{\alpha,\beta}(v_0)=\widehat{M}_{\alpha_0,\beta_0}(v_0)$.
\end{exercise}

For $0<\alpha<\frac{1}{2}$, Bridgeland stability on the projective plane is related to finite dimensional algebras, where moduli spaces are easy to construct via GIT (see Exercise \ref{ex:Quiver}).
For $k\in\Z$, we consider the vector bundle $\EE:=\OO_{\P^2}(k-1)\oplus \Omega_{\P^2}(k+1)\oplus \OO_{\P^2}(k)$.
Let $A$ be the finite-dimensional associative algebra $\End(E)$.
Then, by the Beilinson Theorem \cite{Bei78:exceptional_collection_pn}, the functor
\[
\Phi_\EE: \Db(\P^2) \to \Db(\text{mod-}A), \, \, \Phi_\EE(F) := \mathbf{R}\Hom(\EE,F)
\]
is an equivalence of derived categories.
The simple objects in the category $\Phi_\EE^{-1}(\text{mod-}A)$ are $\OO_{\PP^2}(k-3)[2]$, $\OO_{\PP^2}(k-2)[1]$, $\OO_{\PP^2}(k-1)$.

\begin{exercise}\label{exercise:quiverP2}
For $\alpha,\beta\in\R$, $0<\alpha<\frac{1}{2}$, there exist $k\in\Z$ and an element $A\in\widetilde{\GL}^+(2,\R)$ such that $A\cdot \sigma_{\alpha,\beta}$ is a a stability condition as in Exercise \ref{ex:Quiver}.
\end{exercise}

It is not hard to prove that families of Bridgeland semistable objects coincide with families of modules over an algebra, as defined in \cite{Kin94:moduli_quiver_reps}.
Therefore, this proves the first part of Theorem \ref{thm:P2}. The proof of smoothness in the generic primitive case is more subtle. The complete argument is given in \cite{LZ19:NewStabilityP2}.

\subsubsection*{K3 surfaces}
Let $X$ be a K3 surface.
We consider the algebraic Mukai lattice
\[
H^*_{\mathrm{alg}}(X):=H^0(X,\Z)\oplus \NS(X) \oplus H^4(X,\Z)
\]
together with the Mukai pairing
\[
\left( (r,c,s),(r',c',s') \right) = c.c' - rs' - sr'.
\]
We also consider the \emph{Mukai vector}, a modified version of the Chern character by the square root of the Todd class:
\[
v := \ch \cdot \sqrt{\td_X} = \left(\ch_0, \ch_1, \ch_2 + \ch_0 \right).
\]
Our lattice $\Lambda$ is $H^*_{\mathrm{alg}}(X)$ and the map $v$ is nothing but the Mukai vector.

Bridgeland stability conditions for K3 surfaces can be described beyond the ones we just defined using $\omega,B$.
More precisely, the main result in \cite{Bri08:stability_k3} is that there exists a connected component $\Stab^\dagger(X)$, containing all stability conditions $\sigma_{\omega,B}$ such that the map
\[
\eta\colon \Stab^\dagger(X) \xrightarrow{\ZZ} \Hom(H^*_{\mathrm{alg}}(X),\C) \xrightarrow{(-,-)} H^*_{\mathrm{alg}}(X)_\C
\]
is a covering onto its image, which can be described as a certain period domain.

\begin{thm}\label{thm:K3ModuliProjective}
Let $v_0\in H^*_{\mathrm{alg}}(X)$.
Then for all generic stability conditions $\sigma\in\Stab^\dagger(X)$, there exists a coarse moduli space $M_{\sigma}(v_0)$ parameterizing S-equivalence classes of $\sigma$-semistable objects.
It is a projective variety.
Moreover, if $v_0$ is primitive, then $M_{\sigma}^s(v_0)=M_{\sigma}(v_0)$ is a smooth integral projective variety.
\end{thm}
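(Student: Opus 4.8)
The plan is to assemble the statement from three ingredients: existence together with properness (largely already in place), smoothness with the dimension count via deformation theory, and finally projectivity, which is the genuine difficulty.

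For existence and properness I would first invoke Toda's theorem from the previous subsection: the stack $\mathfrak{M}_\sigma(v_0)$ is a universally closed Artin stack of finite type, and $\mathfrak{M}^s_\sigma(v_0)$ is a $\G_m$-gerbe over an algebraic space $M_\sigma(v_0)$. When $\sigma$ is generic and $v_0$ is primitive, Proposition \ref{prop:locally_finite} guarantees there are no strictly semistable objects, so that $\mathfrak{M}_\sigma(v_0) = \mathfrak{M}^s_\sigma(v_0)$ and $M_\sigma(v_0)$ is a proper algebraic space over $\C$. For the coarse space parameterizing S-equivalence classes in the non-primitive or non-generic situation, I would defer the construction to the positivity argument below, which produces the polarization directly.

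For smoothness and the dimension I would run the standard deformation theory of a stable, hence simple, complex $E$. The Zariski tangent space is $\Ext^1(E,E)$ and the obstructions lie in the trace-free part $\Ext^2(E,E)_0$. Since the canonical bundle of a K3 surface is trivial, Serre duality gives $\Ext^2(E,E) \cong \Hom(E,E)^\vee = \C$, and the trace map splits this copy of $\C$ off; hence $\Ext^2(E,E)_0 = 0$ and $M^s_\sigma(v_0)$ is smooth. Mukai's form of Riemann--Roch, $\chi(E,E) = -(v_0,v_0)$ together with $\hom(E,E) = \ext^2(E,E) = 1$, then yields $\ext^1(E,E) = (v_0,v_0) + 2$, which is the dimension. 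Integrality I would deduce from the fact that, through the locally finite wall-and-chamber structure of $\Stab^\dagger(X)$, the space $M_\sigma(v_0)$ is deformation-equivalent to a moduli space of Gieseker-stable sheaves, whose irreducibility is classical; within each chamber the moduli space is constant by Proposition \ref{prop:locally_finite}, and crossing a wall alters it only birationally.

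The hard part will be projectivity. Here I would use the Positivity Lemma (Theorem \ref{thm:positivity_lemma}): to a (quasi-)universal family $\EE$ over $M_\sigma(v_0)$ it associates a divisor class $\ell_\sigma$, built exactly as $\ell_Z$ in Exercise \ref{exer:ThetaCurves}, and asserts that $\ell_\sigma$ is nef, with $\ell_\sigma . C = 0$ for an integral curve $C$ precisely when all objects parameterized by $C$ are S-equivalent. Since for generic $\sigma$ and primitive $v_0$ semistability coincides with stability, S-equivalence is just isomorphism, so a nonconstant family over $C$ forces $\ell_\sigma . C > 0$; thus $\ell_\sigma$ is strictly positive on every curve. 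The genuine obstacle is to upgrade this strict nefness to ampleness, since strict positivity on curves alone does not suffice in general (cf. Mumford's Example \ref{ex:Mumford}). The resolution I would import from the geometry of $\Stab^\dagger(X)$: deforming $\sigma$ toward the large-volume (Gieseker) region, where $M_\sigma(v_0)$ is classically projective and $\ell_\sigma$ is manifestly the GIT ample class, and using that $\ell_\sigma$ varies continuously and stays nef and curve-positive across the locally finite walls while the moduli space remains smooth and proper of fixed dimension, one concludes that $\ell_\sigma$ is ample. A proper algebraic space carrying an ample class is a projective scheme, which finishes the argument.
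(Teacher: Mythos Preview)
Your treatment of existence, properness, and smoothness is fine and in line with standard arguments. The difficulty, as you correctly identify, is projectivity, and here your proposal does not close the gap.

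The deformation argument you sketch cannot work as stated. Within a fixed chamber the moduli space $M_\sigma(v_0)$ is constant and $\ell_\sigma$ varies in its nef cone, but to reach the Gieseker chamber you must in general cross walls, and at each wall the moduli space itself changes. You acknowledge this yourself in the integrality paragraph (``crossing a wall alters it only birationally''), so you cannot simultaneously claim in the projectivity paragraph that ``the moduli space remains smooth and proper of fixed dimension'' along the whole path in a way that lets you transport ampleness from the Gieseker endpoint. Moreover, the assertion that wall-crossing is a birational modification is itself one of the main theorems of \cite{BM14:projectivity,BM14:stability_k3} and is proved \emph{after} projectivity is established; invoking it here is circular. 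Even restricting to a single chamber, strict positivity of $\ell_\sigma$ on curves does not yield ampleness on a proper algebraic space without further input (bigness, or a Nakai--Moishezon criterion you have not supplied); this is exactly the phenomenon illustrated by Example~\ref{ex:Mumford}.

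The paper's route is genuinely different and avoids this obstacle: following \cite{MYY14:stability_k_trivial_surfaces}, one finds for each generic $\sigma\in\Stab^\dagger(X)$ a Fourier--Mukai equivalence $\Phi:\Db(X)\to\Db(X',\alpha)$ to a (possibly twisted) K3 surface that carries $\sigma$ to a stability condition in the Gieseker chamber for the transformed class. This identifies $M_\sigma(v_0)$ directly with a moduli space of (twisted) Gieseker-semistable sheaves, whose projectivity, smoothness, and irreducibility are classical. The Positivity Lemma then enters only afterwards, to describe divisors on an already-projective space, not to produce projectivity in the first place. Your deferral of the non-primitive coarse moduli construction ``to the positivity argument'' suffers from the same issue: the Positivity Lemma yields a nef class on a parameter space, not a coarse moduli scheme.
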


We will not prove Theorem \ref{thm:K3ModuliProjective}; we refer to \cite{BM14:projectivity}.
The idea of the proof, based on \cite{MYY14:stability_k_trivial_surfaces}, is to reduce to the case of semistable sheaves by using a Fourier-Mukai transform.
The corresponding statement for non-generic stability conditions in $\Stab^\dagger(X)$ is still unknown.

When the vector is primitive, varieties appearing as moduli spaces of Bridgeland stable objects are so called \emph{Irreducible Holomorphic Symplectic}. They are all deformation equivalent to Hilbert schemes of points on a K3 surface.


\section{Applications and Examples}
\label{sec:applications}

In this section we will give examples and applications for studying stability on surfaces.

\subsection{The largest wall for Hilbert schemes}\label{subsec:LargestWallHilbertScheme}

Let $X$ be a smooth complex projective surface of Picard rank one. We will deal with computing the largest wall for ideal sheaves of zero dimensional schemes $Z \subset X$. The moduli space of these ideal sheaves turns out to be the Hilbert scheme of points on $X$. The motivation for this problem lies in understanding its nef cone. We will explain in the next section how, given a stability condition $\sigma$, one constructs nef divisors on moduli spaces of $\sigma$-semistable objects. It turns out that stability conditions on walls will often times induce boundary divisors of the nef cone. If the number of points is large, this was done in \cite{BHLRSW15:nef_cones}.

\begin{prop}
\label{prop:surface_largest_wall}
Let $X$ be a smooth complex projective surface with $\NS(X) = \Z \cdot H$, where $H$ is ample. Moreover, let $a > 0$ be the smallest integer such that $aH$ is effective and $n > a^2 H^2$. Then the biggest wall for the Chern character $(1,0,-n)$ to the left of the unique vertical wall is given by the equation $\nu_{\alpha, \beta}(\OO(-aH)) = \nu_{\alpha, \beta}(1,0,-n)$. Moreover, the ideal sheaves $\II_Z$ that destabilize at this wall are exactly those for which $Z \subset C$ for a curve $C \in |aH|$.
\end{prop}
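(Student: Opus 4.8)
The class $v=(1,0,-n)$ is the Chern character of an ideal sheaf $\II_Z$ of a length-$n$ subscheme, and $\overline{\Delta}^{B_0}_H(v)=2nH^2\geq 0$. Hence by Corollary \ref{cor:LargestWallExists} the semicircular walls lying to the left of the unique vertical wall $\beta=0$ (Proposition \ref{prop:StructureThmWallsSurfaces}) are bounded above, so a largest one exists; the plan is to locate it. Since a curve $C\in|aH|$ has $\II_C=\OO(-aH)$, every $Z\subset C$ yields a short exact sequence $0\to\OO(-aH)\to\II_Z\to\II_{Z/C}\to 0$ in $\Coh^{\beta}(X)$, so $\nu_{\alpha,\beta}(\OO(-aH))=\nu_{\alpha,\beta}(v)$ is a genuine numerical wall and the candidate for the answer.

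First I would record, for the Chern character $w=\ch(F)$ of any potential destabilizing subobject $F$, that the numerical wall $\nu_{\alpha,\beta}(F)=\nu_{\alpha,\beta}(v)$ is the semicircle centred on the $\beta$-axis at
\[
s_F=\frac{n\,\ch_0(F)+\ch_2(F)}{H\cdot\ch_1(F)},\qquad \rho_F^{\,2}=s_F^{\,2}-\frac{2n}{H^2}.
\]
Thus a bigger radius means a more negative centre, and it suffices to minimize $s_F$ over admissible classes. On the wall both $F$ and the quotient $G$ are $\sigma_{\alpha,\beta}$-semistable, so Theorem \ref{thm:BogomolovBridgelandStability} and Lemma \ref{lem:convex_cone} supply $\overline{\Delta}^{B_0}_H(F)\geq 0$ and $\overline{\Delta}^{B_0}_H(G)\geq 0$; together with the heart-membership requirement $0<H\cdot\ch_1^{\beta}(F)<H\cdot\ch_1^{\beta}(v)$ along the wall, and the fact that $aH$ is the smallest effective class, these constraints bound $s_F$ from below. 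In rank one the admissible subobjects are the twists $\OO(-mH)$ with $m\geq a$ and $m^2H^2<2n$, for which $\rho_m=\tfrac{n}{mH^2}-\tfrac m2$ is decreasing in $m$ and so maximal at $m=a$. The main obstacle is excluding higher rank: estimating $s_F$ by inserting the Bogomolov bound $\ch_2(F)\leq (H\cdot\ch_1(F))^2/(2H^2\ch_0(F))$ shows that any wall with $\ch_0(F)\geq 2$ has radius strictly smaller than $\rho_a$ exactly when $n>a^2H^2$. This is precisely where the hypothesis is used, and it pins the largest wall to $\ch(\OO(-aH))$.

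Finally I would identify the destabilized objects. On this wall $\OO(-aH)$ is $\mu$-stable with $\overline{\Delta}^{B_0}_H=0$, hence $\sigma_{\alpha,\beta}$-stable by Lemma \ref{lem:parallel_line_bundle}. Running the extremality argument in reverse, any stable Jordan--H\"older factor of a strictly semistable $\II_Z$ on this wall must have vanishing $\overline{\Delta}^{B_0}_H$ and lie on the wall, which on a Picard rank one surface forces it to be the line bundle $\OO(-aH)$; in particular $\Hom(\OO(-aH),\II_Z)\neq 0$. A nonzero such morphism is the same datum as a nonzero section of $\II_Z(aH)$, i.e.\ a curve $C\in|aH|$ with $Z\subset C$; it is automatically injective as a sheaf map and hence in $\Coh^{\beta}(X)$, and has the same phase as $\II_Z$ on the wall, so it destabilizes. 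Conversely, if $Z$ lies on no curve of $|aH|$ then $H^0(\II_Z(aH))=0$, no subobject of class $\ch(\OO(-aH))$ exists, and $\II_Z$ stays stable across the wall. Therefore the ideal sheaves destabilized on the largest wall are exactly those with $Z\subset C$ for some $C\in|aH|$.
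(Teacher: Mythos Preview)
Your overall architecture matches the paper's: bound all candidate walls from above and show the bound is realized by $\OO(-aH)$, then read off which ideal sheaves destabilize. The paper carries out the bound slightly differently---it observes that any larger semicircle meets the line $\beta=-a$ and derives a contradiction from the heart constraint there, then invokes Lemma~\ref{lem:higherRankBound} to dispose of rank $\geq 2$---but your center/radius computation together with Bogomolov is an equivalent route. One imprecision: a rank-one subobject of $\II_Z$ in $\Coh^\beta(X)$ is a subsheaf and hence an \emph{ideal sheaf} $\II_W$, not necessarily a line bundle; you should note that adding a zero-dimensional part to $W$ only makes the wall smaller, so the maximum over rank-one classes is still attained by $\OO(-aH)$. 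The effectivity constraint $m\geq a$ then comes from the fact that $-\ch_1(\II_W)$ is the class of the divisorial part of $W$.

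The genuine gap is in the last paragraph. Your assertion that ``any stable Jordan--H\"older factor of a strictly semistable $\II_Z$ on this wall must have vanishing $\overline{\Delta}^{B_0}_H$'' is false: in the filtration $0\to\OO(-aH)\to\II_Z\to\II_{Z/C}\to 0$ the quotient has $\overline{\Delta}^{B_0}_H(\II_{Z/C})=(aH^2)^2>0$. Lemma~\ref{lem:convex_cone} and Corollary~\ref{cor:Qzero} only force $\overline{\Delta}=0$ on the factors when $\overline{\Delta}(\II_Z)=0$, which is not the case here since $\overline{\Delta}(\II_Z)=2nH^2$. The paper instead argues directly: on this wall any destabilizing subobject $F$ has $\ch_0(F)=1$ (the higher-rank bound already shows rank $\geq 2$ gives a strictly smaller wall), hence $F\hookrightarrow\II_Z$ is a rank-one subsheaf. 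The heart inequalities $H\cdot\ch_1^\beta(F)>0$ and $H\cdot\ch_1^\beta(G)>0$ along the wall, evaluated near its right endpoint $\beta=-a$, force $\ch_1(F)=-aH$; matching the wall equation then gives $\ch_2(F)=a^2H^2/2$, so $F\cong\OO(-aH)$. That is what yields $\Hom(\OO(-aH),\II_Z)\neq 0$, i.e.\ $Z\subset C$ for some $C\in|aH|$; your conclusion is right but the justification has to go through the Chern character of $F$, not an extremality claim about the discriminant.
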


We need the following version of a result from \cite{CHW17:effective_cones_p2}.

\begin{lem}
\label{lem:higherRankBound}
Let $0 \to F \to E \to G \to 0$ be an exact sequence in $\Coh^{\beta}(X)$ defining a non empty semicircular wall $W$. Assume further that $\ch_0(F) > \ch_0(E) \geq 0$. Then the radius $\rho_W$ satisfies the inequality
\[
\rho_W^2 \leq \frac{\overline{\Delta}_H(E)}{4 H^2 \cdot \ch_0(F) (H^2 \cdot \ch_0(F) - H^2 \cdot \ch_0(E))}.
\]
\begin{proof}
Let $v,w \in K_0(X)$ be two classes such that the wall $W$ given by $\nu_{\alpha, \beta}(v) = \nu_{\alpha, \beta}(w)$ is a non empty semicircle. Then a straightforward computation shows that the radius $\rho_W$ and center $s_W$ satisfy the equation
\begin{equation}
\label{eq:radius_center}
(H^2 \cdot \ch_0(v))^2 \rho_W^2 + \overline{\Delta}_H(v) = (H^2 \cdot \ch_0(v) s_W - H \cdot \ch_1(v))^2.
\end{equation}
For all $(\alpha, \beta) \in W$ we have the inequalities $H \cdot \ch_1^{\beta}(E) \geq H \cdot \ch_1^{\beta}(F) \geq 0$. This can be rewritten as
\[
H \cdot \ch_1(E) + \beta (H^2 \cdot \ch_0(F) - H^2 \cdot \ch_0(E)) \geq H \cdot \ch_1(F) \geq \beta H^2 \cdot \ch_0(F).
\]
Since $H \cdot \ch_1(F)$ is independent of $\beta$ we can maximize the right hand side and minimize the left hand side individually in the full range of $\beta$ between $s_W - \rho_W$ and $s_W + \rho_W$. By our assumptions this leads to
\[
H \cdot \ch_1(E) + (s_W - \rho_W) (H^2 \cdot \ch_0(F) - H^2 \cdot \ch_0(E)) \geq (s_W + \rho_W) H^2 \cdot \ch_0(F).
\]
By rearranging the terms and squaring we get
\[
(2H^2 \cdot \ch_0(F) - H^2 \cdot \ch_0(E))^2 \rho_W^2 \leq (H \cdot \ch_1(E) - H^2 \cdot \ch_0(E) s_W)^2 = (H^2 \cdot \ch_0(E))^2 \rho_W^2 + \overline{\Delta}_H(E).
\]
The claim follows by simply solving for $\rho_W^2$.
\end{proof}
\end{lem}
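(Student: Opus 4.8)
The plan is to convert the two membership constraints imposed by the tilted heart $\Coh^\beta(X)$ into a bound on the horizontal extent of the wall, i.e.\ on its endpoints $\beta = s_W \pm \rho_W$, and then to translate that into the stated estimate on $\rho_W^2$ using the center--radius relation \eqref{eq:radius_center}. The only genuinely computational ingredient is \eqref{eq:radius_center} itself, and the conceptual heart of the argument is a monotonicity trick that lets the two inequalities be evaluated at \emph{opposite} endpoints of the $\beta$-interval.

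First I would record \eqref{eq:radius_center}. For a class $v$ with $H^2 \cdot \ch_0(v) \neq 0$ whose numerical wall against some $w$ is a semicircle, one expands the defining equation $\nu_{\alpha,\beta}(v) = \nu_{\alpha,\beta}(w)$; after clearing the common factor $\alpha > 0$ it becomes a conic in $(\alpha,\beta)$ whose $\alpha^2$- and $\beta^2$-coefficients coincide, hence a circle with center on the $\beta$-axis, and completing the square reads off $s_W$ and $\rho_W$. Equivalently, and more quickly, \eqref{eq:radius_center} is nothing but the identity $\Re Z_{\alpha,\beta}(v) = 0$ evaluated at the top point $(\beta,\alpha) = (s_W,\rho_W)$ of the semicircle, which is exactly part (3) of Proposition \ref{prop:StructureThmWallsSurfaces}. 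I would take this as the routine input.

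The substantive step is the following. Since $F$, $E$, and $G = E/F$ all lie in $\Coh^\beta(X)$, the imaginary part of $Z_{\alpha,\beta}$ is nonnegative on each, and additivity of $\ch_1^\beta$ gives $H \cdot \ch_1^\beta(E) \geq H \cdot \ch_1^\beta(F) \geq 0$ at every point of $W$. Writing these out yields
\[
H \cdot \ch_1(E) + \beta\,(H^2 \cdot \ch_0(F) - H^2 \cdot \ch_0(E)) \geq H \cdot \ch_1(F) \geq \beta\, H^2 \cdot \ch_0(F).
\]
The key observation is that the middle term $H \cdot \ch_1(F)$ is a constant independent of $\beta$, while under the hypothesis $\ch_0(F) > \ch_0(E) \geq 0$ (so that $H^2 \cdot \ch_0(F) > 0$ and $H^2 \cdot \ch_0(F) - H^2 \cdot \ch_0(E) > 0$) both outer expressions are strictly increasing affine functions of $\beta$. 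Hence the left inequality is tightest when its left side is minimized, at $\beta = s_W - \rho_W$, and the right inequality is tightest when its right side is maximized, at $\beta = s_W + \rho_W$; chaining these two evaluations through the constant $H \cdot \ch_1(F)$ produces a single inequality in $s_W$ and $\rho_W$ alone. Rearranging and squaring gives
\[
(2H^2 \cdot \ch_0(F) - H^2 \cdot \ch_0(E))^2 \rho_W^2 \leq (H \cdot \ch_1(E) - H^2 \cdot \ch_0(E)\, s_W)^2.
\]
This legitimacy of evaluating the two constraints at opposite endpoints of the interval is precisely where the sign hypothesis is used, and it is the main point to get right; in particular $\ch_0(F) > \ch_0(E) \geq 0$ guarantees both the correct direction of monotonicity and that the coefficient $2H^2\cdot\ch_0(F) - H^2\cdot\ch_0(E)$ is positive, so squaring is harmless.

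Finally I would eliminate $s_W$. The right-hand side above is exactly $(H^2 \cdot \ch_0(E)\, s_W - H \cdot \ch_1(E))^2$, which by \eqref{eq:radius_center} applied to $v = \ch(E)$ equals $(H^2 \cdot \ch_0(E))^2 \rho_W^2 + \overline{\Delta}_H(E)$. Substituting and collecting the $\rho_W^2$ terms on the left, the coefficient collapses via the factorization $(2p - q)^2 - q^2 = 4p(p-q)$, with $p = H^2 \cdot \ch_0(F)$ and $q = H^2 \cdot \ch_0(E)$, yielding
\[
4\, H^2 \cdot \ch_0(F)\,(H^2 \cdot \ch_0(F) - H^2 \cdot \ch_0(E))\, \rho_W^2 \leq \overline{\Delta}_H(E),
\]
which is the claimed bound after dividing by the (positive) coefficient.
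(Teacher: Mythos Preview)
Your proof is correct and follows essentially the same approach as the paper's: both use the chain $H\cdot\ch_1^\beta(E)\geq H\cdot\ch_1^\beta(F)\geq 0$ along $W$, evaluate the two outer affine functions of $\beta$ at opposite endpoints $s_W\mp\rho_W$ via monotonicity, square, and then eliminate $s_W$ using \eqref{eq:radius_center}. If anything, you spell out more explicitly why the sign hypothesis $\ch_0(F)>\ch_0(E)\geq 0$ makes the monotonicity go the right way and makes the squaring legitimate, and you exhibit the factorization $(2p-q)^2-q^2=4p(p-q)$ that the paper leaves implicit.
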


\begin{proof}[Proof of Proposition \ref{prop:surface_largest_wall}]
We give the proof in the case where $H$ is effective, i.e., $a = 1$. The general case is longer, but not substantially harder. The full argument can be found in \cite{BHLRSW15:nef_cones}.

The equation $\nu_{\alpha, \beta}(1,0,-n) = \nu_{\alpha, \beta}(\OO(-H))$ is equivalent to
\begin{equation}
\label{eq:largestwall}
\alpha^2 + \left(\beta + \frac{1}{2} + \frac{n}{H^2}\right)^2 = \left( \frac{n}{H^2} -\frac{1}{2} \right)^2.
\end{equation}
This shows that every larger semicircle intersects the line $\beta = -1$. Moreover, the object $\OO(-H)$ is in the category along the wall if and only if $n > \tfrac{H^2}{2}$.

We will first show that there is no bigger semicircular wall. Assume we have an exact sequence
\[
0 \to F \to \II_Z \to G \to 0
\]
where $Z \subset X$ has dimension $0$ and length $n$. Moreover, assume the equation $\nu_{\alpha, -1}(F) = \nu_{\alpha, -1}(G)$ has a solution $\alpha > 0$. We have $\ch^{-1}(E) = (1, H, \tfrac{H^2}{2} - n)$. By definition of $\Coh^{-1}(X)$ we have $H \cdot \ch^{-1}_1(F), H \cdot \ch^{-1}_1(G) \geq 0$ and both those numbers add up to $H \cdot \ch^{-1}_1(E) = H^2$. Since $H$ is the generator of the Picard group, this implies either $H \cdot \ch^{-1}_1(F) = 0$ or $H \cdot \ch^{-1}_1(G) = 0$. In particular, either $F$ or $G$ have slope infinity and it is impossible for $F$, $E$ and $G$ to have the same slope for $\beta = -1$ and $\alpha > 0$.

Next, assume that $0 \to F \to \II_Z \to G \to 0$ induces the wall $W$. By the long exact sequence in cohomology $F$ is a torsion free sheaf. By Lemma \ref{lem:higherRankBound} the inequality $\ch_0(F) \geq 2$ leads to
\[
\rho^2 \leq \frac{2 H^2 n }{8(H^2)^2} = \frac{n}{4H^2} < \left( \frac{n}{H^2} -\frac{1}{2} \right)^2.
\]
Therefore, any such sequence giving the wall $\nu_{\alpha, \beta}(1,0,-n) = \nu_{\alpha, \beta}(\OO(-H))$ must satisfy $\ch_0(F) = 1$. Moreover, we must also have $H \cdot \ch^{-1}_1(F) = H \cdot \ch_1(F) + H^2 \geq 0$ and $H \cdot \ch^{-1}_1(G) = - H \cdot \ch_1(F) \geq 0$. A simple calculation shows that $\ch_1(F) = 0$ does not give the correct wall and therefore, $\ch_1(F) = -H$. Another straightforward computation implies that only $\ch_2(F) = \tfrac{H^2}{2}$ defines the right wall numerically. Since $F$ is a torsion free sheaf, this means $F = \OO(-H)$ implying the claim.
\end{proof}

\begin{exercise}
Any subscheme $Z \subset \P^n$ of dimension $0$ and length $4$ is contained in a quadric. Said differently there is a morphism $\OO(-2) \into \II_Z$, i.e., no ideal sheaf is stable below the wall $\nu_{\alpha, \beta}(\OO(-2)) = \nu_{\alpha, \beta}(\II_Z)$. Note that $\ch(\II_Z) = (1,0,-4)$. The goal of this exercise, is to compute all bigger walls for this Chern character.
\begin{enumerate}
\item Compute the equation of the wall $\nu_{\alpha, \beta}(\OO(-2)) = \nu_{\alpha, \beta}(\II_Z)$. Why do all bigger walls intersect the line $\beta = -2$?
\item Show that there are two walls bigger than $\nu_{\alpha, \beta}(\OO(-2)) = \nu_{\alpha, \beta}(\II_Z)$. \emph{Hint: Let $0 \to F \to \II_Z \to G \to 0$ define a wall for some $\alpha > 0$ and $\beta = -2$. Then $F$ and $G$ are semistable, i.e. they satisfy the Bogomolov inequality. Additionally show that $0 < \ch^{\beta}_1(F) < \ch^{\beta}_1(E)$.}
\item Determine the Jordan-H\"older filtration of any ideal sheaf $\II_Z$ that destabilizes at any of these three walls. What do these filtrations imply about the geometry of $Z$? \emph{Hint: Use the fact that a semistable sheaf on $\P^2$ with Chern character $n \cdot \ch(\OO(-m))$ has to be $\OO(-m)^{\oplus n}$.}
\end{enumerate}
\end{exercise}

\subsection{Kodaira vanishing}
As another application, we will give a proof of Kodaira vanishing for surfaces using tilt stability. The argument was first pointed out in \cite{AB13:k_trivial}.
While it is a well-known argument by Mumford that Kodaira vanishing in the surface case is a consequence of Bogomolov's inequality, this proof follows a slightly different approach.

\begin{thm}[Kodaira Vanishing for Surfaces]
Let $X$ be a smooth projective complex surface and $H$ an ample divisor on $X$. If $K_X$ is the canonical divisor class of $X$, then the vanishing
\[
H^i(\OO(H + K_X)) = 0
\]
holds for all $i > 0$.
\begin{proof}
By Serre duality $H^2(\OO(H + K_X)) = H^0(\OO(-H))$. Since anti-ample divisors are never effective we get $H^0(\OO(-H)) = 0$.

The same way Serre duality implies $H^1(\OO(H + K_X)) = H^1(\OO(-H)) = \Hom(\OO, \OO(-H)[1])$. By Lemma \ref{lem:parallel_line_bundle} both $\OO$ and $\OO(-H)[1]$ are tilt semistable for all $\omega = \alpha H$, $B = \beta H$, where $\alpha > 0$ and $\beta \in (-1,0)$. A straightforward computation shows
\[
\nu_{\omega, B}(\OO) > \nu_{\omega, B}(\OO(-H)[1]) \Leftrightarrow \alpha^2 + \left(\beta - \frac{1}{2}\right)^2 > \frac{1}{4}.
\]
Therefore, there is a region in which $\nu_{\omega, B}(\OO) > \nu_{\omega, B}(\OO(-H)[1])$ and both these objects are tilt stable, i.e., $\Hom(\OO, \OO(-H)[1]) = 0$.
\end{proof}
\end{thm}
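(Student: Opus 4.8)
The plan is to split the statement into the degree-one and degree-two cohomology groups, treating $H^2$ by an elementary argument and reserving tilt stability for $H^1$. First I would apply Serre duality on $X$ to rewrite $H^2(\OO(H+K_X)) \cong H^0(\OO(-H))^\vee$; since $-H$ is anti-ample and anti-ample classes are never effective, this group vanishes at once. Serre duality likewise gives $H^1(\OO(H+K_X)) \cong H^1(\OO(-H))^\vee$, and the crucial reformulation is $H^1(\OO(-H)) = \Ext^1(\OO,\OO(-H)) = \Hom(\OO, \OO(-H)[1])$, which turns the problem into a $\Hom$-vanishing between two objects of $\Db(X)$.

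Next I would realize both $\OO$ and $\OO(-H)[1]$ as stable objects inside one common tilted heart. Taking $\omega = \alpha H$ and $B = \beta H$ with $\beta \in (-1,0)$, a short computation of the twisted slope gives $\mu_{\omega,B}(\OO) = -\beta/\alpha > 0$ and $\mu_{\omega,B}(\OO(-H)) = -(1+\beta)/\alpha \leq 0$, so that $\OO \in \TT_{\omega,B}$ and $\OO(-H) \in \FF_{\omega,B}$; hence both $\OO$ and $\OO(-H)[1]$ lie in $\Coh^{\omega,B}(X)$. Since line bundles are $\mu_{\omega,B}$-stable with vanishing discriminant $\Delta = 0$, Lemma \ref{lem:parallel_line_bundle} then guarantees that both objects are actually $\sigma_{\omega,B}$-stable throughout this strip.

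Then I would compare the two tilt slopes. Recalling that the generalized slope attached to $Z_{\omega,B}$ is exactly $\nu_{\omega,B}$, a direct evaluation should produce the equivalence
\[
\nu_{\omega,B}(\OO) > \nu_{\omega,B}(\OO(-H)[1]) \iff \alpha^2 + \left(\beta - \tfrac{1}{2}\right)^2 > \tfrac{1}{4}.
\]
The convenient point is that for $\beta \in (-1,0)$ one already has $\left(\beta - \tfrac{1}{2}\right)^2 > \tfrac{1}{4}$, so the inequality holds automatically for every $\alpha > 0$; thus on the entire strip the objects $\OO$ and $\OO(-H)[1]$ are both stable with $\nu_{\omega,B}(\OO) > \nu_{\omega,B}(\OO(-H)[1])$. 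Finally I would invoke Schur's property for stability functions (Lemma \ref{lem:Schur}): since both objects are semistable and the source has strictly larger slope than the target, $\Hom(\OO, \OO(-H)[1]) = 0$, which is precisely $H^1(\OO(-H)) = 0$ and therefore $H^1(\OO(H+K_X)) = 0$.

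As for the main obstacle, there is no deep difficulty once the setup is in place: the whole content is that $\OO$ and $\OO(-H)[1]$ can be made simultaneously semistable in a single heart with their slopes ordered correctly. The only point needing care is checking that one fixed range of $(\alpha,\beta)$ makes both objects members of $\Coh^{\omega,B}(X)$ (the $\mu_{\omega,B}$ sign conditions) while keeping the slope inequality pointing in the right direction. Fortunately these constraints are compatible on the whole strip $\beta \in (-1,0)$, so no delicate optimization over the $(\alpha,\beta)$-plane is required.
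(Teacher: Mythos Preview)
Your proposal is correct and follows essentially the same route as the paper: Serre duality reduces everything to $\Hom(\OO,\OO(-H)[1])$, Lemma~\ref{lem:parallel_line_bundle} makes both objects $\sigma_{\omega,B}$-stable on the strip $\beta\in(-1,0)$, and the slope comparison $\alpha^2+(\beta-\tfrac12)^2>\tfrac14$ together with Lemma~\ref{lem:Schur} finishes. You even sharpen the paper's ``there is a region'' by noting that $(\beta-\tfrac12)^2>\tfrac14$ already holds on the whole open strip, so any $\alpha>0$ works.
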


\subsection{Stability of special objects}

As in Section \ref{subsec:ExamplesCurves}, we can look at Bridgeland stability for Lazarsfeld-Mukai bundles on a surface.
The setting is the following.
Let $X$ be a smooth projective surface.
Let $L$ be an ample integral divisor on $X$.
We assume the following: $L^2>8$ and $L.C>3$, for all integral curves $C\subset X$.
By Reider's Theorem \cite{Rei88:vector_bundle_linear_systems}, the divisor $L+K_X$ is very ample.
We define the \emph{Lazarsfeld-Mukai} vector bundle as the kernel of the evaluation map:
\[
M_{L+K_X} := \Ker \left(\OO_X\otimes H^0(X,\OO_X(L+K_X)) \onto \OO_X(L+K_X) \right).
\]
We consider the ample divisor $H:=2L+K_X$ and $B:=\frac{K_X}{2}$, and consider the $(\alpha,\beta)$-plane with respect to $H$ and $B$.

The following question would have interesting geometric applications:

\begin{question}\label{question:ProjectiveNormality}
For which $(\alpha,\beta)$ is $M_{L+K_X}$ tilt stable?
\end{question}

\begin{exercise}\label{exercise:StabilityLazarsfeldMukai}
Assume that $M_{L+K_X}$ is $\nu_{\alpha,\beta}$-semistable for $(\alpha,\beta)$ inside the wall defined by $\nu_{\alpha,\beta}(M_{L+K_X})=\nu_{\alpha,\beta}(\OO_X(-L)[1])$.
Show that $H^1(X,M_{L+K_X}\otimes\OO_X(K_X+L))=0$.
\end{exercise}

If the assumption in Exercise \ref{exercise:StabilityLazarsfeldMukai} is satisfied, then we would prove that the multiplication map $H^0(X,\OO_X(L+K_X)) \otimes H^0(X,\OO_X(L+K_X)) \to H^0(X,\OO_X(2L+2K_X))$ is surjective, the first step toward projective normality for the embedding of $X$ given by $L+K_X$.

\begin{ex}
\label{ex:LazarsfeldMukaiK3}
The assumption in Exercise \ref{exercise:StabilityLazarsfeldMukai} is satisfied when $X$ is a K3 surface.
Indeed, this is an explicit computation by using the fact that in that case, the vector bundle $M_L$ is a \emph{spherical} object (namely, $\End(M_L)=\C$ and $\Ext^1(M_L,M_L)=0$).
\end{ex}


\section{Nef Divisors on Moduli Spaces of Bridgeland Stable Objects}
\label{sec:nef}

So far we neglected another important aspect of moduli spaces of Bridgeland stable objects, namely the construction of divisors on them. Let $X$ be an arbitrary smooth projective variety. Given a stability condition $\sigma$ and a fixed set of invariants $v \in \Lambda$ we will demonstrate how to construct a nef divisor on moduli spaces of $\sigma$-semistable objects with class $v$. This was originally described in \cite{BM14:projectivity}.

Assume that $\sigma = (Z,\AA)$ is a stability condition on $\Db(X)$, $S$ is a proper algebraic space of finite type over $\C$ and $v \in \Lambda$ a fixed set of invariants. Moreover, let $\EE \in \Db(X \times S)$ be a flat family of $\sigma$-semistable objects of class $v$, i.e., $\EE$ is $S$-perfect (see Subsection \ref{subsec:moduli} for a definition) and for every $\C$-point $P \in S$, the derived restriction $\EE_{|X \times \{P\}}$ is $\sigma$-semistable of class $v$. The purpose of making the complex $S$-perfect is to make the derived restriction well defined. A divisor class $D_{\sigma, \EE}$ can defined on $S$ by its intersection number with any projective integral curve $C \subset S$:
\[
D_{\sigma, \EE} \cdot C = \Im \left( -\frac{Z((p_X)_* \EE_{|X \times C})}{Z(v)} \right).
\]
We skipped over a detail in the definition that is handled in Section 4 of \cite{BM14:projectivity}. It is necessary to show that the number only depends on the numerical class of the curve $C$.

The motivation for this definition is the following Theorem due to \cite{BM14:projectivity}.

\begin{thm}[Positivity Lemma]
\label{thm:positivity_lemma}
\begin{enumerate}
\item The divisor $D_{\sigma, \EE}$ is nef on $S$.
\item A projective integral curve $C \subset X$ satisfies $D_{\sigma, \EE} \cdot C = 0$ if and only if for two general elements $c,c' \in C$ the restrictions $\EE_{|X \times \{c\}}$ and $\EE_{|X \times \{c'\}}$ are $S$-equivalent, i.e., their Jordan-H\"older filtrations have the same stable factors up to order.
\end{enumerate}
\end{thm}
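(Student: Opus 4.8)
The plan is to reduce both statements to a single computation concerning the $\AA$-cohomology of the object $\Phi_\EE(\OO_C) := (p_X)_*\bigl(\EE_{|X\times C}\bigr)$, where $\Phi_\EE\colon \Db(S)\to\Db(X)$ denotes the Fourier--Mukai functor with kernel $\EE$. First I would use the $\widetilde{\GL}^+(2,\R)$-action of Remark \ref{rmk:GroupActions} to normalize $\sigma=(\PP,Z)$ so that $Z(v)=-1$ and every fibre $\EE_{|X\times\{c\}}$ is $\sigma$-semistable of phase exactly $1$, i.e. lies in $\PP(1)\subset\AA$. This affects neither nefness nor the set of S-equivalence classes, and with this normalization the defining formula simplifies to
\[
D_{\sigma,\EE}\cdot C = \Im\, Z\bigl(\Phi_\EE(\OO_C)\bigr).
\]

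The key input is the Abramovich--Polishchuk machinery (Theorem \ref{thm:constant_t_structure}, \cite{AP06:constant_t_structures}): the heart $\AA$ induces a relative heart $\AA_C\subset\Db(X\times C)$, and since all fibres of $\EE$ lie in $\AA$ one gets $\EE_{|X\times C}\in\AA_C$. Because $C$ is a curve, the pushforward $(p_X)_*$ has cohomological amplitude $[0,1]$ with respect to $(\AA_C,\AA)$, so $\HH^i_\AA(\Phi_\EE(\OO_C))=0$ for $i\neq 0,1$. I would then argue that the top cohomology $\HH^1_\AA(\Phi_\EE(\OO_C))$ is an iterated extension of phase-$1$ objects, hence lies in $\PP(1)$: intuitively it is governed by $H^1(C,\OO_C)$ and is therefore built from semistable phase-$1$ pieces, so after the shift it contributes zero to $\Im Z(\Phi_\EE(\OO_C))$. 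Since $\HH^0_\AA(\Phi_\EE(\OO_C))\in\AA$ has $\Im Z\geq 0$, additivity of $Z$ yields $\Im Z(\Phi_\EE(\OO_C))=\Im Z(\HH^0_\AA(\Phi_\EE(\OO_C)))\geq 0$, which is part (1).

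For part (2), the same computation shows $D_{\sigma,\EE}\cdot C=0$ if and only if $\Im Z(\HH^0_\AA(\Phi_\EE(\OO_C)))=0$, which, since $\Im Z\geq 0$ on $\AA$ with equality only for phase $1$, is equivalent to $\Phi_\EE(\OO_C)\in\PP(1)$, i.e. the whole pushforward is $\sigma$-semistable of phase $1$. I would then translate this phase-$1$ condition into a statement about the Jordan--H\"older factors of the generic fibres. If two general fibres $\EE_{|X\times\{c\}}$ and $\EE_{|X\times\{c'\}}$ were not S-equivalent, a stable factor occurring in one but not the other would, after a spreading-out argument over $C$, produce a sub- or quotient object of $\Phi_\EE(\OO_C)$ of phase strictly larger than $1$, contradicting $\Phi_\EE(\OO_C)\in\PP(1)$; conversely, if all general fibres are S-equivalent then the family is, up to S-equivalence, constant along $C$ and the pushforward remains in $\PP(1)$.

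The main obstacle I expect is the relative-heart step together with the precise identification of the top cohomology: establishing that $\EE_{|X\times C}$ lands in the relative heart and that $(p_X)_*$ has the claimed amplitude rests entirely on the Abramovich--Polishchuk constant-t-structure theorem, and verifying that $\HH^1_\AA(\Phi_\EE(\OO_C))$ contributes zero imaginary part requires care. The equality case of part (2), and in particular the forward implication that $D_{\sigma,\EE}\cdot C=0$ forces S-equivalence of the general fibres, is the most delicate point, since one must control how a destabilizing sub-object of a single fibre propagates to a genuine sub-object of the pushforward inside the relative heart.
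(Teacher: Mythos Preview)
Your approach to part (1) is essentially correct and runs parallel to the paper's, but the packaging differs in one place. Both the paper and you normalize via the $\widetilde{\GL}^+(2,\R)$-action to $Z(v)=-1$, both invoke the Abramovich--Polishchuk relative heart $\AA_C$ and the lemma that $\EE_{|X\times C}\in\AA_C$. The divergence is in how the $\HH^1_\AA$ contribution is controlled. The paper does not analyze the cohomology of $\Phi_\EE(\OO_C)$ at all: instead it twists by an ample line bundle on $C$ and uses part (3) of the Abramovich--Polishchuk theorem to land $(p_X)_*(\EE\otimes p_C^*\OO_C(n))$ directly in $\AA$ for $n\gg 0$; then it observes that the discrepancy $(p_X)_*(\EE\otimes p_C^*T)$, with $T$ a length-$n$ torsion sheaf on $C$, has class a multiple of $v$ and hence real $Z$-value, so $\Im Z$ is unchanged. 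This avoids ever having to say what $\HH^1_\AA(\Phi_\EE(\OO_C))$ looks like.

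Your claim that $\HH^1_\AA(\Phi_\EE(\OO_C))\in\PP(1)$ is in fact true, but your justification (``intuitively it is governed by $H^1(C,\OO_C)$'') is not an argument. The clean way to prove it is exactly the paper's triangle: from $0\to\OO_C\to\OO_C(n)\to T\to 0$ you get, after tensoring with $\EE$ and pushing forward, a triangle whose second and third terms lie in $\AA$ (the second by Abramovich--Polishchuk, the third because it is an iterated extension of fibres $\EE_c\in\PP(1)$); the long exact sequence in $\AA$-cohomology then exhibits $\HH^1_\AA(\Phi_\EE(\OO_C))$ as a quotient in $\AA$ of an object of $\PP(1)$, hence itself in $\PP(1)$. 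So your route works, but it secretly uses the paper's twisting trick anyway, and the paper's formulation is more economical since it never needs the amplitude statement or the cohomology decomposition.

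For part (2), the paper does not give a proof and simply refers to \cite{BM14:projectivity}; your sketch is in the right spirit but, as you acknowledge, the forward implication (that $D_{\sigma,\EE}\cdot C=0$ forces S-equivalence of general fibres) requires a genuine argument about propagating a destabilizing factor along $C$ inside the relative heart, and this is where the real work lies.
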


We will give a proof that the divisor is nef and refer to \cite{BM14:projectivity} for the whole statement. Further background material is necessary. We will mostly follow the proof in \cite{BM14:projectivity}. Without loss of generality we can assume that $Z(v) = -1$ by scaling and rotating the stability condition. Indeed, assume that $Z(v) = r_0 e^{\sqrt{-1} \phi_0 \pi}$ and let $\PP$ be the slicing of our stability condition. We can then define a new stability condition by setting $\PP'(\phi) := \PP(\phi + 1 - \phi_0)$ and $Z' = Z \cdot \tfrac{1}{r_0} e^{\sqrt{-1} (1-\phi_0) \pi}$. Clearly, $Z'(v) = -1$. The definition of $D_{\sigma, \EE}$ simplifies to
\[
D_{\sigma, \EE} \cdot C = \Im \left( Z((p_X)_* \EE_{|X \times C}) \right)
\]
for any projective integral curve $C \subset S$. From this formula the motivation for the definition becomes more clear. If $(p_X)_* \EE_{|X \times C} \in \AA$ holds, the fact that the divisor is nef follows directly from the definition of a stability function. As always, things are more complicated.
We can use Bridgeland's Deformation Theorem and assume further, without changing semistable objects (and the previous assumption), that the heart $\AA$ is noetherian.

One of the key properties in the proof is an extension of a result by Abramovich and Polishchuk from \cite{AP06:constant_t_structures} to the following statement by Polishchuk. We will not give a proof.

\begin{thm}{{\cite[Theorem 3.3.6]{Pol07:constant_t_structures}}}
\label{thm:constant_t_structure}
Let $\AA$ be the heart of a noetherian bounded t-structure on $\Db(X)$. The category $\AA^{qc} \subset D_{qc}(X)$ is the closure of $\AA$ under infinite coproducts in the (unbounded) derived category of quasi-coherent sheaves on $X$. There is a noetherian heart $\AA_S$ on $\Db(X \times S)$ for any finite type scheme $S$ over the complex numbers satisfying the following three properties.
\begin{enumerate}
\item The heart $\AA_S$ is characterized by the property
\[
\EE \in \AA_S \Leftrightarrow (p_X)_* \EE_{|X \times U} \in \AA^{qc} \text{ for every open affine } U \subset S.
\]
\item If $S = \bigcup_i U_i$ is an open covering of $S$, then
\[
\EE \in \AA_S \Leftrightarrow \EE_{|X \times U_i} \in \AA_{U_i} \text{ for all } i.
\]
\item If $S$ is projective and $\OO_S(1)$ is ample, then
\[
\EE \in \AA_S \Leftrightarrow (p_X)_* (\EE \otimes p_S^* \OO_S(n)) \in \AA \text{ for all } n \gg 0.
\]
\end{enumerate}
\end{thm}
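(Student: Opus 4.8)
The plan is to follow the strategy of Abramovich--Polishchuk \cite{AP06:constant_t_structures, Pol07:constant_t_structures}: build the relative heart first over affine bases via pushforward to $X$, glue these local constructions into a global t-structure, and only at the end verify noetherianity and the projective criterion. The essential point throughout is that the projection $p_X \colon X \times S \to X$ is an affine morphism whenever $S$ is affine, so $(p_X)_*$ is exact on quasi-coherent sheaves and conservative, and hence detects t-exactness.

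First I would extend the bounded heart $\AA$ on $\Db(X)$ to a heart $\AA^{qc}$ on the unbounded category $\Dqc(X)$. Because $\AA$ is noetherian, one can define the aisle $\Dqc^{\leq 0}$ as the smallest full subcategory containing $\AA$ and closed under extensions, positive shifts, and arbitrary coproducts, and take the coaisle to be its right orthogonal. The noetherian hypothesis is exactly what guarantees that the heart $\AA^{qc}$ obtained this way is a Grothendieck abelian category in which $\AA$ sits as the subcategory of noetherian objects, and that the resulting t-structure restricts back to $\AA$ on $\Db(X)$. This is where I expect to lean most heavily on the hypotheses and on the general machinery for extending t-structures to unbounded derived categories.

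Next, for $S = \Spec R$ affine, I would define $\AA_S$ by declaring $\EE \in \AA_S$ precisely when $(p_X)_* \EE \in \AA^{qc}$, i.e. by property (1). Since $p_X$ is affine, pushing forward is t-exact for the tautological t-structure and conservative, so this prescription cuts out the heart of a bounded t-structure on $\Db(X \times S)$; concretely $\AA_S$ is an $R$-linear heart, and the $R$-module structure on $(p_X)_* \EE$ records the dependence on $S$. Property (2), locality on $S$, is then the statement that these affine-local hearts are compatible under restriction to smaller affine opens, which holds because pushforward along affine morphisms commutes with flat (here, open) base change; this compatibility is exactly what lets one glue, by a sheaf-of-t-structures/descent argument, to a well-defined global heart $\AA_S$ on $\Db(X \times S)$ for arbitrary finite-type $S$. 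I would then take properties (1) and (2) as the \emph{characterization}, with the real work being the production of an object that satisfies them.

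The main obstacle will be verifying that the glued $\AA_S$ is genuinely the heart of a \emph{bounded} t-structure and is \emph{noetherian}: boundedness requires controlling the t-amplitude of $(p_X)_* \EE$ uniformly, while noetherianity requires combining the local noetherianity of $\AA^{qc}$ with noetherianity of $R$ through a Noetherian-induction and generic-flatness argument on $S$. Finally, for the projective criterion, property (3), with $S$ projective and $\OO_S(1)$ ample, I would argue by a Serre-vanishing mechanism: for $\EE \in \AA_S$, twisting by $p_S^* \OO_S(n)$ and pushing to $X$ kills the higher relative cohomology for $n \gg 0$, so that $(p_X)_*(\EE \otimes p_S^* \OO_S(n))$ lands in the bounded heart $\AA$; conversely, because the twists $\OO_S(n)$ generate, membership of all large twists in $\AA$ forces $\EE$ into the aisle and coaisle cut out by property (1), hence into $\AA_S$.
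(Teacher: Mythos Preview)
The paper does not prove this theorem: it is quoted verbatim from \cite{Pol07:constant_t_structures} and the authors state explicitly ``We will not give a proof.'' So there is no in-paper argument to compare against.

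That said, your outline is a faithful sketch of the actual Abramovich--Polishchuk strategy carried out in \cite{AP06:constant_t_structures, Pol07:constant_t_structures}: extend the noetherian heart to $\Dqc(X)$ using the noetherian hypothesis to get a Grothendieck heart $\AA^{qc}$, define $\AA_S$ over affine $S$ via the conservative, t-exact pushforward along the affine morphism $p_X$, glue using flat base change, and deduce the projective criterion from Serre vanishing. The places you flag as delicate (boundedness of the glued t-structure, noetherianity of $\AA_S$ via generic flatness/noetherian induction) are indeed where the real work in the references lies; your proposal correctly identifies but does not resolve them, so as written it is an outline rather than a proof. For the purposes of these notes, citing \cite{Pol07:constant_t_structures} is exactly what the authors do.
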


In order to apply this theorem to our problem we need the family $\EE$ to be in $\AA_S$. The proof of the following statement is somewhat technical and we refer to \cite[Lemma 3.5]{BM14:projectivity}.

\begin{lem}
Let $\EE \in \Db(X \times S)$ be a flat family of $\sigma$-semistable objects of class $v$. Then $\EE \in \AA_S$.
\end{lem}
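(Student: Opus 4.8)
The plan is to show that all cohomology objects of $\EE$ with respect to the t-structure with heart $\AA_S$ are concentrated in degree $0$, and then to invoke Theorem \ref{thm:constant_t_structure}. Throughout, write $\HH^i_{\AA_S}$ for the cohomology functors of this t-structure and $i_P\colon X \to X\times S$ for the inclusion of the fibre over a closed point $P\in S$, so that $Li_P^*\EE \cong \EE_{|X\times\{P\}}$ because $\EE$ is $S$-perfect. By the locality property \textup{(2)} of Theorem \ref{thm:constant_t_structure}, membership in $\AA_S$ may be checked on the members of an open cover of $S$, so I may assume $S$ is quasi-projective, and after a Noetherian induction on the base that $S$ is integral; where property \textup{(3)} is convenient I would reduce to a projective base by testing along curves.

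First I would prove that $\EE$ has no positive cohomology, i.e. $\EE\in(\AA_S)^{\le 0}$. The key input is the base-change package underlying Theorem \ref{thm:constant_t_structure} from \cite{AP06:constant_t_structures, Pol07:constant_t_structures}: the derived restriction $Li_P^*$ is right t-exact for the pair $(\AA_S,\AA)$, and membership in $(\AA_S)^{\le 0}$ is detected fibrewise, namely $\EE\in(\AA_S)^{\le 0}$ if and only if $Li_P^*\EE\in\AA^{\le 0}$ for every closed point $P$. Here flatness of the family is essential, both to identify $Li_P^*\EE$ with the honest fibre and to make the fibrewise criterion applicable. Since each fibre $\EE_{|X\times\{P\}}$ is $\sigma$-semistable of phase $\phi$, it lies in $\PP(\phi)\subset\AA$, hence in $\AA^{\le 0}$; the criterion then gives $\EE\in(\AA_S)^{\le 0}$, that is, $\HH^i_{\AA_S}(\EE)=0$ for all $i>0$.

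It remains to rule out negative cohomology, and this is the main obstacle. Let $-k$, with $k>0$, be the lowest degree in which $\HH^i_{\AA_S}(\EE)$ is nonzero, if any exists. Over a dense open $U\subseteq S$ on which the relevant objects are flat, the derived restriction $Li_P^*$ agrees with the naive restriction $i_P^*$ and is t-exact for $(\AA_S,\AA)$, so a nonzero $\HH^{-k}_{\AA_S}(\EE)$ would restrict over a point $P\in U$ to genuine cohomology of $\EE_{|X\times\{P\}}$ in $\AA$-degree $-k<0$, which is impossible since $\EE_{|X\times\{P\}}\in\PP(\phi)$ is concentrated in degree $0$. Hence any negative cohomology object must be supported on the closed complement $S\setminus U$, and eliminating it there is the delicate point: one combines a Noetherian induction on the base with the constancy of the numerical class $v$ across fibres and the support property (Definition \ref{def:Bridgeland1}), which bounds $|Z(v(\cdot))|$ from below and prevents a nontrivial semistable contribution of class $0$ from appearing. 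This fibrewise bookkeeping, controlling the Tor-terms created by $Li_P^*$ over the non-flat locus, is exactly the computation of \cite[Lemma 3.5]{BM14:projectivity}, resting on \cite{AP06:constant_t_structures, Pol07:constant_t_structures}, and I would import it. Once $\HH^i_{\AA_S}(\EE)=0$ for all $i\ne0$ has been established, characterization \textup{(1)} of Theorem \ref{thm:constant_t_structure} (or \textup{(3)} after the reduction to a projective base) yields $\EE\in\AA_S$, completing the proof.
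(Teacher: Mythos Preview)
The paper does not actually prove this lemma: immediately before stating it, the authors write that the argument is ``somewhat technical'' and simply refer to \cite[Lemma 3.5]{BM14:projectivity}. Your proposal is therefore not competing with a proof in the paper but rather attempting to expand the very reference the paper cites; and indeed, at the crucial step (ruling out negative $\AA_S$-cohomology over the non-flat locus) you explicitly import \cite[Lemma 3.5]{BM14:projectivity} yourself. So in substance you and the paper do the same thing: defer to Bayer--Macr\`i.

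Your surrounding sketch is a reasonable outline of how that argument goes, with one caveat. The fibrewise detection of $(\AA_S)^{\le 0}$ via right t-exactness of $Li_P^*$ is correct and comes from \cite{AP06:constant_t_structures, Pol07:constant_t_structures}; note however that this does not use flatness of $\EE$ --- right t-exactness of $Li_P^*$ is a general feature of the constant t-structure, and the identification $Li_P^*\EE \simeq \EE_{|X\times\{P\}}$ holds because $\EE$ is $S$-perfect. Flatness enters only in the second half, where you need to control what happens over the generic point and then propagate by Noetherian induction. Since you acknowledge that this second half is exactly the content of \cite[Lemma 3.5]{BM14:projectivity} and do not reproduce it, your proposal is in effect the same citation the paper gives, with some helpful context added.
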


\begin{proof}[Proof of Theorem \ref{thm:positivity_lemma} (1)]
Let $\OO_C(1)$ be an ample line bundle on $C$. If we can show 
\[
D_{\sigma, \EE} \cdot C = \Im \left( Z((p_X)_* (\EE \otimes p_S^* \OO_C(n))) \right)
\]
for $n \gg 0$, then we are done. Indeed, Theorem \ref{thm:constant_t_structure} part (3) implies $(p_X)_* (\EE \otimes p_S^* \OO_C(n)) \in \AA$ for $n \gg 0$ and the proof is concluded by the positivity properties of a stability function. 

Choose $n \gg 0$ large enough such that $H^0(\OO_C(n)) \neq 0$. Then there is a torsion sheaf $T$ together with a short exact sequence
\[
0 \to \OO_C \to \OO_C(n) \to T \to 0.
\]
Since $T$ has zero dimensional support and $\EE$ is a family of objects with class $v$, we can show $Z((p_X)_* (\EE \otimes p_S^* T)) \in \R$ by induction on the length of the support of $T$. But that shows
\[
\Im \left( Z((p_X)_* (\EE \otimes p_S^* \OO_C(n))) \right) = \Im \left( Z((p_X)_* (\EE \otimes p_S^* \OO_C))) \right). \qedhere
\]
\end{proof}

\subsection{The Donaldson morphism}

The definition of the divisor is made such that the proof of the Positivity Lemma is as clear as possible. However, it is hard to explicitly compute it in examples directly via the definition. Computation is often times done via the Donaldson morphism. This is originally explained in Section 4 of \cite{BM14:projectivity}.
Recall that, for a proper scheme $S$, the Euler characteristic gives a well-defined pairing
\[
\chi\colon K_0(\Db(S)) \times K_0(\mathrm{D}_{\mathrm{perf}}(S)) \to \Z
\]
between the Grothendieck groups of the bounded derived categories of coherent sheaves $\Db(S)$ and of perfect complexes $\mathrm{D}_{\mathrm{perf}}(S)$.
Taking the quotient with respect to the kernel of $\chi$ on each
side we obtain numerical Grothendieck groups $K_{\num}(S)$ and $K_{\num}^{\perf}(S)$, respectively, with an induced perfect pairing
\[
\chi: K_{\num}(S) \otimes K_{\num}^{\perf}(S) \to \Z.
\]

\begin{defn}
We define the additive \emph{Donaldson morphism} $\lambda_{\EE}: v^{\#} \to N^1(S)$ by
\[
w \mapsto \det((p_S)_*(p_X^* w \cdot [\EE])).
\]
Here,
\[
v^{\#} = \{ w \in K_{\num}(S)_{\R} : \chi(v \cdot w) = 0 \}.
\]
\end{defn}

Let $w_{\sigma} \in v^{\#}$ be the unique vector such that
\[
\chi(w_{\sigma} \cdot w') = \Im \left(-\frac{Z(w')}{Z(v)} \right)
\]
for all $w' \in K_{\num}(X)_{\R}$.

\begin{prop}[{\cite[Theorem 4.4]{BM14:projectivity}}]
\label{prop:donaldson_computation}
We have $\lambda_{\EE}(w_{\sigma}) = D_{\sigma, \EE}$.
\begin{proof}
Let $\LL_{\sigma} = (p_S)_*(p_X^* w_{\sigma} \cdot [\EE])$. For any $s \in S$ we can compute the rank
\[
r(\LL_{\sigma}) = \chi(S, [\OO_s] \cdot \LL_{\sigma}) = \chi(S \times X, [\OO_{\{s\}\times X}] \cdot [\EE] \cdot p_X^* w_{\sigma}) = \chi(w_{\sigma} \cdot v) = 0.
\]
Therefore, $\LL_{\sigma}$ has rank zero. This implies
\[
\lambda_{\EE}(w_{\sigma}) \cdot C = \chi (\LL_{\sigma | C})
\]
for any projective integral curve $C \subset S$. Let $i_C: C \into S$ be the embedding of $C$ into $S$. Cohomology and base change implies
\begin{align*}
\LL_{\sigma | C} &= i_C^* (p_S)_*(p_X^* w_{\sigma} \cdot [\EE]) \\
&= (p_C)_* (i_C \times \id_X)^* (p_X^* w_{\sigma} \cdot [\EE]) \\
&= (p_C)_* (p_X^* w_{\sigma} \cdot [\EE_{|C \times X}]).
\end{align*}
The proof can be finished by using the projection formula as follows
\begin{align*}
\chi (C, \LL_{\sigma | C}) & = \chi (C, (p_C)_* (p_X^* w_{\sigma} \cdot [\EE_{|C \times X}])) \\
&= \chi(X, w_{\sigma} \cdot (p_X)_* [\EE_{|C \times X}]) \\
&=  \Im \left( -\frac{Z((p_X)_* \EE_{|X \times C})}{Z(v)} \right). \qedhere
\end{align*}
\end{proof}
\end{prop}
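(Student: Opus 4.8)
The plan is to verify the equality of divisor classes by intersecting both sides with an arbitrary projective integral curve $C \subset S$, since a class in $N^1(S)$ is determined by these intersection numbers. Set $\LL_\sigma := (p_S)_*(p_X^* w_\sigma \cdot [\EE])$, so that $\lambda_{\EE}(w_\sigma) = \det(\LL_\sigma)$ by definition. The first and most structurally important step is to show that $\LL_\sigma$ has rank zero in $K_{\num}(S)$. To see this I would pair it against the class of a point: for $s \in S$, base change in $K$-theory gives $r(\LL_\sigma) = \chi(S, [\OO_s]\cdot\LL_\sigma) = \chi(S\times X, [\OO_{\{s\}\times X}]\cdot[\EE]\cdot p_X^* w_\sigma)$, and the right-hand side collapses to $\chi(w_\sigma \cdot v)$, which vanishes precisely because $w_\sigma \in v^{\#}$.

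Knowing that $\LL_\sigma$ has rank zero, the second step converts the determinant into an Euler characteristic: for a rank-zero $K$-theory class the degree of the determinant line bundle along $C$ equals $\chi(C, \LL_{\sigma|C})$, so that $\lambda_{\EE}(w_\sigma)\cdot C = \chi(C, \LL_{\sigma|C})$. I would then compute $\LL_{\sigma|C} = i_C^*\LL_\sigma$ by cohomology and base change applied to the Cartesian square relating $i_C \colon C \hookrightarrow S$ to $i_C \times \id_X \colon C \times X \hookrightarrow S \times X$, obtaining $\LL_{\sigma|C} = (p_C)_*(p_X^* w_\sigma \cdot [\EE_{|C\times X}])$.

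The final step is purely formal: the projection formula pulls $w_\sigma$ out of the pushforward, giving
\[
\chi(C, \LL_{\sigma|C}) = \chi(X, w_\sigma \cdot (p_X)_*[\EE_{|C\times X}]),
\]
and the defining property of $w_\sigma$ rewrites this as $\Im\left(-Z((p_X)_*\EE_{|X\times C})/Z(v)\right) = D_{\sigma,\EE}\cdot C$, completing the argument. I expect the main obstacle to be the second and third steps taken together: justifying the identity $\det(\LL_\sigma)\cdot C = \chi(C,\LL_{\sigma|C})$ in the rank-zero case (a Grothendieck--Riemann--Roch type statement in which the rank-zero hypothesis is exactly what kills the otherwise-present correction terms), and checking that cohomology and base change applies here, which relies on $\EE$ being $S$-perfect so that the derived restrictions and pushforwards are compatible in the numerical Grothendieck groups. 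Once these compatibilities are secured, the remaining computation is bookkeeping with the pairing $\chi$.
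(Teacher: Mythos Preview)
Your proposal is correct and follows essentially the same argument as the paper: compute the rank of $\LL_\sigma$ via a point to see it vanishes (using $w_\sigma \in v^\#$), deduce $\lambda_\EE(w_\sigma)\cdot C = \chi(C,\LL_{\sigma|C})$, then apply base change and the projection formula to identify this with $D_{\sigma,\EE}\cdot C$. The steps, their order, and the key justifications match the paper's proof exactly.
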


\subsection{Applications to Hilbert schemes of points}

Recall that for any positive integer $n \in \N$ the Hilbert scheme of $n$-points $X^{[n]}$ parameterizes subschemes $Z \subset X$ of dimension zero and length $n$. This scheme is closely connected to the symmetric product $X^{(n)}$ defined as the quotient $X^n/S_n$ where $S_n$ acts on $X^n$ via permutation of the factors. By work of Fogarty in \cite{Fog68:hilbert_schemeI} the natural map $X^{[n]} \to X^{(n)}$ is a birational morphism that resolves the singularities of $X^{(n)}$. 

We will recall the description of $\Pic(X^{[n]})$ in case the surface $X$ has irregularity zero, i.e., $H^1(\OO_X) = 0$. It is a further result in \cite{Fog73:hilbert_schemeII}. If $D$ is any divisor on $X$, then there is an $S_n$ invariant divisor $D^{\boxtimes n}$ on $X^n$. This induces a divisor $D^{(n)}$ on $X^{(n)}$, which we pull back to a divisor $D^{[n]}$ on $X^{[n]}$. If $D$ is a prime divisor, then $D^{[n]}$ parameterizes those $Z \subset X$ that intersect $D$ non trivially. Then
\[
\Pic(X^{[n]}) \cong \Pic(X) \oplus \Z \cdot \frac{E}{2},
\]
where $E$ parameterizes the locus of non reduces subschemes $Z$. Moreover, the restriction of this isomorphism to $\Pic(X)$ is the embedding given by $D \mapsto D^{[n]}$. A direct consequence of this result is the description of the N\'eron-Severi group as
\[
\NS(X^{[n]}) \cong \NS(X) \oplus \Z \cdot  \frac{E}{2}.
\]
The divisor $\frac{E}{2}$ is integral because it is given by $\det((p_{X^{[n]}})_* \UU_n)$, where $\UU_n \in \Coh(X \times X^{[n]})$ is the \emph{universal ideal sheaf} of $X^{[n]}$.

\begin{thm}
\label{thm:divisor_hilbert_scheme}
Let $X$ be a smooth complex projective surface with $\Pic(X) = \Z \cdot H$, where $H$ is ample. Moreover, let $a > 0$ be the smallest integer such that $aH$ is effective. If $n \geq a^2 H^2$, then the divisor
\[
D = \frac{1}{2} K_X^{[n]} + \left(\frac{a}{2} + \frac{n}{aH^2}\right) H^{[n]} - \frac{1}{2} E
\]
is nef. If $g$ is the arithmetic genus of a curve $C \in |aH|$ and $n \geq g+1$, then $D$ is extremal. In particular, the nef cone of $X^{[n]}$ is spanned by this divisor and $H^{[n]}$.
\end{thm}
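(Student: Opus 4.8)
The plan is to realize $X^{[n]}$ as a moduli space of Bridgeland semistable objects on the largest wall from Proposition \ref{prop:surface_largest_wall} and then extract $D$ from the Positivity Lemma. First I would observe that ideal sheaves $\II_Z$ of zero-dimensional length-$n$ subschemes are exactly the (twisted) Gieseker-stable sheaves of Chern character $(1,0,-n)$, so by Exercise \ref{exercise:BridgelandvsGieseker} they coincide with the $\sigma_{\alpha,\beta}$-stable objects of this class for $\alpha\gg0$. By Proposition \ref{prop:surface_largest_wall} the largest wall to the left of the vertical wall is $W\colon \nu_{\alpha,\beta}(\OO_X(-aH))=\nu_{\alpha,\beta}(1,0,-n)$, and the chamber above $W$ contains all $\alpha\gg0$; hence throughout that chamber $X^{[n]}$ is the moduli space of $\sigma_{\alpha,\beta}$-stable objects of class $(1,0,-n)$, with universal ideal sheaf $\UU_n$ a flat family of such objects. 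Choosing $\sigma$ on the wall $W$ itself, every $\II_Z$ remains $\sigma$-semistable, so $\UU_n$ is a family of $\sigma$-semistable objects over the proper space $X^{[n]}$, and Theorem \ref{thm:positivity_lemma}(1) yields a nef class $D_{\sigma,\UU_n}$.

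Next I would identify $D_{\sigma,\UU_n}$ with $D$. By Proposition \ref{prop:donaldson_computation} one has $D_{\sigma,\UU_n}=\lambda_{\UU_n}(w_\sigma)$, where $w_\sigma$ is characterized by $\chi(w_\sigma\cdot w')=\Im(-Z_\sigma(w')/Z_\sigma(v))$. The concrete step is to expand $\lambda_{\UU_n}(w_\sigma)=\det\big((p_{X^{[n]}})_*(p_X^*w_\sigma\cdot[\UU_n])\big)$ by Grothendieck--Riemann--Roch, feeding in the Chern character of the universal ideal sheaf and the identification $\NS(X^{[n]})\cong \NS(X)\oplus\Z\cdot\tfrac{E}{2}$ recalled before the statement. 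The coefficients of $H^{[n]}$, $K_X^{[n]}$ and $E$ that emerge should match those of $D$ up to a positive scalar (which is all that matters for nefness); in particular the factor $\tfrac{a}{2}+\tfrac{n}{aH^2}$ multiplying $H^{[n]}$ is exactly the quantity governing the center of the semicircular wall $W$, cf. equation \eqref{eq:largestwall}. This shows $D$ is nef for $n>a^2H^2$, and the boundary case $n=a^2H^2$ follows from the same computation together with continuity of $\sigma\mapsto D_{\sigma,\UU_n}$. I expect this explicit Donaldson-morphism computation to be the main obstacle, since it requires careful bookkeeping of the Todd contributions and matching them against the three generators of $\NS(X^{[n]})$.

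Finally I would establish extremality. Fix a smooth $C\in|aH|$ of arithmetic genus $g$. Since $n\geq g+1$, Riemann--Roch on $C$ produces a line bundle $\xi$ of degree $n$ with $h^0(C,\xi)\geq 2$, so $|\xi|$ contains a pencil giving a rational curve $\gamma\cong\P^1\subset \Sym^n C=C^{[n]}\subset X^{[n]}$. For every $Z_t\in\gamma$ we have $\OO_C(Z_t)\cong\xi$ fixed, so in the destabilizing sequence $0\to\OO_X(-aH)\to\II_{Z_t}\to Q_t\to 0$ along $W$ the quotient $Q_t$ (supported on $C$ and determined by $\OO_C(-Z_t)\cong\xi^{-1}$) is independent of $t$; thus all $\II_{Z_t}$ are strictly $\sigma$-semistable with the same Jordan--H\"older factors, hence pairwise S-equivalent. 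Theorem \ref{thm:positivity_lemma}(2) then forces $D\cdot\gamma=0$, so $D$ lies on the boundary of the nef cone. To conclude, I would use that $\NS(X^{[n]})_\R$ is two-dimensional, spanned by $H^{[n]}$ and $E$ (the hypothesis $\Pic(X)=\Z\cdot H$ gives irregularity zero, so the recalled description of $\NS(X^{[n]})$ applies): the class $H^{[n]}$ is nef, being pulled back from the ample generator of $X^{(n)}$ along the Hilbert--Chow morphism, and it is extremal since it has degree zero on the contracted fibers. As $D$ and $H^{[n]}$ are two distinct nef classes each lying on the boundary of the two-dimensional nef cone, they span its two extremal rays, which is the final assertion.
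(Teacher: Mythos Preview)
Your approach to nefness is essentially the paper's: realize $X^{[n]}$ as the moduli space in the chamber above the largest wall, pass to the wall, and invoke the Positivity Lemma, identifying the resulting class via the Donaldson morphism. The paper packages the Donaldson--GRR computation you anticipate into Proposition \ref{prop:donaldson_morphism_hilb} and Corollary \ref{cor:explicit_divisor}, which show directly that for a semicircular wall with center $s_W$ one has $D_{\sigma,\UU_n}\in\R_{>0}\big(\tfrac{1}{2}K_X^{[n]}-s_W H^{[n]}-\tfrac{1}{2}E\big)$; plugging in $s_W=-\big(\tfrac{a}{2}+\tfrac{n}{aH^2}\big)$ gives $D$.

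Your extremality argument is correct but genuinely different from the paper's. You produce the contracted curve geometrically: a pencil in $|\xi|$ on $C$ (available since $n\ge g+1$ forces $h^0(C,\xi)\ge 2$) gives a $\P^1\subset X^{[n]}$ along which the Jordan--H\"older factors $\OO_X(-aH)$ and $\II_{Z/C}\cong\xi^{-1}$ are constant. The paper instead fixes one $Z\subset C$ and computes $\ext^1(\II_{Z/C},\OO_X(-aH))\ge n+1-g\ge 2$ via Riemann--Roch, so the projectivized extension space already contains a $\P^1$ of $S$-equivalent objects. Your route is more vivid and ties the extremal curve to classical linear series on $C$; the paper's route is shorter and avoids having to argue that the pencil really sits in $X^{[n]}$ and that the quotient is independent of $t$. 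One small correction: you should not ask for $C$ smooth, only integral (which is automatic by minimality of $a$), and then use Riemann--Roch with the arithmetic genus.
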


For the proof of this statement, we need to describe the image of the Donaldson morphism more precisely. In this case, the vector $v$ is given by $(1,0,-n)$ and $\EE = \UU_n \in \Coh(X \times X^{[n]})$ is the universal ideal sheaf for $X^{[n]}$.

\begin{prop}
\label{prop:donaldson_morphism_hilb}
Choose $m$ such that $(1, 0, m) \in v^{\#}$ and for any divisor $D$ on $X$ choose $m_D$ such that $(0, D, m_D) \in v^{\#}$. Then
\begin{align*}
\lambda_{\UU_n}(1, 0, m) &= \frac{E}{2}, \\
\lambda_{\UU_n}(0, D, m_D) &= -D^{[n]}.
\end{align*}
\begin{proof}[Sketch of the proof]
Let $x \in X$ be an arbitrary point and $\C(x)$ the corresponding skyscraper sheaf. The Grothendieck-Riemann-Roch Theorem implies
\[
\ch((p_{X^{[n]}})_*(p_X^*\C(x) \otimes \UU_n)) = (p_{X^{[n]}})_* (\ch(p_X^*\C(x) \otimes \UU_n) \cdot \td(T_{p_{X^{[n]}}})).
\]
As a consequence
\[
\lambda_{\UU_n}(0,0,1) = p_X^*\left(- [x] \cdot \frac{K_X}{2} \right) = 0
\]
holds. Therefore, the values of $m$ and $m_D$ are irrelevant for the remaining computation. Similarly, we can show that 
\[
-\lambda_{\UU_n}\left(0,D,-\frac{D^2}{2}\right) = (p_{X^{[n]}})_* (p_X^*(-D) \cdot \ch_2(\UU_n)).
\]
Intuitively, this divisor consists of those schemes $Z$ parameterized in $X^{[n]}$ that intersect the divisor $D$, i.e., it is $D^{[n]}$ (see \cite{CG90:d_very_ample}).

Finally, we have to compute $\lambda_{\UU_n}(1,0,0)$. By definition it is given as $\det((p_{X^{[n]}})_* \UU_n) = \tfrac{E}{2}$.
\end{proof}
\end{prop}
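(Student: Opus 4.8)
The plan is to reduce everything to a Grothendieck--Riemann--Roch (GRR) computation for the smooth projection $\pi := p_{X^{[n]}}\colon X\times X^{[n]}\to X^{[n]}$, whose relative tangent bundle is $p_X^*T_X$. Since $\lambda_{\UU_n}(w)=\det\bigl(\pi_*(p_X^*w\cdot[\UU_n])\bigr)$ is the first Chern class of the pushforward, GRR gives
\[
\lambda_{\UU_n}(w)=\Bigl[\pi_*\bigl(p_X^*(\ch(w)\cdot\td(X))\cdot\ch(\UU_n)\bigr)\Bigr]_{1},
\]
the degree-one (i.e. $N^1(X^{[n]})$) part of the fiber integral. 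The first input I would record is the Chern character of $\UU_n$: from the universal exact sequence $0\to\UU_n\to\OO_{X\times X^{[n]}}\to\OO_{\ZZ}\to 0$, where $\ZZ\subset X\times X^{[n]}$ is the universal subscheme of codimension two, one gets $\ch_0(\UU_n)=1$, $\ch_1(\UU_n)=0$, and $\ch_2(\UU_n)=-[\ZZ]$. Since $\pi$ has relative dimension two, extracting $N^1(X^{[n]})$ amounts to taking the codimension-three part of the integrand before pushing forward, so only these low-order terms of $\ch(\UU_n)$, together with $\td_{\le 1}(X)=1-\tfrac12 K_X$, will matter.

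Then I would carry out the three evaluations separately, extending $\lambda_{\UU_n}$ additively to all of $K_{\num}(X)_\R$ by the same determinant formula. First, for $w=(0,0,1)=[\C(x)]$ one has $\ch(w)\td(X)=[x]$, so the codimension-three part of the integrand is $p_X^*[x]\cdot\ch_1(\UU_n)+p_X^*\bigl([x]\cdot\td_1(X)\bigr)\cdot\ch_0(\UU_n)$; both summands vanish, the first because $\ch_1(\UU_n)=0$ and the second because $[x]\cdot K_X=0$ on the surface. Hence $\lambda_{\UU_n}(0,0,1)=0$, which is exactly what makes the choices of $m$ and $m_D$ irrelevant, since by additivity $\lambda_{\UU_n}(1,0,m)=\lambda_{\UU_n}(1,0,0)$ and $\lambda_{\UU_n}(0,D,m_D)=\lambda_{\UU_n}(0,D,-\tfrac{D^2}{2})$. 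Next, $(1,0,0)=[\OO_X]$ pulls back to $[\OO_{X\times X^{[n]}}]$, so $p_X^*(1,0,0)\cdot[\UU_n]=[\UU_n]$ and $\lambda_{\UU_n}(1,0,0)=\det(\pi_*\UU_n)=\tfrac{E}{2}$ is immediate from Fogarty's definition of the class $E/2$. Finally, for $w=(0,D,-\tfrac{D^2}{2})$ the codimension-three part of the integrand is $p_X^*D\cdot\ch_2(\UU_n)=-p_X^*D\cdot[\ZZ]$ (the $m_D$- and $\td$-contributions pair against $\ch_1(\UU_n)=0$), whence $\lambda_{\UU_n}(0,D,m_D)=-\pi_*(p_X^*D\cdot[\ZZ])$.

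The main obstacle is the last geometric identification: one must recognize the tautological fiber integral $\pi_*(p_X^*D\cdot[\ZZ])$ as the boundary divisor $D^{[n]}$ (and, dually, confirm that $\det(\pi_*\UU_n)=E/2$ matches Fogarty's normalization). This is not a formal consequence of GRR but rests on the explicit description of $\Pic(X^{[n]})$ recalled before the theorem and on the interpretation of $p_X^*D\cdot[\ZZ]$, supported on $\{(x,Z):x\in Z\cap D\}$, as the locus of subschemes meeting $D$; I would invoke the Catanese--G\"ottsche computation \cite{CG90:d_very_ample} to pin down the class. Care is also needed to check that $\pi_*(p_X^*D\cdot[\ZZ])$ really computes a well-defined cohomology class and that the determinant is additive on $K_{\num}(X)_\R$, so that the piecewise evaluations on $(1,0,0)$, $(0,D,\ast)$ and $(0,0,1)$ legitimately assemble into the stated values on $v^{\#}$; both points are standard but should be stated. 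Once these identifications are in place, combining the three evaluations with the vanishing $\lambda_{\UU_n}(0,0,1)=0$ yields the two formulas.
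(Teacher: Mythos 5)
Your proposal is correct and follows essentially the same route as the paper's sketch: Grothendieck--Riemann--Roch along $p_{X^{[n]}}$, the vanishing $\lambda_{\UU_n}(0,0,1)=0$ to render $m$ and $m_D$ irrelevant, the identification $\lambda_{\UU_n}(1,0,0)=\det\bigl((p_{X^{[n]}})_*\UU_n\bigr)=\tfrac{E}{2}$, and the Catanese--G\"ottsche identification of $\pi_*\bigl(p_X^*D\cdot[\ZZ]\bigr)$ with $D^{[n]}$. Your version is in fact somewhat more explicit than the paper's (computing $\ch(\UU_n)$ from the universal sequence and tracking the codimension-three part carefully), with no gaps.
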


\begin{cor}
\label{cor:explicit_divisor}
Let $W$ be a numerical semicircular wall for $v = (1, 0, -n)$ with center $s_W$. Assume that all ideal sheaves $I_Z$ on $X$ with $\ch(I_Z) = (1, 0, -n)$ are semistable along $W$. Then
\[
D_{\alpha, \beta, \UU_n} \in \R_{> 0} \left( \frac{K_X^{[n]}}{2} - s_W H^{[n]} - \frac{E}{2} \right)
\]
for all $\sigma \in W$.
\begin{proof}
We fix $\sigma = (\alpha, \beta) \in W$. By Proposition \ref{prop:donaldson_computation} there is a class $w_{\sigma} \in v^{\#}$ such that the divisor $D_{\sigma, \UU_n}$ is given by $\lambda_{\UU_n}(w_{\sigma})$. This class is characterized by the property
\[
\chi(w_{\sigma} \cdot w') = \Im \left(-\frac{Z_{\alpha, \beta}(w')}{Z_{\alpha, \beta}(v)} \right)
\]
for all $w' \in K_{\num}(X)_{\R}$. We define $x,y \in \R$ by
\[
-\frac{1}{Z_{\alpha, \beta}(v)} = x + \sqrt{-1} y.
\]
The fact that $Z_{\alpha, \beta}$ is a stability function implies $y \geq 0$. We even have $y>0$ because $y=0$ holds if and only if $(\alpha, \beta)$ is on the unique numerical vertical wall contrary to assumption. The strategy of the proof is to determine $w_{\sigma}$ by pairing it with some easy to calculate classes and then using the previous proposition.

Let $w_{\sigma} = (r,C,d)$, where $r \in \Z$, $d \in \tfrac{1}{2} \Z$ and $C$ is a curve class. We have
\[
r = \chi(w_{\sigma} \cdot (0,0,1)) = \Im ((x+\sqrt{-1}y)Z_{\alpha, \beta}(0,0,1)) = -y.
\]
A similar computation together with Riemann-Roch shows
\begin{align*}
(x + \beta y)H^2 &= \Im ((x+iy) \cdot Z_{\alpha, \beta} (0, H, 0)) \\
&= \chi(w_{\sigma} \cdot (0, H, 0))\\
&= \chi(0, -yH, H \cdot C) \\
&= \int_X (0, -yH, H \cdot C) \cdot \left(1, -\frac{K_X}{2}, \chi(\OO_X) \right) \\
&= \frac{y}{2} H \cdot K_X + H \cdot C.
\end{align*}
Since $\Pic(X) = \Z \cdot H$, we obtain
\[
C = (x + \beta y) H - \frac{y}{2} K_X = ys_W H - \frac{y}{2} K_X.
\]
The last step used $x + \beta y = ys_W$ which is a straightforward computation. We get
\[
D_{\alpha, \beta, \UU_n} \in \R_{> 0} \left( \lambda_{\UU_n}(-1, s_W H - \frac{K_X}{2}, m) \right),
\]
where $m$ is uniquely determined by $w_{\sigma} \in v^{\#}$. The statement follows now from a direct application of Proposition \ref{prop:donaldson_morphism_hilb}.
\end{proof}
\end{cor}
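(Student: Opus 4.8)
The plan is to compute $D_{\sigma, \UU_n}$ by combining Proposition \ref{prop:donaldson_computation} with the explicit values of the Donaldson morphism recorded in Proposition \ref{prop:donaldson_morphism_hilb}. By Proposition \ref{prop:donaldson_computation} we have $D_{\sigma, \UU_n} = \lambda_{\UU_n}(w_\sigma)$, where $w_\sigma \in v^{\#}$ is the unique class satisfying $\chi(w_\sigma \cdot w') = \Im(-Z_{\alpha,\beta}(w')/Z_{\alpha,\beta}(v))$ for all $w' \in K_{\num}(X)_\R$. Since $\lambda_{\UU_n}$ is additive and we already know its value on $(1,0,m)$ and on $(0,D,m_D)$, the entire problem reduces to pinning down $w_\sigma$ explicitly and then reading off $\lambda_{\UU_n}(w_\sigma)$.

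First I would write $w_\sigma = (r, C, d)$ with $r,d \in \R$ and $C$ a divisor class, and exploit $\NS(X) = \Z \cdot H$, so that $K_{\num}(X)_\R$ is spanned by $(1,0,0)$, $(0,H,0)$, $(0,0,1)$. Setting $-1/Z_{\alpha,\beta}(v) = x + \sqrt{-1}\,y$, I recover the components of $w_\sigma$ by pairing against this basis. Pairing with $(0,0,1)$ computes $r$ directly, and since $Z_{\alpha,\beta}(0,0,1) = -1$ I expect $r = -y$. Pairing with $(0,H,0)$ computes $H \cdot C$, which I would evaluate via Riemann--Roch, $\chi = \int_X (\,\cdot\,) \cdot (1, -K_X/2, \chi(\OO_X))$; this is precisely where the canonical class enters, the rank-zero input killing the $\chi(\OO_X)$ term and leaving $\tfrac{y}{2}H \cdot K_X + H \cdot C = (x+\beta y)H^2$. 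Using Picard rank one to promote $H \cdot C$ to $C$ itself, and the identity $x + \beta y = y\,s_W$, this should yield $C = y\,s_W\,H - \tfrac{y}{2} K_X$.

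Having determined $w_\sigma = (-y,\ y\,s_W\,H - \tfrac{y}{2}K_X,\ d)$, I would finish by additivity. The value of $d$ is irrelevant, since the proof of Proposition \ref{prop:donaldson_morphism_hilb} gives $\lambda_{\UU_n}(0,0,1) = 0$. Applying $\lambda_{\UU_n}(1,0,m) = E/2$ and $\lambda_{\UU_n}(0,D,m_D) = -D^{[n]}$ then gives
\[
D_{\sigma,\UU_n} = -y \cdot \tfrac{E}{2} - \left(y\,s_W\,H - \tfrac{y}{2}K_X\right)^{[n]} = y\left(\tfrac{K_X^{[n]}}{2} - s_W H^{[n]} - \tfrac{E}{2}\right),
\]
and the claim follows once $y > 0$ is checked. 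Positivity of $y$ is exactly $\Im(-1/Z_{\alpha,\beta}(v)) > 0$, which holds because $Z_{\alpha,\beta}$ is a stability function and $(\alpha,\beta)$ lies on a semicircular --- hence not the vertical --- wall, so $\Im Z_{\alpha,\beta}(v) \neq 0$.

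The main obstacle will be the Riemann--Roch bookkeeping in the second step, together with the auxiliary identity $x + \beta y = y\,s_W$ that converts the stability-theoretic data $(x,y,\beta)$ into the geometric center $s_W$ of the wall. Verifying this identity amounts to unwinding the equations defining the center and radius of a numerical wall (as in the computation behind Lemma \ref{lem:higherRankBound}), and it is the only place where the hypothesis that $W$ is a genuine semicircular wall is used quantitatively; everything else is formal manipulation of the additive morphism $\lambda_{\UU_n}$.
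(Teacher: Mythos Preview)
Your proposal is correct and follows essentially the same argument as the paper: write $-1/Z_{\alpha,\beta}(v)=x+\sqrt{-1}\,y$, determine the components of $w_\sigma$ by pairing against $(0,0,1)$ and $(0,H,0)$ via Riemann--Roch, use $x+\beta y = y s_W$ and Picard rank one to identify $C$, and then apply Proposition~\ref{prop:donaldson_morphism_hilb}. The only cosmetic difference is that you make explicit why the degree-$2$ component $d$ is irrelevant (namely $\lambda_{\UU_n}(0,0,1)=0$), whereas the paper absorbs this into the choice of $m$ forced by $w_\sigma\in v^{\#}$.
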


\begin{proof}[Proof of Theorem \ref{thm:divisor_hilbert_scheme}]
By Proposition \ref{prop:surface_largest_wall} and the assumption $n \geq a^2H^2$ we know that the largest wall destabilizes those ideal sheaves $\II_Z$ that fit into an exact sequence
\[
0 \to \OO(-aH) \to \II_Z \to \II_{Z/C} \to 0,
\]
where $C \in |aH|$. This wall has center
\[
s = -\frac{a}{2} - \frac{n}{aH^2}. 
\]
By Corollary \ref{cor:explicit_divisor} we get that
\[
D = \frac{1}{2} K_X^{[n]} + \left(\frac{a}{2} + \frac{n}{aH^2}\right) H^{[n]} - \frac{1}{2} E
\]
is nef. We are left to show extremality of the divisor in case $n \geq g+1$. By part (2) of the Positivity Lemma, we have to construct a one dimensional family of $S$-equivalent objects. It exists if and only if $\ext^1(\II_{Z/C}, \OO(-aH)) \geq 2$. A Riemann-Roch calculation shows
\begin{align*}
1 - g = \chi(\OO_C) &= \int_X \left(0, aH, -\frac{a^2H^2}{2} \right) \cdot \left(1, -\frac{K_X}{2}, \chi(\OO_X) \right)  \\
&= - \frac{a}{2} H \cdot K_X -\frac{a^2H^2}{2}.
\end{align*}
Another application of Riemann-Roch for surfaces shows
\begin{align*}
\ext^1(\II_{Z/C}, \OO(-aH)) &\geq -\chi(\II_{Z/C}, \OO(-aH)) \\
&= -\int_X \left(1, -aH, \frac{a^2H^2}{2} \right) \left(0, -aH, -\frac{a^2H^2}{2} - n \right) \left(1, -\frac{K_X}{2}, \chi(\OO_X) \right) \\
&= n - \frac{a}{2} H \cdot K_X - \frac{a^2H^2}{2} \\
&= n + 1 - g.
\end{align*}
Therefore, $n \geq g+1$ implies that $D$ is extremal.
\end{proof}

\begin{rmk}
In particular cases, Theorem \ref{thm:divisor_hilbert_scheme} can be be made more precise and general.
In the case of the projective plane \cite{CHW17:effective_cones_p2, CH18:nef_cones, LZ19:NewStabilityP2} and of K3 surfaces \cite{BM14:projectivity, BM14:stability_k3}, given any primitive vector, varying stability conditions corresponds to a directed Minimal Model Program for the corresponding moduli space. This allows to completely describe the nef cone, the movable cone, and the pseudo-effective cone for them. Also, all corresponding birational models appear as moduli spaces of Bridgeland stable objects. This has also deep geometrical applications; for example, see \cite{Bay18:BrillNoether}.
\end{rmk}


\section{Stability Conditions on Threefolds}
\label{sec:P3}


In this section we will give a short sketchy introduction to the higher-dimensional case.
The main result is the construction of stability condition on $X = \P^3$.
By abuse of notation, we will identify $\ch^{\beta}_i(E)\in\Q$, for any $E \in \Db(\P^3)$.

\subsection{Tilt stability and the second tilt}

The construction of $\sigma_{\alpha, \beta} = (\Coh^{\beta}(X), Z_{\alpha, \beta})$ for $\alpha > 0$ and $\beta \in \R$ can carried out as before. However, this will not be a stability condition, because $Z_{\alpha, \beta}$ maps skyscraper sheaves to the origin. In \cite{BMT14:stability_threefolds}, this (weak) stability condition is called \emph{tilt stability}. The idea is that, by repeating the previous process of tilting another time with $\sigma_{\alpha, \beta}$ instead of slope stability, this might allow to construct a Bridgeland stability condition on $\Db(X)$. Let 
\begin{align*}
\TT'_{\alpha, \beta} &= \{E \in \Coh^{\beta}(\P^3) : \text{any quotient $E
\onto G$ satisfies $\nu_{\alpha, \beta}(G) > 0$} \}, \\
\FF'_{\alpha, \beta} &=  \{E \in \Coh^{\beta}(\P^3) : \text{any subobject $F
\into E$ satisfies $\nu_{\alpha, \beta}(F) \leq 0$} \}
\end{align*}
and set $\AA^{\alpha, \beta}(\P^3) = \langle \FF'_{\alpha, \beta}[1],
\TT'_{\alpha, \beta} \rangle $. For any $s>0$, we define
\[
Z_{\alpha,\beta,s} = -\ch^{\beta}_3 + (s+\tfrac{1}{6})\alpha^2 \ch^{\beta}_1 + \sqrt{-1} (\ch^{\beta}_2 - \frac{\alpha^2}{2} \ch^{\beta}_0).
\]
and the corresponding slope
\[
\lambda_{\alpha,\beta,s} = \frac{\ch^{\beta}_3 - (s+\tfrac{1}{6})\alpha^2 \ch^{\beta}_1}{\ch^{\beta}_2 - \frac{\alpha^2}{2} \cdot \ch^{\beta}_0}.
\]
Recall that the key point in the construction stability conditions on surfaces was the classical Bogomolov inequality for slope stable sheaves. That is the idea behind the next theorem.

\begin{thm}[\cite{BMT14:stability_threefolds, Mac14:conjecture_p3, BMS16:abelian_threefolds}]
\label{thm:construction_threefold}
The pair $(\AA^{\alpha, \beta}(\P^3), \lambda_{\alpha,\beta,s})$ is Bridgeland stability condition for all $s > 0$ if and only if for all tilt stable objects $E \in \Coh^{\beta}(\P^3)$ we have
\[Q_{\alpha, \beta}(E) = \alpha^2 \Delta(E) + 4(\ch_2^{\beta}(E))^2 - 6\ch_1^{\beta}(E) \ch_3^{\beta}(E) \geq 0.\]
\end{thm}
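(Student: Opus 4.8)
The plan is to mirror the surface construction sketched in Section~\ref{subsec:ProofThmSurface}: to recognize $(\AA^{\alpha,\beta}(\P^3),Z_{\alpha,\beta,s})$ as a stability condition I must check that $Z_{\alpha,\beta,s}$ is a stability function on the double tilt $\AA^{\alpha,\beta}(\P^3)$ and that Harder--Narasimhan filtrations together with the support property hold, and the inequality $Q_{\alpha,\beta}\geq 0$ will turn out to be exactly the input required. Positivity of the imaginary part is automatic: $\Im Z_{\alpha,\beta,s}=\ch_2^\beta-\tfrac{\alpha^2}{2}\ch_0^\beta$ is the numerator of the tilt slope $\nu_{\alpha,\beta}$ (whose denominator is a non-negative multiple of $\ch_1^\beta$ on $\Coh^\beta(\P^3)$), so it is $\geq 0$ on $\TT'_{\alpha,\beta}$ and $\leq 0$ on $\FF'_{\alpha,\beta}$, hence $\geq 0$ on $\AA^{\alpha,\beta}(\P^3)=\langle\FF'_{\alpha,\beta}[1],\TT'_{\alpha,\beta}\rangle$. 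The whole equivalence is then driven by the remaining stability-function axiom.

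First I would isolate the core computation, which produces both directions at once for objects on the real axis. An object $E$ with $\Im Z_{\alpha,\beta,s}(E)=0$ has $\nu_{\alpha,\beta}(E)=0$, and a $\nu_{\alpha,\beta}$-stable such $E$ lies in $\FF'_{\alpha,\beta}$, so $E[1]\in\AA^{\alpha,\beta}(\P^3)$. Substituting $\ch_2^\beta(E)=\tfrac{\alpha^2}{2}\ch_0^\beta(E)$ collapses the two relevant quantities to
\[
Q_{\alpha,\beta}(E)=\ch_1^\beta(E)\bigl(\alpha^2\ch_1^\beta(E)-6\ch_3^\beta(E)\bigr),\qquad
\Re Z_{\alpha,\beta,s}(E[1])=\ch_3^\beta(E)-\bigl(s+\tfrac16\bigr)\alpha^2\ch_1^\beta(E).
\]
Since $\ch_1^\beta(E)\geq 0$ (as $E\in\Coh^\beta(\P^3)$), the requirement $\Re Z_{\alpha,\beta,s}(E[1])<0$ for all $s>0$ is equivalent, upon letting $s\to 0^+$, to $\alpha^2\ch_1^\beta(E)-6\ch_3^\beta(E)\geq 0$, i.e. to $Q_{\alpha,\beta}(E)\geq 0$. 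This settles the theorem at the level of the stability-function axiom: $Z_{\alpha,\beta,s}$ is a stability function on $\AA^{\alpha,\beta}(\P^3)$ for every $s>0$ if and only if $Q_{\alpha,\beta}(E)\geq 0$ for every tilt-stable $E$ with $\nu_{\alpha,\beta}(E)=0$, and the necessity for tilt-stable objects of arbitrary slope follows by bringing such an object to this boundary locus via the $\widetilde{\GL}^+(2,\R)$-action and the local finiteness of tilt-stability walls (Proposition~\ref{prop:locally_finite}).

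For the sufficiency direction I would upgrade the stability function to a full Bridgeland stability condition. The support property I would obtain by exhibiting $Q_{\alpha,\beta}$ as an admissible quadratic form in the equivalent formulation following Definition~\ref{def:Bridgeland1}: a direct computation shows $Q_{\alpha,\beta}$ is negative definite on $\ker Z_{\alpha,\beta,s}$, while the hypothesis $Q_{\alpha,\beta}\geq 0$ on all tilt-stable objects, combined with the convexity statement of Lemma~\ref{lem:convex_cone}, propagates $Q_{\alpha,\beta}\geq 0$ to every $Z_{\alpha,\beta,s}$-semistable object of $\AA^{\alpha,\beta}(\P^3)$. Existence of Harder--Narasimhan filtrations I would deduce from Proposition~\ref{prop:hn_exists}, after verifying that $\AA^{\alpha,\beta}(\P^3)$ is noetherian and that $\Im Z_{\alpha,\beta,s}$ has discrete image (following the pattern of Lemma~\ref{lem:TiltNoetherian}); alternatively one fixes a single admissible $s$ by hand and propagates to all $s>0$ using Bridgeland's Deformation Theorem~\ref{thm:BridgelandMain} and Proposition~\ref{prop:ExplicitBridgelandMain}, with $Q_{\alpha,\beta}$ as the deformation-invariant form.

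The main obstacle is the sufficiency direction, and specifically the two global properties into which the pointwise inequality must be leveraged: noetherianness of the second tilt $\AA^{\alpha,\beta}(\P^3)$ and the support property. Unlike the first tilt, where Lemma~\ref{lem:TiltNoetherian} is available, controlling ascending chains in a double tilt is delicate, and this is where the genuine analytic content of \cite{Mac14:conjecture_p3, BMS14:abelian_threefolds} lies. Likewise, passing from $Q_{\alpha,\beta}\geq 0$ on tilt-stable objects to $Q_{\alpha,\beta}\geq 0$ on all $Z_{\alpha,\beta,s}$-semistable objects uses the full strength of the hypothesis (tilt-stable objects of every slope, not merely $\nu_{\alpha,\beta}=0$) together with a careful wall analysis, since the convexity argument sees only factors on a common ray and must be reconciled with the possibility of higher-codimension walls as in Remark~\ref{rmk:WallChamberGeneralSemistability}.
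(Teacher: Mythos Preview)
The paper does not give its own proof of Theorem~\ref{thm:construction_threefold}; the statement is recorded with references to \cite{BMT14:stability_threefolds, Mac14:conjecture_p3, BMS14:abelian_threefolds} and left unproved. The subsequent material (the reduction lemma and Theorem~\ref{thm:conj_p3}) instead establishes the inequality $Q_{\alpha,\beta}\geq 0$ itself for $\P^3$, taking the equivalence for granted. So your sketch is being measured against the cited literature rather than against anything in the paper.

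Against that literature your outline is essentially correct and follows the approach of \cite{BMS14:abelian_threefolds}: the core computation collapsing the stability-function axiom on objects with $\nu_{\alpha,\beta}=0$ to $Q_{\alpha,\beta}\geq 0$ is right, and you correctly flag noetherianity of the double tilt and the support property as the places where real work is needed, deferring to the references. One imprecision worth fixing: invoking the $\widetilde{\GL}^+(2,\R)$-action to reduce the necessity direction to the case $\nu_{\alpha,\beta}(E)=0$ does not work as stated, since that action is on the Bridgeland stability condition $(\AA^{\alpha,\beta},Z_{\alpha,\beta,s})$ and does nothing to tilt stability or to the value of $\nu_{\alpha,\beta}(E)$. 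The correct reduction varies $(\alpha,\beta)$ instead: by the wall structure (Proposition~\ref{prop:StructureThmWallsSurfaces}, valid for tilt stability by Remark~\ref{rmk:ExtensionToTiltStability3folds}) a tilt-stable $E$ remains tilt-semistable along its numerical wall down to a point where $\nu_{\alpha,\beta}(E)=0$, and one uses that the locus $Q_{\alpha,\beta}(E)=0$ is itself a numerical wall (exactly the observation in the paper's reduction lemma) to transport the inequality back. This is the mechanism you should cite in place of the $\widetilde{\GL}^+(2,\R)$-action.
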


It was more generally conjectured that similar inequalities are true for all smooth projective threefolds. This turned out to be true in various cases. The first proof was in the case of $\P^3$ in \cite{Mac14:conjecture_p3} and a very similar proof worked for the smooth quadric hypersurface in $\P^4$ in \cite{Sch14:conjecture_quadric}. These results were generalized with a fundamentally different proof to all Fano threefolds of Picard rank one in \cite{Li19:conjecture_fano_threefold}. The conjecture is also known to be true for all abelian threefolds with two independent proofs by \cite{MP16:conjecture_abelian_threefoldsII} and \cite{BMS16:abelian_threefolds}. Unfortunately, it turned out to be problematic in the case of the blow up of $\P^3$ in a point as shown in \cite{Sch17:counterexample}.

\begin{rmk}\label{rmk:ExtensionToTiltStability3folds}
Many statements about Bridgeland stability on surfaces work almost verbatim in tilt stability. Analogously to Theorem \ref{prop:locally_finite} walls in tilt stability are locally finite and stability is an open property. The structure of walls from Proposition \ref{prop:StructureThmWallsSurfaces} is the same in tilt stability. The Bogomolov inequality also works, i.e., any tilt semistable object $E$ satisfies $\Delta(E) \geq 0$. The bound for high degree walls in Lemma \ref{lem:higherRankBound} is fine as well.
Finally, slope stable sheaves $E$ are $\nu_{\alpha, \beta}$-stable for all $\alpha \gg 0$ and $\beta < \mu(E)$.
\end{rmk}

We will give the proof of Chunyi Li in the special case of $\P^3$ in these notes.
We first recall the Hirzebruch-Riemann-Roch Theorem for $\P^3$.

\begin{thm}
Let $E \in \Db(\P^3)$. Then
\[
\chi(\P^3, E) = \ch_3(E) + 2 \ch_2(E) + \frac{11}{6} \ch_1(E) + \ch_0(E).
\]
\end{thm}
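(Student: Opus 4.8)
The plan is to apply the general Hirzebruch--Riemann--Roch theorem
\[
\chi(\P^3, E) = \int_{\P^3} \ch(E) \cdot \td(\P^3),
\]
which is valid for any $E \in \Db(\P^3)$ (both sides being additive on exact triangles, so it is enough that it holds for sheaves), and to reduce the statement to an explicit computation of the Todd class of $\P^3$. Once $\td(\P^3)$ is known, the formula is obtained simply by extracting the top-degree component of the product $\ch(E)\cdot\td(\P^3)$.

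First I would compute $\td(\P^3)$ from the Euler sequence
\[
0 \to \OO_{\P^3} \to \OO_{\P^3}(1)^{\oplus 4} \to T_{\P^3} \to 0,
\]
which gives $c(T_{\P^3}) = (1+H)^4 = 1 + 4H + 6H^2 + 4H^3$, where $H$ is the hyperplane class and $H^4 = 0$. Hence $c_1 = 4H$, $c_2 = 6H^2$, $c_3 = 4H^3$, and substituting into the standard expansion
\[
\td = 1 + \tfrac{1}{2}c_1 + \tfrac{1}{12}\left(c_1^2 + c_2\right) + \tfrac{1}{24}c_1 c_2
\]
yields $\td(\P^3) = 1 + 2H + \tfrac{11}{6}H^2 + H^3$.

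Finally, writing $\ch(E) = \ch_0 + \ch_1 + \ch_2 + \ch_3$ with $\ch_i(E)$ in codimension $i$, I would read off the $H^3$-coefficient of $\ch(E)\cdot\td(\P^3)$, using $\int_{\P^3} H^3 = 1$ to identify each $\ch_i(E)$ with a rational number as in the convention fixed above. The four contributions $\ch_3\cdot 1$, $\ch_2\cdot 2H$, $\ch_1\cdot\tfrac{11}{6}H^2$, and $\ch_0\cdot H^3$ sum to $\ch_3 + 2\ch_2 + \tfrac{11}{6}\ch_1 + \ch_0$, which is exactly the claimed formula. The only genuine content is the Todd class computation; I expect no real obstacle beyond careful bookkeeping of the numerical coefficients.
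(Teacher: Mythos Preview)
Your computation is correct. Note, however, that the paper does not actually give a proof of this statement: it is simply recalled as the classical Hirzebruch--Riemann--Roch theorem specialized to $\P^3$, with no argument provided. Your derivation via the Euler sequence and the explicit Todd class is the standard way to justify the formula, so there is nothing to compare against.
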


In order to prove the inequality in Theorem \ref{thm:construction_threefold}, we want to reduce the problem to a simpler case.

\begin{defn}
For any object $E \in \Coh^{\beta}(\P^3)$, we define
\[
\overline{\beta}(E) = \begin{cases}
\frac{\ch_1(E) - \sqrt{\Delta_H(E)}}{\ch_0(E)} & \ch_0(E) \neq 0, \\
\frac{\ch_2(E)}{\ch_1(E)} & \ch_0(E) = 0.
\end{cases}
\]
The object $E$ is called \emph{$\overline{\beta}$-(semi)stable}, if $E$ (semi)stable in a neighborhood of $(0, \overline{\beta}(E))$.
\end{defn}

A straightforward computation shows that $\ch_2^{\overline{\beta}}(E) = 0$.

\begin{lem}[\cite{BMS16:abelian_threefolds}]
Proving the inequality in Theorem \ref{thm:construction_threefold} can be reduced to $\overline{\beta}$-stable objects. Moreover, in that case the inequality reduces to $\ch_3^{\overline{\beta}}(E) \leq 0$.
\begin{proof}
Let $E \in \Coh^{\beta_0}(\P^3)$ be a $\nu_{\alpha_0, \beta_0}$-stable object with $\ch(E) = v$. If $(\alpha_0, \beta_0)$ is on the unique numerical vertical wall for $v$, then $\ch_1^{\beta_0}(E) = 0$ and therefore,
\[
Q_{\alpha_0, \beta_0} = \alpha_0^2 \Delta(E) + 4(\ch_2^{\beta_0}(E))^2 \geq 0.
\]
Therefore, $(\alpha_0, \beta_0)$ lies on a unique numerical semicircular wall $W$ with respect to $v$. One computes that there are $x,y \in \R$ such that
\begin{align*}
Q_{\alpha, \beta}(E) \geq 0 &\Leftrightarrow \Delta(E) \alpha^2 + \Delta(E) \beta^2 + x \beta + y \alpha \geq 0, \\
Q_{\alpha, \beta}(E) = 0 &\Leftrightarrow \nu_{\alpha, \beta}(E) = \nu_{\alpha, \beta}\left(\ch_1(E), 2\ch_2(E), 3\ch_3(E), 0 \right).
\end{align*}
In particular, the equation $Q_{\alpha, \beta}(E) \geq 0$ defines the complement of a semi-disc with center on the $\beta$-axis or a quadrant to one side of a vertical line in case $\Delta(E) = 0$. Moreover, $Q_{\alpha, \beta}(E) = 0$ is a numerical wall with respect to $v$.

We will proceed by an induction on $\Delta(E)$ to show that it is enough to prove the inequality for $\overline{\beta}$-stable objects. Assume $\Delta(H) = 0$. Then $Q_{\alpha_0, \beta_0}(E) \geq 0$ is equivalent to $Q_{0, \overline{\beta}}(E) \geq 0$. If $E$ would not be $\overline{\beta}$-stable, then it must destabilize along a wall between $W$ and $(0, \overline{\beta}(E))$. By part (3) of Lemma \ref{lem:convex_cone} with $Q = \Delta$ all stable factors of $E$ along that wall have $\Delta = 0$. By part (4) of the same lemma this can only happen if at least one of the stable factors satisfies $\ch_{\leq 2}(E) = (0,0,0)$ which only happens at the numerical vertical wall.

Assume $\Delta(H) > 0$. If $E$ is $\overline{\beta}$-stable, then it is enough to show $Q_{0, \overline{\beta}(E)} \geq 0$. Assume $E$ is destabilized along a wall between $W$ and $(0, \overline{\beta}(E))$ and let $F_1, \ldots, F_n$ be the stable factors of $E$ along this wall. By Lemma \ref{lem:convex_cone} (3) we have $\Delta(F_i) < \Delta(E)$ for all $i = 1, \ldots n$. We can then use Lemma \ref{lem:convex_cone} (3) with $Q = Q_{\alpha, \beta}$ to finish the proof by induction.
\end{proof}
\end{lem}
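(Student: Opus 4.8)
The plan is to establish both assertions at once by analysing the quadratic form $Q_{\alpha,\beta}$ as a function on the $(\alpha,\beta)$-plane and sliding a tilt-stable object down to the $\beta$-axis. First I would record the elementary shape of $Q_{\alpha,\beta}(E)$: a direct expansion in the twisted Chern character shows that its vanishing locus $\{Q_{\alpha,\beta}(E)=0\}$ is a numerical wall for $v=\ch(E)$ (a semicircle centered on the $\beta$-axis when $\Delta(E)>0$, a vertical line when $\Delta(E)=0$), and that the region $\{Q_{\alpha,\beta}(E)\geq 0\}$ is the complement of the corresponding open semi-disc (respectively, one side of the vertical line). Since $\Delta(E)$ is the coefficient of $\alpha^2$ and $\Delta(E)\geq 0$ by the Bogomolov inequality in tilt stability (Remark~\ref{rmk:ExtensionToTiltStability3folds}), the form is manifestly nonnegative for $\alpha\gg 0$, so the relevant region is the \emph{outside} of the disc.

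The second assertion is the easy computational input. By the very definition of $\overline{\beta}(E)$ one has $\ch_2^{\overline{\beta}}(E)=0$, so evaluation at the boundary point $(0,\overline{\beta}(E))$ collapses the form to
\[
Q_{0,\overline{\beta}}(E)=-6\,\ch_1^{\overline{\beta}}(E)\,\ch_3^{\overline{\beta}}(E).
\]
As $E\in\Coh^{\overline{\beta}}(\P^3)$ and $\P^3$ has Picard rank one, $\ch_1^{\overline{\beta}}(E)\geq 0$; hence $Q_{0,\overline{\beta}}(E)\geq 0$ is equivalent to $\ch_3^{\overline{\beta}}(E)\leq 0$. For a $\overline{\beta}$-stable object this is exactly the inequality read off at the axis, which explains why the $\beta$-axis is the correct place to test positivity.

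For the reduction itself I would induct on the (discrete, nonnegative) value $\Delta(E)$. Fix a tilt-stable $E$, stable at some $(\alpha_0,\beta_0)$. If that point lies on the vertical numerical wall then $\ch_1^{\beta_0}(E)=0$ and $Q_{\alpha_0,\beta_0}(E)=\alpha_0^2\Delta(E)+4\bigl(\ch_2^{\beta_0}(E)\bigr)^2\geq 0$ at once, giving the base case. Otherwise, using the disc description from the first step together with nonnegativity of $Q$ for $\alpha\gg 0$, positivity of $Q$ at $(\alpha_0,\beta_0)$ is controlled by the value at the axis point $(0,\overline{\beta}(E))$: when $\Delta(E)=0$ the form is $\beta$-independent up to the positive factor $\ch_1^{\beta}(E)$, so this is an honest equivalence, while for $\Delta(E)>0$ it suffices to check at the axis. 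Thus if $E$ is $\overline{\beta}$-stable we conclude by the evaluation above; if not, $E$ becomes strictly semistable on some wall between $\{Q=0\}$ and the axis, with Jordan-H\"older factors $F_1,\dots,F_n$ of equal tilt-slope.

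The crux---and the step I expect to be the main obstacle---is to run the convexity estimate of Lemma~\ref{lem:convex_cone} with two different quadratic forms. Applied with $Q=\Delta$ (the Bogomolov form, nonnegative on all tilt-semistable objects) to the relation $v=\sum_i v(F_i)$ along the destabilizing wall, part (3) gives $\Delta(F_i)\leq\Delta(E)$. When $\Delta(E)=0$, parts (3)--(4) force every $\Delta(F_i)=0$ and at least one factor to satisfy $\ch_{\leq 2}(F_i)=0$, i.e.\ to sit on the vertical wall, which is the base case already handled. When $\Delta(E)>0$ the inequalities are strict, so the induction hypothesis yields $Q(F_i)\geq 0$ for every factor; feeding this back into Lemma~\ref{lem:convex_cone} (3), now with $Q=Q_{\alpha,\beta}$, propagates nonnegativity up to $E$. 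The delicate points to verify carefully are that $Q_{\alpha,\beta}$ satisfies the hypotheses of the convexity lemma along the ray determined by the wall, and that the axis point $(0,\overline{\beta}(E))$ genuinely lies on the positive side of $\{Q=0\}$, so that the axis evaluation really controls the sign at the original point $(\alpha_0,\beta_0)$.
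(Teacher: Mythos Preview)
Your proposal is correct and follows essentially the same approach as the paper: the same geometric description of $\{Q_{\alpha,\beta}(E)=0\}$ as a numerical wall, the same induction on $\Delta(E)$, and the same two invocations of Lemma~\ref{lem:convex_cone} (first with $Q=\Delta$ to drop the discriminant of the Jordan--H\"older factors, then with $Q=Q_{\alpha,\beta}$ to propagate nonnegativity back to $E$). You even supply the explicit evaluation $Q_{0,\overline{\beta}}(E)=-6\,\ch_1^{\overline{\beta}}(E)\,\ch_3^{\overline{\beta}}(E)$ justifying the second assertion, which the paper leaves implicit.
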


The idea for the following proof is due to \cite{Li19:conjecture_fano_threefold}.

\begin{thm}
\label{thm:conj_p3}
For all tilt stable objects $E \in \Coh^{\beta}(\P^3)$ we have
\[Q_{\alpha, \beta}(E) = \alpha^2 \Delta(E) + 4(\ch_2^{\beta}(E))^2 - 6 \ch_1^{\beta}(E) \ch_3^{\beta}(E) \geq 0.\]
\begin{proof}
As observed in Remark \ref{rmk:ExtensionToTiltStability3folds}, tilt semistable objects satisfy $\Delta\geq0$.
Hence, line bundles are stable everywhere in tilt stability in the case $X = \P^3$.

Let $E \in \Coh^{\overline{\beta}}(X)$ be a $\overline{\beta}$-stable object. By tensoring with line bundles, we can assume $\overline{\beta} \in [-1,0)$.
By assumption we have $\nu_{0, \overline{\beta}}(E) = 0 < \nu_{0, \overline{\beta}}(\OO_{\P^3})$ which implies $\Hom(\OO_{\P^3}, E) = 0$. Moreover, the same argument together with Serre duality shows
\[
\Ext^2(\OO_{\P^3}, E) = \Ext^1(E, \OO_{\P^3}(-4)) = \Hom(E, \OO_{\P^3}(-4)[1]) = 0.
\]
Therefore,
\begin{align*}
0 \geq \chi(\OO, E) &= \ch_3(E) + 2 \ch_2(E) + \frac{11}{6} \ch_1(E) + \ch_0(E) \\
&= \ch_3^{\overline{\beta}}(E) + (\overline{\beta} + 2) \ch_2^{\overline{\beta}}(E) + \frac{1}{6}(3\overline{\beta}^2 + 12 \overline{\beta} + 11) \ch_1^{\overline{\beta}}(E) \\
& \ \ \ + \frac{1}{6}(\overline{\beta}^3 + 6\overline{\beta}^2 + 11\overline{\beta} + 6) \ch_0^{\overline{\beta}}(E) \\
& = \ch_3^{\overline{\beta}}(E) + \frac{1}{6}(3\overline{\beta}^2 + 12 \overline{\beta} + 11) \ch_1^{\overline{\beta}}(E) + \frac{1}{6}(\overline{\beta}^3 + 6\overline{\beta}^2 + 11\overline{\beta} + 6) \ch_0^{\overline{\beta}}(E).
\end{align*}
By construction of $\Coh^{\overline{\beta}}(X)$, we have $\ch_1^{\overline{\beta}}(E) \geq 0$. Due to $\overline{\beta} \in [-1,0)$ we also have $3\overline{\beta}^2 + 12 \overline{\beta} + 11 \geq 0$ and $\overline{\beta}^3 + 6\overline{\beta}^2 + 11\overline{\beta} + 6 \geq 0$. If $\ch_0^{\overline{\beta}}(E) \geq 0$ or $\overline{\beta} = -1$, this finishes the proof. If $\ch_0^{\overline{\beta}}(E) < 0$ and $\overline{\beta} \neq -1$, the same type of argument works with $\chi(\OO(3), E) \leq 0$.
\end{proof}
\end{thm}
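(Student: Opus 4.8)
The plan is to feed a two-sided cohomology-vanishing argument into Hirzebruch--Riemann--Roch. By the previous lemma it suffices to treat a $\overline{\beta}$-stable object $E$, and in that case $\ch_2^{\overline{\beta}}(E)=0$ so the whole inequality $Q_{\alpha,\beta}(E)\geq 0$ collapses to the single assertion $\ch_3^{\overline{\beta}}(E)\leq 0$. Tensoring by line bundles preserves tilt stability, keeps $\ch_2^{\overline{\beta}}=0$, and leaves $Q$ unchanged, so I may normalize $\overline{\beta}\in[-1,0)$. I will also use that on $\P^3$ every line bundle has $\Delta=0$, hence by the Bogomolov inequality in tilt stability (Remark \ref{rmk:ExtensionToTiltStability3folds}) together with large-volume stability it is $\nu_{\alpha,\beta}$-stable for \emph{all} $(\alpha,\beta)$; this is what makes the phase comparisons below legitimate.

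The core step is to produce $\chi(\OO_{\P^3},E)\leq 0$. First I would show $\Hom(\OO_{\P^3},E)=0$ by comparing slopes at the point $(0,\overline{\beta})$: there $\nu_{0,\overline{\beta}}(E)=0<\nu_{0,\overline{\beta}}(\OO_{\P^3})$, so a nonzero map from the strictly higher-slope stable object $\OO_{\P^3}$ into $E$ is impossible (Lemma \ref{lem:Schur} in tilt stability). Next I would show $\Ext^2(\OO_{\P^3},E)=0$ via Serre duality, $\Ext^2(\OO_{\P^3},E)\cong\Hom(E,\OO_{\P^3}(-4)[1])^\vee$, and again compare at $(0,\overline{\beta})$, where one checks $\nu_{0,\overline{\beta}}(\OO_{\P^3}(-4)[1])<0=\nu_{0,\overline{\beta}}(E)$, forcing this $\Hom$ to vanish. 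Since only $\Ext^1$ and $\Ext^3$ can survive, $\chi(\OO_{\P^3},E)=-\ext^1-\ext^3\leq 0$.

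Expanding $0\geq\chi(\OO_{\P^3},E)$ with the Hirzebruch--Riemann--Roch formula for $\P^3$ recalled above, rewriting each $\ch_i$ in terms of $\ch_i^{\overline{\beta}}$ and using $\ch_2^{\overline{\beta}}(E)=0$, yields
\[
0\geq \ch_3^{\overline{\beta}}(E)+\tfrac{1}{6}\big(3\overline{\beta}^2+12\overline{\beta}+11\big)\,\ch_1^{\overline{\beta}}(E)+\tfrac{1}{6}(\overline{\beta}+1)(\overline{\beta}+2)(\overline{\beta}+3)\,\ch_0^{\overline{\beta}}(E).
\]
By construction of $\Coh^{\overline{\beta}}(\P^3)$ we have $\ch_1^{\overline{\beta}}(E)\geq 0$, and on $\overline{\beta}\in[-1,0)$ both $3\overline{\beta}^2+12\overline{\beta}+11\geq 0$ and $(\overline{\beta}+1)(\overline{\beta}+2)(\overline{\beta}+3)\geq 0$. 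Hence if $\ch_0^{\overline{\beta}}(E)\geq 0$ every correction term is nonnegative and $\ch_3^{\overline{\beta}}(E)\leq 0$ follows at once; the same holds when $\overline{\beta}=-1$, since then the $\ch_0^{\overline{\beta}}$-coefficient vanishes and its sign is irrelevant.

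The only genuinely delicate case is $\ch_0^{\overline{\beta}}(E)<0$ with $\overline{\beta}\neq-1$, where the $\ch_0^{\overline{\beta}}$-term has the wrong sign. Here I would run the identical argument with $\OO_{\P^3}(3)$ in place of $\OO_{\P^3}$: the vanishings $\Hom(\OO_{\P^3}(3),E)=0$ and $\Ext^2(\OO_{\P^3}(3),E)\cong\Hom(E,\OO_{\P^3}(-1)[1])^\vee=0$ follow from the same phase comparisons (this is precisely where $\overline{\beta}\neq-1$ is needed so that $\OO_{\P^3}(-1)[1]$ has strictly negative slope), giving $\chi(\OO_{\P^3}(3),E)\leq 0$. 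Expanding via Riemann--Roch produces $\ch_3^{\overline{\beta}}(E)+\tfrac16(3\overline{\beta}^2-6\overline{\beta}+2)\ch_1^{\overline{\beta}}(E)+\tfrac16\overline{\beta}(\overline{\beta}-1)(\overline{\beta}-2)\ch_0^{\overline{\beta}}(E)\leq 0$, and on $[-1,0)$ the $\ch_1^{\overline{\beta}}$-coefficient is again $\geq 0$ while $\tfrac16\overline{\beta}(\overline{\beta}-1)(\overline{\beta}-2)\leq 0$; combined with $\ch_0^{\overline{\beta}}(E)<0$ this makes the last term nonnegative, so once more $\ch_3^{\overline{\beta}}(E)\leq 0$. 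I expect the main work to be exactly this two-sided vanishing-and-sign bookkeeping: verifying that the phase comparisons killing $\Hom$ and $\Ext^2$ are valid for both $\OO_{\P^3}$ and its twist uniformly across $\overline{\beta}\in[-1,0)$, and that the symmetric twist flips the $\ch_0^{\overline{\beta}}$-coefficient in the right direction, rather than the Riemann--Roch computation itself.
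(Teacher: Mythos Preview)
Your proposal is correct and follows essentially the same approach as the paper's proof: reduce to $\overline{\beta}$-stable objects, normalize $\overline{\beta}\in[-1,0)$, kill $\Hom$ and $\Ext^2$ from $\OO_{\P^3}$ (resp.\ $\OO_{\P^3}(3)$) via slope comparison and Serre duality, then read off $\ch_3^{\overline{\beta}}(E)\leq 0$ from the sign of the Riemann--Roch expansion. You even supply the explicit $\OO_{\P^3}(3)$ expansion and sign analysis that the paper leaves to the reader.
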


\subsection{Castelnuovo's genus bound}

The next exercise outlines an application of this inequality to a proof of Castelnuovo's classical theorem on the genus of non-degenerate curves in $\P^3$.

\begin{thm}[\cite{Cas37:inequality}]
Let $C \subset \P^3$ be an integral curve of degree $d \geq 3$ and genus $g$. If $C$ is not contained in a plane, then
\[
g \leq \frac{1}{4}d^2 - d + 1.
\]
\end{thm}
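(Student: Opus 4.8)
The plan is to translate Castelnuovo's bound into an inequality on the top Chern character of the ideal sheaf $\II_C$ and then to extract that inequality from the Bogomolov--Gieseker type inequality of Theorem \ref{thm:conj_p3}. First I would compute, using the Hirzebruch--Riemann--Roch formula on $\P^3$ together with the sequence $0 \to \II_C \to \OO_{\P^3} \to \OO_C \to 0$ and $\chi(\OO_C) = 1-g$, that
\[
\ch(\II_C) = \left(1,\, 0,\, -d,\, 2d+g-1\right).
\]
Thus $g \le \tfrac14 d^2 - d + 1$ is exactly equivalent to the bound $\ch_3(\II_C) \le \tfrac14 d^2 + d$, and the whole problem reduces to bounding $\ch_3(\II_C)$ from above. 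Note that non-degeneracy of $C$ is essential here, since plane curves of large degree violate the bound; it will enter as the vanishing $H^0(\II_C(1)) = \Hom(\OO_{\P^3}(-1), \II_C) = 0$.

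Since $\II_C$ is a rank one torsion-free, hence slope stable, sheaf, Remark \ref{rmk:ExtensionToTiltStability3folds} shows it is $\nu_{\alpha,\beta}$-stable for all $\alpha \gg 0$ and $\beta<0$, so by Theorem \ref{thm:conj_p3} we have $Q_{\alpha,\beta}(\II_C)\ge 0$ at every point where $\II_C$ is tilt semistable. The computation I would then highlight is that, along any numerical semicircular wall for the class $(1,0,-d,\ast)$, the form $Q_{\alpha,\beta}(\II_C)$ collapses to something linear. Indeed, by Proposition \ref{prop:StructureThmWallsSurfaces} such a wall is a semicircle whose top lies on the hyperbola $\Re Z_{\alpha,\beta}(\II_C)=0$, i.e. $\beta^2-\alpha^2 = 2d$; writing $\beta_c,\rho$ for its center and radius this says $\beta_c^2-\rho^2 = 2d$. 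Substituting $\alpha^2 = \rho^2-(\beta-\beta_c)^2$ into $Q_{\alpha,\beta}(\II_C)=\alpha^2\Delta + 4(\ch_2^\beta)^2 - 6\ch_1^\beta\ch_3^\beta$ (with $\Delta=2d$), the quartic and quadratic terms in $\beta$ cancel and the constant term $2d(\rho^2-\beta_c^2)+4d^2$ vanishes, leaving
\[
Q_{\alpha,\beta}(\II_C) = \bigl(4d\,\beta_c + 6\,\ch_3(\II_C)\bigr)\,\beta .
\]
Since $\beta<0$ on the wall, semistability of $\II_C$ there is equivalent to $\ch_3(\II_C)\le -\tfrac{2d}{3}\beta_c$. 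Hence bounding $\ch_3$ is the same as controlling the center of the largest actual wall on which $\II_C$ remains semistable.

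I would then argue by contradiction: if $\ch_3(\II_C) > \tfrac14 d^2 + d$, then $Q_{0,\beta}(\II_C) = 4d^2+2d\beta^2+6\ch_3(\II_C)\beta < 0$ on a nonempty interval of the $\beta$-axis, so $\II_C$ must be destabilized along an actual semicircular wall $W$ enclosing that interval, via a sequence $0 \to F \to \II_C \to G \to 0$ in $\Coh^\beta(\P^3)$ with $F,G$ tilt semistable on $W$. The main obstacle — and the real content of the proof — is the classification of the destabilizing subobject $F$. Here I would use the higher-rank bound of Lemma \ref{lem:higherRankBound} together with the convexity Lemma \ref{lem:convex_cone}, applied both to $\overline{\Delta}$ and to $Q_{\alpha,\beta}$ restricted to $W$, to bound $\ch_0(F)$ and $\overline{\Delta}(F)$, while non-degeneracy rules out the outermost $\OO_{\P^3}(-1)$-wall (center $-\tfrac12-d$) as an actual wall. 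The remaining possibilities should force $F$ to be (a twist of) a rank-one subsheaf exhibiting $C$ on a quadric surface — precisely the extremal Castelnuovo configuration — and feeding the resulting constraints on $\ch(F),\ch(G)$ back through the linear formula for $Q|_W$ pins $\beta_c(W)$ and yields exactly $\ch_3(\II_C)\le \tfrac14 d^2 + d$. I expect this destabilizer analysis, rather than any single inequality, to be the delicate step, since it is what produces the sharp constant $\tfrac14$ (a naive application of $Q\ge 0$ at a line-bundle wall or at the $\overline\beta$-point gives only the weaker bounds $\tfrac13 d^2$ or $\tfrac{2\sqrt2}{3}d^{3/2}$, the latter failing precisely for the quadric curves that are not $\overline\beta$-stable).
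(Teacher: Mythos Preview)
Your overall strategy matches the paper's: compute $\ch(\II_C)$, argue by contradiction that $\II_C$ must be tilt-destabilized if the bound fails, use Lemma~\ref{lem:higherRankBound} to force the destabilizing subobject to have rank one, reduce to a line bundle $\OO(-a)$ by integrality of $C$, and use non-degeneracy to rule out $a=1$. Your linear formula $Q_{\alpha,\beta}(\II_C)\big|_W = (4d\beta_c + 6\ch_3)\beta$ is correct and a pleasant observation.

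The gap is in your final step. You write that once $F=\OO(-2)$ is identified, ``feeding the resulting constraints on $\ch(F),\ch(G)$ back through the linear formula for $Q|_W$ pins $\beta_c(W)$ and yields exactly $\ch_3(\II_C)\le\tfrac14 d^2+d$.'' But $\beta_c(W)$ is already determined by $F=\OO(-2)$: it equals $-1-\tfrac{d}{2}$, and plugging this into your own linear formula gives only $\ch_3(\II_C)\le\tfrac13 d^2+\tfrac23 d$ --- precisely the weak $\tfrac13 d^2$ bound you yourself flag as insufficient. Applying $Q$ to $\II_C$ along the $\OO(-2)$-wall cannot do better, so your outline as written does not reach the sharp constant.

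What the paper does instead is apply Theorem~\ref{thm:conj_p3} not to $\II_C$ but to the \emph{quotient} $G$, which has $\ch(G)=(0,2,-d-2,\ch_3(\II_C)+\tfrac43)$. The key intermediate step (part~(6) of the exercise following the theorem) is a general bound: any tilt semistable object with Chern character $(0,2,c,e)$ satisfies $e\le\tfrac{c^2}{4}+\tfrac13$. This uses $Q_{\alpha,\beta}(G)\ge 0$ together with Lemma~\ref{lem:higherRankBound} applied to the walls of $G$ itself, to control how close to the optimal point $(0,\tfrac{c}{2})$ one may push $G$ before it destabilizes. Substituting $c=-d-2$ and $e=\ch_3(\II_C)+\tfrac43$ then gives exactly $\ch_3(\II_C)\le\tfrac14 d^2+d$. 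The sharp $\tfrac14$ thus comes from $Q$ applied to $G$ (with its own wall analysis), not from $Q$ applied to $\II_C$ along the destabilizing wall; this is the idea your outline is missing.
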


\begin{exercise}
Assume there exists an integral curve $C \subset \P^3$ of degree $d \geq 3$ and arithmetic genus $g > \tfrac{1}{4}d^2 - d + 1$ which is not contained in a plane.
\begin{enumerate}
\item Compute that the ideal sheaf $\II_C$ satisfies
\[
\ch(\II_C) = (1,0,-d,2d + g - 1).
\]
\emph{Hint: Grothendieck-Riemann-Roch!}
\item Show that $\II_C$ has to be destabilized by an exact sequence
\[
0 \to F \to \II_C \to G \to 0,
\]
where $\ch_0(F) = 1$. \emph{Hint: Use Lemma \ref{lem:higherRankBound} to show that any wall of higher rank has to be contained inside $Q_{\alpha, \beta}(\II_C) < 0$.}
\item Show that $\II_C$ has to be destabilized by a sequence of the form
\[
0 \to \OO(-a) \to \II_C \to G \to 0,
\]
where $a > 0$ is a positive integer. \emph{Hint: Show that if the subobject is an actual ideal sheaf and not a line bundle, then integrality of $C$ implies that $\II_C$ is destabilized by a line bundle giving a bigger or equal wall.}
\item Show that the wall is inside $Q_{\alpha, \beta}(\II_C) < 0$ if $a > 2$. Moreover, if $d = 3$ or $d = 4$, then the same holds additionally for $a = 2$.
\item Derive a contradiction in the cases $d = 3$ and $d = 4$.
\item Let $E \in \Coh^{\beta}(\P^3)$ be a tilt semistable object for some $\alpha > 0$ and $\beta \in \R$ such that $\ch(E) = (0,2,d,e)$ for some $d$, $e$. Show that the inequality 
\[
e \leq \frac{d^2}{4} + \frac{1}{3}
\]
holds. \emph{Hint: Use Theorem \ref{thm:conj_p3} and Lemma \ref{lem:higherRankBound}.}
\item Obtain a contradiction via the quotient $G$ in $0 \to \OO(-2) \to \II_C \to G \to 0$.
\end{enumerate}
\end{exercise}


\appendix

\section{Background on Derived Categories}
\label{sec:derived_categories}

This section contains definition and important properties of the bounded derived category $\Db(\AA)$ for an abelian category $\AA$. Most of the time the category $\AA$ will be the category $\Coh(X)$ of coherent sheaves on a smooth projective variety $X$. To simplify notation $\Db(X)$ will be written for $\Db(\Coh(X))$. Derived categories were introduced by Verdier in his thesis under the supervision of Grothendieck. The interested reader can find a detailed account of the theory in \cite{GM03:homological_algebra}, the first two chapters of \cite{Huy06:fm_transforms} or the original source \cite{Ver67:verdier_thesis}. 

\begin{defn}
\begin{enumerate}
\item A complex
\[\ldots \to A^{i-1} \to A^i \to A^{i+1} \to \ldots \]
is called \emph{bounded} if $A^i = 0$ for both $i \gg 0$ and $ i \ll 0$.

\item The objects of the category $\Kom^b(\AA)$ are bounded complexes over $\AA$ and its morphisms are homomorphisms of complexes.

\item A morphism $f: A \to B$ in $\Kom^b(\AA)$ is called a \emph{quasi isomorphism} if the induced morphism of cohomology groups $H^i(A) \to H^i(B)$ is an isomorphism for all integers $i$.
\end{enumerate}
\end{defn}

The bounded derived category of $\AA$ is the localization of $\Kom^b(\AA)$ by quasi isomorphisms. The exact meaning of this is the next theorem.

\begin{thm}[{\cite[Theorem 2.10]{Huy06:fm_transforms}}]
There is a category $\Db(\AA)$ together with a functor $Q: \Kom^b(\AA) \to \Db(\AA)$ satisfying two properties.
\begin{enumerate}
\item The morphism $Q(f)$ is an isomorphism for any quasi-isomorphism $f$ in the category $\Kom^b(\AA)$.
\item Any functor $F: \Kom^b(\AA) \to \DD$ satisfying property (i) factors uniquely through $Q$, i.e., there is a unique (up to natural isomorphism) functor $G: \Db(\AA) \to \DD$ such that $F$ is naturally isomorphic to $G \circ Q$.
\end{enumerate}
\end{thm}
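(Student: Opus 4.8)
The plan is to construct $\Db(\AA)$ as the localization of the category of complexes at the class of quasi-isomorphisms, using the classical calculus of fractions, and to read the universal property off that construction. First I would reduce from $\Kom^b(\AA)$ to the homotopy category $K^b(\AA)$, whose morphisms are homotopy classes of chain maps. This reduction is legitimate because every homotopy equivalence is a quasi-isomorphism, and, more to the point, any two homotopic chain maps become equal after quasi-isomorphisms are inverted: a null-homotopy exhibits the difference $f-g$ as factoring through the (acyclic) cone of an identity, so $Q(f)=Q(g)$ is forced. Hence the sought functor $Q$ factors through $K^b(\AA)$, and it suffices to localize $K^b(\AA)$ at the class $S$ of quasi-isomorphisms.

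The heart of the argument is to show that $S$ is a multiplicative system admitting a calculus of (right) fractions. Here I would use the triangulated structure of $K^b(\AA)$, under which a morphism $f$ lies in $S$ precisely when $\cone(f)$ is acyclic. That $S$ contains all identities and is closed under composition follows from the long exact cohomology sequence (equivalently, from the octahedral axiom applied to the cones of $f$, $g$, and $g\circ f$). The essential Ore condition --- that a diagram $X \xrightarrow{f} Z \xleftarrow{s} Y$ with $s\in S$ can be completed to a commutative square whose remaining leg again lies in $S$ --- is obtained by embedding $s$ in a distinguished triangle, completing the relevant composite to a triangle, and checking acyclicity of the new cone via axiom TR3; the cancellation axiom is handled by the same cone bookkeeping. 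This verification, together with the set-theoretic point that the resulting fraction classes form honest sets, is the main obstacle.

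Granting the calculus of fractions, I would then define $\Db(\AA)$ to have the complexes as objects and, as morphisms $A\to B$, the equivalence classes of roofs $A \xleftarrow{s} C \xrightarrow{f} B$ with $s\in S$; composition is defined by completing the relevant diagram through the Ore condition,
\[
(B \xleftarrow{t} D \xrightarrow{g} E)\ \circ\ (A \xleftarrow{s} C \xrightarrow{f} B) \ =\ (A \xleftarrow{s\circ s'} C' \xrightarrow{g\circ f'} E),
\]
where $f'$ and $s'\in S$ fill in over $f$ and $t$. The multiplicative-system axioms make this associative and unital, so $\Db(\AA)$ is a genuine category, and the functor $Q$ sending $f$ to the roof $[\id,f]$ inverts every quasi-isomorphism, giving property (i).

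Finally, for the universal property, suppose $F\colon \Kom^b(\AA)\to\DD$ inverts quasi-isomorphisms. By the first paragraph $F$ descends to $K^b(\AA)$, and I would then set $G$ equal to $F$ on objects and define it on a roof by $G([s,f]) = F(f)\circ F(s)^{-1}$. The Ore calculus guarantees this is independent of the chosen representative and respects composition, and one checks directly that $G\circ Q$ is naturally isomorphic to $F$. Uniqueness is immediate: since every morphism of $\Db(\AA)$ has the form $Q(f)\circ Q(s)^{-1}$, any functor with $G\circ Q\cong F$ is determined on all of $\Db(\AA)$ by its values on the image of $Q$.
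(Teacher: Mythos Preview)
The paper does not actually supply a proof of this theorem; it simply quotes the statement from \cite[Theorem 2.10]{Huy06:fm_transforms} as background in the appendix. Your outline is the standard Verdier construction via calculus of fractions on the homotopy category, which is exactly what one finds in the cited reference, so in that sense you are aligned with the paper's intended source.

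One small point worth tightening: your reduction from $\Kom^b(\AA)$ to $K^b(\AA)$ relies on the claim that homotopic maps become equal after inverting quasi-isomorphisms, and you justify this by saying the difference $f-g$ factors through an acyclic object. That argument implicitly uses additivity of the target functor $F$, which the universal property does not assume. The robust argument uses the cylinder: the two inclusions $i_0,i_1\colon A\to\mathrm{Cyl}(A)$ satisfy $p\circ i_0=p\circ i_1=\id_A$ with $p$ a quasi-isomorphism, so any functor inverting $p$ identifies $i_0$ and $i_1$, and a homotopy $f\simeq g$ gives $H\colon\mathrm{Cyl}(A)\to B$ with $H\circ i_0=f$, $H\circ i_1=g$. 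Apart from this, your sketch is correct and complete in outline.
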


In particular, $Q$ identifies objects in $\Kom^b(\AA)$ and $\Db(\AA)$. By the definition of quasi isomorphisms we still have well defined cohomology groups $H^i(A)$ for any $A \in \Db(\AA)$. The category $\AA$ is equivalent to the full subcategory of $\Db(\AA)$ consisting of those objects $A \in \AA$ that satisfy $H^i(A) = 0$ for all $i \neq 0$. In the next section we will learn that this is the simplest example of what is known as the heart of a bounded t-structure.

Notice, there is the automorphism $[1]: \Db(\AA) \to \Db(\AA)$, where $E[1]$ is defined by $E[1]^i = E^{i+1}$. It simply changes the grading of a complex. Moreover, we define the shift functor $[n] = [1]^n$ for any integer $n$. The following lemma will be used to actually compute homomorphisms in the derived category.

\begin{lem}[{\cite[Proposition 2.56]{Huy06:fm_transforms}}]
Let $\AA$ be either an abelian category with enough injectives or $\Coh(X)$ for a smooth projective variety $X$. For any $A,B \in \AA$ and $i \in \Z$ we have the equality
\[\Hom_{\Db(\AA)}(A,B[i]) = \Ext^i(A, B).\]
\end{lem}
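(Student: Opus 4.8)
The plan is to compute $\Hom_{\Db(\AA)}(A, B[i])$ by replacing $B$ with an injective resolution and reducing morphisms in the localized category to honest homotopy classes of chain maps. First I would treat the case where $\AA$ has enough injectives. Choosing an injective resolution $0 \to B \to I^0 \to I^1 \to \cdots$, the quasi-isomorphism $B \to I^\bullet$ becomes an isomorphism $B \cong I^\bullet$ in $\Db(\AA)$ by the universal property of the localization, so that $\Hom_{\Db(\AA)}(A, B[i]) \cong \Hom_{\Db(\AA)}(A, I^\bullet[i])$.

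The central step is the identification $\Hom_{\Db(\AA)}(A, I^\bullet[i]) \cong \Hom_{K(\AA)}(A, I^\bullet[i])$, where $K(\AA)$ is the homotopy category of complexes over $\AA$. By the calculus of fractions, a morphism in $\Db(\AA)$ is represented by a roof $A \xleftarrow{s} C \xrightarrow{f} I^\bullet[i]$ with $s$ a quasi-isomorphism. The key lemma to establish is that a bounded-below complex of injectives is injective with respect to quasi-isomorphisms: given such an $s$, any chain map $f\colon C \to I^\bullet[i]$ factors, up to homotopy, through $s$. I would prove this by the usual dévissage, building the factorization degree by degree, using at each stage that $I^j$ is an injective object and that $\cone(s)$ is acyclic. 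Granting this, every roof is equivalent to one with $s = \id$, so $\Hom_{\Db(\AA)}(A, I^\bullet[i])$ is exactly the group of homotopy classes of chain maps $A \to I^\bullet[i]$.

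Finally I would make this homotopy-class group explicit. Since $A$ sits in degree $0$, a chain map $A \to I^\bullet[i]$ is a single morphism $f\colon A \to I^i$ satisfying $d^i_I \circ f = 0$, and two such maps are homotopic precisely when they differ by $d^{i-1}_I \circ h$ for some $h\colon A \to I^{i-1}$. Hence the homotopy classes form $H^i\!\big(\Hom_\AA(A, I^\bullet)\big)$, which is the classical $\Ext^i(A,B)$ computed from the resolution $I^\bullet$; this settles the enough-injectives case.

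For $\Coh(X)$ with $X$ smooth projective, the obstruction is that $\Coh(X)$ itself lacks enough injectives. The plan is to pass to $\Qcoh(X)$, which does have enough injectives, via the standard fact that the natural functor $\Db(\Coh(X)) \to \Db_{\Coh}(\Qcoh(X))$ onto complexes with coherent cohomology is an equivalence. The argument above then computes the $\Hom$ in $\Db(\Qcoh(X))$ as $\Ext^i_{\Qcoh}(A,B)$, and I would check that for coherent $A$ and $B$ this coincides with $\Ext^i_{\Coh}(A,B)$. The main obstacle throughout is the injectivity-up-to-homotopy lemma for complexes of injectives, where the real content lies; the remaining work is bookkeeping with the calculus of fractions and the short cohomology computation.
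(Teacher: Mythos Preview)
The paper does not prove this lemma at all: it is stated with a citation to \cite[Proposition 2.56]{Huy06:fm_transforms} and no argument is given. Your outline is the standard and correct proof, exactly along the lines of the cited reference.

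One small technical point worth tightening: you write that the quasi-isomorphism $B \to I^\bullet$ becomes an isomorphism in $\Db(\AA)$, but an injective resolution is in general only bounded below, so $I^\bullet$ need not lie in $\Db(\AA)$. The clean fix is to observe that $\Db(\AA) \hookrightarrow D^+(\AA)$ is fully faithful and carry out the computation in $D^+(\AA)$; alternatively, in the $\Coh(X)$ case with $X$ smooth projective one can invoke finite homological dimension to get a bounded injective resolution in $\Qcoh(X)$. Either way the rest of your argument goes through unchanged.
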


In contrast to $\Kom^b(\AA)$ the bounded derived category $\Db(\AA)$ is not abelian. This lead Verdier and Grothendieck to the notion of a triangulated category which will be explained in the next theorem.

\begin{defn}
For any morphism $f: A \to B$ in $\Kom^b(\AA)$ the cone $C(f)$ is defined by $C(f)^i = A^{i+1} \oplus B^i$. The differential is given by the matrix
\[\left(\begin{matrix}
-d_A & 0 \\
f & d_B
\end{matrix}\right).\]
\end{defn}

The inclusion $B^i \into A^{i+1} \oplus B^i$ leads to a morphism $B \to C(f)$ and the projection $B^i \oplus A^{i+1} \onto  A^{i+1}$ leads to a morphism $C(f) \to A[1]$.

\begin{defn}
A sequence of maps $F \to E \to G \to F[1]$ in $\Db(\AA)$ is called a \emph{distinguished triangle} if there is a morphism $f: A \to B$ in $\Kom^b(\AA)$ and a commutative diagram with vertical isomorphisms in $\Db(\AA)$ as follows
\[
\xymatrix{
F \ar[r] \ar[d] & E \ar[r] \ar[d] & G \ar[r] \ar[d] & F[1] \ar[d] \\
A \ar[r] & B \ar[r] & C(f) \ar[r] & A[1].
}
\]
\end{defn}

These distinguished triangles should be viewed as the analogue of exact sequences in an abelian category. If $0 \to A \to B \to C \to 0$ is an exact sequence in $\AA$, then $A \to B \to C \to A[1]$ is a distinguished triangle where the map $C \to A[1]$ is determined by the element in $\Hom(C, A[1]) = \Ext^1(C,A)$ that determines the extension $B$. The following properties of the derived category are essentially the defining properties of a \emph{triangulated category}.

\begin{thm}[{\cite[Chapter IV]{GM03:homological_algebra}}]
\begin{enumerate}
\item Any morphism $A \to B$ in $\Db(\AA)$ can be completed to a distinguished triangle $A \to B \to C \to A[1]$.
\item A triangle $A \to B \to C \to A[1]$ is distinguished if and only if the induced triangle $B \to C \to A[1] \to B[1]$ is distinguished.
\item Assume we have two distinguished triangles with morphisms $f$ and $g$ making the diagram below commutative.
\[
\xymatrix{
A \ar[r] \ar[d]^f & B \ar[r] \ar[d]^g & C \ar[r] \ar@{-->}[d]^{\exists h} & A[1] \ar[d]^{f[1]} \\
A' \ar[r] & B' \ar[r] & C' \ar[r] & A'[1].
}
\]
Then we can find $h: C \to C'$ making the whole diagram commutative.
\item Assume we have two morphisms $A \to B$ and $B \to C$. Then together with (1) and (3) we can get a commutative diagram as follows where all rows and columns are distinguished triangles.
\[
\xymatrix{
A \ar[r] \ar[d]^{\id} & B \ar[r] \ar[d] & D \ar[r] \ar@{-->}[d]^{\exists} & A[1] \ar[d]^{\id[1]} \\
A \ar[r] \ar[d] & C \ar[r] \ar[d] & E \ar[r] \ar@{-->}[d]^{\exists} & A[1] \ar[d] \\
0 \ar[r] \ar[d] & F \ar[r]^{\id} \ar[d] & F \ar[r] \ar@{-->}[d]^{\exists} & 0 \ar[d] \\
A[1] \ar[r] & B[1] \ar[r] & D[1] \ar[r] & A[2]
}
\]
\end{enumerate}
\end{thm}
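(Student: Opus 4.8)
The four assertions are precisely the defining axioms (TR1)--(TR4) of a triangulated category, specialized to $\Db(\AA)$. The plan is to establish them first at the level of the homotopy category $\mathrm{K}^b(\AA)$, the quotient of $\Kom^b(\AA)$ by chain homotopy through which the localization defining $\Db(\AA)$ factors, where the cone construction $C(f)$ is available and behaves functorially up to homotopy, and then to transfer the structure along the localization $\mathrm{K}^b(\AA)\to\Db(\AA)$ at quasi-isomorphisms. The enabling fact for the second step is that quasi-isomorphisms form a multiplicative system admitting a calculus of fractions, so that every morphism $\phi$ in $\Db(\AA)$ is represented by a roof $A\xleftarrow{s}A'\xrightarrow{g}B$ with $s$ a quasi-isomorphism and $\phi=g\circ s^{-1}$; this is what lets cone constructions performed on genuine chain maps survive in $\Db(\AA)$.

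For (1), the standard triangle $A\to A\to 0\to A[1]$ and invariance under isomorphism of triangles are immediate from the definition adopted in the excerpt. To complete an arbitrary $\phi:A\to B$, I would represent it by a roof as above, form the cone triangle $A'\xrightarrow{g}B\to C(g)\to A'[1]$, and then use that $s$ is an isomorphism in $\Db(\AA)$ to identify this, via the vertical isomorphisms $(s^{-1},\id_B,\id_{C(g)},s^{-1}[1])$, with a triangle whose first map is $\phi$; being isomorphic to a cone triangle, it is distinguished by definition. For (2) the content is one explicit homotopy computation: for a chain map $f:A\to B$ the cone of the inclusion $B\to C(f)$ is canonically homotopy equivalent to $A[1]$, with the expected sign, so rotating a cone triangle again produces a cone triangle; applying this repeatedly yields both implications. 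For (3), after transporting along isomorphisms one may assume both rows are cone triangles of chain maps $f,f'$; writing $C(f)^i=A^{i+1}\oplus B^i$, the commutativity $f'\circ a=b\circ f$ makes the componentwise map $h^i=a^{i+1}\oplus b^i$ a chain map filling in the diagram, and this descends to $\Db(\AA)$.

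The genuinely involved point is the octahedral axiom (4). Given composable maps $A\to B\to C$, I would form the three cones $C(A\to B)$, $C(A\to C)$, $C(B\to C)$ as explicit iterated-cone complexes and write down the degreewise maps between them induced from the components; the assertion that the third column $C(A\to B)\to C(A\to C)\to C(B\to C)\to C(A\to B)[1]$ is again a cone triangle is then verified by exhibiting an explicit homotopy equivalence between $C(B\to C)$ and the cone of $C(A\to B)\to C(A\to C)$. The sign bookkeeping in the differentials of these iterated cones is the main obstacle; everything else is formal.

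Finally, to pass from $\mathrm{K}^b(\AA)$ to $\Db(\AA)$ I would invoke Verdier's theorem that the localization of a triangulated category at a multiplicative system compatible with the triangulation is again triangulated, with distinguished triangles those isomorphic to images of distinguished triangles from $\mathrm{K}^b(\AA)$ --- which is exactly the notion of distinguished triangle used in the excerpt. The one compatibility to check is that quasi-isomorphisms are stable under rotation and completion, equivalently that in a distinguished triangle two of whose legs are quasi-isomorphisms the third is one as well; this follows from the long exact cohomology sequence attached to a cone triangle together with the five lemma. Granting this, the axioms (1)--(4) proved in $\mathrm{K}^b(\AA)$ descend verbatim to $\Db(\AA)$.
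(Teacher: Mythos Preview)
The paper does not supply a proof of this theorem; it is stated with a bare citation to \cite[Chapter IV]{GM03:homological_algebra} and is used only as background. There is therefore nothing in the paper to compare against.

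That said, your sketch is the standard route taken in the cited reference: verify (TR1)--(TR4) in $\mathrm{K}^b(\AA)$ using the explicit mapping-cone formulas, then invoke Verdier's localization theorem to push the triangulated structure to $\Db(\AA)$. The ingredients you list --- roofs from the calculus of fractions, the explicit homotopy equivalence $C(B\hookrightarrow C(f))\simeq A[1]$ for rotation, the componentwise fill-in for (TR3), the iterated-cone description for the octahedron, and the five-lemma check that quasi-isomorphisms are compatible with the triangulation --- are exactly the right ones. One small caution for (TR3): the square $g\circ f = f'\circ a$ need only commute up to homotopy in $\mathrm{K}^b(\AA)$, so the fill-in $h^i$ must incorporate the chosen homotopy rather than being the bare diagonal $a^{i+1}\oplus b^i$; otherwise your outline is accurate.
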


The key in property (4) is that the triangle $D \to E \to F \to D[1]$ is actually distinguished. Be aware that contrary to most definitions in category theory the morphism in (3) is not necessarily unique.

\begin{exercise}
Let $A \to B \to C \to A[1]$ be a distinguished triangle and $E \in \Db(\AA)$ be an arbitrary object. Then there are long exact sequences
\[
\ldots \to \Hom(E, C[-1]) \to \Hom(E, A) \to \Hom(E, B) \to \Hom(E, C) \to  \Hom(E, A[1]) \to \ldots
\]
and
\[
\ldots \to \Hom(A[-1], E) \to \Hom(C, E) \to \Hom(B, E) \to \Hom(A, E) \to \Hom(C[-1], E) \to \ldots
\]
Show the existence of one of the two long exact sequences (their proofs are almost the same).
\end{exercise}

\begin{exercise}
Let $f: A \to B$ be a morphism in $\Db(\AA)$. Show that $f$ is an isomorphism if and only if $C = 0$.
\end{exercise}

\begin{exercise}
Prove the corresponding statement to the Five Lemma for derived categories: Assume there is a commutative diagram between distinguished triangles
\[
\xymatrix{
A \ar[r] \ar[d]^f & B \ar[r] \ar[d]^g & C \ar[r] \ar[d]^h & A[1] \ar[d]^{f[1]} \\
A' \ar[r] & B' \ar[r] & C' \ar[r] & A'[1].
}
\]
If two of the morphisms $f,g,h$ are isomorphisms, so is the third one.
\end{exercise}

We will need the following technical statement in the main text.

\begin{prop}[{\cite[Proposition 5.4]{BM02:fm_elliptic_fibrations}}]
\label{prop:locally_free_complex}
Let $X$ be a smooth projective variety and $E \in \Db(X)$. If $\Ext^i(E, \C(x)) = 0$ for all $x \in X$ and $i < 0$ or $i > s \in \Z$. Then $E$ is isomorphic to a complex $F^{\bullet}$ of locally free sheaves such that $F^i = 0$ for $i > 0$ and $i < -s$.
\end{prop}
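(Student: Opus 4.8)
The plan is to reduce the assertion to a fiberwise statement about a complex of vector bundles, and to read off the two one-sided $\Ext$-vanishing hypotheses as fiberwise injectivity at one end and fiberwise surjectivity at the other. Since $X$ is smooth and projective, $E$ is isomorphic in $\Db(X)$ to a bounded complex $F^\bullet$ of locally free sheaves, and I would fix such a representative. Because each $F^j$ is locally free, $\RHom(E, \C(x))$ is computed by the honest (non-derived) complex $(F^\bullet)^\vee \otimes \C(x)$, where $\big((F^\bullet)^\vee\big)^i = (F^{-i})^\vee$; hence $\Ext^i(E, \C(x)) = H^i\big((F^\bullet)^\vee \otimes \C(x)\big)$ is the $i$-th cohomology of a finite complex of vector spaces varying with $x$. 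The hypothesis then says precisely that this fiberwise cohomology vanishes for $i < 0$ and for $i > s$, and I note that it is an invariant of $E$, independent of the chosen representative.

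First I would truncate at the top. Let $b$ be the largest index with $F^b \neq 0$ and suppose $b > 0$. The bottom term of the dual complex sits in degree $-b < 0$, so $\Ext^{-b}(E, \C(x)) = 0$ means the map $(F^b)^\vee \otimes \C(x) \to (F^{b-1})^\vee \otimes \C(x)$ is injective for every $x$; dually, $F^{b-1} \to F^b$ is surjective on every fiber. By the standard fiberwise criterion this map is then a surjection of bundles with locally free kernel $K$, and replacing $F^{b-1}, F^b$ by $K, 0$ produces a quasi-isomorphic complex of locally free sheaves with top degree $b-1$. Iterating brings the top degree down to $0$.

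Symmetrically, let $a$ be the smallest index with $F^a \neq 0$ and suppose $a < -s$. Now the top term of the dual complex lies in degree $-a > s$, so $\Ext^{-a}(E, \C(x)) = 0$ forces $(F^{a+1})^\vee \otimes \C(x) \to (F^a)^\vee \otimes \C(x)$ to be surjective for all $x$, i.e. $F^a \to F^{a+1}$ is injective on every fiber. Again the fiberwise criterion makes $F^a \to F^{a+1}$ a subbundle inclusion with locally free cokernel $Q$; replacing $F^a, F^{a+1}$ by $0, Q$ lowers the bottom degree to $a+1$. Iterating down to $-s$ leaves a complex of locally free sheaves with $F^i = 0$ for $i > 0$ and $i < -s$ representing $E$, which is the claim.

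The genuinely non-formal ingredient, and the step I expect to be the main obstacle to phrase cleanly, is the fiberwise criterion: a morphism of vector bundles that is injective (respectively surjective) on every fiber is automatically a subbundle inclusion (respectively a quotient bundle), with locally free cokernel (respectively kernel). This is where Nakayama's lemma and the local constancy of fiber rank enter, and it is exactly what converts the cohomological vanishing into the local splittings that permit the term-by-term truncation. The only other point requiring care is the identification $\Ext^i(E, \C(x)) = H^i\big((F^\bullet)^\vee \otimes \C(x)\big)$, valid precisely because the $F^j$ are locally free, so that no higher $\lHom$ or $\Tor$ contributions appear and the two-sided hypothesis separates cleanly into the two ends of the complex.
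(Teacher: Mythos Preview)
Your argument is correct and is essentially the standard proof of this fact, as given in the cited reference \cite{BM02:fm_elliptic_fibrations}. Note that the paper itself does not supply a proof of this proposition; it merely states the result in the appendix and cites the original source, so there is no in-paper proof to compare against.
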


All triangles coming up in this article are distinguished. Therefore, we will simply drop the word distinguished from the notation.


\newcommand{\etalchar}[1]{$^{#1}$}
\def\cprime{$'$} \def\cprime{$'$}

\end{document}